\theoremstyle{theorem}
\newtheorem{theorem}{Theorem}[section]
\newtheorem{proposition}{Proposition}[section]
\newtheorem{lemma}{Lemma}[section]
\newtheorem{corollary}{Corollary}[section]
\theoremstyle{definition}
\newtheorem{definition}{Definition}[section]
\newtheorem{example}{Example}[section]
\newtheorem{remark}{Remark}[section]
\newcommand{\codim}{\operatorname{codim}}
\newcommand{\V}[1]{\vspace{#1}}
\newcommand{\id}{\operatorname{id}}
\newcommand{\R}{\mathbb{R}}
\newcommand{\Z}{\mathbb{Z}}
\newcommand{\Pa}{\partial}
\newcommand{\Ker}{\operatorname{Ker}}
\newcommand{\Coker}{\operatorname{Coker}}
\renewcommand{\Im}{\operatorname{Im}}
\newcommand{\rank}{\operatorname{rank}}
\newcommand{\corank}{\operatorname{corank}}
\newcommand{\Hom}{\operatorname{Hom}}
\newcommand{\GL}{\operatorname{GL}}
\title{
Characterization of generic parameter families of constraint mappings in optimization}
\author{Naoki Hamada, Kenta Hayano and Hiroshi Teramoto}
\date{} 
\begin{document}

\maketitle
\begin{abstract}

The purpose of this paper is to understand generic behavior of constraint functions in optimization problems relying on singularity theory of smooth mappings. 
To this end, we will focus on the subgroup $\mathcal{K}[G]$ of the Mather's group $\mathcal{K}$, whose action to constraint map-germs preserves the corresponding feasible set-germs (i.e.~the set consisting of points satisfying the constraints). 
We will classify map-germs with small stratum $\mathcal{K}[G]_e$-codimensions, and calculate the codimensions of the $\mathcal{K}[G]$-orbits of jets represented by germs in the classification lists and those of the complements of these orbits.
Applying these results and a variant of the transversality theorem, we will show that families of constraint mappings whose germ at any point in the corresponding feasible set is $\mathcal{K}[G]$-equivalent to one of the normal forms in the classification list compose a residual set in the entire space of constraint mappings with at most $4$-parameters.
These results enable us to quantify genericity of given constraint mappings, and thus evaluate to what extent known test suites are generic. 

\end{abstract}

\tableofcontents

\section{Introduction}

Constrained optimization is a problem of minimizing \textit{objective function(s)} within the \textit{feasible set} that is described by the system of equalities and inequalities of \textit{constraint functions}.
This problem appears in a wide range of academic and industrial tasks, including planning, scheduling, design, development, and operation \cite{Arora_Book}.
Although there is an elegant and powerful theory for restricted cases (e.g.~for linear objective/constraint functions \cite{dantzig1997linear,dantzig1997linear2} and convex ones \cite{Boyd_Book,Nesterov_Book}), it is in general difficult to establish a theory for solving such problems.
Solvers for general optimization problems, such as Bayesian optimization \cite{7352306,10.1145/3582078} and evolutionary computation \cite{10.5555/2810085,10.1007/s10462-021-10042-y}, are developed primarily through empirical performance evaluation using a set of artificially designed optimization problems, which is called a \textit{test suite}.
A good test suite should represent typical classes of real-world problems and will facilitate the development of good solvers.

Since errors are inherent in observations and modeling processes of real-world problems, we would like to focus on properties that any problem possesses after a small perturbation and that are preserved by perturbations, that is, generic properties.
Therefore, in order to develop a well-designed test suite, it is necessary to quantify and estimate genericity of a given constrained optimization problem.
A well-known constrained test suite is the C-DTLZ \cite{Jain2014AnEM}, which adds artificially designed constraint functions to the DTLZ \cite{Zitzler2000ComparisonOM}, the de facto standard in unconstrained optimization.
Contrary to their fame, DTLZ and C-DTLZ have been criticized for dealing with exceptional functions that rarely occur in practice \cite{8790342}.
It would be nice to examine whether or not the functions given in existing test suites are generic.
(We will indeed show in Example~\ref{exam:C1-DTLZ1} that the constraint map-germ at a solution of C1-DTLZ1 is far from generic, that is, one cannot expect that it appears in real-world problems.)

The main purpose of this paper is to explore generic properties of smooth inequality/equality constraints on manifolds.
Our main result, Theorem~\ref{thm:main theorem detail}, establishes that the set of $b\leq 4$-parameter families satisfying specific conditions is residual in the space of smooth parameter families of constraint mappings. 
This theorem implies not only that a generic $b\leq 4$-parameter family of constraint mappings satisfy the conditions in the theorem, but also that one can make any (not necessarily generic) parameter family of constraint mappings satisfy the conditions by a small perturbation.
In particular, the main theorem provides a comprehensive understanding of generic behavior of constraint mappings, highlighting typical properties that can be expected in a generic (and thus real-world) setting. 
Note that the main theorem is described in terms of the key concept, \textit{reduction} of constraint map-germs, that is, the operation eliminating unnecessary inequality constraints and restricting to a submanifold satisfying equality constraints. (See Section~\ref{sec:reduction} for detail.)

In order to obtain the main theorem, we will focus on the subgroup $\mathcal{K} [G] \subset
\mathcal{K}$ defined in Section~\ref{sec:preliminaries}. 
This subgroup was originally introduced by Tougeron \cite{Tougeron1972} for a linear Lie group $G$. 
Its basic properties were investigated by Gervais \cite{Gervais1977,Gervais1982,Gervais1984} and further studied by
Izumiya et al. \cite{Izumiya_unpublished}, who provided many interesting examples.
For a suitable Lie group $G$ (given in Section~\ref{sec:preliminaries}), the group $\mathcal{K}[G]$ acts on constraint map-germs in a sensible way; the action of $\mathcal{K}[G]$ indeed preserves the corresponding feasible set-germs, and thus it is suitable for our purpose (i.e.~examining behavior of generic constraint mappings). 
In Theorem~\ref{thm:classification jets}, we will classify map-germs with small $\mathcal{K}[G]_e$-codimensions, and calculate the codimensions of the $\mathcal{K}[G]$-orbits of jets represented by germs in the classification lists and those of the complements of these orbits.
The main result then follows from this theorem together with a variant of the transversality theorem. 
Note that part of the classification in Theorem~\ref{thm:classification jets} has already been given in \cite{Siersma1981,Dimca1984}. 
See Remark~\ref{rem:previous work classification} for detail.

The classification and generic properties obtained in the manuscript is the first step toward creating good test suites with various desired properties and assessing known test suites properly.
By perturbing constraints in our classification lists (Tables~\ref{table:generic constraint q=0}, \ref{table:generic constraint r=0} and \ref{table:generic constraint q>0 r=1}), we can create various constraint mappings, which can be expected to appear in a generic setting.
Since $\mathcal{K}[G]$ is geometric in the sense of Damon~\cite{Damon1984Book}, it is enough to consider a versal unfolding of constraints as a perturbation. 
For understanding which types of constraints appear in a versal unfolding of each constraint (i.e.~obtaining a bifurcation diagram of a versal unfolding), we have to deal with the \textit{recognition problem} for each map-germ in the classification lists (with respect to $\mathcal{K}[G]$-equivalence, cf.~\cite{Gaffney1983}). 
Note that the solutions of the recognition problems are also useful to assess existing test suites. 
In a forthcoming paper, we will solve the recognition problems and give the bifurcation diagrams of versal unfoldings for map-germs in the lists with (stratum) $\mathcal{K}[G]_e$-codimension at most $3$. 

Throughout the manuscript, we will examine only constraint mappings, and not deal with objective functions. 
On the one hand, it is reported \cite{10.1145/3377929.3389974} that real-world problems often have a larger number of constraint functions than objective functions, and that many constraint functions will be active at the same time.
Thus, constraint functions themselves are important objects and have been studied from various perspectives in relevant references \cite{7969433}.
On the other hand, in order to determine generic behavior of objective functions and constraints following the same scheme, we will have to focus on another subgroup of $\mathcal{K}$ instead of $\mathcal{K}[G]$, preserving not only feasible set-germs but also natural ordering for objective functions, called the \textit{Pareto ordering} (cf.~\cite{Miettinen1999}). 
Since such a subgroup is not necessarily geometric in the sense of Damon~\cite{Damon1984Book}, it might be much more difficult to understand the action of this group to map-germs than that of $\mathcal{K}[G]$. 
We will study objective functions (possibly with constraints) in a future project.

This paper is organized as follows: 
after reviewing basic notions (e.g.~$\mathcal{K}[G]$-equivalence and (extended) intrinsic derivatives) in Section~\ref{sec:preliminaries}, we will define a reduction of a constraint map-germ in Section~\ref{sec:reduction}, which can be obtained from the original map-germ by removing inactive inequality constraints and composing an embedding to a submanifold-germ determined from equality constraints. 
In Section~\ref{sec:transversality parameter family}, we will then discuss transversality of (parameter families of) constraint map-germs and their reductions. 
Section~\ref{sec:classification_constraints} is devoted to the classification of jets appearing as (full) reductions of generic $b$($\leq 4$)-parameter families of constraint mappings (Theorem~\ref{thm:classification jets}). 
We will give the main theorem in full detail (Theorem~\ref{thm:main theorem detail}) after the proof of Theorem~\ref{thm:classification jets}.

\section{Preliminaries}\label{sec:preliminaries}

Let $N$ be a manifold,
\begin{math}
	q, r \in \mathbb{N} \cup \left\{ 0 \right\}
\end{math}
and
\begin{math}
	g \colon N \rightarrow \mathbb{R}^q
\end{math}, and 
\begin{math}
	h \colon N \rightarrow \mathbb{R}^r
\end{math}
be
\begin{math}
	C^\infty
\end{math}-mappings. 
The set
\begin{equation}
	M \left( g, h \right) = \left\{ x \in N \middle| \begin{matrix} g_i \left( x \right) \le 0 & \left( i \in \left\{ 1, \ldots, q \right\} \right) \\ h_j \left( x \right) = 0 & \left( j \in \left\{ 1, \ldots, r \right\} \right) \end{matrix} \right\}
\end{equation}
is called the \textit{feasible set} determined from the inequality and equality constraint mappings $g$ and $h$, respectively, where $g(x)=(g_1(x),\ldots, g_q(x))$ and $h(x)=(h_1(x),\ldots, h_r(x))$. 
In this paper, we write 
\begin{math}
	g \left( x \right) \le 0 \Leftrightarrow \forall i \in \left\{ 1, \ldots, q \right\}, g_i \left( x \right) \le 0
\end{math}.

Let $\mathcal{E}_n$ be the set of function-germs on $(\R^n,0)$, whose element is denoted by $f:(\R^n,0)\to \R$ or $f:(\R^n,0)\to (\R,f(0))$. 
The set $\mathcal{E}_n$ is a local ring with addition and multiplication induced from those on $\R$, and the maximal ideal $\mathcal{M}_n=\{f\in \mathcal{E}_n~|~f(0)=0\}$. 
One can regard the product $\mathcal{E}_n^p$($=(\mathcal{E}_n)^p$) as the $\mathcal{E}_n$-module of map-germs from $(\R^n,0)\to \R^p$ in the obvious way. 
We denote by $e_1,\ldots, e_p$ the standard generators of $\R^p$, and these are regarded as constant map-germs, in particular elements of $\mathcal{E}_n^p$.
For map-germs $g\in \mathcal{E}_n^q$ and $h\in \mathcal{M}_n\mathcal{E}_n^r$, we define a subset-germ $M(g,h)$ of $\R^n$ at $0$ in the same way as above. 
Note that $M(g,h)=\emptyset$ if $g_i(0)>0$ for some $i\in \{1,\ldots, q\}$, and $M(g,h)=M(\hat{g},h)$, where $\hat{g}\in \mathcal{E}_n^{q'}$ is obtained from $g$ by removing the components with negative values at $0$.

\subsection{$\mathcal{K}[G]$-equivalence}\label{sec:K[G] equivalence}

Let $G_d\subset \GL(q,\R)$ be the group of diagonal matrices with positive diagonal entries, \begin{math}
G_{gp}
\end{math} be the semidirect product of 
\begin{math}
G_d
\end{math}
and the group of 
\begin{math}
q \times q
\end{math}
permutation matrices 
\begin{math}
P_q
\end{math}, and
\begin{equation}
G = \left\{ \left(
\begin{array}{c|c}
C & B \\
\midrule
O_{r,q} & A 
\end{array}
\right) \middle| C \in G_{gp}, B \in M_{q,r} \left( \mathbb{R} \right), A \in \GL \left( r, \mathbb{R} \right) \right\},
\end{equation}
where 
\begin{math}
M_{q,r} \left( \mathbb{R} \right)
\end{math}
is the set of 
\begin{math}
q \times r
\end{math}
real matrices, and
\begin{math}
O_{r,q}
\end{math}
is the 
\begin{math}
r \times q
\end{math}
zero matrix. 
We define the group $\mathcal{K}[G]$ as follows: 
\[
\mathcal{K}[G] = \left\{(\Phi, \Psi) ~\left|~\begin{minipage}[c]{74mm}
$\Phi:(\R^n,0)\to (\R^n,0)$~:~diffeomorphism-germ 

$\Psi:(\R^n,0)\to G$~:~smooth map-germ
\end{minipage}\right.\right\}.
\]
Note that $\mathcal{K}[G]$ is a subgroup of $\mathcal{K}$, and contains the group $\mathcal{R}$, where $\mathcal{R}$ and $\mathcal{K}$ are groups introduced in \cite{Mather1968}, in particular $\mathcal{K}[G]$ acts on the set $\mathcal{M}_n(\mathcal{E}_n^{q}\times \mathcal{E}_n^r)\cong \mathcal{M}_n\mathcal{E}_n^{q+r}$ as follows:
\begin{align*}
&\left(\Phi(x),\begin{pmatrix}
C(x) & B(x) \\
O & A(x) 
\end{pmatrix}
\right)\cdot (g(x),h(x)) \\
=& \left(C(x)g(\Phi^{-1}(x))+B(x)h(\Phi^{-1}(x)),A(x)h(\Phi^{-1}(x))\right).
\end{align*}
Two map-germs $(g,h),(g',h')\in \mathcal{M}_n\mathcal{E}_n^{q+r}$ are said to be \textit{$\mathcal{K}[G]$-equivalent} if $(g,h)$ is contained in the $\mathcal{K}[G]$-orbit of $(g',h')$ (cf.~\cite{Izumiya_unpublished}). 
It is easy to see that if $(g,h)$ is equal to $(\Phi,\Psi)\cdot (g',h')$ for $(\Phi,\Psi)\in \mathcal{K}[G]$, then $M(g,h)$ is equal to $\Phi(M(g',h'))$. 
For a map-germ 
\begin{math}
\left( g, h \right) \colon \left( \mathbb{R}^n, 0 \right) \rightarrow \left( \mathbb{R}^{q+r}, 0 \right)
\end{math}, the formal tangent space and the extended tangent space of the 
\begin{math}
\mathcal{K} \left[ G \right]
\end{math}-equivalence class are
\begin{align}
T \mathcal{K} \left[ G \right] \left( g, h \right) &= \mathcal{M}_n \left< \cfrac{\partial \left( g, h \right)}{\partial x_1}, \ldots, \cfrac{\partial \left( g, h \right)}{\partial x_n} \right>_{\mathcal{E}_n} + T \mathcal{C} \left[ G \right] \left( g, h \right), \label{eq:ft} \\
T \mathcal{K} \left[ G \right]_e \left( g, h \right) &= \left< \cfrac{\partial \left( g, h \right)}{\partial x_1}, \ldots, \cfrac{\partial \left( g, h \right)}{\partial x_n} \right>_{\mathcal{E}_n} + T \mathcal{C} \left[ G \right] \left( g, h \right), \label{eq:fet} 
\end{align}
where 
\begin{math}
T \mathcal{C} \left[ G \right] \left( g, h \right) = \langle \mathfrak{g} \left( g, h \right) \rangle_{\mathcal{E}_n}
\end{math}
is the tangent space generated by vectors 
\begin{math}
\left( g, h \right)
\end{math}
multiplied by the Lie algebra
\begin{math}
\mathfrak{g}
\end{math}
of 
\begin{math}
G
\end{math}. Specifically, it is 
\begin{equation}
T \mathcal{C} \left[ G \right] \left( g, h \right) = \langle \mathfrak{g} \left( g, h \right) \rangle_{\mathcal{E}_n} = \left(\langle g_1 e_1, \ldots, g_q e_q \rangle_{\mathcal{E}_n}+\langle h_1,\ldots, h_r\rangle_{\mathcal{E}_n}{\mathcal{E}_n^q}\right) \oplus \langle h_1, \ldots, h_r \rangle_{\mathcal{E}_n} \mathcal{E}_n^r.
\end{equation}

\noindent
The \textit{$\mathcal{K}[G]$-codimension} and \textit{$\mathcal{K}[G]_e$-codimension} of $(g,h)\in \mathcal{M}_n\mathcal{E}_n^{q+r}$ are defined as the dimensions (as real vector spaces) of $\mathcal{M}_n\mathcal{E}_n^{q+r}/T\mathcal{K}[G](g,h)$ and $\mathcal{E}_n^{q+r}/T\mathcal{K}[G]_e(g,h)$, respectively. 
In this manuscript, we will also deal with map-germs $(g,h):(N,x)\to (\R^{q+r},(y,0))$ for a manifold $N$, $x\in N$, and $y\in \R^q$ with $y_j\leq 0$. 
Its $\mathcal{K}[G]$- and $\mathcal{K}[G]_e$-codimensions are defined to be those of $(\hat{g},h)\circ \varphi^{-1}$, where $\varphi:U\to \R^n$ is a chart around $x$ and $\hat{g}=(g_{k_1},\ldots, g_{k_s})$ for $\{k_1,\ldots, k_s\}=\{k\in \{1,\ldots,q\}~|~y_k=0\}$.

\begin{example}[C1-DTLZ1 \cite{6595567}]\label{exam:C1-DTLZ1}
C1-DTLZ1 is a benchmark problem for evolutionary many-objective optimization algorithms proposed by H.~Jain and K.~Deb \cite{6595567}. Let 
\begin{math}
k
\end{math}
be a positive integer and 
\begin{math}
M
\end{math}
be an integer greater than $1$. The problem has the following objective function 
\begin{math}
f \colon \mathbb{R}^{M-1+k} \rightarrow \mathbb{R}^{M-1}
\end{math}
along with a function 
\begin{math}
g \colon \mathbb{R}^{M-1+k} \rightarrow \mathbb{R}
\end{math}
involving an inequality constraint. 
For $(y,z)\in \R^{M-1}\times \R^k$, let
\begin{align}
f_1(y,z) &= 0.5 \left( 1+\tilde{f}(z) \right) \prod_{i=1}^{M-1} y_i \\
f_m(y,z) &= 0.5 \left( 1+\tilde{f}(z) \right) \prod_{i=1}^{M-m} y_i \left( 1-y_{M-m+1} \right) \quad \left( 2 \le m \le M \right) \\
g(y,z) &= 1 - \frac{f_M(y,z)}{0.6} - \sum_{i=1}^{M-1} \frac{f_i(y,z)}{0.5}
\end{align}
where
\begin{math}
\tilde{f}(z) = 100 \left\{ k + \sum_{i=1}^k \left( \left( z_i - 0.5 \right)^2 - \cos \left( 20 \pi \left( z_i - 0.5 \right) \right) \right) \right\}
\end{math}. 
By using the functions, C1-DTLZ1 is formulated as the minimization problem of 
\begin{math}
f = \left( f_1, \ldots, f_M \right)
\end{math}
subject to 
\begin{math}
g \ge 0
\end{math}
and 
\begin{math}
y_i, z_j \in \left[ 0, 1 \right]
\end{math}
for 
\begin{math}
i \in \left\{ 1, \ldots, M-1 \right\}
\end{math}
and 
\begin{math}
j \in \left\{ 1, \ldots, k \right\}
\end{math}. 

The function 
\begin{math}
g
\end{math}
can be rewritten as 
\begin{math}
g = 1 - \frac{5}{6}\left( 1 + \tilde{f} \right) \left( 1+\frac{y_1}{5} \right)
\end{math}. The inequality constraint 
\begin{math}
g \ge 0
\end{math}
is active if 
\begin{math}
y_1 = 1
\end{math}
and 
\begin{math}
z_i = 1/2
\end{math}
for all 
\begin{math}
i = \left\{ 1, \ldots, k \right\}
\end{math}. 
For $y'=(y_2,\ldots, y_{M-1})\in (0,1)^{M-2}$, we put $x(y') = (1,y',1/2,\ldots, 1/2)\in \R^{M-1+k}$. 
There are two active inequality constraints $g$ and $y_1$ at $x(y')$. 
One can easily check
\begin{math}
d \left( y_1 \right)_{x(y')} = \left( 1, 0, \ldots, 0 \right)
\end{math}
and 
\begin{math}
d \left( g \right)_{x(y')} = \left(-\frac{1}{6}, 0, \ldots, 0 \right)
\end{math}.
Thus the map-germ 
\begin{math}
\left( g, y_1 \right)
\end{math}
at $x(y')$ has non-isolated singularity, in particular its 
\begin{math}
\mathcal{K}
\end{math}-codimension is infinity. Therefore, its 
\begin{math}
\mathcal{K} \left[ G \right]
\end{math}-codimension is infinity as well. 
\end{example}

\begin{example}[C2-DTLZ2 \cite{6595567}]\label{exam:C2-DTLZ2}
C2-DTLZ2 is also a benchmark problem given in \cite{6595567}.
We take $k$ and $M$ as in Example~\ref{exam:C1-DTLZ1}.
Let 
\begin{math}
r
\end{math}
be a real number. The problem has the following objective function 
\begin{math}
f \colon \mathbb{R}^{M-1+k} \rightarrow \mathbb{R}^{M-1}
\end{math}
along with a function 
\begin{math}
g \colon \mathbb{R}^{M-1+k} \rightarrow \mathbb{R}
\end{math}
involving an inequality constraint. 
For $(y,z)\in \R^{M-1+k}$, let
\begin{align}
f_1(y,z) &= \left( 1+\tilde{f}(z) \right) \prod_{i=1}^{M-1} \cos \left( \frac{y_i \pi}{2} \right) \\
f_m(y,z) &= \left( 1+\tilde{f}(z) \right) \left( \prod_{i=1}^{M-m} \cos \left( \frac{y_i \pi}{2} \right) \right) \sin \left( \frac{y_{M-m+1} \pi}{2} \right) \quad \left( 2 \le m \le M \right) \\
g(y,z) &= \sum_{i=1}^M \left( f_i(y,z) - \lambda(y,z) \right)^2 - r^2
\end{align}
where
\begin{math}
\tilde{f}(z) = \sum_{i=1}^k \left( z_i - 0.5 \right)^2
\end{math}
and 
\begin{math}
\lambda(y,z) = \frac{1}{M} \sum_{i=1}^M f_i(y,z)
\end{math}. 
By using the functions, C2-DTLZ2 is formulated as the minimization problem of 
\begin{math}
f = \left( f_1, \ldots, f_M \right)
\end{math}
subject to 
\begin{math}
g \ge 0
\end{math}
and 
\begin{math}
y_i, z_j \in \left[ 0, 1 \right]
\end{math}
for 
\begin{math}
i \in \left\{ 1, \ldots, m \right\}
\end{math}
and 
\begin{math}
j \in \left\{ 1, \ldots, k \right\}
\end{math}.

By putting 
\begin{math}
\zeta_i = f_i/({1+\tilde{f}})
\end{math}
for 
\begin{math}
i = 1, \ldots, M
\end{math}, 
\begin{math}
\sum_{i=1}^M \zeta_i^2 = 1
\end{math}
holds and thus the variable
\begin{math}
\zeta =(\zeta_1,\ldots, \zeta_M)\in \R^{M}
\end{math}
is constrained on the unit sphere. This makes it possible to reformulate the problem in terms of 
\begin{math}
\zeta
\end{math}
and 
\begin{math}
z
\end{math}
with the additional equality constraint 
\begin{math}
\sum_{i=1}^M \zeta_i^2 = 1
\end{math}
as follows: Minimize 
\begin{math}
f
\end{math}
with respect to 
\begin{math}
z
\end{math}
and 
\begin{math}
\zeta
\end{math}
subject to 
\begin{math}
g = \left( 1+\tilde{f} \right)^2 \sum_{i=1}^M \left( \zeta_i - \frac{1}{M} \sum_{i=1}^M \zeta_i \right)^2 - r^2 \ge 0
\end{math}, 
\begin{math}
\sum_{i=1}^M \zeta_i^2 = 1
\end{math}, and 
\begin{math}
z_i, \zeta_j \in \left[ 0, 1 \right]
\end{math}
for all 
\begin{math}
i \in \left\{ 1, \ldots, k \right\}
\end{math}
and 
\begin{math}
j \in \left\{ 1, \ldots, M \right\}
\end{math}.
Note that the original problem is a reduction of the reformulated problem in the sense introduced in Section~\ref{sec:reduction}. 

Let 
\begin{math}
z_i = 1/2
\end{math}
for all 
\begin{math}
i \in \left\{ 1, \ldots, k \right\}
\end{math}. Let 
\begin{math}
\ell \in \left\{ 1, \ldots, M \right\}
\end{math}
and 
\begin{math}
\zeta_i = \frac{1}{\sqrt{\ell}}
\end{math}
for 
\begin{math}
i \in \left\{ 1, \ldots, \ell \right\}
\end{math}
and 
\begin{math}
\zeta_i = 0
\end{math}
otherwise. Suppose 
\begin{math}
r = \sqrt{1-\ell/M}
\end{math}, then, the set of active inequality constraints are 
\begin{math}
\zeta_{\ell+1}, \ldots, \zeta_M
\end{math}
and 
\begin{math}
g
\end{math}. 
In this case, the map-germ of
\begin{math}
\left( \zeta_{\ell+1}, \ldots, \zeta_M, g,\sum_{i=1}^M\zeta_i^2-1 \right)
\end{math}
at 
\begin{equation}
\left( \zeta_1, \ldots, \zeta_\ell, \zeta_{\ell+1}, \ldots, \zeta_M, z_1, \ldots, z_k \right) = \left( \frac{1}{\sqrt{\ell}}, \ldots, \frac{1}{\sqrt{\ell}}, 0, \ldots, 0, 1/2, \ldots, 1/2 \right)
\end{equation}
has 
\begin{math}
\mathcal{K} \left[ G \right]_e
\end{math}-codimension 
\begin{math}
1
\end{math}. This can be shown as follows: Let 
\begin{math}
\left( g, h \right)
\end{math}
be the map-germ. Then, the standard basis of 
\begin{math}
T \mathcal{K} \left[ G \right]_e \left( g, h \right)
\end{math}
with respect to the monomial ordering in Appendix~\ref{sec:calculation codim determinacy type 1k} consists of 
\begin{multline}
e_1 - \frac{2a}{\ell} e_{M-\ell+2}, \ldots, e_{M-\ell} - \frac{2a}{\ell} e_{M-\ell+2}, e_{M-\ell+1} - \frac{M-\ell+2}{\ell} e_{M-\ell+2}, \\
\zeta_1 e_{M-\ell+2}, \ldots, \zeta_M e_{M-\ell+2}, z_1 e_{M-\ell+2}, \ldots, z_k e_{M-\ell+2}.
\end{multline}
By using Theorem~\ref{thm:standard_basis}, the quotient space 
\begin{math}
\mathcal{E}_{M+k}^{M-\ell+2} / T \mathcal{K} \left[ G \right]_e \left( g, h \right)
\end{math}
is isomorphic to 
\[
\left<e_{M-\ell+2}\right>_\R\subset \R[[\zeta_1,\ldots, \zeta_M,z_1,\ldots, z_k]]^{M-\ell +2}.
\]
Lemma~\ref{lem:K[G]-codim reduction} implies that the 
\begin{math}
\mathcal{K} \left[ G \right]_e
\end{math}-codimension of the original reduced problem is equal to or less than 
\begin{math}
1
\end{math}. Since the rank of the differential of the inequality constraint of the reduced problem is at most that of the problem before reduction, the 
\begin{math}
\mathcal{K} \left[ G \right]_e
\end{math}-codimension of the original reduced problem is equal to 
\begin{math}
1
\end{math}.
In this case, the set of constraints with a parameter 
\begin{math}
r
\end{math}
exhibits a generic behavior. 

\end{example}

For manifolds $N, P$, let $J^m(N,P)$ be the $m$-jet bundle from $N$ to $P$, and $\pi:J^m(N,P)\to N$ be the source mapping. 
We denote the $m$-jet represented by $f:(N,x)\to P$ by $j^mf(x)$. 
We put $J^m(n,p) = \{j^mf(0)\in J^m(\R^n,\R^p)~|~f\in \mathcal{E}_n^p\}$ and $J^m(n,p)_0 = \{j^mf(0)\in J^m(n,p)~|~f(0)=0\}$. 
It is easy to check that the projection from $\mathcal{E}_n^p$ to $J^m(n,p)$ induces the isomorphism between $J^m(n,p)$ (resp.~$J^m(n,p)_0$) and $\mathcal{E}_n^p/\mathcal{M}_n^{m+1}\mathcal{E}_n^p$ (resp.~$\mathcal{M}_n\mathcal{E}_n^p/\mathcal{M}_n^{m+1}\mathcal{E}_n^p$).
Furthermore, we can consider the group of $m$-jets of elements in $\mathcal{K}[G]$, denoted by $\mathcal{K}[G]^m$ and its action on $J^m(n,q+r)_0$. 
Two $m$-jets are said to be \textit{$\mathcal{K}[G]^m$-equivalent} if these are contained in the same $\mathcal{K}[G]^m$-orbit. 
Let $\pi_m:\mathcal{E}_n^{q+r}\to \mathcal{E}_n^{q+r}/\mathcal{M}_n^{m+1}\mathcal{E}_n^{q+r}\cong J^m(n,q+r)$ and \begin{math}
\pi^m_{m'} \colon J^m \left(n,q+r\right) \rightarrow J^{m'} \left(n,q+r\right)
\end{math}
be the natural projections.
The tangent space of the $\mathcal{K}[G]^m$-orbit of an $m$-jet $\sigma = j^m(g,h)(0)\in J^m(n,q+r)$ is equal to $\pi_m(T\mathcal{K}[G](g,h))\subset J^m(n,q+r)_0$, in particular the latter subspace does not depend on the choice of a representative $(g,h)$ of $\sigma$. 
We denote this subspace by $T\mathcal{K}[G]^m(\sigma)$. 
The \textit{$\mathcal{K}[G]^m$-codimension} of an $m$-jet $\sigma\in\mathcal{M}_n\mathcal{E}_n^{q+r}$ is defined as the dimension (as a real vector space) of a quotient space $J^m(n,q+r)_0/T\mathcal{K}[G]^m(\sigma)$. 
A map-germ $(g,h)\in \mathcal{M}_n\mathcal{E}_n^{q+r}$ or its $m$-jet $j^m(g,h)(0)$ is said to be \textit{$m$-determined} relative to $\mathcal{K}[G]$ if any germ $(g',h')\in (\pi_m)^{-1}(j^m(g,h)(0))$ is $\mathcal{K}[G]$-equivalent to $(g,h)$, and $(g,h)$ is \textit{finitely determined} relative to $\mathcal{K}[G]$ if it is $m$-determined for some $m$. 

\begin{proposition}\label{prop:basic properties K[G]-eq/codim}

Let $(g,h)\in \mathcal{M}_n\mathcal{E}_n^{q+r}$.

\begin{enumerate}[(1)]

\item 
(Corollary 4.5 in \cite{Izumiya_unpublished})
The map-germ $(g,h)$ (or its $m$-jet $j^m(g,h)(0)$) is $m$-determined if $\mathcal{M}_n^m\mathcal{E}_n^{q+r}$ is contained in $T\mathcal{K}[G](g,h)$. 
(Note that this condition is equivalent to the condition that $\mathcal{M}_n^m\mathcal{E}_n^{q+r}$ is contained in $T\mathcal{K}[G](g,h)+\mathcal{M}_n^{m+1}\mathcal{E}_n^{q+r}$, which depends only on the $m$-jet $j^m(g,h)(0)$.)

\item 
(\cite{Izumiya_unpublished})
The map-germ $(g,h)$ is finitely determined relative to $\mathcal{K}[G]$ if and only if it has finite $\mathcal{K}[G]$-codimension.

\end{enumerate}

\end{proposition}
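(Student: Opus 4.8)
The plan is to prove both assertions by the classical homotopy (Mather) method for finite determinacy, adapted to the group $\mathcal{K}[G]$; since $\mathcal{K}[G]$ is geometric in the sense of Damon \cite{Damon1984Book}, the statements are also instances of his general determinacy theory, but I would spell out the direct argument along the lines of \cite{Izumiya_unpublished,Mather1968}. The one algebraic fact I would isolate at the outset is that $T\mathcal{K}[G](g,h)$ is an $\mathcal{E}_n$-submodule of $\mathcal{E}_n^{q+r}$: its ``vector-field'' summand $\mathcal{M}_n\langle \partial(g,h)/\partial x_1,\ldots,\partial(g,h)/\partial x_n\rangle_{\mathcal{E}_n}$ is the product of the ideal $\mathcal{M}_n$ with an $\mathcal{E}_n$-module, and $T\mathcal{C}[G](g,h)$ is an $\mathcal{E}_n$-module by construction. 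Since $\mathcal{M}_n^m\mathcal{E}_n^{q+r}$ is a finitely generated $\mathcal{E}_n$-module and $\mathcal{M}_n^{m+1}\mathcal{E}_n^{q+r}=\mathcal{M}_n\cdot\mathcal{M}_n^m\mathcal{E}_n^{q+r}$, Nakayama's lemma applied to the quotient $\mathcal{M}_n^m\mathcal{E}_n^{q+r}/\bigl(T\mathcal{K}[G](g,h)\cap\mathcal{M}_n^m\mathcal{E}_n^{q+r}\bigr)$ immediately yields the equivalence of the two inclusions in the parenthetical remark of $(1)$; I would record this first and reuse it throughout.

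For $(1)$, assume $\mathcal{M}_n^m\mathcal{E}_n^{q+r}\subseteq T\mathcal{K}[G](g,h)$, take $(g',h')$ with $j^m(g',h')(0)=j^m(g,h)(0)$, and set $F_t=(g,h)+t\bigl((g',h')-(g,h)\bigr)$ for $t\in[0,1]$ — a path of germs all sharing the $m$-jet of $(g,h)$, with $\partial F_t/\partial t=(g',h')-(g,h)\in\mathcal{M}_n^{m+1}\mathcal{E}_n^{q+r}$. Because the condition ``$\mathcal{M}_n^m\mathcal{E}_n^{q+r}\subseteq T\mathcal{K}[G](\,\cdot\,)+\mathcal{M}_n^{m+1}\mathcal{E}_n^{q+r}$'' depends only on the $m$-jet, it holds for every $F_t$, and then the Nakayama equivalence above gives $\mathcal{M}_n^m\mathcal{E}_n^{q+r}\subseteq T\mathcal{K}[G](F_t)$ for all $t$; in particular $\partial F_t/\partial t\in T\mathcal{K}[G](F_t)$. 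Upgrading this pointwise-in-$t$ membership to the $t$-parametrized tangent space with $\mathcal{E}_{n+1}$-coefficients — legitimate by a parametrized (Malgrange-type) version of Nakayama's lemma, since $j^mF_t(0)$ is independent of $t$ — I would solve $\partial F_t/\partial t=-\sum_i\xi^t_i\,\partial F_t/\partial x_i+M^t\cdot F_t$ for a germ of vector field $\xi^t\in\mathcal{M}_n\mathcal{E}_n^n$ vanishing at $0$ and a germ $M^t\colon(\R^n,0)\to\mathfrak{g}$, both smooth in $t$. Integrating $\partial_t+\sum_i\xi^t_i\partial_{x_i}$ together with the companion linear ODE governed by $M^t$ produces a smooth family $(\Phi_t,\Psi_t)\in\mathcal{K}[G]$ with $(\Phi_0,\Psi_0)=\id$ and $(\Phi_t,\Psi_t)\cdot(g,h)=F_t$; at $t=1$ this exhibits $(g,h)$ as $\mathcal{K}[G]$-equivalent to $(g',h')$, so $(g,h)$ is $m$-determined.

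For $(2)$, the direction ``finite $\mathcal{K}[G]$-codimension $\Rightarrow$ finitely determined'' I would argue as follows: if $\dim_\R\mathcal{M}_n\mathcal{E}_n^{q+r}/T\mathcal{K}[G](g,h)<\infty$, the images of the descending filtration $\{\mathcal{M}_n^k\mathcal{E}_n^{q+r}\}_k$ in this finite-dimensional quotient stabilize, giving some $m$ with $\mathcal{M}_n^m\mathcal{E}_n^{q+r}\subseteq T\mathcal{K}[G](g,h)+\mathcal{M}_n^{m+1}\mathcal{E}_n^{q+r}$; by the Nakayama equivalence this forces $\mathcal{M}_n^m\mathcal{E}_n^{q+r}\subseteq T\mathcal{K}[G](g,h)$, and $(1)$ makes $(g,h)$ $m$-determined. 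For the converse, suppose $(g,h)$ is $k$-determined. Every $(k+1)$-jet lying over $j^k(g,h)(0)$ is represented by a germ $\mathcal{K}[G]$-equivalent to $(g,h)$, hence its image in $J^{k+1}(n,q+r)_0$ lies in the $\mathcal{K}[G]^{k+1}$-orbit of $\sigma:=j^{k+1}(g,h)(0)$; so the whole fibre $(\pi^{k+1}_k)^{-1}(j^k(g,h)(0))$ — an affine subspace through $\sigma$ with direction space $\mathcal{M}_n^{k+1}\mathcal{E}_n^{q+r}/\mathcal{M}_n^{k+2}\mathcal{E}_n^{q+r}$ — is contained in that orbit. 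Since $\mathcal{K}[G]^{k+1}$ is a finite-dimensional Lie group acting smoothly on $J^{k+1}(n,q+r)_0$, its orbit through $\sigma$ is an immersed (indeed initial) submanifold with tangent space $T\mathcal{K}[G]^{k+1}(\sigma)=\pi_{k+1}\bigl(T\mathcal{K}[G](g,h)\bigr)$, so this tangent space contains the direction space of the fibre: $\mathcal{M}_n^{k+1}\mathcal{E}_n^{q+r}\subseteq T\mathcal{K}[G](g,h)+\mathcal{M}_n^{k+2}\mathcal{E}_n^{q+r}$, hence $\subseteq T\mathcal{K}[G](g,h)$ by Nakayama. Therefore $\mathcal{M}_n\mathcal{E}_n^{q+r}/T\mathcal{K}[G](g,h)$ is a quotient of the finite-dimensional space $\mathcal{M}_n\mathcal{E}_n^{q+r}/\mathcal{M}_n^{k+1}\mathcal{E}_n^{q+r}$, so $(g,h)$ has finite $\mathcal{K}[G]$-codimension.

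The step I expect to be the main obstacle is the parametrized solvability-and-integration in $(1)$: passing from the membership $\partial F_t/\partial t\in T\mathcal{K}[G](F_t)$ holding for each fixed $t$ to a choice of trivializing data $(\xi^t,M^t)$ with smooth dependence on $t$, and then verifying that the resulting flow genuinely lands in $\mathcal{K}[G]$ — i.e.\ that $\Phi_t$ is a diffeomorphism-germ fixing $0$ (which needs $\xi^t(0)=0$) and that the $\mathfrak{g}$-valued equation $\dot\Psi_t=M^t\Psi_t$ keeps $\Psi_t$ inside the closed subgroup $G$. This is precisely where the geometric-subgroup structure of $\mathcal{K}[G]$ is used, and it mirrors Mather's original treatment of $\mathcal{K}$ in \cite{Mather1968}; everything else reduces to the bookkeeping of Nakayama's lemma and elementary linear algebra in jet spaces.
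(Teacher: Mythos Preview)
The paper does not supply its own proof of this proposition: both parts are stated with attribution to \cite{Izumiya_unpublished} (part~(1) as Corollary~4.5 there), so there is no in-paper argument to compare against. Your proposal is a correct and standard direct proof via the Mather homotopy method, and it is exactly the kind of argument one would expect to find in the cited reference or in Damon's general framework \cite{Damon1984Book}. The Nakayama bookkeeping, the homotopy $F_t$, and the orbit/tangent-space argument for the converse in~(2) are all sound; your use of the fact that $\mathcal{K}[G]^{k+1}$-orbits are initial submanifolds to pass from the set-theoretic inclusion of the affine fibre to the tangential inclusion is the right way to handle that step.

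The one place to tighten is precisely the step you flagged: the parametrized solvability in~(1). As written you assert a ``parametrized (Malgrange-type) version of Nakayama's lemma'' to upgrade the pointwise inclusion $\partial F_t/\partial t\in T\mathcal{K}[G](F_t)$ to a smooth-in-$t$ choice of $(\xi^t,M^t)$. This is true but deserves a sentence of justification: since $j^mF_t(0)$ is independent of $t$, one can choose generators of $T\mathcal{K}[G](F_t)$ modulo $\mathcal{M}_n^{m+1}\mathcal{E}_n^{q+r}$ that are polynomial in $t$, solve for the coefficients of $\partial F_t/\partial t$ by linear algebra over $\mathcal{E}_{n+1}$ (or use the preparation theorem), and then cover $[0,1]$ by finitely many subintervals to globalize the integration. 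You should also note explicitly that the ODE $\dot\Psi_t=M^t\Psi_t$ stays in $G$ because $G$ is a closed Lie subgroup of $\mathrm{GL}(q+r,\mathbb{R})$ and $M^t$ takes values in its Lie algebra $\mathfrak{g}$; this is automatic but worth stating since $G$ here is not all of $\mathrm{GL}$.
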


The $\mathcal{K}[G]_e$-codimension and the $\mathcal{K}[G]$-codimension are related as follows. 

\begin{proposition}\label{prop:relation K[G]/K[G]_e-cod}

Suppose that $(g,h)\in \mathcal{M}_n\mathcal{E}_{n}^{q+r}$ is not a submersion.
The basis of the kernel of the natural projection
\[
\frac{ \mathcal{E}_n^{q+r}}{t(g,h)(\mathcal{M}_n\mathcal{E}_n^n)+T\mathcal{C}[G](g,h)}\rightarrow \frac{\mathcal{E}_n^{q+r}}{t(g,h)(\mathcal{E}_n^n)+T\mathcal{C}[G](g,h)} = \frac{\mathcal{E}_n^{q+r}}{T\mathcal{K}[G]_e(g,h)}
\]
is $[t(g,h)(e_1)],\ldots, [t(g,h)(e_n)]$. 
In particular, the $\mathcal{K}[G]_e$-codimension of $(g,h)$ is equal to the sum of the $\mathcal{K}[G]$-codimension of $(g,h)$ and $-n+(q+r)$. 

\end{proposition}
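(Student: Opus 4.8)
The plan is to analyze the natural projection
\[
\rho\colon \frac{\mathcal{E}_n^{q+r}}{t(g,h)(\mathcal{M}_n\mathcal{E}_n^n)+T\mathcal{C}[G](g,h)}\longrightarrow \frac{\mathcal{E}_n^{q+r}}{t(g,h)(\mathcal{E}_n^n)+T\mathcal{C}[G](g,h)}
\]
directly. Both spaces are quotients of $\mathcal{E}_n^{q+r}$ by subspaces differing only by the finite-dimensional piece spanned over $\R$ by the constant vector fields $e_1,\dots,e_n$; indeed $t(g,h)(\mathcal{E}_n^n) = t(g,h)(\mathcal{M}_n\mathcal{E}_n^n) + \langle t(g,h)(e_1),\dots,t(g,h)(e_n)\rangle_\R$ because $\mathcal{E}_n^n = \mathcal{M}_n\mathcal{E}_n^n \oplus \langle e_1,\dots,e_n\rangle_\R$ as $\R$-vector spaces and $t(g,h)$ is $\R$-linear. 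Hence $\rho$ is a surjection whose kernel is exactly the image of $\langle t(g,h)(e_1),\dots,t(g,h)(e_n)\rangle_\R$ in the source, so the claimed spanning set for the kernel is immediate. The remaining, and main, point is that these $n$ classes $[t(g,h)(e_i)]$ are \emph{linearly independent} in the source quotient; equivalently, that
\[
t(g,h)(\mathcal{E}_n^n)\cap\bigl(t(g,h)(\mathcal{M}_n\mathcal{E}_n^n)+T\mathcal{C}[G](g,h)\bigr) \;=\; t(g,h)(\mathcal{M}_n\mathcal{E}_n^n)+\bigl(\langle t(g,h)(e_i)\rangle_\R\cap\text{nothing extra}\bigr),
\]
i.e.\ no nontrivial $\R$-combination $\sum c_i\, t(g,h)(e_i) = \sum c_i\,\partial(g,h)/\partial x_i$ lies in $t(g,h)(\mathcal{M}_n\mathcal{E}_n^n)+T\mathcal{C}[G](g,h)$.

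To prove that independence I would pass to $1$-jets (evaluate at $0$ after the obvious linear-algebra reduction). A relation $\sum_i c_i\,\partial(g,h)/\partial x_i \in t(g,h)(\mathcal{M}_n\mathcal{E}_n^n)+T\mathcal{C}[G](g,h)$ would, after writing out the right-hand side, say that the constant vector $\sum_i c_i\,\partial(g,h)/\partial x_i(0)\in\R^{q+r}$ is a value at $0$ of an element of $t(g,h)(\mathcal{M}_n\mathcal{E}_n^n)+T\mathcal{C}[G](g,h)$. But every generator of $t(g,h)(\mathcal{M}_n\mathcal{E}_n^n)$ has the form $a_i(x)\,\partial(g,h)/\partial x_i$ with $a_i\in\mathcal{M}_n$, hence vanishes at $0$, and from the explicit description of $T\mathcal{C}[G](g,h)$ in the excerpt, writing $(g,h)\in\mathcal{M}_n\mathcal{E}_n^{q+r}$ (so $g_i,h_j\in\mathcal{M}_n$), every generator $g_i e_i$, $h_j e_k$, $h_j e_{q+k}$ also vanishes at $0$. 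Therefore the whole subspace $t(g,h)(\mathcal{M}_n\mathcal{E}_n^n)+T\mathcal{C}[G](g,h)$ is contained in $\mathcal{M}_n\mathcal{E}_n^{q+r}$, i.e.\ consists of germs vanishing at $0$. Consequently the relation forces $\sum_i c_i\,\partial(g,h)/\partial x_i(0)=0$ in $\R^{q+r}$, i.e.\ $\sum_i c_i$ times the rows of the Jacobian matrix $d(g,h)_0$ vanish; since $(g,h)$ is \emph{not} a submersion is exactly the hypothesis that could fail, I must instead use that $d(g,h)_0$ has rank... here care is needed, and this is the crux.

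The subtlety is that non-submersivity alone does not make the columns $\partial(g,h)/\partial x_i(0)$ linearly independent. The correct argument keeps the full germ, not just its $1$-jet: suppose $\sum_i c_i\,\partial(g,h)/\partial x_i = \sum_i b_i(x)\,\partial(g,h)/\partial x_i + v$ with $b_i\in\mathcal{M}_n$ and $v\in T\mathcal{C}[G](g,h)$. Set $a_i = c_i - b_i(x)\in\mathcal{E}_n$; then $a_i(0)=c_i$ and $\sum_i a_i\,\partial(g,h)/\partial x_i = v\in T\mathcal{C}[G](g,h)$. The left-hand side is $t(g,h)$ applied to the vector field $\xi=\sum a_i\partial/\partial x_i$, and $t(g,h)(\xi)\in T\mathcal{C}[G](g,h)$ means precisely that $(g,h)$ is, infinitesimally along $\xi$, tangent to its own $\mathcal{C}[G]$-orbit; by the standard integration/triviality argument for the geometric subgroup $\mathcal{K}[G]$ (Damon's theory, as invoked in the introduction) this forces $\xi$ to be tangent to the fibers, i.e.\ $(g,h)$ would be a submersion unless $\xi(0)=0$, contradicting $c=(c_1,\dots,c_n)\neq0$. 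The cleanest route, which I would adopt and which sidesteps the integration machinery, is to reduce to the $\mathcal{K}$-theoretic analogue: since $\mathcal{R}\subset\mathcal{K}[G]$ and $T\mathcal{C}[G](g,h)\subset\mathcal{M}_n\mathcal{E}_n^{q+r}$, the relation lives inside the $\mathcal{K}$-tangent space computation, where the classical fact (Mather) is exactly that $[t(g,h)(e_1)],\dots,[t(g,h)(e_n)]$ are independent in $\mathcal{E}_n^{q+r}/(t(g,h)(\mathcal{M}_n\mathcal{E}_n^n)+(g,h)^*\mathcal{M}_p\cdot\mathcal{E}_n^{q+r})$ when $(g,h)$ is not a submersion — and $T\mathcal{C}[G](g,h)\subset (g,h)^*\mathcal{M}_p\cdot\mathcal{E}_n^{q+r}$ by the displayed formula since each generator is a product of a component of $(g,h)$ with a generator of $\mathcal{E}_n^{q+r}$. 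This containment is the key observation; granting it, the independence is inherited from the $\mathcal{K}$-case, and the dimension count $\mathcal{K}[G]_e\text{-cod}(g,h) = \mathcal{K}[G]\text{-cod}(g,h) + (q+r) - n$ follows by adding $\dim_\R(\text{source}) = \dim_\R(\text{target}) + n$ to the relation between the codimensions, using $\dim_\R \mathcal{E}_n^{q+r}/T\mathcal{K}[G]_e(g,h)$ in the target and $\dim_\R \mathcal{M}_n\mathcal{E}_n^{q+r}/T\mathcal{K}[G](g,h)$ after identifying the source quotient with the latter via $\mathcal{E}_n^{q+r} = \mathcal{M}_n\mathcal{E}_n^{q+r}\oplus\langle e_1,\dots,e_{q+r}\rangle_\R$. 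I expect the verification that $T\mathcal{C}[G](g,h)\subseteq(g,h)^*\mathcal{M}_{q+r}\cdot\mathcal{E}_n^{q+r}$ together with pinning down exactly where non-submersivity enters the independence to be the only real obstacle; everything else is bookkeeping with the direct-sum decompositions.
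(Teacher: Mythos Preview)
Your proposal is correct and, once the false starts are pruned, matches what the paper intends: the paper does not give a proof but refers to Mather's argument for the $\mathcal{K}$-case, and your final route---observe $T\mathcal{C}[G](g,h)\subset (g,h)^\ast\mathcal{M}_{q+r}\mathcal{E}_n^{q+r}$ and hence inherit the linear independence of $[t(g,h)(e_i)]$ from the classical $\mathcal{K}$-statement---is precisely the intended reduction. The only cosmetic point is that the detours through ``evaluate the $1$-jet'' and the Damon integration remark are unnecessary and should be deleted; the containment $T\mathcal{C}[G](g,h)\subset (g,h)^\ast\mathcal{M}_{q+r}\mathcal{E}_n^{q+r}$ plus Mather's theorem is the whole argument, and the dimension count you give at the end (using $\mathcal{E}_n^{q+r}=\mathcal{M}_n\mathcal{E}_n^{q+r}\oplus\R^{q+r}$ and $T\mathcal{K}[G](g,h)\subset\mathcal{M}_n\mathcal{E}_n^{q+r}$) is correct.
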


\noindent
The proof of this proposition is quite similar to that for \cite[Theorem 2.5]{Mather1969}. 
The detail is left to the reader. 
Using this proposition, one can also obtain a generating set of the quotient space ${\mathcal{E}_n^{q+r}}/{T\mathcal{K}[G]_e(g,h)}$ from that of ${\mathcal{M}_n\mathcal{E}_n^{q+r}}/{T\mathcal{K}[G](g,h)}$.

In order to obtain normal forms of constraint map-germs, we will use the ($\mathcal{K}[G]$ version of) complete transversal theorem \cite{Bruce1997} explained below. 
Let 
\begin{math}
\mathcal{K} \left[ G \right]_l
\end{math}
be the normal subgroup of 
\begin{math}
\mathcal{K} \left[ G \right]
\end{math}
consisting of those germs whose 
\begin{math}
l
\end{math}-jet is equal to that of the identity for 
\begin{math}
l \in \mathbb{N}
\end{math}.
We also define a subgroup $\mathcal{K}[G]^m_l\subset \mathcal{K}[G]^m$ in the same way. 

\begin{theorem}[\cite{Bruce1997}]\label{thm:complete transversal}

Let $m\geq 2$ and
\begin{math}
T
\end{math}
be an 
\begin{math}
\mathbb{R}
\end{math}-vector subspace of 
\begin{math}
\mathcal{M}_n^m \mathcal{E}_n^{q+r}
\end{math}
such that 
\begin{equation}
\mathcal{M}_n^m \mathcal{E}_n^{q+r} \subset T + T \mathcal{K} \left[ G \right]_1 \left( g,h \right) + \mathcal{M}_n^{m+1} \mathcal{E}_n^{q+r}
\end{equation}
holds. 
Then, for any map-germ 
\begin{math}
(g',h')
\end{math}
such that 
\begin{math}
j^{m-1} (g',h') = j^{m-1} (g,h)
\end{math}
holds, there exists 
\begin{math}
t \in T
\end{math}
such that 
\begin{math}
j^m (g',h')
\end{math}
is 
\begin{math}
\mathcal{K} \left[ G \right]_1^m
\end{math}-equivalent to 
\begin{math}
j^m \left( (g,h) + t \right)
\end{math}. 

\end{theorem}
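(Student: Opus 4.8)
The plan is to obtain this statement as the $\mathcal{K}[G]$-instance of the general complete transversal lemma of Bruce--Kirk--du~Plessis \cite{Bruce1997}; concretely, I would re-run its proof, which reduces everything to Mather's lemma on orbits of a Lie group action -- here the action of the finite-dimensional Lie group $\mathcal{K}[G]_1^m$ on the jet space $J^m(n,q+r)_0$. The one thing that must be checked by hand for the group at issue is the stability estimate
\begin{equation}
T\mathcal{K}[G]_1\big((g,h)+u\big)+\mathcal{M}_n^{m+1}\mathcal{E}_n^{q+r}=T\mathcal{K}[G]_1(g,h)+\mathcal{M}_n^{m+1}\mathcal{E}_n^{q+r}\qquad\big(u\in\mathcal{M}_n^m\mathcal{E}_n^{q+r},\ m\ge 2\big).
\end{equation}
This follows from the description $T\mathcal{K}[G]_1(g,h)=\mathcal{M}_n^2\langle\partial(g,h)/\partial x_1,\dots,\partial(g,h)/\partial x_n\rangle_{\mathcal{E}_n}+\mathcal{M}_n^2\,T\mathcal{C}[G](g,h)$, valid because every element of $\mathcal{K}[G]_1$ has $1$-jet equal to that of the identity, so that all coefficients occurring in the generators lie in $\mathcal{M}_n^2$: replacing $(g,h)$ by $(g,h)+u$ then changes the $\mathcal{R}$-generators by elements of $\mathcal{M}_n^2\langle\partial u/\partial x_i\rangle_{\mathcal{E}_n}\subseteq\mathcal{M}_n^{m+1}\mathcal{E}_n^{q+r}$ and the $\mathcal{C}[G]$-generators by elements of $\mathcal{M}_n^2\mathcal{M}_n^m\mathcal{E}_n^{q+r}\subseteq\mathcal{M}_n^{m+1}\mathcal{E}_n^{q+r}$. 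Applying $\pi_m\colon\mathcal{E}_n^{q+r}\to J^m(n,q+r)$ (whose kernel is $\mathcal{M}_n^{m+1}\mathcal{E}_n^{q+r}$) and the identity $T\mathcal{K}[G]_1^m(\sigma)=\pi_m\big(T\mathcal{K}[G]_1(g_0,h_0)\big)$ for any representative $(g_0,h_0)$ of $\sigma$ (proved exactly as the analogous statement for $\mathcal{K}[G]^m$ recalled above), one concludes that the tangent space to the $\mathcal{K}[G]_1^m$-orbit through $j^m\big((g,h)+u\big)$ is, for \emph{every} $u\in\mathcal{M}_n^m\mathcal{E}_n^{q+r}$, the one fixed subspace $\pi_m\big(T\mathcal{K}[G]_1(g,h)\big)$ of $J^m(n,q+r)_0$.

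Granting this, the argument runs as follows. Write $(g',h')=(g,h)+w$ with $w\in\mathcal{M}_n^m\mathcal{E}_n^{q+r}$ and, by the hypothesis of the theorem, decompose $w=t+v+\rho$ with $t\in T$, $v\in T\mathcal{K}[G]_1(g,h)$, $\rho\in\mathcal{M}_n^{m+1}\mathcal{E}_n^{q+r}$; automatically $v\in\mathcal{M}_n^m\mathcal{E}_n^{q+r}$. Consider the affine curve of germs $F_s:=(g,h)+t+s(w-t)$ for $s$ in an open interval containing $[0,1]$, joining $F_0=(g,h)+t$ to $F_1=(g',h')$, and the corresponding curve $\sigma_s:=j^mF_s$ in $J^m(n,q+r)_0$. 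Its velocity $\dot\sigma_s=\pi_m(w-t)=\pi_m(v)$ is constant in $s$, and by the previous paragraph it lies in $\pi_m\big(T\mathcal{K}[G]_1(g,h)\big)=T_{\sigma_s}\big(\mathcal{K}[G]_1^m\cdot\sigma_s\big)$, whose dimension is likewise independent of $s$. Mather's lemma on Lie group actions, applied to the connected curve $\{\sigma_s\}$ and the Lie group $\mathcal{K}[G]_1^m$, then forces the whole curve into a single $\mathcal{K}[G]_1^m$-orbit; in particular $j^m(g',h')=\sigma_1$ is $\mathcal{K}[G]_1^m$-equivalent to $\sigma_0=j^m\big((g,h)+t\big)$, which is the assertion.

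The point I would be most careful about is that the $\mathcal{K}[G]_1^m$-orbit through $\sigma_s$ is in general \emph{not} contained in the affine subspace $\{\sigma\in J^m(n,q+r):\pi^m_{m-1}\sigma=j^{m-1}(g,h)\}$, because $T\mathcal{K}[G]_1(g,h)$ usually has nonzero components in degrees $2,\dots,m-1$; hence one cannot argue that the orbits of the jets $j^m((g,h)+t)$ sweep out a slice, and must instead feed the explicit curve $\{\sigma_s\}$ directly into Mather's lemma, as above. Two further points need a line each: that $\mathcal{K}[G]_1^m$ is genuinely a Lie group acting algebraically on $J^m(n,q+r)_0$, so that its orbits are locally closed immersed submanifolds with tangent spaces as stated -- this uses that $G$ is a Lie group and that $\mathcal{K}[G]$ is geometric in the sense of Damon \cite{Damon1984Book}, just as in \cite{Bruce1997} -- and that Mather's lemma, stated for submanifolds, is here applied to a slightly enlarged open arc so that the endpoints $\sigma_0,\sigma_1$ do land in the common orbit.
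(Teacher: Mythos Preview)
The paper does not prove this theorem; it quotes it from \cite{Bruce1997} and uses it as a black box throughout Section~\ref{sec:classification_constraints}. Your proposal correctly reproduces the Bruce--Kirk--du~Plessis argument in the $\mathcal{K}[G]$ setting---the only group-specific input, namely the stability of $\pi_m\big(T\mathcal{K}[G]_1(g,h)\big)$ under perturbations in $\mathcal{M}_n^m\mathcal{E}_n^{q+r}$, is verified correctly from the explicit description $T\mathcal{K}[G]_1(g,h)=\mathcal{M}_n^2\langle\partial(g,h)/\partial x_i\rangle_{\mathcal{E}_n}+\mathcal{M}_n^2\,T\mathcal{C}[G](g,h)$, and the Mather-lemma step via the affine path $\sigma_s$ (whose image, since $\dot\sigma_s=\pi_m(v)$ is constant, is either a point or an embedded segment) is sound.
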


\subsection{Extended intrinsic derivative} \label{sec:extended_intrinsic_derivative}

Let $f:(\R^n,0)\to (\R^q,0)$ be a map-germ with $\rank df_0=q-1$.
In this subsection we will extend the intrinsic derivative of $f$ to a larger subspace, including $\Ker df_0$, which is the source of the original intrinsic derivative. 

We take a vector $\mu_f \in (\Im df_0)^\perp\setminus \{0\}$ and define a subspace $W_f\subset T_0\R^n$ as follows: 
\[
W_f = \bigcap_{\begin{minipage}[c]{13mm}
\scriptsize
$1\leq j\leq q$
$(\mu_f)_j \neq 0$
\end{minipage}} \Ker (df_j)_0,
\]
where $(\mu_f)_j$ and $f_j$ are the $j$-th component of $\mu_f$ and $f$, respectively. 
This subspace is a generalization of that considered in \cite{Guddat_book_1990} ($\tilde{T}$ in the equation (2.5.11) in p.~43).
Since the dimension of $(\Im df_0)^\perp$ is equal to $1$, $W_f$ do not depend on the choice of $\mu_f$. 
We consider the germ $df:(\R^n,0)\to \Hom(T \R^n, f^\ast T\R^q)$ of a section of $\Hom(T \R^n, f^\ast T\R^q)$ and its differential
\[
d(df)_0:T_0 \R^n \to T_{df_0}\Hom (T \R^n, f^\ast T\R^q). 
\]
The tangent space $T_{df_0}\Hom (T \R^n, f^\ast T\R^q)$ can be identified with $\R^n \times \Hom (T_0 \R^n, T_0\R^q)$ by taking the canonical trivializations of the bundles $T \R^n$ and $f^\ast T\R^q$. 
Let $p_2:\R^n \times \Hom (T_0 \R^n, T_0\R^q)\to \Hom (T_0 \R^n, T_0\R^q)$ be the projection and define a linear mapping $\tilde{D}^2 f : W_f\otimes W_f \to \Coker (df_0)$ as follows: 
\[
\tilde{D}^2f(v_1\otimes v_2) = \left[p_2\left(d(df)_0(v_1)\right)(v_2)\right],
\]
where $[v]\in \Coker(df_0)$ for $v\in T_0\R^q$ is a vector represented by $v$.
We call $\tilde{D}^2f$ the \textit{extended intrinsic derivative} of $f$. 
Note that the restriction $\tilde{D}^2f|_{\Ker df_0\otimes \Ker df_0}$ is the usual intrinsic derivative, which we denote by $D^2f$. 

\begin{theorem}\label{T:invariance D^2 under K}

Let $\phi:(\R^n,0)\to (\R^n,0)$ be a diffeomorphism germ, $\psi:(\R^n,0)\to G_{gp}$ be a germ, and $g := \psi \cdot (f \circ \phi)$.
The following diagram commutes: 
\[
\begin{CD}
W_f\otimes W_f @> \tilde{D}^2 f>> \Coker (df_0) \\
@V (d\phi_0)^{-1}\otimes (d\phi_0)^{-1} VV @VV \psi(0)V \\
W_g\otimes W_g @> \tilde{D}^2 g>> \Coker (dg_0).
\end{CD}
\] 
In other words, the extended intrinsic derivative is $\mathcal{K}[G]$-invariant for corank-$1$ inequality constraint map-germs. 
\end{theorem}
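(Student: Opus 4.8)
The plan is to reduce the statement to two independent checks: invariance under the $\mathcal{R}$-part (the diffeomorphism $\phi$) and invariance under the $G_{gp}$-part (the matrix multiplier $\psi$), and then verify compatibility. Since $g = \psi\cdot(f\circ\phi)$ factors as a composition, and since the group of pairs $(\phi,\psi)$ is generated by the two subgroups $(\phi,\mathrm{id})$ and $(\mathrm{id},\psi)$, it suffices to establish the commuting diagram in each of these two cases separately; the general case follows by composing the two diagrams. So I would split the proof into Step 1 ($\psi \equiv \mathrm{id}$, i.e. $g = f\circ\phi$) and Step 2 ($\phi = \mathrm{id}$, i.e. $g = \psi\cdot f$ with $\psi:(\R^n,0)\to G_{gp}$ a map-germ).

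\medskip

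\textbf{Step 1 (diffeomorphism part).} Here I must show $W_{f\circ\phi} = (d\phi_0)^{-1}(W_f)$ and that $\tilde{D}^2(f\circ\phi)(v_1\otimes v_2) = \tilde{D}^2 f\bigl((d\phi_0)(v_1)\otimes (d\phi_0)(v_2)\bigr)$ under the identification $\Coker\,d(f\circ\phi)_0 = \Coker\,df_0$ (which holds since $\Im\,d(f\circ\phi)_0 = \Im\,df_0$, as $d\phi_0$ is an isomorphism, so $\mu_{f\circ\phi}$ can be taken equal to $\mu_f$ and the index set defining $W$ is unchanged). The $W$-identity is immediate from the chain rule $d(f\circ\phi)_0 = df_0\circ d\phi_0$ applied componentwise. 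For the derivative identity I would differentiate the section $d(f\circ\phi) = (df\circ\phi)\cdot d\phi$ once more at $0$; the product rule produces two terms, one involving $d(df)_0$ composed with $d\phi_0$ (the term that survives), and one involving the second derivative of $\phi$. The key point is that the latter term lands in $\Im\,df_0$ after the projection $p_2$, hence vanishes in $\Coker\,df_0$ — this is precisely the reason one restricts to $W_f$ in the first place and is the same mechanism that makes the classical intrinsic derivative well-defined. I would make this precise by a local coordinate computation of $p_2(d(df)_0(v_1))(v_2)$ in terms of the Hessians of the components $f_j$.

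\medskip

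\textbf{Step 2 ($G_{gp}$-multiplier part).} Now $g = \psi\cdot f$ where $\psi(x)$ is, for each $x$, a product of a positive diagonal matrix and a permutation matrix. Write $g_i(x) = c_i(x) f_{\sigma(x)(i)}(x)$ locally; since $\psi$ is continuous and $P_q$ is discrete, the permutation $\sigma(x) = \sigma$ is constant near $0$, so $g_i = c_i\, f_{\sigma(i)}$ with $c_i \in \mathcal{E}_n$, $c_i(0) > 0$. From $dg_{i,0} = c_i(0)\,df_{\sigma(i),0} + f_{\sigma(i)}(0)\,dc_{i,0}$ and $f(0)=0$ we get $dg_0 = \psi(0)\circ df_0$, so $\Im\,dg_0 = \psi(0)(\Im\,df_0)$, $\mu_g = (\psi(0)^{-1})^{\mathsf T}\mu_f$ up to scalar, and the index set $\{j : (\mu_g)_j \neq 0\}$ is the $\sigma$-image of $\{j : (\mu_f)_j \neq 0\}$ (using positivity of the diagonal entries so no cancellation occurs); combined with $\Ker\,dg_{i,0} = \Ker\,df_{\sigma(i),0}$ this yields $W_g = W_f$, matching the left vertical map (which is the identity since $\phi=\mathrm{id}$). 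For the derivative, I differentiate $dg_i = c_i\,(df_{\sigma(i)}) + f_{\sigma(i)}\,dc_i$ at $0$: the product rule gives $c_i(0)\,d(df_{\sigma(i)})_0(v_1) + (\text{terms with a factor } df_{\sigma(i),0}(v_1)\text{ or } f_{\sigma(i)}(0))$. Restricting $v_1$ to $W_g$ kills $df_{\sigma(i),0}(v_1)$ (it is proportional to annihilation by the relevant components, but more carefully: only components $f_j$ with $(\mu_f)_j\neq 0$ are killed, so I must check the surviving terms land in $\Im\,df_0$ — again the standard cokernel argument), and $f_{\sigma(i)}(0)=0$ kills the rest. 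Hence $p_2(d(dg)_0(v_1))(v_2) \equiv \psi(0)\,p_2(d(df)_0(v_1))(v_2) \pmod{\Im\,dg_0}$, which is exactly the assertion that the right square commutes with vertical map $\psi(0)$.

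\medskip

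\textbf{Main obstacle.} The delicate point, appearing in both steps, is the claim that all the ``error terms'' produced by the product/chain rule genuinely vanish in the cokernel — i.e.\ lie in $\Im\,df_0$ — when the first vector is restricted to $W_f$ rather than merely to $\Ker\,df_0$. For a general multiplier $\psi(x)$ (not just diagonal-times-permutation at $x=0$ but varying), the off-diagonal derivatives $dc_{i,0}$ and the mixing they induce must be shown not to contribute modulo $\Im\,dg_0$, and this is where the precise definition of $W_f$ — intersecting only the kernels of those components $f_j$ with $(\mu_f)_j\neq 0$ — is essential and must be used carefully rather than invoked loosely. I expect the cleanest route is an explicit coordinate computation: choose coordinates on $(\R^q,0)$ so that $\Im\,df_0 = \R^{q-1}\times\{0\}$ and $\mu_f = e_q$, reducing the cokernel to the single $q$-th coordinate, and then the whole statement becomes a one-line check that the $q$-th component of the relevant Hessian transforms by the scalar $\psi(0)$ acting on $\Coker$.
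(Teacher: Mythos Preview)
Your two-step decomposition is valid and the computations in Steps~1 and~2 can be completed as you describe; in particular, the error terms you flag as the main obstacle do land in $\Im dg_0$, precisely because membership in $W_f$ kills the components $df_{j,0}(v)$ for all $j$ with $(\mu_f)_j\neq 0$, and a vector supported on the remaining indices $\{j:(\mu_f)_j=0\}$ automatically lies in $\Im df_0$ (since $e_j\in\Im df_0$ exactly when $(\mu_f)_j=0$).

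The paper takes a different and more conceptual route. After permuting so that $(\mu_f)_1=\cdots=(\mu_f)_r=0$ and $(\mu_f)_{r+1},\ldots,(\mu_f)_q\neq 0$, it introduces the projection $\pi:\R^q\to\R^{q-r}$ onto the last $q-r$ coordinates and observes that $W_f=\Ker d(\pi\circ f)_0$ and that $\tilde D^2 f$ is identified with the \emph{ordinary} intrinsic derivative $D^2(\pi\circ f)$ via the isomorphism $\overline{d\pi_0}:\Coker df_0\to\Coker d(\pi\circ f)_0$. A diagonal $\psi$ preserves the splitting $\R^r\times\R^{q-r}$, so $(\phi,\psi)$ induces a $\mathcal{K}$-equivalence between $\pi\circ f$ and $\pi\circ g$, and the theorem follows from the standard $\mathcal{K}$-invariance of the usual $D^2$ (proved as a separate lemma by the same direct calculation you sketch). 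This route absorbs all the error-term bookkeeping into a single identification and explains \emph{why} $W_f$ is the right domain: it is simply the kernel of a projected map. Your approach, by contrast, is more hands-on but self-contained; either works.

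One caveat on your suggested shortcut at the end: you cannot in general arrange $\mu_f=e_q$ by a target coordinate change compatible with the problem. The group $G_{gp}$ acts only by positive diagonal scalings and permutations of the standard basis vectors, so if $\mu_f$ has several nonzero entries you can at best permute the zero entries to the front (as the paper does); an arbitrary linear change of coordinates on $\R^q$ making $\Im df_0=\R^{q-1}\times\{0\}$ would destroy both the $G_{gp}$-structure of $\psi$ and the componentwise definition of $W_f$. Since your Steps~1 and~2 already go through without this reduction, simply drop the shortcut.
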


\noindent
We need the following lemma to show Theorem~\ref{T:invariance D^2 under K}: 

\begin{lemma}\label{L:invariance intrinsic derivative}

Let $\phi:(\R^n,0)\to (\R^n,0)$ be a diffeomorphism germ, $\psi:(\R^n,0)\to \mathrm{GL}(q,\R)$ be a germ, and $g := \psi \cdot (f \circ \phi)$.
The following diagram commutes: 
\[
\begin{CD}
\Ker(df_0)\otimes \Ker(df_0) @> D^2 f>> \Coker (df_0) \\
@V (d\phi_0)^{-1}\otimes (d\phi_0)^{-1} VV @VV \psi(0)V \\
\Ker(dg_0)\otimes \Ker(dg_0) @> D^2 g>> \Coker (dg_0).
\end{CD}
\]%
In other words, the intrinsic derivative is $\mathcal{K}$-invariant. 

\end{lemma}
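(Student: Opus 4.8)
The plan is to reduce the assertion to a direct computation with the chain rule and the Leibniz rule, using the concrete description of $D^2$ that comes from the trivialisations fixed in Section~\ref{sec:extended_intrinsic_derivative}. Under the canonical trivialisations of $T\R^n$ and $f^\ast T\R^q$, the section $df$ becomes the map $x\mapsto(x,\,Df(x))$ with $Df(x)\in\Hom(\R^n,\R^q)$ the Jacobian, so $p_2\big(d(df)_0(v_1)\big)$ is the directional derivative $\partial_{v_1}(Df)(0)\in\Hom(\R^n,\R^q)$, and hence
\[
\tilde D^2 f(v_1\otimes v_2)=\big[\,d^2f_0(v_1,v_2)\,\big]\in\Coker(df_0),
\]
where $d^2f_0\colon T_0\R^n\times T_0\R^n\to\R^q$ is the (symmetric, $\R^q$-valued) Hessian of $f$ at $0$. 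Thus $D^2f$ is simply $d^2f_0$, restricted to $\Ker df_0\otimes\Ker df_0$ and composed with the projection $\R^q\twoheadrightarrow\Coker(df_0)$; likewise for $g$. It therefore suffices to compare the Hessians of $f$ and $g$ modulo the relevant images.

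First I would record the first-order bookkeeping. Because $g(x)=\psi(x)\cdot f(\phi(x))$ and $f(\phi(0))=f(0)=0$, the Leibniz rule gives $dg_0=\psi(0)\,df_0\,d\phi_0$. Since $\psi(0)\in\GL(q,\R)$ and $d\phi_0$ is invertible, this yields $\Ker dg_0=(d\phi_0)^{-1}\Ker df_0$ and $\Im dg_0=\psi(0)\,\Im df_0$, so $\psi(0)$ descends to an isomorphism $\Coker(df_0)\xrightarrow{\ \sim\ }\Coker(dg_0)$. Hence both vertical arrows of the diagram are the well-defined maps indicated, and the left-hand one restricts $(d\phi_0)^{-1}$ correctly.

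The heart of the proof is then the following computation. Differentiating $g(x)=\psi(x)\cdot(f\circ\phi)(x)$ twice at $0$ and using $(f\circ\phi)(0)=0$ to kill the term $\big(d^2\psi_0(w_1,w_2)\big)(f\circ\phi)(0)$, one gets for $w_1,w_2\in T_0\R^n$
\[
d^2g_0(w_1,w_2)=\psi(0)\,d^2(f\circ\phi)_0(w_1,w_2)+\big(d\psi_0(w_2)\big)d(f\circ\phi)_0(w_1)+\big(d\psi_0(w_1)\big)d(f\circ\phi)_0(w_2).
\]
Putting $w_i=(d\phi_0)^{-1}v_i$ with $v_i\in\Ker df_0$, one has $d(f\circ\phi)_0(w_i)=df_0\,v_i=0$, so the last two terms vanish; moreover the chain rule gives
\[
d^2(f\circ\phi)_0\big((d\phi_0)^{-1}v_1,(d\phi_0)^{-1}v_2\big)=d^2f_0(v_1,v_2)+df_0\big(d^2\phi_0((d\phi_0)^{-1}v_1,(d\phi_0)^{-1}v_2)\big),
\]
whose second summand lies in $\Im df_0$. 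Passing to $\Coker(dg_0)$ and using $\psi(0)(\Im df_0)\subset\Im dg_0$, all correction terms die and we are left with $D^2g\big((d\phi_0)^{-1}v_1\otimes(d\phi_0)^{-1}v_2\big)=\psi(0)\cdot D^2f(v_1\otimes v_2)$, which is exactly the commutativity of the square. The argument presents no genuine obstacle; the only point requiring care is the bookkeeping that distinguishes the terms vanishing because we evaluate on kernel vectors at the origin from those vanishing because they lie in the image of $df_0$, together with checking that $\psi(0)$ really induces the stated map on cokernels.
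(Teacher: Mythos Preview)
Your argument is correct and follows essentially the same approach as the paper's proof: both compute the second derivative of $g=\psi\cdot(f\circ\phi)$ directly via the Leibniz and chain rules, then observe that the cross terms vanish on kernel vectors (since $d(f\circ\phi)_0(w_i)=0$) while the $df_0\circ d^2\phi_0$ term dies in the cokernel. The paper organizes the computation around $p_2(d(dg)_0(v))$ rather than the Hessian $d^2g_0$, but the content is identical.
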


\begin{proof}
In what follows, we represent germs and their representatives by the same symbols. 
We identify $\Hom(T\R^n, f^\ast T \R^q)$ with $\R^n \times \Hom (\R^n,\R^q)$ in the obvious way, and the second component is further identified with the set of $q\times n$ matrices.
The second component of the differential $dg$ can be calculated as follows: 
{\allowdisplaybreaks
\begin{align*}
p_2(dg)& = p_2(d (\psi \cdot (f\circ \phi))) \\
& = \left(\frac{\Pa}{\Pa x_j}\sum_k \psi_{ik}\cdot (f_k\circ \phi)\right)_{i,j} \\
& = \left(\sum_k \frac{\Pa \psi_{ik}}{\Pa x_j}\cdot (f_k\circ \phi)\right)_{i,j} + \left(\sum_k \psi_{ik}\cdot \frac{\Pa (f_k\circ \phi)}{\Pa x_j}\right)_{i,j} \\
& = \left(\sum_k \frac{\Pa \psi_{ik}}{\Pa x_j}\cdot (f_k\circ \phi)\right)_{i,j} + \psi \cdot df \circ d\phi.
\end{align*}
}%
Here, $(x_{i,j})_{i,j}$ represent a matrix whose $(i,j)$-entry is $x_{i,j}$. 
For $v\in \Ker(dg_0)\subset T_0\R^n$, the value $p_2(d(dg)_0(v))$, which is identified with a $q\times n$ matrix, can be calculated as follows:
{\allowdisplaybreaks
\begin{align*}
&p_2(d(dg)_0(v)) \\
=&\left(\sum_k v\left(\frac{\Pa \psi_{ik}}{\Pa x_j}\right)\cdot (f_k\circ \phi(0)) +\frac{\Pa \psi_{ik}}{\Pa x_j}(0) \cdot v(f_k\circ \phi) \right)_{i,j} \\
&+ v(\psi)\cdot df_0\circ d\phi_0 + \psi(0)\cdot (d\phi_0(v))(df) \circ d\phi_0 + \psi(0)\cdot df_0\circ v(d\phi)\\
=&v(\psi)\cdot df_0\circ d\phi_0 + \psi(0)\cdot (d\phi_0(v))(df) \circ d\phi_0 + \psi(0)\cdot df_0\circ v(d\phi)\\
&(\because f_k\circ \phi(0)=0,\hspace{.3em}v(f_k\circ \phi)=0). 
\end{align*}
}%
Thus, for $v_1,v_2\in \Ker(dg_0)$, the value $D^2g (v_1\otimes v_2)$ can be calculated as follows: 
{\allowdisplaybreaks
\begin{align*}
& D^2g (v_1\otimes v_2)\\
=&[p_2(d(dg)_0(v_1))(v_2)] \\
=&[v_1(\psi)\cdot df_0\circ d\phi_0(v_2) + \psi(0)\cdot (d\phi_0(v_1))(df) \circ d\phi_0(v_2) + \psi(0)\cdot df_0\circ v_1(d\phi)(v_2)]\\
=&[\psi(0)\cdot (d\phi_0(v_1))(df) \circ d\phi_0(v_2)] = \psi(0)\cdot D^2f(d\phi_0(v_1)\otimes d\phi_0(v_2)).
\end{align*}
}%
This completes the proof of the lemma. 
\end{proof}

\begin{proof}[Proof of Theorem~\ref{T:invariance D^2 under K}]
As we calculated, the following equality holds: 
\[
p_2(dg) = \left(\sum_k \frac{\Pa \psi_{ik}}{\Pa x_j}\cdot (f_k\circ \phi)\right)_{i,j} + \psi \cdot df \circ d\phi = \left(\frac{\Pa \psi_{i,\sigma^{-1}(i)}}{\Pa x_j}\cdot (f_{\sigma^{-1}(i)}\circ \phi)\right)_{i,j} + \psi \cdot df \circ d\phi, 
\]
where $\rho:G_{gp}\to P_q$ is the projection and $\sigma = \rho(\psi(0))$.
Since $f\circ \phi (0) = 0$, the differential $dg_0$ is equal to $\psi(0)\cdot df_0\circ d\phi_0$.

Suppose first that the image of $\psi$ is contained in $G_d = \Ker (\rho)$. 
Since $\psi(0)e_j = k e_j$ for some $k>0$ and $(\mu_f)_j=0$ if and only if $e_j\in \Im df_0$, $(\mu_f)_j=0$ if and only if $(\mu_g)_j=0$. 
The following equality thus holds: 
\begin{align*}
W_g &= \bigcap_{\begin{minipage}[c]{13mm}
\scriptsize
$1\leq j\leq q$
$(\mu_g)_j \neq 0$
\end{minipage}} \Ker (dg_j)_0\\
&= \bigcap_{\begin{minipage}[c]{13mm}
\scriptsize
$1\leq j\leq q$
$(\mu_f)_j \neq 0$
\end{minipage}} \Ker \left(\psi_{jj}(0)\cdot (df_j)_0\circ d\phi_0\right)=d\phi_0^{-1}\left(\bigcap_{\begin{minipage}[c]{13mm}
\scriptsize
$1\leq j\leq q$
$(\mu_f)_j \neq 0$
\end{minipage}} \Ker (df_j)_0\right)=d\phi_0^{-1}(W_f). 
\end{align*}
One can assume $(\mu_f)_1=\cdots = (\mu_f)_r=0$ without loss of generality. 
Let $\pi:\R^q\to \R^{q-r}$ be the projection to the latter components. 
The subspace $W_f$ is equal to $\Ker (d(\pi\circ f)_0)$. 
Since $d\pi_0(\Im df_0)$ is equal to $\Im d(\pi\circ f)_0$, $d\pi_0$ induces a well-defined homomorphism $\overline{d\pi_0}:\Coker(df_0)\to \Coker (d(\pi\circ f)_0)$. 
For any $v\in T_0\R^q$ with $[v]\in \Ker \overline{d\pi_0}$, there exists $w\in T_0\R^q$ with $d\pi_0(v)=d(\pi\circ f)_0(w)$, in particular $v-df_0(w)$ is contained in $\Ker d\pi_0$, which is further contained in $\Im df_0$ by the assumption. 
Thus, $v$ is contained in $\Im df_0$ and $\overline{d\pi_0}$ is an isomorphism.
Under the identification by $\overline{d\pi_0}$, $\tilde{D}^2 f$ coincides with the (usual) intrinsic derivative of $\pi\circ f$. 
Since $\pi\circ f$ and $\pi \circ g$ are $\mathcal{K}$-equivalent and, by Lemma~\ref{L:invariance intrinsic derivative}, the intrinsic derivative is invariant under $\mathcal{K}$-equivalence, the diagram in Theorem~\ref{T:invariance D^2 under K} commutes provided that the image of $\psi$ is in $G_d$. 

For a general $\psi$, one can regard it as a composition of a permutation of entries of $\R^q$ and a mapping whose image is contained in $G_d$. 
Since the diagram in Theorem~\ref{T:invariance D^2 under K} commutes if $\phi = \id$ and $\psi$ is a permutation, the diagram for general $\phi$ and $\psi$ also commutes. 
\end{proof}

Note that one can deduce as a corollary of the proof that $\tilde{D}^2 f$ is symmetric.

\section{Reductions of constraint map-germs and their jets}\label{sec:reduction}

In this section, we will introduce a \textit{reduction} procedure, which changes a map-germ (or its jet) keeping the corresponding feasible set-germ fixed up to diffeomorphisms. 
After explaining its definition, we will discuss various properties of it, especially relating with codimensions. 

Let $N$ be an $n$-manifold without boundary, $g=(g_1,\ldots,g_q):(N,\overline{x}) \to (\R^q,\overline{y})$ and $h=(h_1,\ldots, h_r):(N,\overline{x})\to (\R^r,0)$ be map-germs such that $M(g,h)\neq \emptyset$ (i.e.~$\overline{y}_k\leq 0$ for any $k\in \{1,\ldots,q\}$), $(i):= (i_1,\ldots,i_{r-l})$ and $(k):=(k_1,\ldots, k_{q-s})$ be systems of indices such that $\rank d(h_{i_1},\ldots, h_{i_{r-l}})_{\overline{x}}=r-l$ and $g_{k_1}(\overline{x}),\ldots, g_{k_{q-s}}(\overline{x})\neq 0$. 
Take an immersion-germ $\iota_{(i)}:(\R^{n-r+l},0)\to (N,\overline{x})$ so that the set-germ $\iota_{(i)}(\R^{n-r+l})$ is equal $(h_{i_1},\ldots, h_{i_{r-l}})^{-1}(0)$. 
We define a map-germ $(g,h)_{\iota_{(i)},(k)}:(\R^{n-r+l},0)\to (\R^{s+l},0)$, called a \textit{reduction} of $(g,h)$ as follows: 
\[
(g,h)_{\iota_{(i)},(k)}:= (g_{\iota_{(i)},(k)},h_{\iota_{(i)}}) := \left(g_1\circ\iota_{(i)},\overset{\hat{k}}{\ldots},g_q\circ \iota_{(i)},h_1\circ\iota_{(i)},\overset{\hat{i}}{\ldots},h_r\circ\iota_{(i)}\right), 
\]
where $\hat{k}$ and $\hat{i}$ mean that the components with indices $k_1,\ldots, k_{q-s},i_1,\ldots, i_{r-l}$ removed. 
It is easy to see that the feasible set-germ defined by $(g,h)$ is diffeomorphic to that defined by its reduction. 

The rank of the differential $(dg_{\iota_{(i)},(k)})_0$ is at most $\rank d(g_1,\overset{\hat{k}}{\ldots},g_q)_{\overline{x}}$, and possibly less than $\rank d(g_1,\overset{\hat{k}}{\ldots},g_q)_{\overline{x}}$. 
As for $\rank (dh_{\iota_{(i)}})_0$, the following holds: 

\begin{lemma}\label{H:lem:diff of h_{(i)} is 0}

The rank of the differential $(dh_{\iota_{(i)}})_0$ is equal to $\rank dh_{\overline{x}}-r+l$. 

\end{lemma}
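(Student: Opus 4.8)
The strategy is to pass through the chain rule and then reduce to a short computation in linear algebra on $T_{\overline{x}}N$. Write $h_{(i)}:=(h_{i_1},\ldots,h_{i_{r-l}})$ and let $h_{\hat{i}}:(N,\overline{x})\to(\R^l,0)$ denote the tuple of those $l$ components of $h$ whose indices do not occur in $(i)$, so that $h_{\iota_{(i)}}=h_{\hat{i}}\circ\iota_{(i)}$. First I would record the geometric meaning of $\iota_{(i)}$. Since $\rank d(h_{(i)})_{\overline{x}}=r-l$, the set-germ $h_{(i)}^{-1}(0)$ is a submanifold-germ of $(N,\overline{x})$ of dimension $n-(r-l)=n-r+l$ with tangent space at $\overline{x}$ equal to $W:=\Ker d(h_{(i)})_{\overline{x}}$. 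As $\iota_{(i)}$ is an immersion-germ of a source of dimension $n-r+l$ onto this submanifold-germ, it is a diffeomorphism-germ onto it; in particular $d(\iota_{(i)})_0$ is an isomorphism onto $W$. By the chain rule $d(h_{\iota_{(i)}})_0=d(h_{\hat{i}})_{\overline{x}}\circ d(\iota_{(i)})_0$, and therefore
\[
\rank d(h_{\iota_{(i)}})_0=\dim d(h_{\hat{i}})_{\overline{x}}(W).
\]

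It then remains to verify the linear-algebra identity $\dim d(h_{\hat{i}})_{\overline{x}}(W)=\rank dh_{\overline{x}}-r+l$. Put $V=T_{\overline{x}}N$, $L_{(i)}=d(h_{(i)})_{\overline{x}}:V\to\R^{r-l}$ (which is surjective, so $\dim W=\dim\Ker L_{(i)}=n-(r-l)$), and $L_{\hat{i}}=d(h_{\hat{i}})_{\overline{x}}:V\to\R^l$. Up to permuting the target coordinates, $dh_{\overline{x}}$ is the map $v\mapsto(L_{(i)}v,L_{\hat{i}}v)$, so its restriction to $W=\Ker L_{(i)}$ takes values in $\{0\}\times\R^l$; composing this restriction with the (injective) projection onto the second factor recovers $L_{\hat{i}}|_W$, whence $\rank\bigl(dh_{\overline{x}}|_W\bigr)=\dim L_{\hat{i}}(W)$. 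Applying rank--nullity to $dh_{\overline{x}}|_W$ gives
\[
\dim L_{\hat{i}}(W)=\dim W-\dim\bigl(W\cap\Ker dh_{\overline{x}}\bigr)=\dim W-\dim\Ker dh_{\overline{x}},
\]
where the last equality uses $\Ker dh_{\overline{x}}\subset W$. Substituting $\dim W=n-(r-l)$ and $\dim\Ker dh_{\overline{x}}=n-\rank dh_{\overline{x}}$ yields $\dim L_{\hat{i}}(W)=\rank dh_{\overline{x}}-r+l$, which combined with the displayed formula above proves the lemma.

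I do not expect a genuine obstacle here; the two points that need a little care are the assertion that $\Im d(\iota_{(i)})_0$ equals exactly $\Ker d(h_{(i)})_{\overline{x}}$ — this is precisely where the hypothesis $\rank d(h_{(i)})_{\overline{x}}=r-l$ is used, via the implicit function theorem — and the bookkeeping of which components of $h$ belong to $(i)$ and which to $\hat{i}$.
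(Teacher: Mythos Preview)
Your proof is correct and follows essentially the same approach as the paper: both identify $\Im d(\iota_{(i)})_0$ with $W=\Ker d(h_{(i)})_{\overline{x}}$, then apply rank--nullity to the restriction of $dh_{\overline{x}}$ to $W$ using the key inclusion $\Ker dh_{\overline{x}}\subset W$. Your writeup is in fact slightly more explicit than the paper's about why the rank of $dh_{\overline{x}}|_W$ coincides with $\dim L_{\hat{i}}(W)$, but the underlying argument is the same.
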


\begin{proof}
We first observe that $(dh_{\iota_{(i)}})_0$ is the composition of the injection $(d\iota_{(i)})_0$ and the restriction $dh_{\overline{x}}|_{\Ker d(h_{i_1},\ldots, h_{i_{r-l}})_{\overline{x}}}$. 
Since $\Ker dh_{\overline{x}}$ is contained in $\Ker d(h_{i_1},\ldots, h_{i_{r-l}})_{\overline{x}}$, we obtain 
\begin{align*}
\rank (dh_{\iota_{(i)}})_0 &= (n-r+l) - \dim \Ker (dh_{\overline{x}}|_{\Ker d(h_{i_1},\ldots, h_{i_{r-l}})_{\overline{x}}})\\
&=(n-r+l)-\dim \Ker dh_{\overline{x}}\\
&=(n-r+l)-(n-\rank dh_{\overline{x}})=\rank dh_{\overline{x}}-r+l. 
\end{align*}
This completes the proof of the lemma.
\end{proof}

\noindent
We call a reduction $(g,h)_{\iota_{(i)},(k)}$ with 
$g_1 \circ \iota_{\left( i \right)} \left( 0 \right) = 0, \overset{\hat{k}}{\ldots}, g_q \circ \iota_{\left( i \right)} \left( 0 \right) = 0$ and
$(dh_{\iota_{(i)}})_0=0$ a \textit{full reduction} of $(g,h)$. 
By Lemma~\ref{H:lem:diff of h_{(i)} is 0}, a full reduction of $(g,h)$ necessarily has $r-\rank(dh_{\overline{x}})$ equality constraints. 
Note that if $g_k({\overline{x}})< 0$ for any $k\in \{1,\ldots,q\}$ (that is, $(k)=(1,\ldots, q)$) and $dh_{\overline{x}}$ is surjective, a full reduction of $(g,h):(N,{\overline{x}})\to (\R^{q+r},(\overline{y},0))$ is the constant germ $c:(\R^{n-r},0)\to \{0\}=\R^0$.  

\begin{lemma}\label{H:lem:basic properties germ (g,h)_{(i)}}

The $\mathcal{R}$-equivalence class of $(g,h)_{\iota_{(i)},(k)}$ is determined from the $\mathcal{R}$-equivalence class of $(g,h)$ and the choice of indices $(i),(k)$. 
In particular it does not depend on the choice of an immersion-germ $\iota_{(i)}$. 

\end{lemma}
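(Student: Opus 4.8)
The plan is to unwind the definition of reduction and reduce the statement to the corresponding fact for $\mathcal{R}$-equivalence, exploiting the fact that reduction is built entirely from precomposition data — an immersion-germ $\iota_{(i)}$ — together with the purely combinatorial choice of which components to delete. Suppose $(g,h)$ and $(g',h')$ are $\mathcal{R}$-equivalent via a diffeomorphism-germ $\phi:(N,\overline{x}')\to(N,\overline{x})$, so that $(g',h')=(g\circ\phi,h\circ\phi)$ (working in charts if one prefers to reason on $\R^n$). The index data $(i)$, $(k)$ are fixed and the same for both sides; note that $\rank d(h'_{i_1},\dots,h'_{i_{r-l}})_{\overline{x}'}=r-l$ and $g'_{k_j}(\overline{x}')\neq 0$ automatically, since $\phi$ is a diffeomorphism. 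First I would show that $\phi$ carries the submanifold-germ $(h'_{i_1},\dots,h'_{i_{r-l}})^{-1}(0)$ diffeomorphically onto $(h_{i_1},\dots,h_{i_{r-l}})^{-1}(0)$: this is immediate because $h'_{i_a}\circ\phi^{-1}=h_{i_a}$, hence the zero-sets correspond. Consequently, if $\iota_{(i)}:(\R^{n-r+l},0)\to(N,\overline{x})$ is an immersion-germ with image that submanifold-germ of $h$, then $\phi^{-1}\circ\iota_{(i)}$ is an immersion-germ with image the corresponding submanifold-germ of $h'$, and is therefore an admissible choice of immersion on the $(g',h')$ side.

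Next I would compute the two reductions and compare. By definition $(g,h)_{\iota_{(i)},(k)}$ has components $g_j\circ\iota_{(i)}$ (for $j\notin\{k_\bullet\}$) and $h_j\circ\iota_{(i)}$ (for $j\notin\{i_\bullet\}$). Using the immersion-germ $\phi^{-1}\circ\iota_{(i)}$ on the primed side, the reduction $(g',h')_{\phi^{-1}\circ\iota_{(i)},(k)}$ has components $g'_j\circ\phi^{-1}\circ\iota_{(i)}=g_j\circ\iota_{(i)}$ and $h'_j\circ\phi^{-1}\circ\iota_{(i)}=h_j\circ\iota_{(i)}$, with the same index sets deleted. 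Hence the two reductions are literally \emph{equal} as map-germs $(\R^{n-r+l},0)\to(\R^{s+l},0)$ for this particular choice of immersion on the primed side. It remains only to remove the dependence on the choice of immersion-germ: if $\iota_{(i)}$ and $\tilde\iota_{(i)}$ are two immersion-germs with the same image submanifold-germ of $h'$, then $\tilde\iota_{(i)}^{-1}\circ\iota_{(i)}$ is a well-defined diffeomorphism-germ of $(\R^{n-r+l},0)$, and composing with it turns one reduction into the other; thus any two reductions (with the same index data) of a fixed $(g',h')$ are $\mathcal{R}$-equivalent. Combining this with the equality established above shows $(g,h)_{\iota_{(i)},(k)}$ and $(g',h')_{\tilde\iota_{(i)},(k)}$ are $\mathcal{R}$-equivalent for arbitrary admissible choices on both sides, which is exactly the claim; taking $(g,h)=(g',h')$ in particular gives independence from the immersion-germ.

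The only genuinely delicate point is the bookkeeping that the image of $\iota_{(i)}$ is a \emph{set-germ}, so that "immersion-germ with prescribed image" is well-defined up to reparametrization and $\phi^{-1}\circ\iota_{(i)}$ is indeed an immersion-germ (not merely a smooth map-germ); this follows since $\phi^{-1}$ is a local diffeomorphism and $\iota_{(i)}$ an immersion, so their composite is an immersion, and its image equals $\phi^{-1}\big((h_{i_\bullet})^{-1}(0)\big)=(h'_{i_\bullet})^{-1}(0)$ by the first step. Beyond that, everything is a direct substitution of $h'_{i_a}\circ\phi^{-1}=h_{i_a}$, $g'_j\circ\phi^{-1}=g_j$, so I do not expect any real obstacle — the main care is simply to keep the deleted index sets aligned on the two sides and to phrase the immersion-germ independence as a reparametrization argument rather than trying to canonically choose $\iota_{(i)}$.
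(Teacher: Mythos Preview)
Your proposal is correct and takes essentially the same approach as the paper: both argue that two immersion-germs with the same image differ by a reparametrization of the source (giving independence from $\iota_{(i)}$), and that for a diffeomorphism-germ $\phi$ the composite $\phi^{-1}\circ\iota_{(i)}$ is an admissible immersion for $(g\circ\phi,h\circ\phi)$, yielding literal equality of the two reductions. The only cosmetic difference is the order of the two steps (the paper treats independence first, then $\mathcal{R}$-invariance), and you make the final combination explicit where the paper leaves it implicit.
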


\begin{proof}
We first show that the $\mathcal{R}$-equivalence class of $(g,h)_{\iota_{(i)},(k)}$ does not depend on the choice of $\iota_{(i)}$. 
Let $\eta_{(i)}$ be another immersion-germ from $(\R^{n-r+l},0)$ to $(N,{\overline{x}})$ such that $\eta_{(i)}(\R^{n-r+l})$ is equal to $(h_{i_1},\ldots, h_{i_{r-l}})^{-1}(0)$. 
One can define a diffeomorphism-germ 
\[
\Phi:(\R^{n-r+l},0)\xrightarrow{\iota_{(i)}} ((h_{i_1},\ldots, h_{i_{r-l}})^{-1}(0),{\overline{x}})\xrightarrow{\eta_{(i)}^{-1}} (\R^{n-r+l},0),
\]
and it is easy to check that $(g,h)_{\iota_{(i)},(k)}$ is equal to $(g,h)_{\eta_{(i)},(k)}\circ \Phi$. 

Let $\phi:(N,{\overline{x}})\to (N,{\overline{x}})$ be a diffeomorphism-germ. 
It is easy to see that the rank of $d((h_{i_1},\ldots, h_{i_{r-l}})\circ \phi)_{\overline{x}}$ is also equal to $r-l$, and $\phi^{-1}\circ \iota_{(i)}$ is an immersion-germ to $((h_{i_1},\ldots, h_{i_{r-l}})\circ \phi)^{-1}(0)$. 
The following equalities then hold: 
{\allowdisplaybreaks
\begin{align*}
&(g\circ \phi,h\circ \phi)_{\phi^{-1}\circ \iota_{(i)},(k)}\\
=&\left((g_1\circ\phi)\circ (\phi^{-1}\circ \iota_{(i)}),\overset{\hat{k}}{\ldots},(g_q\circ\phi)\circ (\phi^{-1}\circ \iota_{(i)}),\right.\\
&\left.\hspace{1em}(h_1\circ\phi)\circ (\phi^{-1}\circ \iota_{(i)}),\overset{\hat{i}}{\ldots},(h_r\circ\phi)\circ (\phi^{-1}\circ \iota_{(i)})\right)\\
=&(g,h)_{\iota_{(i)},(k)}. 
\end{align*}
}%
This completes the proof of Lemma~\ref{H:lem:basic properties germ (g,h)_{(i)}}.
\end{proof}

\begin{lemma}\label{lem:K[G]-codim reduction}

The $\mathcal{K}[G]$-codimension of $(g,h)_{\iota_{(i)},(k)}$ is less than or equal to that of $(g,h)$, and the same is true for the $\mathcal{K}[G]_e$-codimension. 

\end{lemma}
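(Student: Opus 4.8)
The plan is to show that a reduction can only shrink the relevant tangent spaces in a controlled way, so that the quotient spaces defining the codimensions can only get smaller. Write $(G,H):=(g,h)_{\iota_{(i)},(k)}$ for brevity, and recall that by the very definition of a reduction its source is $(\R^{n-r+l},0)$ and its target is $(\R^{s+l},0)$. The two operations constituting a reduction — discarding the components $g_{k_1},\dots,g_{k_{q-s}}$ that are nonzero at $\overline x$, and precomposing with the immersion $\iota_{(i)}$ onto $(h_{i_1},\dots,h_{i_{r-l}})^{-1}(0)$ — should be treated one after the other. The deletion step is essentially free: discarding a $g_{k_j}$ with $g_{k_j}(\overline x)\neq 0$ replaces $M(g,h)$ by $M(\hat g,h)$ without changing the feasible set, and by the conventions in Section~\ref{sec:preliminaries} the $\mathcal{K}[G]$- and $\mathcal{K}[G]_e$-codimensions of a map-germ with a constraint nonzero at the base point are by definition those of the germ with that component removed; so after this step we may assume $(k)=(1,\dots,q)$ with all $g_i(\overline x)=0$, and it remains to compare $(g,h)$ with $(g,h)_{\iota_{(i)}}:=(g\circ\iota_{(i)},\,h_1\circ\iota_{(i)},\overset{\hat i}{\dots},h_r\circ\iota_{(i)})$.

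For the precomposition step, by Lemma~\ref{H:lem:basic properties germ (g,h)_{(i)}} the $\mathcal{R}$-class, hence a fortiori the $\mathcal{K}[G]$-class, of the reduction is independent of the chosen immersion $\iota_{(i)}$, so I may pick $\iota_{(i)}$ to be a linear coordinate inclusion after an $\mathcal{R}$-change of coordinates on the source: choose a chart on $N$ at $\overline x$ in which $h_{i_1}=x_{n-r+l+1},\dots,h_{i_{r-l}}=x_n$ and $\iota_{(i)}$ is the inclusion $(x_1,\dots,x_{n-r+l})\mapsto (x_1,\dots,x_{n-r+l},0,\dots,0)$. Under this normalization, precomposition with $\iota_{(i)}$ is exactly the ring surjection $\rho:\mathcal{E}_n\to\mathcal{E}_{n-r+l}$ setting the last $r-l$ variables to $0$, extended componentwise to modules, and moreover the components $h_{i_1}\circ\iota_{(i)},\dots,h_{i_{r-l}}\circ\iota_{(i)}$ of the full germ vanish identically — they are precisely the variables we killed — which is why the target drops to $\R^{s+l}$. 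The key algebraic observation is then that $\rho$ carries $T\mathcal{K}[G](g,h)$ into $T\mathcal{K}[G]\big((g,h)_{\iota_{(i)}}\big)$ (and likewise for the extended tangent spaces $T\mathcal{K}[G]_e$): the generators $\partial(g,h)/\partial x_j$ for $j\le n-r+l$ map to $\partial(g,h)_{\iota_{(i)}}/\partial x_j$, the generators for $j>n-r+l$ map into the image of $T\mathcal{C}[G]$ because $h_{i_1},\dots,h_{i_{r-l}}$ become $0$ and the chain rule produces only terms lying in the ideal generated by the remaining $h$'s times $\mathcal{E}_{n-r+l}^{s+l}$, and the $T\mathcal{C}[G]$ part maps into $T\mathcal{C}[G]\big((g,h)_{\iota_{(i)}}\big)$ since $\rho$ is a ring homomorphism sending $g_i\mapsto g_i\circ\iota_{(i)}$, $h_j\mapsto h_j\circ\iota_{(i)}$. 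Hence $\rho$ descends to a \emph{surjection} of quotient rings/modules
\[
\frac{\mathcal{M}_n\mathcal{E}_n^{q+r}}{T\mathcal{K}[G](g,h)}\;\longrightarrow\;\frac{\mathcal{M}_{n-r+l}\mathcal{E}_{n-r+l}^{s+l}}{T\mathcal{K}[G]\big((g,h)_{\iota_{(i)}}\big)},
\]
and similarly in the extended case, which immediately gives the two inequalities of dimensions.

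The main obstacle I anticipate is the bookkeeping of the deleted $h$-components: one has to be careful that removing $h_{i_1},\dots,h_{i_{r-l}}$ from the \emph{target} of the reduction is legitimate at the level of tangent spaces. The point is that after the normalization $h_{i_a}\circ\iota_{(i)}\equiv 0$, so each such component contributes a zero row to $(g,h)_{\iota_{(i)}}$; deleting a component that is identically zero removes a direct summand $\langle 0\rangle$-worth of generators from $T\mathcal{C}[G]$ and kills the corresponding standard coordinate in the target, and one must check this does not accidentally enlarge the quotient — it does not, because the $\mathcal{E}_n$-submodule $\langle h_{i_1},\dots,h_{i_{r-l}}\rangle\mathcal{E}_n^{q}$ appearing in $T\mathcal{C}[G]$ maps to $0$ anyway, and the summand $\langle h_1,\dots,h_r\rangle\mathcal{E}_n^r$ restricts to $\langle h_1\circ\iota,\dots,h_r\circ\iota\rangle\mathcal{E}_{n-r+l}^{r}$, whose $(i)$-indexed coordinates are zero. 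A clean way to sidestep most of this is to factor the reduction as (i) restriction $h\mapsto$ delete the zero rows, a map-germ whose $\mathcal{K}[G]$-codimension manifestly equals that of the un-deleted germ (the extra target coordinates being redundant), followed by (ii) the surjection argument above; I would organize the write-up that way and leave the explicit chain-rule verification of the inclusion $\rho\big(T\mathcal{K}[G](g,h)\big)\subset T\mathcal{K}[G]\big((g,h)_{\iota_{(i)}}\big)$ to a short displayed computation.
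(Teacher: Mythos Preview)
Your overall strategy --- normalize so that the immersion is a coordinate inclusion and then exhibit a surjection of quotient modules --- is exactly the paper's approach, and your treatment of the deletion of inactive inequality constraints is fine. The gap is in the precomposition step: the claimed inclusion $\rho\bigl(T\mathcal{K}[G](g,h)\bigr)\subset T\mathcal{K}[G]\bigl((g,h)_{\iota_{(i)}}\bigr)$ is \emph{false} as stated, and the ``chain rule'' justification you offer for the generators $\partial(g,h)/\partial x_j$ with $j>n-r+l$ does not hold.

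Take the smallest instance: $n=2$, $q=0$, $r=2$, $l=1$, with $h_1=x_2$ and $h_2=x_1^3+x_2$, so $\iota(y)=(y,0)$ and the reduction is $H=x_1^3\in\mathcal{M}_1$. Here $\partial(h_1,h_2)/\partial x_2=(1,1)\in T\mathcal{K}_e(h_1,h_2)$; your map (restrict to $x_2=0$, drop the $h_1$-slot) sends it to the constant $1$, which does not lie in $T\mathcal{K}_e(x_1^3)=\langle x_1^2\rangle_{\mathcal{E}_1}$. For the non-extended tangent space the same failure occurs: $x_1\cdot(1,1)\in T\mathcal{K}(h_1,h_2)$ maps to $x_1\notin T\mathcal{K}(x_1^3)=\langle x_1^3\rangle_{\mathcal{E}_1}$. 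There is no chain rule here: $\rho(\partial f/\partial x_j)$ for $j>n-r+l$ is just the restriction of a derivative transverse to $\iota(\R^{n-r+l})$, and nothing forces it into the ideal generated by the surviving $h$'s.

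The missing ingredient is one preliminary $\mathcal{K}[G]$-equivalence before you define $\rho$. Since $h_{i_a}=x_{n-r+l+a}$ in your chart, Hadamard's lemma gives $g_i(x)=g_i(x',0)+\sum_a h_{i_a}\,\tilde g_{i,a}(x)$ and likewise for the surviving $h_m$'s; the $B$- and $A$-blocks of the $\mathcal{K}[G]$-action (which let you add arbitrary $h$-multiples to any component) then replace $(g,h)$ by the $\mathcal{K}[G]$-equivalent germ $\bigl(g(x',0),\,x_{n-r+l+1},\dots,x_n,\,\hat h(x',0)\bigr)$. For \emph{this} representative the transverse partials $\partial/\partial x_j$ with $j>n-r+l$ vanish on the retained components, so their images under your map are zero and the inclusion of tangent spaces becomes the short computation you envisioned. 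This is precisely the step the paper inserts; without it the surjection-of-quotients argument does not go through.
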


\begin{proof}
If $(g,h)$ is a submersion, the $\mathcal{K}[G]$-codimensions and $\mathcal{K}[G]_e$-codimensions of $(g,h)$ and its reduction are all equal to $0$, in particular the statement holds. 
In what follows, we assume that $(g,h)$ is not a submersion. 
The $\mathcal{K}[G]_e$-codimension is the sum of the $\mathcal{K}[G]$-codimension and $-n+q+r$ by Proposition~\ref{prop:relation K[G]/K[G]_e-cod} and $-n+q+r$ is invariant under reduction. (Note that $n,q,r$ are respectively the number of variables, active inequality constraints, and active equality constraints.) 
It is thus enough to show the statement for the $\mathcal{K}[G]_e$-codimension. 

We can take a diffeomorphism-germ $\phi:(\R^n,0)\to (N,{\overline{x}})$ so that $h_{i_j}\circ \phi(x)=x_j$ for any $j\in \{1,\ldots, r-l\}$. 
By Lemma~\ref{H:lem:basic properties germ (g,h)_{(i)}}, the $\mathcal{K}[G]_e$-codimension of the reduction of $(g,h)$ is equal to that of $(g\circ \phi,h\circ \phi)$. 
Furthermore, the $\mathcal{K}[G]_e$-codimension is invariant under permutation of components of equality constraints, and those of inequality constraints. 
One can thus put the following assumptions without loss of generality:
\begin{itemize}

\item
$(N,\overline{x})=(\R^n,0)$,

\item 
$(i_1,\ldots, i_{r-l})=(1,\ldots, r-l)$ and $(k_1,\ldots, k_{q-s}) = (s+1,\ldots, q)$,

\item 
$h=(x_1,\ldots, x_{r-l},h_{r-l+1},\ldots, h_r) =: (x_1,\ldots, x_{r-l},\widehat{h}(x))$, 

\item
$\iota_{(i)}(y)=(0,y)\in (\R^{r-l}\times \R^{n-r+l},(0,0))$ for $y\in (\R^{n-r+l},0)$. 

\end{itemize}
The germ $(g(x),h(x)) = (g(x),x_1,\ldots, x_{r-l},\widehat{h}(x))$ is $\mathcal{K}[G]$-equivalent to the following germ:
\[
(g(0,x'),x_1,\ldots, x_{r-l},\widehat{h}(0,x')), 
\]
where $x' =(x_{r-l+1},\ldots, x_n)\in \R^{n-r+l}$. 
Since the reduction of this germ (by $\iota_{(i)}$ given in the assumption above) is equal to that of $(g,h)$, we can further assume that $g$ and $\widehat{h}$ are contained in $\mathcal{E}_{n-r+l}^q$ and $\mathcal{E}_{n-r+l}^l$, respectively (i.e.~the values of these germs do not depend on $x_1,\ldots, x_{r-l}$). 

Let $\psi:\mathcal{E}_n^{q+r}\to \mathcal{E}_{n-r+l}^{s+l}$ be a homomorphism defined by 
{\allowdisplaybreaks
\begin{align*}
\psi(\xi_1,\ldots, \xi_q,\eta_1,\ldots,\eta_r) =& (\xi_1\circ\iota,\ldots, \xi_{s}\circ \iota,\eta_{r-l+1}\circ \iota,\ldots, \eta_{r}\circ \iota)\\
=&(\xi_1(0,y),\ldots, \xi_{s}(0,y),\eta_{r-l+1}(0,y),\ldots, \eta_{r}(0,y)).
\end{align*}
}%
It is easy to see that $\psi$ is surjective. 
Since the $\mathcal{K}[G]_e$-codimension of $(g,h)$ (resp.~$(g,h)_{\iota_{(i)},(k)}$) is the dimension of the quotient space $\mathcal{E}_n^{q+r}/T\mathcal{K}[G]_e(g,h)$ (resp.~$\mathcal{E}_{n-r+l}^{s+l}/T\mathcal{K}[G]_e(g,h)_{\iota_{(i)},(k)}$), it is enough to show that the image $\psi(T\mathcal{K}[G]_e(g,h))$ is contained in $T\mathcal{K}[G]_e(g,h)_{\iota_{(i)},(k)}$. 

By the definition, the tangent space $T\mathcal{K}[G]_e(g,h)$ is equal to 
\begin{align*}
t(g,h)(\mathcal{E}_n^{n}) + h^\ast \mathcal{M}_r \mathcal{E}_n^{q+r} + \left<g_1e_1,\ldots, g_qe_q\right>_{\mathcal{E}_n}.
\end{align*}
The image $t(g,h)(\mathcal{E}_n^{n})$ has the following generating set as an $\mathcal{E}_n$-module: 
\[
\left\{\left.\left(\sum_{i=1}^{q}\frac{\Pa g_i}{\Pa x_j}e_i +\sum_{i=1}^{r}\frac{\Pa h_i}{\Pa x_j}e_{i +q} \right)~\right|~j=1,\ldots, n\right\}.
\]
The image of a generator by $\psi$ is calculated as follows:
\begin{align*}
\psi\left(\left(\sum_{i=1}^{q}\frac{\Pa g_i}{\Pa x_j}e_i +\sum_{i=1}^{r}\frac{\Pa h_i}{\Pa x_j}e_{i+q}\right)\right)=
\sum_{i=1}^{s}\dfrac{\Pa g_i}{\Pa x_j}(0,y)e_i +\sum_{i=r-l+1}^{r}\dfrac{\Pa \widehat{h}_{i-r+l}}{\Pa x_j}(0,y)e_{i+q}. 
\end{align*}
This germ is equal to zero if $j\leq r-l$ since $g$ (resp.~$\widehat{h}$) is contained in $\mathcal{E}_{n-r+l}^q$ (resp.~$\mathcal{E}_{n-r+l}^l$). 
If $j \geq r-l+1$, the germ above is contained in $t(g,h)_{\iota_{(i)},(k)}(\mathcal{E}_{n-r+l}^{n-r+l})$, which is further contained in $T\mathcal{K}[G]_e(g,h)_{\iota_{(i)},(k)}$. 
The set $\{h_ie_j~|~i=1,\ldots, r, j=1,\ldots, q+r\}$ is a generating set of $h^\ast \mathcal{M}_r \mathcal{E}_n^{q+r}$ as an $\mathcal{E}_n$-module. 
The image $\psi(h_ie_j)$ is equal to zero if $i\leq r-l$ or $q+1\leq j\leq q+r-l$. 
Suppose that $i$ is larger than $r-l$. 
The image $\psi(h_ie_j)$ is equal to $\widehat{h}_{i-r+l}(0,y)e_j$ (resp.~$\widehat{h}_{i-r+l}(0,y)e_{j-q+s-r+l}$) if $j\leq q$ (resp.~$j\geq q+r-l$), which is contained in $h_{\iota_{(i)}}^\ast \mathcal{M}_{n-r+l}\mathcal{E}_{n-r+l}^{q+s}\subset T\mathcal{K}[G]_e(g,h)_{\iota_{(i)},(k)}$. 
One can also show that $\psi(\left<g_1e_1,\ldots, g_qe_q\right>_{\mathcal{E}_n})$ is contained in $T\mathcal{K}[G]_e(g,h)_{\iota_{(i)},(k)}$ by direct calculation, completing the proof of Lemma~\ref{lem:K[G]-codim reduction}. 
\end{proof}

For a system $(j)=(j_1,\ldots,j_{n-r+l})$ of indices in $\{1,\ldots, n\}$, together with systems $(i)$ and $(k)$ as above, we define 
\begin{align*}
\Gamma^m({n},{q+r})&=\left\{j^m(g,h)(0)\in J^m(n,q+r)~|~h(0)=0\right\}\mbox{, and} \\
\Gamma^m_{(i),(k)}(n,{q+r})& = \left\{j^m(g,h)(0)\in \Gamma^m(n,{q+r})~\left|~\mbox{\begin{minipage}[c]{49mm}
$g_{k_1}(0),\ldots, g_{k_{q-s}}(0)\neq 0$ \\
$\rank d(h_{i_1},\ldots, h_{i_{r-l}})_0=r-l$
\end{minipage}}\right.\right\}, 
\end{align*}
One can easily check that $\Gamma^m_{(i),(k)}(n,{q+r})$ is a semi-algebraic submanifold of $J^m(n,{q+r})$ with codimension $r$.
For an $\mathcal{R}^m$-invariant subset $\Sigma\subset \Gamma^m({n-r+l},{s+l})$, we define $\tilde{\Sigma}_{(i),(k)}\subset \Gamma^m_{(i),(k)}(n,{q+r})$ and $\tilde{\Sigma}\subset \Gamma^m(n,{q+r})$ as follows:
\begin{align*}
\tilde{\Sigma}_{(i),(k)}&:=\left\{j^m(g,h)(0) \in \Gamma^m_{(i),(k)}(n,q+r)~\left|~j^m(g,h)_{\iota_{(i)},(k)}(0)\in\Sigma\mbox{ for }\exists \iota_{(i)}
\right.\right\},\\
\tilde{\Sigma}&:=\bigcup_{(i),(k)}\tilde{\Sigma}_{(i),(k)}. 
\end{align*}%

\begin{proposition}\label{H:prop:semi-alg tilde{Sigma}}

The sets $\tilde{\Sigma}_{(i),(k)}$ and $\tilde{\Sigma}$ are $\mathcal{R}^m$-invariant.
Moreover, the following hold:

\begin{itemize}

\item 
If $\Sigma$ is a semi-algebraic subset of $\Gamma^m({n-r+l},{s+l})$, $\tilde{\Sigma}_{(i),(k)}$ and $\tilde{\Sigma}$ are semi-algebraic subsets of $\Gamma^m(n,{q+r})$. 

\item 
If $\Sigma$ is a submanifold of $\Gamma^m({n-r+l},{s+l})$, $\tilde{\Sigma}_{(i),(k)}$ is a submanifold of $\Gamma^m_{(i),(k)}(n,{q+r})$. 

\end{itemize}

\noindent
In each case, the following holds.
\begin{align*}
\codim(\tilde{\Sigma},J^m(n,{q+r}))&=\codim(\tilde{\Sigma}_{(i),(k)},J^m(n,{q+r}))\\
&= \codim(\Sigma,\Gamma^m({n-r+l},{s+l}))+r,
\end{align*}
where $\codim(Y,X)=\dim X-\dim Y$ for $Y\subset X$. 

\end{proposition}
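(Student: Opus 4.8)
The plan is to deduce all four assertions from one structural fact: after refining $\Gamma^m_{(i),(k)}(n,q+r)$ into finitely many semi-algebraic open pieces indexed by a choice of complementary index system $(j)=(j_1,\ldots,j_{n-r+l})$, the assignment $j^m(g,h)(0)\mapsto j^m\big((g,h)_{\iota_{(i)},(k)}\big)(0)$ is realized on each piece by a \emph{polynomial submersion} $\mathrm{red}$ onto $\Gamma^m(n-r+l,s+l)$, in such a way that the part of $\tilde{\Sigma}_{(i),(k)}$ lying in that piece equals $\mathrm{red}^{-1}(\Sigma)$. Granting this, the preimage of a semi-algebraic set under a polynomial map is semi-algebraic, and the preimage of a submanifold under a submersion is a submanifold of the same codimension, so $\tilde{\Sigma}_{(i),(k)}$ inherits the type of $\Sigma$; the finite union $\tilde{\Sigma}=\bigcup_{(i),(k)}\tilde{\Sigma}_{(i),(k)}$ is then semi-algebraic; and the codimension identities follow from $\codim\big(\mathrm{red}^{-1}(\Sigma)\big)=\codim(\Sigma,\Gamma^m(n-r+l,s+l))$ together with the already recorded fact $\codim\big(\Gamma^m_{(i),(k)}(n,q+r),J^m(n,q+r)\big)=r$, since every piece and every admissible $(i),(k)$ produce reductions into the same fixed space $\Gamma^m(n-r+l,s+l)$ and hence the same codimension.

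First I would clear the formal points. If $\phi$ is a diffeomorphism-germ of $(\R^n,0)$, the open conditions $g_{k_a}(0)\neq 0$ and $\rank d(h_{i_1},\ldots,h_{i_{r-l}})_0=r-l$ are unchanged under $(g,h)\mapsto(g,h)\circ\phi$, and the computation in the proof of Lemma~\ref{H:lem:basic properties germ (g,h)_{(i)}} shows that a reduction of $(g,h)\circ\phi$ taken with the immersion $\phi^{-1}\circ\iota_{(i)}$ equals $(g,h)_{\iota_{(i)},(k)}$; this yields $\mathcal{R}^m$-invariance of every $\tilde{\Sigma}_{(i),(k)}$, hence of the union $\tilde{\Sigma}$. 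Read at the level of $m$-jets, the same lemma shows that, for fixed $(g,h)$ and fixed $(i),(k)$, any two reductions have $\mathcal{R}^m$-equivalent $m$-jets; since $\Sigma$ is $\mathcal{R}^m$-invariant, ``$j^m(g,h)_{\iota_{(i)},(k)}(0)\in\Sigma$ for some $\iota_{(i)}$'' is equivalent to the same statement for one canonically chosen $\iota_{(i)}$, which is what allows me to remove the existential quantifier from the definition of $\tilde{\Sigma}_{(i),(k)}$.

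Then I would construct that canonical choice and verify the structural fact. On the piece where the $(r-l)\times(r-l)$ submatrix of $\big(\partial h_{i_a}/\partial x_b\big)_0$ picked out by $(j)$ is invertible, I would take $\iota_{(i)}$ to be the graph, over the $x_{(j)}$-coordinates, of the solution of $(h_{i_1},\ldots,h_{i_{r-l}})=0$ given by the implicit function theorem; truncating this solution at order $m$ produces a polynomial map with the same $m$-jet, so the induced $\mathrm{red}\colon(\text{piece})\to\Gamma^m(n-r+l,s+l)$ is polynomial in the jet coordinates and $\tilde{\Sigma}_{(i),(k)}\cap(\text{piece})=\mathrm{red}^{-1}(\Sigma)$. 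To see $\mathrm{red}$ is a submersion I would perturb $(g,h)$ inside $\Gamma^m(n,q+r)$ only in the retained inequality components and in the components of $h$ outside $(i)$, leaving $h_{i_1},\ldots,h_{i_{r-l}}$ and hence $\iota_{(i)}$ fixed; the first-order variation of $\mathrm{red}$ is then precomposition of the perturbation with the immersion-germ $\iota_{(i)}$, which surjects onto $T\Gamma^m(n-r+l,s+l)$ because precomposition with an immersion-germ is surjective on $m$-jets (extend any target jet constantly in the normal directions). Hence $\mathrm{red}$ is a submersion on each piece, $\mathrm{red}^{-1}(\Sigma)$ is a submanifold of $\Gamma^m_{(i),(k)}(n,q+r)$ of codimension $\codim(\Sigma,\Gamma^m(n-r+l,s+l))$, and gluing the finitely many pieces and the finitely many $(i),(k)$ gives every claim, the codimension equalities coming from $\codim\big(\Gamma^m_{(i),(k)}(n,q+r),J^m(n,q+r)\big)=r$.

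The main obstacle is the construction just described: arranging that the auxiliary immersion-germ $\iota_{(i)}$, obtained through a truncated implicit function theorem, depends polynomially on $j^m(g,h)(0)$ over each piece while staying compatible with the existential clause up to $\mathcal{R}^m$-equivalence and keeping $\mathrm{red}$ a submersion. (Alternatively, one may first normalize $h$ to $(x_1,\ldots,x_{r-l},\widehat h)$ with $\widehat h$ independent of $x_1,\ldots,x_{r-l}$ by the $\mathcal{R}^m$-step used in the proof of Lemma~\ref{lem:K[G]-codim reduction}, after which $\mathrm{red}$ becomes the restriction of an obvious linear surjection.) The remaining verifications — semi-algebraicity and gluing of the pieces, and equality of codimensions across pieces and across $(i),(k)$ — are routine.
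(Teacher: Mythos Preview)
Your proposal is correct and follows essentially the same route as the paper: cover $\Gamma^m_{(i),(k)}(n,q+r)$ by the open pieces $\Gamma^m_{(i),(j),(k)}(n,q+r)$ indexed by a complementary system $(j)$, build on each piece a canonical reduction map (the paper's $\Psi_{(i),(j),(k)}$, obtained via $j^m(\chi_{(i),(j)}(h)^{-1})(0)$ composed with the fixed inclusion $\rho$), and show it is a submersion by exhibiting a section that perturbs only the retained components while keeping $h_{i_1},\ldots,h_{i_{r-l}}$ fixed --- exactly your perturbation argument, written in the paper as the explicit map $\Theta_\sigma$. One small point: the reduction map is a \emph{Nash} (rational semi-algebraic) map in the jet coordinates rather than literally polynomial, since the $m$-jet of the implicit-function solution involves the inverse of the distinguished Jacobian minor; this does not affect the argument, as Nash maps preserve semi-algebraicity just as well.
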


\begin{proof}
It is enough to show the statements for $\tilde{\Sigma}_{(i),(k)}$ for a fixed $(i),(k)$. 
Let $j^m(g,h)(0)\in \tilde{\Sigma}_{(i),(k)}$ and $\phi:(\R^n,0)\to (\R^n,0)$ be a diffeomorphism-germ. 
Since $\Sigma$ is $\mathcal{R}^m$-invariant, it follows from (the proof of) Lemma~\ref{H:lem:basic properties germ (g,h)_{(i)}} that $j^m((g,h)\circ \phi)_{\phi^{-1}\circ \iota_{(i)},(k)}(0)$ is contained in $\Sigma$ and thus $j^m((g,h)\circ\phi)(0)\in \tilde{\Sigma}_{(i),(k)}$. 

For a system $(j)=(j_1,\ldots, j_{n-r+l})$ of indices in $\{1,\ldots,n\}$, we put 
\[
\chi_{(i),(j)}(h)=(h_{i_1},\ldots, h_{i_{r-l}},x_{j_1},\ldots, x_{j_{n-r+l}}):(\R^n,0)\to (\R^n,0)
\]
and 
\[
\Gamma^m_{(i),(j),(k)}(n,{q+r}) = \left\{j^m(g,h)(0)\in \Gamma^m_{(i),(k)}(n,{q+r})~\left|~\rank d\left(\chi_{(i),(j)}(h)\right)_0=n\right.\right\}.
\]
Note that it is a Zariski-open subset of $\Gamma^m_{(i),(k)}(n,{q+r})$, and $\Gamma^m_{(i),(k)}(n,{q+r})$ is equal to $\bigcup_{(j)}\Gamma^m_{(i),(j),(k)}(n,{q+r})$. 
We define
$\Phi_{(i),(j),(k)}:\Gamma^m_{(i),(j),(k)}(n,{q+r})\to J^m(n,n)_{0}$ by 
\[
\Phi_{(i),(j),(k)}(j^m(g,h)(0)):=j^m\left(\chi_{(i),(j)}(h)^{-1}\right)(0).
\] 
One can check that $\Phi_{(i),(j),(k)}$ is a Nash mapping. 
Let $\rho\in J^m({n-r+l},n)_{0}$ be the $m$-jet represented by $\rho(y)=(0,y)$. 
We obtain a Nash mapping $\Psi_{(i),(j),(k)}:\Gamma^m_{(i),(j),(k)}(n,{q+r})\to \Gamma^m({n-r+l},{s+l})$ as follows:
\[
\Psi_{(i),(j),(k)}(j^m(g,h)(0)) = j^m\left(g_1,\overset{\hat{k}}{\ldots},g_q,h_1,\overset{\hat{i}}{\ldots},h_r\right)(0)\cdot\Phi_{(i),(j),(k)}(j^m(g,h)(0))\cdot \rho, 
\]
where $\cdot$ in the right-hand side means composition of representatives. 
Since $\Sigma$ is $\mathcal{R}^m$-invariant, the intersection $\tilde{\Sigma}_{(i),(k)}\cap \Gamma^m_{(i),(j),(k)}(n,{q+r})$ is equal to $\Psi_{(i),(j),(k)}^{-1}(\Sigma)$. 
As $\Psi_{(i),(j),(k)}$ is semi-algebraic mapping and $\Gamma^m_{(i),(j),(k)}(n,{q+r})$ is a semi-algebraic subset of $J^m(n,{q+r})$, we can conclude that $\tilde{\Sigma}_{(i),(k)}$ is semi-algebraic subset of $J^m(n,{q+r})$ if $\Sigma\subset J^m({n-r+l},{s+l})$ is semi-algebraic. 

For $\sigma =j^m(g,h)(0)\in \Gamma^m_{(i),(j),(k)}(n,{q+r})$, we define 
\[
\Theta_\sigma:\Gamma^m({n-r+l},{s+l})\to \Gamma^m_{(i),(j),(k)}(n,{q+r})
\]
as follows: 
Let $\pi:(\R^n,0)\to (\R^{n},0)$ and $\overline{\pi}:(\R^n,0)\to (\R^{n-r+l},0)$ be the map-germs defined by $\pi(x)= (0,\ldots, 0,x_{j_1},\ldots,x_{j_{n-r+l}})$ and $\overline{\pi}(x)= (x_{j_1},\ldots,x_{j_{n-r+l}})$, respectively. 
We take indices $\hat{i}_1,\ldots,\hat{i}_{l}\in \{1,\ldots, r\}$ so that $\hat{i}_j<\hat{i}_{j+1}$ and $\{i_1,\ldots, i_{r-l},\linebreak\hat{i}_1,\ldots, \hat{i}_l\}=\{1,\ldots,r\}$. 
We also take indices $\hat{k}_1,\ldots, \hat{k}_s\in \{1,\ldots,q\}$ in the same manner. 
Since the rank of $d\left(\chi_{(i),(j)}(h)\right)_0$ is equal to $n$, we can take map-germs $\tilde{g}_{k}:(\R^n,0)\to \R^q$ and $\tilde{h}_{k}:(\R^n,0)\to \R^r$ ($k=1,\ldots, r-l$) so that $g$ and $h$ are respectively equal to $g\circ \chi_{(i),(j)}(h)^{-1}\circ\pi + \sum_{k=1}^{r-l}h_{i_k}\tilde{g}_{k}$ and $h\circ \chi_{(i),(j)}(h)^{-1}\circ \pi + \sum_{k=1}^{r-l}h_{i_k}\tilde{h}_{k}$. 
For $j^m(g',h')(0)\in \Gamma^m({n-r+l},{s+l})$, we put $\Theta_\sigma(j^m(g',h')(0))=j^m(g'',h'')(0)$, where $g''_{\hat{k}_i}=g'_{i}\circ \overline{\pi}+\sum_{k=1}^{r-l}h_{i_k}(\tilde{g}_{k})_{\hat{k}_i}$ for $i=1,\ldots, s$, $g''_{k_i}=g_{k_i}$ for $i=1,\ldots, q-s$, $h''_{\hat{i}_j}=h'_j\circ \overline{\pi}+ \sum_{k=1}^{r-l}h_{i_k}(\tilde{h}_{k})_{\hat{i}_j}$ for $j=1,\ldots, l$, and $h''_{i_j}=h_{i_j}$ for $j=1,\ldots, r-l$.
It is then easy to see that $\Theta_\sigma$ is smooth, $\Theta_\sigma\circ \Psi_{(i),(j),(k)}(\sigma) = \sigma$, and $\Psi_{(i),(j),(k)}\circ \Theta_\sigma=\id_{\Gamma^m({n-r+l},{s+l})}$. 
In particular, $(d\Psi_{(i),(j),(k)})_\sigma$ is surjective, and thus $\Psi_{(i),(j),(k)}$ is a submersion. 

Since $\Psi_{(i),(j),(k)}^{-1}(\Sigma) = \tilde{\Sigma}_{(i),(k)}\cap \Gamma_{(i),(j),(k)}^m(n,{q+r})$ is an open subset of $\tilde{\Sigma}_{(i),(k)}$, $\tilde{\Sigma}_{(i),(k)}$ is a submanifold if $\Sigma$ is. 
Furthermore, in each case that $\Sigma$ is a semi-algebraic subset or a submanifold, $\codim(\Psi_{(i),(j),(k)}^{-1}(\Sigma),\Gamma_{(i),(j),(k)}^m(n,{q+r}))$ is equal to $\codim (\Sigma,\Gamma^m({n-r+l},{s+l}))$. 
We eventually obtain
{\allowdisplaybreaks
\begin{align*}
&\codim(\tilde{\Sigma}_{(i),(k)},J^m(n,{q+r}))\\
=&\codim(\Psi_{(i),(j),(k)}^{-1}(\Sigma),\Gamma_{(i),(j),(k)}^m(n,{q+r}))+\codim(\Gamma_{(i),(j),(k)}^m(n,{q+r}),J^m(n,{q+r}))\\
=&\codim (\Sigma,\Gamma^m({n-r+l},{s+l}))+r.
\end{align*}
}%
This completes the proof of Proposition~\ref{H:prop:semi-alg tilde{Sigma}}. 
\end{proof}

By Proposition~\ref{H:prop:semi-alg tilde{Sigma}} and observations in the next section, the following value is important when analyzing local behavior of generic parameter families of constraint maps. 

\begin{definition}\label{H:def:extended codimension}

For a submanifold or a semi-algebraic subset $\Sigma\subset \Gamma^m(n,{q+r})$, the value 
\[
\codim(\Sigma,\Gamma^m(n,{q+r}))-n+r=\codim(\Sigma,J^m(n,{q+r}))-n
\]
is called the \textit{extended codimension} of $\Sigma$ and denoted by $d_e(\Sigma)$. 

\end{definition}

\noindent
Indeed, one can deduce from Proposition~\ref{H:prop:semi-alg tilde{Sigma}} that $\codim(\tilde{\Sigma},J^m(n,{q+r}))$ is equal to $d_e(\Sigma)+n$.
Note that for $\sigma\in J^m(n,q+r)_0$ the extended codimension $d_e(\mathcal{K}[G]^m\cdot \sigma)$ and the $\mathcal{K}[G]^m$-codimension of $\sigma$, which is equal to $\codim(\mathcal{K}[G]^m\cdot \sigma,J^m(n,q+r)_0)$, are related as follows:
\begin{equation}\label{eq:relation d_e K[G]-codim}
d_e(\mathcal{K}[G]^m\cdot \sigma)=\codim(\mathcal{K}[G]^m\cdot \sigma,J^m(n,q+r)_0)-n+q+r. 
\end{equation}
Thus, one can deduce from Proposition~\ref{prop:relation K[G]/K[G]_e-cod} that the $\mathcal{K}[G]_e$-codimension of an $m$-determined non-submersion map-germ $(g,h):(\R^n,0)\to (\R^{q+r},0)$ is equal to $d_e(\mathcal{K}[G]^m\cdot j^m(g,h)(0))$.
Note also that $d_e(\Sigma)$ is equal to $d_e((\pi^m_{m'})^{-1}(\Sigma))$ for a semi-algebraic subset $\Sigma\subset \Gamma^{m'}(n,q+r)$ since $\pi^m_{m'}:\Gamma^m(n,q+r)\to \Gamma^{m'}(n,q+r)$ is a submersion.  

\section{Transversality for parameter families of constraint mappings}\label{sec:transversality parameter family}

In this section, we will briefly review a result on transversality for parameter families of mappings, and show that transversality of (parameter families of) constraint mappings imply that of their reductions. 

Let $\pi: J^m(N,\R^{q+r})\to N$ be the source mapping, which is a fiber bundle with fiber $J^m(n,{q+r})$ and structure group $\mathcal{R}^m$. 
For an $\mathcal{R}^m$-invariant submanifold (resp.~semi-algebraic subset) $\Sigma\subset \Gamma^m({n-r+l},{s+l})$, one can define $\mathcal{R}^m$-invariant submanifolds (resp.~fiberwise semi-algebraic subsets) $\tilde{\Sigma}_{(i),(k),N}$ and $\tilde{\Sigma}_N$ in $J^m(N,\R^{q+r})$ so that their intersections with a fiber of the projection $\pi:J^m(N,\R^{q+r})\to N$ are $\tilde{\Sigma}_{(i),(k)}$ and $\tilde{\Sigma}$, respectively. 
Let $b\geq 0$ be an integer and $U\subset \R^b$ be an open set. 
We endow $C^\infty(N\times U,\R^{q+r})$ with the Whitney $C^\infty$-topology. 
We regard $(g,h)\in C^\infty(N\times U,\R^{q+r})$ as a $b$-parameter family of constraint mappings and put $(g_u({\overline{x}}),h_u({\overline{x}}))=(g({\overline{x}},u),h({\overline{x}},u))$ for $u\in U$. 
We define $j_1^m(g,h):N\times U\to J^m(N,\R^{q+r})$ by $j_1^m(g,h)({\overline{x}},u)=j^m(g_u,h_u)({\overline{x}})$. 
By the parametric transversality theorem ([Wall Lemma 2.1]) and Proposition~\ref{H:prop:semi-alg tilde{Sigma}}, we can show the following: 

\begin{theorem}\label{H:thm:transversality constraints}

Let $\Sigma\subset \Gamma^m({n-r+l},{s+l})$ be an $\mathcal{R}^m$-invariant submanifold.  
Then, the set of mappings $(g,h)\in C^\infty(N\times U,\R^{q+r})$ with $j_1^m(g,h)\pitchfork \tilde{\Sigma}_{(i),(k),N}$ is a residual (and thus dense) subset of $C^\infty(N\times U,\R^{q+r})$. 

\end{theorem}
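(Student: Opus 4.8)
The plan is to deduce the result from the parametric transversality theorem ([Wall, Lemma~2.1]) applied to a suitable evaluation map, the geometric input being Proposition~\ref{H:prop:semi-alg tilde{Sigma}}, which tells us that $\tilde{\Sigma}_{(i),(k),N}$ is a (locally closed) submanifold of $J^m(N,\R^{q+r})$. First I would reduce the statement to a local one over compact pieces. Fix a countable cover $\{V_\alpha\}$ of $N$ by coordinate domains with $\overline{V_\alpha}$ compact and contained in a larger chart, and a countable cover $\{U_\beta\}$ of $U$ by relatively compact open subsets, and let $\mathcal{T}_{\alpha\beta}\subset C^\infty(N\times U,\R^{q+r})$ be the set of $(g,h)$ for which $j_1^m(g,h)$ is transverse to $\tilde{\Sigma}_{(i),(k),N}$ at every point of $\overline{V_\alpha}\times\overline{U_\beta}$. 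Since $j_1^m(g,h)\pitchfork\tilde{\Sigma}_{(i),(k),N}$ is equivalent to transversality at each point of $N\times U$, and every point lies in some $V_\alpha\times U_\beta$, the set in the statement equals $\bigcap_{\alpha,\beta}\mathcal{T}_{\alpha\beta}$; because $C^\infty(N\times U,\R^{q+r})$ with the Whitney $C^\infty$-topology is a Baire space, it suffices to prove each $\mathcal{T}_{\alpha\beta}$ is open and dense. Openness over a compact set is the standard argument for transversality to a locally closed submanifold (after decomposing $\tilde{\Sigma}_{(i),(k),N}$ into countably many pieces each closed near $\overline{V_\alpha}\times\overline{U_\beta}$, exactly as in the usual proof of Thom's transversality theorem), so the heart of the matter is density of $\mathcal{T}_{\alpha\beta}$.

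For density, fix $(g,h)$ and $(\alpha,\beta)$, and identify a neighbourhood of $\overline{V_\alpha}$ with an open set $\Omega\subset\R^n$; over $\Omega$ the bundle $J^m(N,\R^{q+r})$ is trivialised as $\Omega\times J^m(n,q+r)$, and since $\tilde{\Sigma}_{(i),(k)}$ is $\mathcal{R}^m$-invariant and, by Proposition~\ref{H:prop:semi-alg tilde{Sigma}}, a submanifold of $\Gamma^m_{(i),(k)}(n,q+r)$, the restriction of $\tilde{\Sigma}_{(i),(k),N}$ is $\Omega\times\tilde{\Sigma}_{(i),(k)}$. Let $P$ be the finite-dimensional space of polynomial maps $\R^n\to\R^{q+r}$ of degree at most $m$, and define
\[
\Phi\colon \Omega\times U_\beta\times P\longrightarrow \Omega\times J^m(n,q+r),\qquad \Phi(\overline{x},u,p)=\bigl(\overline{x},\, j^m((g_u,h_u)+p)(\overline{x})\bigr).
\]
Because $j^m$ is linear in its argument and every element of $J^m(n,q+r)$ is the $m$-jet at $\overline{x}$ of a polynomial of degree at most $m$, the partial differential of $\Phi$ in the $P$-direction is already surjective onto the fibre, so $\Phi$ is a submersion; hence $\Phi^{-1}(\Omega\times\tilde{\Sigma}_{(i),(k)})$ is a submanifold of the finite-dimensional manifold $\Omega\times U_\beta\times P$. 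Applying the parametric transversality theorem ([Wall, Lemma~2.1])—equivalently, Sard's theorem to the projection $\Phi^{-1}(\Omega\times\tilde{\Sigma}_{(i),(k)})\to P$—the set of $p\in P$ with $j_1^m((g,h)+p)\pitchfork\Omega\times\tilde{\Sigma}_{(i),(k)}$ over $\Omega\times U_\beta$ is residual, hence dense, in $P$. Choosing such a $p$ arbitrarily small and replacing it by its product with a bump function that is $\equiv 1$ on $\overline{V_\alpha}\times\overline{U_\beta}$ and supported slightly larger yields a globally $C^\infty$-small perturbation of $(g,h)$ that still gives transversality along $\overline{V_\alpha}\times\overline{U_\beta}$; thus $\mathcal{T}_{\alpha\beta}$ is dense.

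Combining the two steps, $\bigcap_{\alpha,\beta}\mathcal{T}_{\alpha\beta}$ is a countable intersection of open dense sets, hence residual, and it coincides with the set in the statement; residual subsets of the Baire space $C^\infty(N\times U,\R^{q+r})$ are dense, which completes the argument. The main obstacle is not a single deep step but the careful verification of the hypotheses of the parametric transversality theorem—in particular that the evaluation map $\Phi$ is a submersion, so that the preimage of $\tilde{\Sigma}_{(i),(k)}$ is a manifold—together with the bookkeeping needed to pass from transversality over compact pieces to transversality over all of $N\times U$ while keeping the perturbation globally small; the substantive geometric content, namely that $\tilde{\Sigma}_{(i),(k),N}$ is a submanifold of the expected codimension, has already been supplied by Proposition~\ref{H:prop:semi-alg tilde{Sigma}}.
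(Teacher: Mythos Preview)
Your proposal is correct and follows essentially the same approach that the paper indicates: the paper simply cites the parametric transversality theorem ([Wall, Lemma~2.1]) together with Proposition~\ref{H:prop:semi-alg tilde{Sigma}} and omits the details, while you have written out the standard localisation-plus-polynomial-perturbation argument that underlies such a citation. There is nothing substantively different between your argument and what the paper intends.
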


\noindent
Since a semi-algebraic set with codimension $d$ can be decomposed into a finite union of submanifolds with codimension greater than or equal to $d$, we obtain

\begin{corollary}

Let $\Sigma\subset \Gamma^m({n-r+l},{s+l})$ be an $\mathcal{R}^m$-invariant submanifold or semi-algebraic subset with $d_e(\Sigma)>b$. 
The set of mappings $(g,h)\in C^\infty(N\times U,\R^{q+r})$ with $j_1^m(g,h)(N\times U)\cap \tilde{\Sigma}_N=\emptyset$ is a residual (and thus dense) subset of $C^\infty(N\times U,\R^{q+r})$. 

\end{corollary}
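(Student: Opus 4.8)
The plan is to deduce the corollary from Theorem~\ref{H:thm:transversality constraints} together with the codimension bookkeeping established in Proposition~\ref{H:prop:semi-alg tilde{Sigma}} and the definition of the extended codimension. First I would reduce the semi-algebraic case to the submanifold case: since a semi-algebraic subset $\Sigma\subset \Gamma^m(n-r+l,s+l)$ of codimension $d$ admits a finite stratification into $\mathcal{R}^m$-invariant semi-algebraic submanifolds $\Sigma_\alpha$ each of codimension $\geq d$ in $\Gamma^m(n-r+l,s+l)$ (one can take an $\mathcal{R}^m$-invariant stratification by averaging/semi-algebraic triviality, or simply note that the $\mathcal{R}^m$-saturation of each stratum of a fixed stratification is again semi-algebraic of no smaller codimension), it suffices to prove the statement for each stratum $\Sigma_\alpha$, and then intersect the resulting residual sets, which is again residual. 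Thus I may assume $\Sigma$ is an $\mathcal{R}^m$-invariant submanifold with $d_e(\Sigma)>b$, i.e.\ (by Definition~\ref{H:def:extended codimension}) $\codim(\Sigma,\Gamma^m(n-r+l,s+l))+r>b+n$.

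The key step is then a dimension count. By Proposition~\ref{H:prop:semi-alg tilde{Sigma}} (applied fiberwise and globalized to $J^m(N,\R^{q+r})$ in the obvious way), $\tilde{\Sigma}_{(i),(k),N}$ is an $\mathcal{R}^m$-invariant submanifold of $J^m(N,\R^{q+r})$ with
\[
\codim(\tilde{\Sigma}_{(i),(k),N},J^m(N,\R^{q+r}))=\codim(\Sigma,\Gamma^m(n-r+l,s+l))+r=d_e(\Sigma)+n>b+n.
\]
Now apply Theorem~\ref{H:thm:transversality constraints}: for a residual set of $(g,h)\in C^\infty(N\times U,\R^{q+r})$ the map $j_1^m(g,h):N\times U\to J^m(N,\R^{q+r})$ is transverse to $\tilde{\Sigma}_{(i),(k),N}$. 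But the domain $N\times U$ has dimension $n+b$, which is strictly less than $\codim(\tilde{\Sigma}_{(i),(k),N},J^m(N,\R^{q+r}))$; hence transversality forces $j_1^m(g,h)(N\times U)\cap \tilde{\Sigma}_{(i),(k),N}=\emptyset$. Since the index systems $(i)$ and $(k)$ range over a finite set, intersecting these finitely many residual sets we conclude that the set of $(g,h)$ with $j_1^m(g,h)(N\times U)\cap \tilde{\Sigma}_N=\emptyset$ (recall $\tilde{\Sigma}_N=\bigcup_{(i),(k)}\tilde{\Sigma}_{(i),(k),N}$) is residual; in a Baire space a finite (indeed countable) intersection of residual sets is residual, and residual subsets of $C^\infty(N\times U,\R^{q+r})$ with the Whitney topology are dense since this space is Baire.

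The only genuinely delicate point is the reduction of the semi-algebraic case to the submanifold case while keeping the subsets $\mathcal{R}^m$-invariant and controlling codimension; everything else is the standard ``transversality to a submanifold of codimension exceeding the source dimension means empty intersection'' argument combined with the codimension formula already proved in Proposition~\ref{H:prop:semi-alg tilde{Sigma}}. I would handle the stratification point by invoking the existence of a semi-algebraic (Whitney) stratification of $\Sigma$ compatible with the $\mathcal{R}^m$-action — note $\mathcal{R}^m$ is a semi-algebraic group acting semi-algebraically, so one can average a stratification over the group or take the coarsest common refinement of the translates — and observing that no stratum has codimension smaller than that of $\Sigma$ itself, so $d_e(\Sigma_\alpha)\geq d_e(\Sigma)>b$ for each stratum. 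With that in hand the corollary is immediate.
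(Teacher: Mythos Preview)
Your proposal is correct and follows essentially the same approach as the paper, which simply notes in one sentence that a semi-algebraic set of codimension $d$ decomposes into finitely many submanifolds of codimension $\geq d$ and then invokes Theorem~\ref{H:thm:transversality constraints}. You have in fact been more careful than the paper about the $\mathcal{R}^m$-invariance of the strata (which is needed to apply the theorem), a point the paper leaves implicit.
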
 

Let $(g,h):(N\times U,({\overline{x}},u))\to (\R^{q+r},(\overline{y},0))$ be a map-germ with $g_{k_i}({\overline{x}},u)\leq 0$ for $i=1,\ldots, q-s$ and $\rank d((h_{i_1})_u,\ldots, (h_{i_{r-l}})_u)_{\overline{x}}=r-l$. 
One can take a diffeomorphism germ $\lambda_{(i)}:(\R^{b},0)\to (U,u)$ and a map-germ $\iota_{(i)}:(\R^{n-r+l}\times \R^b,(0,0))\to (N,{\overline{x}})$ so that $\iota_{(i),v}:(\R^{n-r+l},0)\to N\times \{\lambda_{(i)}(v)\}$ (defined by $\iota_{(i),v}(y)=\iota_{(i)}(y,v)$) is an immersion-germ to $((h_{i_1})_{\lambda_{(i)}(v)},\ldots, (h_{i_{r-l}})_{\lambda_{(i)}(v)})^{-1}(0)$ for $v\in \R^b$ (sufficiently close to $0$). 
We define a map-germ $(g,h)_{\iota_{(i)},\lambda_{(i)},(k)}:(\R^{n-r+l}\times \R^b,(0,0))\to (\R^{s+l},0)$, called a \textit{reduction} of $(g,h)$ as follows:
\[
(g,h)_{\iota_{(i)},\lambda_{(i)},(k)}=\left(g_1\circ(\iota_{(i)},\lambda_{(i)}),\overset{\hat{k}}{\ldots},g_q\circ(\iota_{(i)},\lambda_{(i)}),h_1\circ(\iota_{(i)},\lambda_{(i)}),\overset{\hat{i}}{\ldots},h_r\circ(\iota_{(i)},\lambda_{(i)})\right).
\]
We define a \textit{full reduction} in the same way as the non-parametric case.
We need the following proposition in order to analyze local behavior of generic parameter families of constraint functions.

\begin{proposition}\label{H:prop:equivalence transversality reduction}

Let $(g,h):(N\times U,({\overline{x}},u))\to (\R^{q+r},\overline{y})$ be as above and $\Sigma$ be an $\mathcal{R}^m$-invariant submanifold of $\Gamma^m({n-r+l},{s+l})$.
Let $\overline{\Sigma}$ be the submanifold in $J^m(\R^{n-r+l},\R^{s+l})$ whose intersection with a fiber of the projection $\pi:J^m(\R^{n-r+l},\R^{s+l})\to \R^{n-r+l}$ is $\Sigma$. 
If $j_1^m(g,h)$ is transverse to $\tilde{\Sigma}_{(i),(k),N}$ at $({\overline{x}},u)$, then $j_1^m(g,h)_{\iota_{(i)},\lambda_{(i)},(k)}$ is transverse to $\overline{\Sigma}$ at $(0,0)$. 

\end{proposition}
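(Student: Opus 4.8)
The plan is to deduce the transversality of $j_1^m(g,h)_{\iota_{(i)},\lambda_{(i)},(k)}$ to $\overline{\Sigma}$ from two ingredients --- transversality to the coarser submanifold $\overline{\Gamma}^m_N\supset\overline{\Sigma}$, where $\overline{\Gamma}^m_N\subset J^m(\R^{n-r+l},\R^{s+l})$ is the subbundle whose fibre is $\Gamma^m(n-r+l,s+l)$, and transversality, \emph{inside} $\overline{\Gamma}^m_N$, of the restriction of $j_1^m(g,h)_{\iota_{(i)},\lambda_{(i)},(k)}$ to the preimage of $\overline{\Gamma}^m_N$ --- and then to recombine them by the elementary fact that if $f\pitchfork S$ and $f|_{f^{-1}(S)}\pitchfork Z$ inside $S$, then $f\pitchfork Z$. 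First I would pass to normal form: choosing a chart on $N$, the diffeomorphism $\lambda_{(i)}$, and a coordinate straightening on $N\times U$, one may assume $N\times U=(\R^n\times\R^b,0)$, that $h_{i_j}(x,v)=x_j$ for $j=1,\dots,r-l$ (so, after reordering the equality constraints, $h=(x_1,\dots,x_{r-l},\widehat{h})$ with $\widehat{h}=(\widehat{h}_1,\dots,\widehat{h}_l)$), that $\lambda_{(i)}=\mathrm{id}$, and that $\iota_{(i)}(y,v)=((0,y),v)=:\kappa(y,v)$. This is an $\mathcal{R}$-change, which preserves both transversalities because $\Sigma$ --- hence $\overline{\Sigma}$ --- and $\tilde{\Sigma}_{(i),(k),N}$ are $\mathcal{R}^m$-invariant, and it is compatible with reduction by (the proof of) Lemma~\ref{H:lem:basic properties germ (g,h)_{(i)}}. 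By the definitions of $\tilde{\Sigma}$ and $\overline{\Sigma}$ together with $\mathcal{R}^m$-invariance of $\Sigma$, the jet $\sigma_0:=j^m(g_u,h_u)(\overline{x})$ lies in $\tilde{\Sigma}_{(i),(k),N}$ iff the reduction jet $\tau_0:=j_1^m(g,h)_{\iota_{(i)},\lambda_{(i)},(k)}(0,0)$ lies in $\overline{\Sigma}$; so if $\sigma_0\notin\tilde{\Sigma}_{(i),(k),N}$ both transversalities hold vacuously, and one may assume $\sigma_0\in\tilde{\Sigma}_{(i),(k),N}$, in particular $h_u(\overline{x})=0$. Finally, choosing $(j)$ so that $\chi_{(i),(j)}(h_v)=\mathrm{id}$, one has the pointwise identity
\[
j_1^m(g,h)_{\iota_{(i)},\lambda_{(i)},(k)}(y,v)=\Psi_{(i),(j),(k),N}\bigl(j_1^m(g,h)(\kappa(y,v))\bigr)
\]
whenever $h_v((0,y))=0$, where $\Psi_{(i),(j),(k),N}$ is the bundle analogue of the submersion $\Psi_{(i),(j),(k)}$ constructed in the proof of Proposition~\ref{H:prop:semi-alg tilde{Sigma}}; indeed the $\Phi$-factor of that map is the identity jet because $\chi_{(i),(j)}(h_v)=\mathrm{id}$, so $\Psi_{(i),(j),(k),N}$ is precisely ``drop the components $g_{k_\bullet},h_{i_\bullet}$ and restrict to $\{x_1=\dots=x_{r-l}=0\}$'', which is the reduction.

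\textbf{The two ingredients.}
(a) Since $\tilde{\Sigma}_{(i),(k),N}$ is contained in the codimension-$r$ submanifold of jets whose equality-constraint value at the source vanishes, the hypothesis forces $j_1^m(g,h)$ to be transverse to the latter, i.e.\ $dh_{(\overline{x},u)}\colon\R^{n+b}\to\R^r$ is onto; a one-line computation --- $\Im d\kappa_{(0,0)}=\Ker d(h_1,\dots,h_{r-l})_{(0,0)}$, and for $w\in\R^l$ surjectivity of $dh_{(\overline{x},u)}$ provides $\xi$ with $dh_{(\overline{x},u)}(\xi)=(0,w)$, whence $\xi\in\Ker d(h_1,\dots,h_{r-l})_{(0,0)}$ and $d\widehat{h}_{(\overline{x},u)}(\xi)=w$ --- then shows $d(\widehat{h}\circ\kappa)_{(0,0)}\colon\R^{n-r+l+b}\to\R^l$ is onto; as $\widehat{h}\circ\kappa$ is the ``evaluation of the surviving equality constraints at the source'' composed with $j_1^m(g,h)_{\iota_{(i)},\lambda_{(i)},(k)}$, this says exactly that $j_1^m(g,h)_{\iota_{(i)},\lambda_{(i)},(k)}\pitchfork\overline{\Gamma}^m_N$ at $(0,0)$. (b) Applying the same elementary restriction lemma to the hypothesis gives that $j_1^m(g,h)|_P$, with $P:=h^{-1}(0)$, is transverse to $\tilde{\Sigma}_{(i),(k),N}$ inside that codimension-$r$ submanifold at $(\overline{x},u)$; near $\sigma_0$ one has $\tilde{\Sigma}_{(i),(k),N}=\Psi_{(i),(j),(k),N}^{-1}(\overline{\Sigma})$ with $\Psi_{(i),(j),(k),N}$ a submersion onto $\overline{\Gamma}^m_N$, so the standard submersion lemma yields $\Psi_{(i),(j),(k),N}\circ j_1^m(g,h)|_P\pitchfork\overline{\Sigma}$ inside $\overline{\Gamma}^m_N$ at $(\overline{x},u)$. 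Because the first $r-l$ defining equations of $P$ are the coordinate functions $x_1,\dots,x_{r-l}$, we have $P\subset\Im\kappa$, hence $\kappa$ restricts to a diffeomorphism of $\kappa^{-1}(P)=(j_1^m(g,h)_{\iota_{(i)},\lambda_{(i)},(k)})^{-1}(\overline{\Gamma}^m_N)$ onto $P$; the displayed identity then identifies $j_1^m(g,h)_{\iota_{(i)},\lambda_{(i)},(k)}$ restricted to $(j_1^m(g,h)_{\iota_{(i)},\lambda_{(i)},(k)})^{-1}(\overline{\Gamma}^m_N)$ with $\Psi_{(i),(j),(k),N}\circ j_1^m(g,h)|_P$ up to that diffeomorphism, whence it is transverse to $\overline{\Sigma}$ inside $\overline{\Gamma}^m_N$ at $(0,0)$.

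\textbf{Conclusion and main obstacle.}
Combining (a) and (b) by the converse restriction lemma --- if $f\pitchfork S$ and $f|_{f^{-1}(S)}\pitchfork Z$ inside $S$ then $f\pitchfork Z$ --- applied with $f=j_1^m(g,h)_{\iota_{(i)},\lambda_{(i)},(k)}$, $S=\overline{\Gamma}^m_N$, $Z=\overline{\Sigma}$, gives $j_1^m(g,h)_{\iota_{(i)},\lambda_{(i)},(k)}\pitchfork\overline{\Sigma}$ at $(0,0)$, as required. The genuinely delicate point is the bundle version $\Psi_{(i),(j),(k),N}$ and the displayed compatibility identity: one has to carry out the source-point bookkeeping in $J^m(N,\R^{q+r})$ and verify that reducing the jet $j^m(g_v,h_v)((0,y))$ --- legitimate precisely because $h_v((0,y))=0$ --- reproduces the jet at $y$ of the parametrized reduction $(g,h)_{\iota_{(i)},\lambda_{(i)},(k)}(\,\cdot\,,v)$. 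This is, however, only a parametrized rewording of the fibrewise construction already in the proof of Proposition~\ref{H:prop:semi-alg tilde{Sigma}}; everything else is the formal calculus of transversality together with the normalizations above.
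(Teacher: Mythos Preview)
Your argument is correct and takes a genuinely different route from the paper's proof. The paper works directly: it extends the fibrewise projection of Proposition~\ref{H:prop:semi-alg tilde{Sigma}} to a submersion $\tilde{\Psi}_{(i),(j),(k)}:J^m_{(i),(j)}(\R^n,\R^{q+r})\to J^m(\R^{n-r+l},\R^{s+l})$ defined on \emph{all} jets (not just those with vanishing $h$-value), establishes the global identity $j_1^m(g,h)_{\iota_{(i)},\lambda_{(i)},(k)}=\tilde{\Psi}_{(i),(j),(k)}\circ j_1^m(g,h)\circ(\iota_{(i)},\lambda_{(i)})$, and then does explicit linear algebra: it decomposes $T_{(0,0)}(\R^n\times\R^b)=W_1\oplus W_2$ and the target tangent space as $V_1\oplus V_2$, checks that the transversality hypothesis forces $V_2=T\tilde{\Sigma}+dj_1^m(g,h)(W_2)$, and then pushes through $d\tilde{\Psi}$.

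Your approach instead factors the problem through the intermediate submanifold $\overline{\Gamma}^m_N$ and invokes the restriction/recombination lemma for transversality twice. This is more conceptual and modular: each step is a one-line application of a standard transversality fact, and you never have to name explicit complementary subspaces. The price is that you need the compatibility identity only on the locus $h_v((0,y))=0$, and you must separately verify surjectivity of $d(\widehat{h}\circ\kappa)_{(0,0)}$ to get ingredient~(a). The paper's direct computation avoids this two-step structure but requires the somewhat delicate bookkeeping with $V_1,V_2,W_1,W_2$. In fact, with your normalization $h_{i_j}(x,v)=x_j$ and $(j)=(r-l+1,\dots,n)$, the displayed identity actually holds for \emph{all} $(y,v)$, not just those with $h_v((0,y))=0$; you could have exploited this to shorten the argument further, essentially recovering the paper's global identity in your coordinates.
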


\begin{proof}
Since the proposition concerns local property around $({\overline{x}},u)\in N\times U$, we can assume that $N=\R^n, U=\R^b, {\overline{x}}=0$, and $u=0$ without loss of generality. 
We take a system $(j)=(j_1,\ldots, j_{n-r+l})$ of indices in $\{1,\ldots, n\}$ so that $j^m(g_0,h_0)(0)$ is contained in $\Gamma^m_{(i),(j),(k)}(n,{q+r})$. 
One can show that (parametric) $\mathcal{R}^m$-equivalence class of $(g,h)_{\iota_{(i)},\lambda_{(i)},(k)}$ does not depend on the choice of $\iota_{(i)}$ and $\lambda_{(i)}$. 
Thus, one can further assume that $\lambda_{(i)}=\id_{\R^b}$ and 
\[
\iota_{(i)}(y,v) = ((h_{i_1})_v,\ldots, (h_{i_{r-l}})_v,x_{j_1},\ldots, x_{j_{n-r+l}})^{-1}(0,y). 
\] 
Put
\begin{align*}
&J^m_{(i),(j)}(\R^n,\R^{q+r})\\
=& \left\{j^m(g,h)(y)\in J^m(\R^n,\R^{q+r})~\left|~\rank d(h_{i_1},\ldots, h_{i_{r-l}},x_{j_1},\ldots, x_{j_{n-r+l}})_y=n\right.\right\}
\end{align*}
and define $\tilde{\Psi}_{(i),(j),(k)}:J^m_{(i),(j)}(\R^n,\R^{q+r}) \to J^m(\R^{n-r+l},\R^{s+l})$ as follows: 
\[
\tilde{\Psi}_{(i),(j),(k)}(j^m(g,h)(x))=j^m\left((g_1,\overset{\hat{k}}{\ldots},g_q,h_1,\overset{\hat{i}}{\ldots},h_r)\circ \rho_{(j),x}\right)(\pi_{(j)}(x)),
\]
where $\rho_{(j),x}:\R^{n-r+l}\to \R^n$ is defined as follows:
\[
\mbox{The $k$-th component of }\rho_{(j),x}(y) = \begin{cases}
y_{k'} & (k=j_{k'} \mbox{ for some }k'\in \{1,\ldots, n-r+l\}),\\
x_k & (\mbox{otherwise}). 
\end{cases}
\]
As in the proof of Proposition~\ref{H:prop:semi-alg tilde{Sigma}}, we can show that $\tilde{\Psi}_{(i),(j),(k)}$ is a submersion. 
Furthermore, it is easy to see that the following equality holds: 
\[
j^m_1(g,h)_{\iota_{(i)},\lambda_{(i)},k} = \tilde{\Psi}_{(i),(j),(k)}\circ j^m_1(g,h)\circ (\iota_{(i)},\lambda_{(i)}). 
\]
By the assumption on transversality, we have the following equality: 
\[
T_{j^m_1(g,h)(0,0)}J^m(\R^n,\R^{q+r}) = T_{j^m_1(g,h)(0,0)}\tilde{\Sigma}_{(i),(k),N} + d(j_1^m(g,h))_{(0,0)}(T_{(0,0)}\R^n\times \R^b). 
\]
We take indices $\hat{i}_1,\ldots, \hat{i}_l\in \{1,\ldots, r\}$ and $\hat{j}_1,\ldots, \hat{j}_{r-l}\in \{1\ldots,n\}$ so that $\{i_1,\ldots, i_{r-l},\linebreak\hat{i}_1,\ldots, \hat{i}_{l}\}=\{1,\ldots,r\}$ and $\{j_1,\ldots, j_{n-r+l},\hat{j}_1,\ldots, \hat{j}_{r-l}\}=\{1,\ldots,n\}$, and put
{\allowdisplaybreaks
\begin{align*}
&W = T_{(0,0)}(\R^n\times \R^b), \\
&W_1 = \left<\left(\frac{\Pa}{\Pa x_{\hat{j}_1}}\right),\ldots, \left(\frac{\Pa}{\Pa x_{\hat{j}_{r-l}}}\right)\right> \subset W, \\
&W_2= \Im d(\iota_{(i)},\lambda_{(i)})_{(0,0)}= \Ker d(h_{i_1},\ldots,h_{i_{r-l}})_0\subset W, \\
&V = T_{j^m_1(g,h)(0,0)}J^m(\R^n,\R^{q+r}) \cong T_0\R^n\oplus T_0 \R^{q}\oplus T_0 \R^{r}\oplus T_{j^m_1(g,h)(0,0)}J^m(\R^n,\R^{q+r})_{(0,0)}, \\
&V_1 = \left<\left(\frac{\Pa}{\Pa X_{i_1}}\right),\ldots,\left(\frac{\Pa}{\Pa X_{i_{r-l}}}\right) \right>\subset T_0\R^r\subset V,\\
&V_2 = T_0\R^n\oplus T_0 \R^{q}\oplus \left<\left(\frac{\Pa}{\Pa X_{\hat{i}_1}}\right),\ldots,\left(\frac{\Pa}{\Pa X_{\hat{i}_{l}}}\right) \right>\oplus T_{j^m_1(g,h)(0,0)}J^m(\R^n,\R^{q+r})_{(0,0)}\subset V.
\end{align*}
}%
It is easy to see that

\begin{itemize}

\item
$V_1\oplus V_2 = T_{j^m_1(g,h)(0,0)}\tilde{\Sigma}_{(i),(k),N} + dj_1^m(g,h)_{(0,0)}(W_1\oplus W_2)$, 

\item 
$T_{j^m_1(g,h)(0,0)}\tilde{\Sigma}_{(i),(k),N}$ and $dj_1^m(g,h)_0(W_2)$ are contained in $V_2$.  

\item
$p_1\circ (dj_1^m(g,h)_{(0,0)})|_{W_1}:W_1\to V_1$ is an isomorphism, where $p_1:V_1\oplus V_2\to V_1 $ is the projection. 

\end{itemize}

\noindent
By these conditions, we can deduce $V_2= T_{j^m_1(g,h)(0,0)}\tilde{\Sigma}_{(i),(k),N} + dj_1^m(g,h)_{(0,0)}(W_2)$. 
Since $V_1$ is contained in $\Ker (d\tilde{\Psi}_{(i),(j),(k)})_{j^m_1(g,h)(0,0)}$ and $\tilde{\Sigma}_{(i),(k),N}\cap J^m_{(i),(j)}(\R^n,\R^{q+r})$ is equal to $\tilde{\Psi}_{(i),(j),(k)}^{-1}(\overline{\Sigma})$, we obtain
\begin{align*}
&T_{j^m_1(g,h)_{\iota_{(i)},\lambda_{(i)},(k)}(0,0)}J^m(\R^{n-r+l},\R^{s+l}) \\
=& T_{j^m_1(g,h)_{\iota_{(i)},\lambda_{(i)},(k)}(0,0)}\overline{\Sigma} + d(\tilde{\Psi}_{(i),(j),(k)}\circ j_1^m(g,h))_{(0,0)}(W_2)\\
=& T_{j^m_1(g,h)_{\iota_{(i)},\lambda_{(i)},(k)}(0,0)}\overline{\Sigma} + (d(j^m_1(g,h)_{\iota_{(i)},\lambda_{(i)},k}))_{(0,0)}(T_{(0,0)}(\R^{n-r+l}\times \R^b)). 
\end{align*}
This completes the proof of the proposition. 
\end{proof}

Suppose that $\Sigma$ is a $\mathcal{K}[G]^m$-orbit of an $m$-determined jet $j^m(g_0,h_0)(0)\in \Gamma^m(n-r+l,s+l)$. 
Since the group $\mathcal{K}[G]$ is geometric in the sense of Damon \cite{Damon1984Book}, transversality of $j^m_1(g,h)_{\iota_{(i)},\lambda_{(i)},(k)}$ to $\overline{\Sigma}$ is equivalent to versality of $(g,h)_{\iota_{(i)},\lambda_{(i)},(k)}$ as an unfolding. 
Thus, by the proposition above, transversality of $j^m_1(g,h)$ to $\tilde{\Sigma}_{(i),(k),N}$ implies that the reduction $(g,h)_{\iota_{(i)},\lambda_{(i)},(k)}$ is a versal unfolding of $(g_0,h_0)$, which is $\mathcal{K}[G]$-equivalent to $(g_0,h_0) + \sum_{i=1}^{b}u_i\xi_i$, where $\xi_1,\ldots, \xi_b\in \mathcal{E}_{n-r+l}^{s+l}$ are representatives of generators of $\mathcal{E}_{n-r+l}^{s+l}/T\mathcal{K}[G]_e(g_0,h_0)$.

\section{Structure of jet spaces relative to $\mathcal{K}[G]$-actions} \label{sec:classification_constraints}

In this section, we will completely classify jets appearing as full reductions of generic $b$-parameter families of constraint mappings with $b\leq 4$ (Theorem~\ref{thm:classification jets}), and then give the main theorem in full detail (Theorem~\ref{thm:main theorem detail}). 
In the context of multi-objective optimization, usually, 
\begin{math}
n
\end{math}
is much larger than 
\begin{math}
q+r
\end{math}. In what follows, we assume that is always the case. 

Let $W^m_{n,q,r}\subset J^m(n,{q+r})_{0}$ be the set of jets $j^m(g,h)(0)$ with $\rank dh_0 = 0$. 
Since a full reduction of any map-germ is contained in $W^m_{n,q,r}$ for some $n,q,r$, it is enough to examine jets in this subset.

\begin{theorem}\label{thm:classification jets}

\begin{enumerate}

\item 
The extended codimension $d_e(W^m_{n,q,r})$ is equal to $q+r +n(r-1)$. 
In particular it is greater than $4$ if either $r\geq 2$ or $r=1\land q\geq 4$. (Note that we assume $n\gg q+r$.)

\item 
Let $A_{1,k}$ and $A_2\subset W^5_{n,0,1}$ be respectively the set of $5$-jets $\mathcal{K}[G]^5$-equivalent to those of type $(1,k)$ and $(2)$ (with any possible signs) in Table~\ref{table:generic constraint q=0}.
The extended codimensions $d_e(A_{1,k})$ and $d_e(A_2)$ are respectively equal to $k-1$ and $4$. 
Furthermore, the extended codimension of 
\[
W^5_{n,0,1}\setminus \left(\bigsqcup_{k=2}^5 A_1^k \sqcup A_2\right)
\]
is equal to $5$. 

\item 
Let $B_0$ be the set of $5$-jets represented by submersions, and $B_{i,k}$ and $B_j$ be respectively the sets of $5$-jets $\mathcal{K}[G]^5$-equivalent to those of type $(i,k)$ and $(j)$ (with any possible signs and parameters) in Table~\ref{table:generic constraint r=0} ($i=1,3,4$, $j=2,5,\ldots ,10$). 
Then $d_e(B_0)$ is equal to $q-n$, and $d_e(B_{i,k})$ and $d_e(B_j)$ are as shown in the far right column of Table~\ref{table:generic constraint r=0}.
Furthermore, the extended codimension of  
\[
J^5(n,q)_0\setminus \left(B_0\sqcup \left(\bigsqcup_{i,k}B_{i,k}\right)\sqcup \left(\bigsqcup_{j}B_{j}\right)\right)
\]
is equal to $5$.

\item 
Let $C_{i,k}$ and $C_j$ be respectively the sets of jets $\mathcal{K}[G]^m$-equivalent to those of type $(i,k)$ and $(j)$ (with any possible signs and parameters) in Table~\ref{table:generic constraint q>0 r=1} ($i=1,3$, $j=2,4,\ldots, 8$, and $m$ depends on the type).
Then $d_e(C_{i,k})$ and $d_e(C_{j})$ are as shown in the far right column of Table~\ref{table:generic constraint q>0 r=1}. 
Furthermore, the extended codimensions of 
\[
W^4_{n,1,1}\setminus \left(\bigsqcup_{i,k}\left(C_{i,k}\right)\sqcup C_2\right), \hspace{.3em}W^3_{n,2,1}\setminus \left(\bigsqcup_{j=4}^7C_j\right),\hspace{.3em} W^3_{n,3,1}\setminus C_8
\]
are equal to $5$.

\end{enumerate}

\noindent
For each map-germ $(g,h)$ in Tables~\ref{table:generic constraint q=0}, \ref{table:generic constraint r=0} and \ref{table:generic constraint q>0 r=1}, one can take representatives of generators of the quotient space $\mathcal{E}_n^{q+r}/T\mathcal{K}[G]_e(g,h)$ as shown in Table~\ref{table:generator quotient}. 

\end{theorem}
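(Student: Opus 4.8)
The plan is to prove the four assertions essentially independently, with part (1) as a warm-up computation and parts (2)--(4) forming the technical heart. Throughout, I will repeatedly invoke the relation \eqref{eq:relation d_e K[G]-codim}, which expresses $d_e(\mathcal{K}[G]^m\cdot\sigma)$ in terms of the $\mathcal{K}[G]^m$-codimension of $\sigma$ inside $J^m(n,q+r)_0$, together with Proposition~\ref{prop:relation K[G]/K[G]_e-cod} to pass between $\mathcal{K}[G]$- and $\mathcal{K}[G]_e$-codimensions; the point is that $d_e$ is designed precisely so that these numerical bookkeeping steps become transparent and $n$-independent. For part (1), I would compute $d_e(W^m_{n,q,r})$ directly: $W^m_{n,q,r}$ is cut out inside $J^m(n,q+r)_0$ by the vanishing of the full $r\times n$ Jacobian block of $h$ at $0$ together with the vanishing of the $0$-jet of the active inequalities, and a straightforward dimension count of these linear conditions on jet coordinates yields $\codim = q + r + nr$ in $J^m(n,q+r)_0$; subtracting $n$ and recalling $\codim$ in $J^m(n,q+r)_0$ versus $J^m(n,q+r)$ differs by $q+r$ gives $d_e = q+r+n(r-1)$. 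The monotonicity-in-$(r,q)$ consequence is then immediate.

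For parts (2)--(4), the scheme is the same for each table. First, for every normal form $(g,h)$ listed I would (a) verify finite $\mathcal{K}[G]$-determinacy by exhibiting an $m$ with $\mathcal{M}_n^m\mathcal{E}_n^{q+r}\subset T\mathcal{K}[G](g,h)$ (Proposition~\ref{prop:basic properties K[G]-eq/codim}(1)), (b) compute $T\mathcal{K}[G]_e(g,h)$ explicitly from \eqref{eq:fet} and read off a basis of the quotient $\mathcal{E}_n^{q+r}/T\mathcal{K}[G]_e(g,h)$ — these are exactly the generators recorded in Table~\ref{table:generator quotient}, so this step simultaneously proves the final sentence of the theorem — and (c) convert the resulting $\mathcal{K}[G]_e$-codimension into $d_e$ of the orbit via \eqref{eq:relation d_e K[G]-codim} and Proposition~\ref{prop:relation K[G]/K[G]_e-cod}, matching the "far right column" values. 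These computations are routine linear algebra over $\mathcal{E}_n$ but voluminous; the calculations of the sphere-type examples already sketched in Example~\ref{exam:C2-DTLZ2} and the appendix on standard bases indicate the computational tools (monomial orderings, Theorem~\ref{thm:standard_basis}) I would use to make them systematic. Second — and this is the real work — for each complement statement I must show that the locus of jets in $W^m_{n,q,r}$ (or $J^5(n,q)_0$) not lying in one of the listed orbits has extended codimension exactly $5$. This requires a genuine \emph{classification}: using the complete transversal theorem (Theorem~\ref{thm:complete transversal}) I would organize $W^m_{n,q,r}$ by corank and by the $\mathcal{K}[G]$-invariants available (the $0$-jet of $g$, the rank of $dg_0$, and, in corank $1$, the extended intrinsic derivative $\tilde{D}^2 f$ of Section~\ref{sec:extended_intrinsic_derivative} whose $\mathcal{K}[G]$-invariance is Theorem~\ref{T:invariance D^2 under K}), peeling off strata of low codimension one at a time until everything remaining is either on the list or visibly of codimension $\geq 5$.

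The main obstacle is precisely this last classification step: showing the complement has extended codimension \emph{exactly} $5$ (not merely $\geq 5$) means I must both (i) exhibit one concrete stratum of the complement of codimension $5$ — typically the next case in the $A$, $D$, $E$ style hierarchy (e.g.\ an $A_6$-type or a corank-$2$ stratum) — and (ii) prove nothing of lower codimension has been missed, which forces a complete though finite case analysis of $m$-jets up to the relevant determinacy level. Controlling the interplay between the inequality part (where the group $G_{gp}$ only allows positive-diagonal rescalings and permutations, so signs are genuine invariants and cannot be normalized away) and the equality part (where the full $\GL(r,\R)$ and the mixing block $M_{q,r}(\R)$ act) is what makes the $q>0$, $r=1$ tables in part (4) delicate; here I would lean on the reduction lemmas of Section~\ref{sec:reduction} (Lemmas~\ref{lem:K[G]-codim reduction}, \ref{H:lem:basic properties germ (g,h)_{(i)}}) and on Proposition~\ref{H:prop:semi-alg tilde{Sigma}} to transfer codimension computations between $W^m_{n,q,r}$ and the smaller jet spaces where the surviving singularities actually live, so that the bulk of the classification can be carried out with $r=0$ (or $r=1$, $q$ small) and then lifted. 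Verifying semialgebraicity of each complement stratum — needed so that the "codimension $5$" statement is meaningful and so that the corollary in Section~\ref{sec:transversality parameter family} applies — follows formally from the constructions being given by polynomial conditions on jet coordinates, so I would dispatch it with a remark rather than belabor it.
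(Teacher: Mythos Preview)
Your proposal is correct and follows essentially the same route as the paper: direct dimension count for part~(1); then, for parts~(2)--(4), stratification by corank, normalization of the $1$- and $2$-jets, iterated use of the complete transversal theorem (Theorem~\ref{thm:complete transversal}) to climb the jet tower, standard-basis computations to pin down tangent spaces and codimensions, and the extended intrinsic derivative of Section~\ref{sec:extended_intrinsic_derivative} to handle the moduli strata (types~(6), (10) in Table~\ref{table:generic constraint r=0} and types~(4), (8) in Table~\ref{table:generic constraint q>0 r=1}).

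One small correction of emphasis: the reduction machinery of Section~\ref{sec:reduction} (Lemmas~\ref{H:lem:basic properties germ (g,h)_{(i)}}, \ref{lem:K[G]-codim reduction}, Proposition~\ref{H:prop:semi-alg tilde{Sigma}}) is not used inside the proof of Theorem~\ref{thm:classification jets} itself---once you are in $W^m_{n,q,r}$ there is nothing left to reduce---but rather to pass from Theorem~\ref{thm:classification jets} to the main Theorem~\ref{thm:main theorem detail}. What the paper actually does for part~(4) is work directly in each $W^m_{n,q,1}$, normalize the $1$-jet of $(g,h)$ to $(x_1,\ldots,x_q,0)$, and then run the same complete-transversal analysis as in part~(3); the calculations are so parallel to those for $\Lambda_{l,s}$ that most are omitted by reference. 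So your instinct that the $r=1$ cases piggyback on the $r=0$ work is right, but the mechanism is repetition of the argument, not an appeal to the reduction lemmas.
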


\begin{table}[htbp]
 \begin{center}
  \begin{tabular}{|c|l|c|c|c|} \hline
   type & jet & range &$\mathcal{K}$-determinacy & ex.~cod. \\ \hline
   $(1,k)$ & $x_1^k \pm x_2^2 \pm \cdots \pm x_n^2$ & $2\leq k \leq 5$ &$k$& $k-1$ \\
\hline   $(2)$ & $x_1^3 \pm x_1 x_2^2 + x_3^2 \pm \cdots \pm x_n^2$ & &$3$ & $4$ \\
 \hline
  \end{tabular}
  \caption{$5$-jets in $W^5_{n,0,1}$ appearing as a full reduction of a generic $b\leq 4$-parameter family of constraint mappings.}
  \label{table:generic constraint q=0}
 \end{center}
\end{table}

\begin{landscape}
\begin{table}[htbp]
 \begin{center}
{\renewcommand{\arraystretch}{1.2}
  \begin{tabular}{|c|l|c|c|c|c|} \hline
   type& $\tilde{g} \left( x_{l+1}, \ldots, x_n \right)$ & $l$ & range & 
$\mathcal{K}[G]$-determinacy
& 
ex.~cod.
\\ \hline
   $(1,k)$ & $\pm x_q^k + \sum_{j=q+1}^n \pm x_j^2$ & \multirow{2}{*}{$q-1$} & $2\leq k \leq 5$ &$k$ &$k-1$ \\ \cline{1-2}\cline{4-6}
   $(2)$ & $x_q^3 \pm x_q x^2_{q+1} + \sum_{j=q+2}^n \pm x_j^2$ &&  &$3$& $4$ \\ \hline
   $(3,k)$ & $\pm x_{q-1}^k + \sum_{j=q}^n \pm x_j^2$ & &  $2\leq k \leq 4$ &$k$& $k$ \\ \cline{1-2}\cline{4-6}
   $(4,k)$ & $\pm x_q^k \pm x_{q-1} x_q + \sum_{j=q+1}^n \pm x_j^2$ & $q-2$ & $k=3,4$ &$k$& $k$ \\ \cline{1-2}\cline{4-6}
   $(5)$ & $\pm x_{q-1}^2 \pm x_q^3 + \sum_{j=q+1}^n \pm x_j^2$ & &  &$3$& $4$ \\ \hline
   $(6)$ & $\sum_{j=1}^{2} \delta_{j} x_{q-j}^2 + \alpha x_{q-2} x_{q-1} + \sum_{j=q}^n \pm x_j^2$ & & $\alpha\in \R, (\ast)$ &$2$& $3$ \\ \cline{1-2}\cline{4-6}
   $(7)$ & $\pm \left( x_{q-2} \pm x_{q-1} \right)^2 \pm x_{q-1}^3 + \sum_{j=q}^n \pm x_j^2$ &  & &$3$& $4$ \\ \cline{1-2}\cline{4-6}
   $(8)$ & $\pm x_{q-2}^3 \pm x_{q-1}^2 \pm x_{q-2} x_{q-1} + \sum_{j=q}^n \pm x_j^2$ &$q-3$ &  &$3$& $4$ \\ \cline{1-2}\cline{4-6}
   $(9)$ & $\begin{matrix*}[l] &x_q^3 \pm x_{q-2} x_q \pm x_{q-1} x_q \\ &\pm x_{q-2} x_{q-1} + \sum_{j=q+1}^n \pm x_j^2\end{matrix*}$ & && $3$ & $4$ \\ \hline
   $(10)$ & $\begin{matrix*}[l] &\sum_{j=1}^3 \delta_j x_{q-4+j}^2 \\ &+ \sum_{1 \le i < j \le 3} \alpha_{ij} x_{q-4+i} x_{q-4+j} \\ &\pm x_{q-3} x_{q-2} x_{q-1} + \sum_{j=q}^n \pm x_j^2\end{matrix*}$ & $q-4$ & $
\alpha_{ij}\in \R,(\ast\ast)$ &$3$& $4$ \\
\hline
  \end{tabular}
}
  \captionsetup{singlelinecheck=off}
  \caption[.]{Normal forms 
$  	\left( x_1, \ldots, x_{q-1}, \sum_{j=1}^{l_1} x_j - \sum_{j=l_1+1}^l x_j + \tilde{g} \left( x_{l+1}, \ldots, x_n \right) \right)$
  of jets in $W^5_{n,q,0}=J^5(n,q)_0$ appearing as a full reduction of a generic $b\leq 4$-parameter family of constraint mappings, where $0\leq l_1 \leq \lceil \frac{l}{2} \rceil$, $\delta_j=\pm1$, $(\ast)$ (for type $(6)$) is the condition $4 \delta_{1} \delta_{2} - \alpha^2 \neq 0$, and $(\ast\ast)$ (for type $(10)$) is the following condition: 
\begin{center}
$4 \delta_i \delta_j - \alpha_{ij}^2 \neq 0\hspace{.3em}(i,j\in\{1,2,3\},\hspace{.3em}i\neq j),\hspace{.3em} 4 \delta_1 \delta_2 \delta_3 + \alpha_{12} \alpha_{13} \alpha_{23} - \delta_3 \alpha_{12}^2 - \delta_{2} \alpha_{13}^2 - \delta_{3} \alpha_{23}^2 \neq 0.$
\end{center}
Note that the values in the far right column are the extended codimensions of $B_{\mathrm{type}}$, which are not necessarily equal to the $\mathcal{K}[G]_e$-codimensions of the corresponding map-germs (especially for types (6) and (10)). 
}
  \label{table:generic constraint r=0}
 \end{center}
\end{table}
\end{landscape}

\begin{table}[htbp]
 \begin{center}
{\renewcommand{\arraystretch}{1.2}
  \begin{tabular}{|c|l|c|c|c|c|} \hline
   type & $h$ &$q$& range &\begin{minipage}[c]{10mm}
\centering
\V{.2em}
$\mathcal{K}[G]$-

det.
\V{.2em}
\end{minipage}& ex.~cod. \\ \hline
   $(1,k)$ & $x_1^k + \sum_{j=2}^n \pm x_j^2$ & &$2\leq k \leq 4$ &$k$ & $k$ \\ \cline{1-2}\cline{4-6}
   $(2)$ & $x_2^3 \pm x_1^2+ \sum_{j=3}^n \pm x_j^2$ &$1$&  &$3$& $4$ \\ \cline{1-2}\cline{4-6}
   $(3,k)$ & $x_2^k \pm x_1 x_2 + \sum_{j=3}^n \pm x_j^2$ && $3\leq k \leq 4$ &$k$& $k$ \\ \hline
   $(4)$ & $\delta_1 x_1^2 + \delta_2 x_2^2 + \alpha x_1 x_2 + \sum_{j=3}^n \pm x_j^2$ &\multirow{4}{*}{$2$}&$\alpha\in \R, (\ast)$ &$2$& $3$ \\ \cline{1-2}\cline{4-6}
   $(5)$ & $x_1^3 \pm x_2^2 \pm x_1 x_2 + \sum_{j=3}^n \pm x_j^2$ && $\delta_j = \pm 1 $ &$3$& $4$ \\ \cline{1-2}\cline{4-6}
   $(6)$ & $\left( x_1 \pm x_2 \right)^2 \pm x_2^3 + \sum_{j=3}^n \pm x_j^2$ && &$3$& $4$ \\ \cline{1-2}\cline{4-6}
   $(7)$ & $x_3^3 \pm x_1 x_3 \pm x_2 x_3 \pm x_1 x_2 + \sum_{j=4}^n \pm x_j^2$ & & &$3$& $4$ \\ \hline
   $(8)$ & $\begin{matrix}
\sum_{j=1}^3 \delta_j x_j^2 + \sum_{1 \le i < j \le 3} \alpha_{ij} x_i x_j \\ \pm x_1 x_2 x_3 + \sum_{j=4}^n \pm x_j^2\end{matrix}$ &$3$& $\alpha_{ij}\in \R, (\ast\ast)$ &$3$& $4$ \\
\hline
  \end{tabular}
}
\caption{Normal forms $(g_1(x),\ldots, g_q(x),h(x))=\left(x_1,\ldots, x_q,h(x)\right)$ of jets in $W^4_{n,1,1}, W^3_{n,2,1}$ and $W^3_{n,3,1}$ appearing as a full reduction of a generic $b\leq 4$-parameter family of constraint mappings, where $(\ast)$ and $(\ast\ast)$ are the same conditions as those in Table~\ref{table:generic constraint r=0}.
Note that the extended codimensions in the table are not necessarily equal to the $\mathcal{K}[G]_e$-codimensions of the corresponding map-germs (especially for types (4) and (8)). }
  \label{table:generic constraint q>0 r=1}
 \end{center}
\end{table}

\begin{table}[htbp]
\begin{center}
{\renewcommand{\arraystretch}{1.2}
\begin{tabular}{|c|c|c|l|}\hline
$q$ & $r$ & type & generators \\
\hline
\multirow{2}{*}{$0$}&\multirow{2}{*}{$1$}&$(1,k)$&$1,x_1,\ldots, x_1^{k-2}$\\
\cline{3-4}
&&$(2)$&$1,x_1,x_2,x_1^2$\\
\hline
&\multirow{10}{*}{$0$}&$(1,k)$&$e_q,x_qe_q,\ldots, x_q^{k-2}e_q$\\
\cline{3-4}
&&$(2)$&$e_q,x_qe_q,x_{q+1}e_q,x_q^2e_q$\\
\cline{3-4}
&&$(3,k)$&$e_q,x_{q-1}e_q,\ldots,x_{q-1}^{k-1}e_q$\\
\cline{3-4}
&&$(4,k)$&$e_q,x_qe_q,\ldots, x_q^{k-1}e_q$\\
\cline{3-4}
&&$(5)$&$e_q,x_{q-1}e_q,x_qe_q,x_{q-1}x_qe_q$\\
\cline{3-4}
&&$(6)$&$e_q,x_{q-2}e_q,x_{q-1}e_q,x_{q-2}x_{q-1}e_q$\\
\cline{3-4}
&&$(7)$&$e_q,x_{q-2}e_q,x_{q-1}e_q,x_{q-1}^2e_q$\\
\cline{3-4}
&&$(8)$&$e_q,x_{q-2}e_q,x_{q-1}e_q,x_{q-1}^2e_q$\\
\cline{3-4}
&&$(9)$&$e_q,x_{q-2}e_q,x_{q-1}e_q,x_qe_q$\\
\cline{3-4}
&&$(10)$&$e_q,x_{q-3}e_q,x_{q-2}e_q,x_{q-1}e_q,x_{q-3}x_{q-2}e_q,x_{q-3}x_{q-1}e_q,x_{q-2}x_{q-1}e_q$\\
\hline
\multirow{3}{*}{$1$}&\multirow{8}{*}{$1$}&$(1,k)$&$e_2,x_1e_2,\ldots,x_1^{k-1}e_2$\\
\cline{3-4}
&&$(2)$&$e_2,x_1e_2,x_2e_2,x_1 x_2 e_2$\\
\cline{1-1}\cline{3-4}
&&$(3,k)$&$e_2,x_2e_2,\ldots, x_2^{k-1}e_2$\\
\cline{3-4}
\multirow{4}{*}{$2$}&&$(4)$&$e_3,x_1e_3,x_2e_3,x_1x_2e_3$\\
\cline{3-4}
&&$(5)$&$e_3,x_1e_3,x_2e_3,x_1^2e_3$\\
\cline{3-4}
&&$(6)$&$e_3,x_1e_3,x_2e_3,x_2^2e_3$\\
\cline{3-4}
&&$(7)$&$e_3,x_1e_3,x_2e_3,x_3e_3$\\
\cline{1-1}\cline{3-4}
$3$&&$(8)$&$e_4,x_1e_4,x_2e_4,x_3e_4,x_1x_2e_4,x_1x_3e_4,x_2x_3e_4$\\
\hline
\end{tabular}
}
\caption{Representatives of generators of the quotient space $\mathcal{E}_n^{q+r}/T\mathcal{K}[G]_e(g,h)$.
Here, $e_1,\ldots, e_{q+r}\in \mathcal{E}_n^{q+r}$ consist of the standard basis.}
\label{table:generator quotient}
\end{center}
\end{table}

\begin{remark}

The \textit{stratum $\mathcal{K}[G]_e$-codimension} of a germ of type in the tables is defined to be the extended codimension of the corresponding semi-algebraic set $A_{\mathrm{type}},B_{\mathrm{type}}$ or $C_{\mathrm{type}}$. 
If the semi-algebraic set does not contain an uncountable family of $\mathcal{K}[G]^m$-orbits (i.e.~the type is not (6), (10) in Table~\ref{table:generic constraint r=0} or (4), (10) in Table~\ref{table:generic constraint q>0 r=1}), the stratum $\mathcal{K}[G]_e$-codimension coincides with the usual $\mathcal{K}[G]_e$-codimension. 

\end{remark}

\begin{remark}\label{rem:previous work classification}

The results in Table~\ref{table:generic constraint q=0} are not new. We reproduce the table for the sake of completeness. The classification lists of function-germs on boundaries, corners due to Siersma \cite{Siersma1981} are similar to those in Table~\ref{table:generic constraint r=0} and Table~\ref{table:generic constraint q>0 r=1} but he considers an equivalence relation different from 
\begin{math}
\mathcal{K} \left[ G \right]
\end{math}-equivalence.
In the paper of Dimca \cite{Dimca1984}, the classification lists of simple complex analytic function-germs corresponding to the case $q = 1$ are shown. 
Type (1) in Table~\ref{table:generic constraint q>0 r=1} corresponds to $C_2$ for $k=2$ and $B_k$ for $k \ge 3$ in Table~1 in \cite{Dimca1984}. 
Types (2) and (3) in Table~\ref{table:generic constraint q>0 r=1} correspond to $F_4$ and $C_{k+1}$, respectively. 
This implies that 
\begin{math}
\mathcal{K} \left[ G \right]
\end{math}-classes up to stratum 
\begin{math}
\mathcal{K} \left[ G \right]_e
\end{math}-codimension $4$ are simple. However, in the case of 
\begin{math}
q \ge 2
\end{math}, that is no longer the case and moduli families appear in the very beginning of the classification table. 
Types (4) and (7) are such moduli families in the case of $q=2$ and $q=3$, respectively.
\end{remark}

\begin{proof}[Proof of Theorem~\ref{thm:classification jets}]
We can easily calculate the extended codimension of $W^m_{n,q,r}$ as follows:
\[
d_e(W^m_{n,q,r}) = \codim(W^m_{n,q,r},J^m(n,q+r))-n = (q+r+rn)-n=q+r+n(r-1). 
\]
This shows 1 of Theorem~\ref{thm:classification jets}.

\subsection*{Classification of jets in $W^m_{n,0,1}$}

In what follows, we will show 2 of Theorem~\ref{thm:classification jets}. 
The group $\mathcal{K}[G]$ is equal to $\mathcal{K}$ for the case $q=0$, in particular the extended codimension of the $\mathcal{K}[G]^m$-orbit of an $m$-jet with the trivial $1$-jet is equal to its $\mathcal{K}_e^m$-codimension. 
Function-germs with small $\mathcal{R}_e$-codimensions have been classified in \cite{Arnold1972classificationfunctiongerm}. 
Although necessary analysis for proving 2 of Theorem~\ref{thm:classification jets} have already been done implicitly in \cite{Arnold1972classificationfunctiongerm}, we will give the full proof below both for the sake of completeness of this manuscript, and as a preparation for the proof of the other parts of Theorem~\ref{thm:classification jets}. 
(Note that we need not only to classify jets with small $\mathcal{K}^m$-codimensions, but also to show that the extended codimension of the complement of the union of the $\mathcal{K}^m$-orbits of the jets in the classification list is greater than $4$.)

Let $Q_s\subset W_{n,0,1}$ be the set of $2$-jets which is $\mathcal{K}$-equivalent to $\sum_{j=s}^n \pm x_j^2$ (with some signs). 
We can deduce from the Morse lemma that $Q_s$ is a finite union of $\mathcal{K}$-orbits. The extended codimension of $Q_s$ (and thus that of $(\pi^m_2)^{-1}(Q_s)$ for any $m\geq 3$) is equal to $1+s(s-1)/2$ \cite{Gibson_book}. 
In particular, the extended codimension of $\bigsqcup_{s\geq 4}Q_s$ is greater than $4$. 
For this reason, we will only focus on jets in $(\pi^m_2)^{-1}(Q_s)$ for $s\leq 3$ and suitable orders $m$ below.

\subsubsection*{Jets in $Q_1$($=(\pi^2_2)^{-1}(Q_1)$)}

A jet in $Q_1$ is $\mathcal{K}^2$-equivalent to $\sum_{j=1}^{n}\pm x_j^2$. 
It follows from the Morse Lemma that it is $2$-determined relative to $\mathcal{R}$ (and thus $\mathcal{K}$). 
In particular, the preimage $(\pi^5_2)^{-1}(Q_1)$ is equal to $A_{1,2}$. 

\subsubsection*{Jets in $(\pi^5_2)^{-1}(Q_2)$}

A jet in $Q_2$ is $\mathcal{K}^2$-equivalent to the $2$-jet represented by $f_1=\sum_{j=2}^{n}\pm x_j^2$. For any 
\begin{math}
m \geq 3
\end{math}, any
\begin{math}
m
\end{math}-jet 
\begin{math}
f \in \left( \pi_2^m \right)^{-1} \left( f_1 \right)
\end{math}
is 
\begin{math}
\mathcal{R}^m
\end{math}-equivalent to 
\begin{math}
\sum_{i = 3}^m c_i x_1^i + f_1
\end{math} \cite{Gibson_book} for some 
\begin{math}
c_i \; \left( i = 3, \ldots, m \right)
\end{math}. 
Therefore, we can deduce that an $m$-jet $\sigma\in J^m(n,1)_0$ with $\pi^m_{m-1}(\sigma)=j^{m-1}f_1(0)$ is $\mathcal{K}^m$-equivalent to either the $m$-jet of the germ of type $(1,m)$ in Table~\ref{table:generic constraint q=0} or $j^mf_1(0)$ for $m\geq 3$. The germ of type $(1,m)$ in Table~\ref{table:generic constraint q=0} is $m$-determined relative to $\mathcal{K}$ \cite{Siersma1972}, and thus any jet in $(\pi^5_2)^{-1}(Q_2)$ is $\mathcal{K}$-equivalent to either the jet of type $(1,m)$ for $m=3,4,5$ or the jet $j^5f_1(0)$ (with some signs). 
The $\mathcal{K}^5$-codimension of the germ of type $(1,m)$ is equal to $n-2+m$ \cite{Siersma1972}. 
On the other hand, in the same way as that in Appendix~\ref{sec:calculation codim determinacy type 1k}, one can show that \begin{math}
J^5 \left( n, 1 \right)_0 / T \mathcal{K}^5 \left( j^5 f_1 \left( 0 \right) \right)
\end{math}
is isomorphic to 
\begin{math}
\bigl<\overbrace{x_1, \ldots, x_n}^n, \overbrace{x_1^2, x_1^3, x_1^4, x_1^5}^4\bigr>_{\R}\subset \R[[x]]
\end{math}, in particular the $\mathcal{K}^5$-codimension of  $j^5f_1(0)$ is equal to $n+4$.
We can thus deduce from the relation~\eqref{eq:relation d_e K[G]-codim} that the extended codimension of $A_{1,m}$ is equal to $m-1$, and that of the complement $(\pi^5_2)^{-1}(Q_2)\setminus \left(\bigsqcup_{3\leq m \leq 5} A_{1,m}\right)$ is equal to $5$.

\subsubsection*{Jets in $(\pi^3_2)^{-1}(Q_3)$}

A jet in $Q_3$ is $\mathcal{K}^2$-equivalent to the $2$-jet represented by $f_2= \sum_{j=3}^{n}\pm x_j^2$.
By using the result in \cite{Gibson_book}, a $3$-jet $\sigma\in J^3(n,1)_0$ with $(\pi^3_2)(\sigma)=j^2f_2(0)$ is $\mathcal{K}^3$-equivalent to one of the $3$-jets
\begin{math}\label{eq:3jet Q_3}
x_1^3 \pm x_1 x_2^2 + \sum_{j=3}^{n}\pm x_j^2
\end{math} (the jet represented by the germ of type $(2)$), 
\begin{math}
\sigma_1 = x_1^2 x_2 + \sum_{j=3}^{n}\pm x_j^2
\end{math},
\begin{math}
\sigma_2 = x_1^3 + \sum_{j=3}^{n}\pm x_j^2
\end{math}, and 
\begin{math}
j^3f_2(0)=\sum_{j=3}^{n}\pm x_j^2
\end{math}.
The germ of type $(2)$ is $3$-determined and has codimension $n+3$ as shown in \cite{Siersma1972}. 
On the other hand, one can show the following in the same way as that in Appendix~\ref{sec:calculation codim determinacy type 1k}: 

\begin{itemize}

\item
$J^3(n,1)_0/T\mathcal{K}^3(\sigma_1)$ is isomorphic to 
\begin{math}
\bigl<\overbrace{x_1, \ldots, x_n}^n, \overbrace{x_1^2, x_1 x_2, x_2^2, x_2^3}^4\bigr>\subset \R[[x]]
\end{math}, in particular the $\mathcal{K}^3$-codimension of $\sigma_1$ is $n+4$,

\item 
$J^3(n,1)_0/T\mathcal{K}^3(\sigma_2)$ is isomorphic to 
\begin{math}
\bigl<\overbrace{x_1, \ldots, x_n}^n, \overbrace{x_1^2, x_1 x_2, x_2^2, x_1 x_2^2, x_2^3}^5\bigr>_\R\subset \R[[x]]
\end{math}, in particular the $\mathcal{K}^3$-codimension of $\sigma_2$ is $n+5$,

\item 
$J^3(n,1)_0/T\mathcal{K}^3(j^3f_2(0))$ is isomorphic to 
\begin{math}
\bigl<\overbrace{x_1, \ldots, x_n}^n, \overbrace{x_1^2, x_1 x_2, x_2^2, x_1^3, x_1^2 x_2, x_1 x_2^2, x_2^3}^7\bigr>_\R\linebreak\subset \R[[x]]
\end{math}, in particular the $\mathcal{K}^3$-codimension of $j^3f_2(0)$ is $n+7$,

\end{itemize}

\noindent
We can eventually conclude that any jet in $(\pi^3_2)^{-1}(Q_3)$ is $\mathcal{K}^3$-equivalent to that represented by the germ of type $(2)$, the jets $\sigma_1$, $\sigma_2$ or $j^3f_2(0)$. The germ of type $(2)$ is $3$-determined relative to $\mathcal{K}$ \cite{Siersma1972}, and thus, the preimage by $\pi^5_3$ of the union of the $\mathcal{K}^3$-orbits of the germs of type $(2)$ (with all possible signs) is equal to $A_2$. 
Furthermore, the calculations of $\mathcal{K}^3$-codimensions we have done above imply that the extended codimensions of $A_2$ and the complement $(\pi^5_2)^{-1}(Q_3)\setminus A_2$ are equal to $4$ and $5$, respectively. 

In summary, we have shown the following equality:
\begin{align*}
&W^5_{n,0,1}\setminus \left(\bigsqcup_{k=2}^5 A_{1,k}\sqcup A_2\right)\\
 =& (\pi^5_2)^{-1}\left(\bigsqcup_{s\geq 4}Q_s\right)\sqcup \left(\bigsqcup_{\begin{minipage}[c]{11mm}
\scriptsize
\centering
all signs \\
in $f_1$
\end{minipage}}\mathcal{K}^5\cdot j^5f_1(0)\right)\\
&\sqcup  \left((\pi^5_3)^{-1}\left(\left(\bigsqcup_{\begin{minipage}[c]{11mm}
\scriptsize
\centering
all signs \\
in $\sigma_1$
\end{minipage}}\mathcal{K}^3\cdot\sigma_1\right)\sqcup\left(\bigsqcup_{\begin{minipage}[c]{11mm}
\scriptsize
\centering
all signs \\
in $\sigma_2$
\end{minipage}}\mathcal{K}^3\cdot\sigma_2\right)\sqcup\left(\bigsqcup_{\begin{minipage}[c]{11mm}
\scriptsize
\centering
all signs \\
in $f_2$
\end{minipage}}\mathcal{K}^3\cdot j^3f_2(0)\right)\right)\right).
\end{align*}
We have also shown that the extended codimension of this complement is equal to $5$. 
This completes the proof of 2 of Theorem~\ref{thm:classification jets}.

\subsection*{Classification of jets in $W^m_{n,q,0}$ for $q > 0$}

In what follows, we will show 3 of Theorem~\ref{thm:classification jets}. 
Let $\Sigma_k\subset J^1(n,q)_0$ be the set of $1$-jets $j^1f(0)$ with $\corank (df)_0\geq k$, which is an algebraic subset with codimension $k(n-q+k)$ in $J^1(n,q)_0$. (Note that we assume $n\gg q$.) 
It is easy to see that $B_0$ is equal to $(\pi^5_1)^{-1}(\Sigma_0\setminus \Sigma_1)$, and its extended codimension is calculated as follows: 
{\allowdisplaybreaks
\begin{align*}
d_e(B_0) &= \codim((\pi^5_1)^{-1}(\Sigma_0\setminus \Sigma_1),J^5(n,q))-n \\
&=\codim(\Sigma_0\setminus \Sigma_1,J^1(n,q))-n =q-n.
\end{align*}
}%
Furthermore, the extended codimension of $(\pi^5_1)^{-1}(\Sigma_2)$, which is the set of $5$-jets $j^5g(0)$ with $\corank(dg)_0\geq 2$, is equal to $\codim(\Sigma_2,J^1(n,q))-n=2(n-q+2)+q-n$, which is much larger than $4$. 
For this reason, we will consider $5$-jets $j^5g(0)$ with $\corank(dg)_0=1$ below. 

\begin{lemma}\label{lem:classification corank1 1-jet q=0}

For $l\in \{0,\ldots, q-1\}$, let $\Lambda_l$ be the set of $1$-jets $\mathcal{K}[G]^1$-equivalent to 
\begin{equation}
\left( x_1, \ldots, x_{q-1}, \sum_{j=1}^{l_1} x_j - \sum_{j=l_1+1}^l x_j \right) \label{eq:q0_1jet}
\end{equation}
for some $l_1\in \left\{ 0, 1, \ldots, \lceil \frac{l}{2} \rceil \right\}$. 
Then, $\Lambda_0,\ldots, \Lambda_{q-1}$ are mutually distinct submanifolds in $\Sigma_1\setminus \Sigma_2$. 
Furthermore, the following equality holds: 

\begin{itemize}

\item
$\Sigma_1\setminus \Sigma_2 = \bigsqcup_{l=0}^{q-1}\Lambda_l$, 

\item
$\codim (\Lambda_l,\Sigma_1\setminus \Sigma_2) = q-1-l$. 

\end{itemize}

\noindent
In particular, the extended codimension of $\Lambda_l$ is equal to $q-l$ (and thus $d_e((\pi^5_1)^{-1}(\Lambda_l))=q-l$). 

\end{lemma}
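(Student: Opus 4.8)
The plan is to reduce the statement to a question in linear algebra about the action of $\GL(n,\R)\times G_{gp}$ on $q\times n$ matrices, and then to count orbit dimensions. First I would note that every germ in $J^1(n,q)_0$ vanishes at the origin, so the $1$-jet of $\Psi\cdot(g\circ\phi^{-1})$ depends on $\Psi$ only through $\Psi(0)$; since here $r=0$ we have $G=G_{gp}$, and therefore the $\mathcal{K}[G]^1$-action on $J^1(n,q)_0\cong M_{q,n}(\R)$ is simply $A\mapsto PDAQ^{-1}$ with $PD\in G_{gp}$ a positive monomial matrix and $Q\in\GL(n,\R)$. This action is algebraic, so every orbit is a locally closed embedded submanifold and its closure is a union of orbits of at most the same dimension; orbit dimensions may thus be computed through stabilizers inside $\GL(n,\R)\times G_{gp}$, which has dimension $n^2+q$.

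Next I would carry out the classification of the rank-$(q-1)$ jets. Given such a matrix $A$, I would use $Q$ to pick source coordinates in which a basis of the row space of $A$ becomes $dx_1,\dots,dx_{q-1}$, so that $A=[\,C\mid 0\,]$ with $C\in M_{q,q-1}(\R)$ of rank $q-1$ and the residual freedom $C\mapsto PDCR$, $R\in\GL(q-1,\R)$. The one-dimensional space of linear relations among the rows of $A$, spanned by some functional $\mu\in(\R^q)^*$, transforms under $PD$ and is fixed by $R$; since $D$ rescales coordinates by positive numbers and $P$ permutes them, the only invariants of this line are $\supp\mu$ together with the unordered partition of $\supp\mu$ into the indices where $\mu$ is positive and where $\mu$ is negative (unordered because $\mu$ and $-\mu$ span the same line). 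Writing $|\supp\mu|=l+1$ and letting $l_1$ be the size of the smaller of the two parts (so $0\le l_1\le\lceil l/2\rceil$), and using that the matrices $C$ with a prescribed left-kernel line form a single orbit of the right $\GL(q-1,\R)$-action — each such $C$ being an isomorphism $\R^{q-1}\xrightarrow{\sim}\Ker\mu$ — I recover precisely the normal forms in the statement. This yields $\Sigma_1\setminus\Sigma_2=\bigsqcup_{l=0}^{q-1}\Lambda_l$, where $\Lambda_l$ is the union of the finitely many orbits $\mathcal{O}_{l,l_1}$, $0\le l_1\le\lceil l/2\rceil$. Since $l$ is a $\mathcal{K}[G]^1$-invariant on $\Sigma_1\setminus\Sigma_2$ (it is the support size of the row relation, minus one), the $\Lambda_l$ are pairwise disjoint and hence pairwise distinct.

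Then I would compute $\dim\mathcal{O}_{l,l_1}$ via the stabilizer of a normal form $A_0=A_0(l,l_1)=[\,C_0\mid 0\,]$. The equation $DA_0Q^{-1}=A_0$ forces the top-right $(q-1)\times(n-q+1)$ block of $Q$ to vanish, forces $D$ to preserve the hyperplane $\Im(C_0)=\Ker\mu$ — equivalently $d_i$ is constant on $\supp\mu=\{1,\dots,l,q\}$ while $d_{l+1},\dots,d_{q-1}$ remain free — and then pins down the top-left $(q-1)\times(q-1)$ block of $Q$ in terms of $D$, the remaining $n-q+1$ rows of $Q$ being free subject to invertibility. Counting parameters gives $\dim\mathrm{Stab}(A_0)=(q-l)+n(n-q+1)$, independently of $l_1$, hence $\dim\mathcal{O}_{l,l_1}=n^2+q-(q-l)-n(n-q+1)=nq-n+l$, again independent of $l_1$. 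Consequently $\Lambda_l$ is a finite disjoint union of orbits of the single dimension $nq-n+l$; as the closure of an orbit is invariant and can contain no other orbit of equal dimension, each $\mathcal{O}_{l,l_1}$ is open and closed in $\Lambda_l$, so $\Lambda_l$ is an embedded submanifold. Finally $\dim(\Sigma_1\setminus\Sigma_2)=nq-(n-q+1)=nq-n+q-1$ gives $\codim(\Lambda_l,\Sigma_1\setminus\Sigma_2)=q-1-l$, and combining this with $\codim(\Sigma_1\setminus\Sigma_2,J^1(n,q)_0)=n-q+1$ and the $q$ constant coordinates of $J^1(n,q)$ yields extended codimension $q-l$; the statement for $(\pi^5_1)^{-1}(\Lambda_l)$ follows because $\pi^5_1$ is a submersion.

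The step I expect to be the main obstacle is showing that $\Lambda_l$ is a genuine submanifold rather than a union of strata of several dimensions. Everything rests on the equality $\dim\mathcal{O}_{l,l_1}=nq-n+l$ for all admissible $l_1$: once the stabilizer count — and in particular its insensitivity to the sign pattern $l_1$ — has been checked, the submanifold property and the codimension formulas are formal. I therefore anticipate that verifying the stabilizer dimension, uniformly in $l_1$, will be the technical heart of the argument.
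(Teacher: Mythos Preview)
Your argument is correct and reaches the same conclusions as the paper, but the two proofs compute the orbit dimensions by different means. The paper normalizes a $1$-jet to $(x_1,\dots,x_{q-1},\sum_{j\le l}\delta_jx_j)$, exhibits an explicit coordinate change plus permutation that flips all signs $\delta_j$ simultaneously (reducing to the stated normal forms), and then computes the $\mathcal{K}[G]^1$-codimension directly from the tangent space, finding $J^1(n,q)_0/T\mathcal{K}[G]^1(g_l)\cong\langle x_{l+1}e_q,\dots,x_ne_q\rangle_\R$, hence codimension $n-l$, independent of $l_1$. You instead identify the orbit invariant as the left-kernel line $\R\mu$ modulo positive rescaling and permutation, and compute the orbit dimension via the stabilizer in $\GL(n,\R)\times G_{gp}$; the stabilizer count $(q-l)+n(n-q+1)$ is correct and likewise independent of $l_1$. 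One minor slip: under the permutation part $P$ the support of $\mu$ is not invariant as a \emph{set}, only its cardinality is, so the complete invariant is $(|\supp\mu|,\{l_1,\,l+1-l_1\})$; your subsequent use of $|\supp\mu|=l+1$ shows you intended this.

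Where your write-up is arguably more explicit than the paper's is the submanifold step: the paper asserts that $\Lambda_l$, as a finite union of $\mathcal{K}[G]^1$-orbits of the same codimension, is a submanifold, without further comment; your observation that orbit closures of a real algebraic action meet only strictly lower-dimensional orbits, so the $\mathcal{O}_{l,l_1}$ are mutually open and closed in $\Lambda_l$, supplies the missing justification. Conversely, the paper's tangent-space computation integrates more smoothly with the rest of the argument (where $T\mathcal{K}[G]$ is used throughout), and yields the generators of the quotient directly, which feeds into later determinacy and complete-transversal calculations.
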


\begin{proof}[Proof of Lemma~\ref{lem:classification corank1 1-jet q=0}]
It is easy to see that a $1$-jet in $\Sigma_1\setminus \Sigma_2$ is $\mathcal{K}[G]^1$-equivalent to the following jet for some $l\in \{1,\ldots, q-1\}$ and $\delta_j = \pm 1$: 
\begin{equation}\label{eq:original 1-jet}
\left( x_1, \ldots, x_{q-1}, \sum_{j=1}^l \delta_j x_j \right). 
\end{equation}
If there exists at least one 
\begin{math}
j \in \left\{ 1, \ldots, l \right\}
\end{math}
for which 
\begin{math}
\delta_j = 1
\end{math} (say $\delta_1=1$), this
\begin{math}
1
\end{math}-jet can be transformed to the following form. 
\begin{equation}\label{eq:transformed 1-jet}
\left( x'_1, \ldots, x'_{q-1}, x'_1 - \sum_{j=2}^l \delta_j x'_j \right).
\end{equation}
Indeed, the $1$-jet in Eq.~\eqref{eq:original 1-jet} can be transformed to
\begin{equation}
\left( x'_1 - \sum_{j=2}^l \delta_j x'_j, x'_2, \ldots, x'_{q-1}, x'_1 \right),
\end{equation}
by the coordinate transformation 
\begin{equation}
\left( x_1, \ldots, x_n \right) \mapsto \left( x_1 + \sum_{j=2}^l \delta_j x_j, x_2, \ldots, x_n \right).
\end{equation}
By permuting the
\begin{math}
1
\end{math}-st and the 
\begin{math}
q
\end{math}-th components of the $1$-jet, we get the $1$-jet in Eq.~\eqref{eq:transformed 1-jet}.
In summary, one can flip the signs of 
\begin{math}
\delta_j \; \left( j \in \left\{ 2, \ldots, l \right\} \right)
\end{math}
simultaneously by using the $\mathcal{K}[G]^1$-action provided 
\begin{math}
\delta_1 = 1
\end{math}. 
This shows that $\Sigma_1\setminus \Sigma_2 $ is equal to $ \bigcup_{l=0}^{q-1}\Lambda_l$. 

The subset $\Lambda_l$ is equal to the union of the $\mathcal{K}[G]^1$-orbits of the $1$-jets in Lemma~\ref{lem:classification corank1 1-jet q=0} (with $l_1\in \left\{ 0, 1, \ldots, \lceil \frac{l}{2} \rceil \right\}$), which is a submanifold of $\Sigma_1\setminus \Sigma_2$. The $\mathcal{K}[G]^1$-codimension of the $1$-jet in Lemma~\ref{lem:classification corank1 1-jet q=0} is $n-l$ since
\begin{math}
J^1(n,q)_0/T\mathcal{K}^1(g_l)
\end{math}
is isomorphic to 
\begin{math}
\bigl<\overbrace{x_{l+1} e_q, \ldots, x_n e_q}^{n-l}\bigr>_{\R}\linebreak\subset \R[[x]]^q
\end{math}.

Since this $\mathcal{K}[G]^1$-codimension is equal to $\codim(\Lambda_l,J^1(n,q)_0)$, the codimension of $\Lambda_l$ in $\Sigma_1\setminus \Sigma_2$ is equal to $q-1-l$ and $\Lambda_l\cap \Lambda_{l'}=\emptyset$ for $l\neq l'$. 
The last statement follows from the relation~\eqref{eq:relation d_e K[G]-codim}.
\end{proof}

\noindent
By this lemma, the extended codimension of 
$\bigsqcup_{l\leq q-5} \Lambda_l$ is equal to $5$. 
For this reason, we will only focus on jets in $(\pi^m_1)^{-1}\left(\Lambda_l\right)$ for $l\geq q-4$ below.

\begin{lemma}\label{lem:normal form corank1 2-jet q=0}

Any $2$-jet in $(\pi^2_1)^{-1}\left(\Lambda_l\right)$ is $\mathcal{K}[G]^2$-equivalent to 
\begin{equation}\label{eq:normal form 2-jet q=0}
\left( x_1, \ldots, x_{q-1}, \sum_{j=1}^{l} \pm x_j+\sum_{j_1=l+1}^{q-1}\sum_{j_2=l+1}^{s-1}a_{j_1j_2}x_{j_1}x_{j_2}+ \sum_{j=s}^{n}\pm x_j^2\right)
\end{equation}
for some $s\in \{q,\ldots, n\}$ and $a_{j_1j_2}\in \R$. 

\end{lemma}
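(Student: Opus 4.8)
The plan is to reduce the $2$-jet in stages, starting from the $1$-jet normal form provided by Lemma~\ref{lem:classification corank1 1-jet q=0} and then cleaning up the quadratic part of the last component with the moves available in $\mathcal{K}[G]^2$. Recall that here $r=0$, so $G=G_{gp}$ and $T\mathcal{C}[G](g)=\langle g_1e_1,\ldots,g_qe_q\rangle_{\mathcal{E}_n}$; the relevant moves are thus source diffeomorphisms, permutations and positive rescalings of the components, and multiplication of a component by a unit. \textbf{Step 1 (normalise the first $q-1$ components).} Applying a $\mathcal{K}[G]^2$-element lifting the $\mathcal{K}[G]^1$-element of Lemma~\ref{lem:classification corank1 1-jet q=0}, we may assume the $1$-jet of $(g_1,\ldots,g_q)$ equals $\left(x_1,\ldots,x_{q-1},\sum_{j=1}^{l_1}x_j-\sum_{j=l_1+1}^{l}x_j\right)$. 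Then $(g_1,\ldots,g_{q-1},x_q,\ldots,x_n)$ has Jacobian the identity at $0$, hence is a coordinate system; composing with its inverse (an $\mathcal{R}$-move, so an element of $\mathcal{K}[G]$, with $1$-jet the identity) we may assume $g_i=x_i$ as $2$-jets for $i=1,\ldots,q-1$ while the $1$-jet of $g_q$ is unchanged. So after Step~1 we have $g_q=\sum_{j=1}^{l_1}x_j-\sum_{j=l_1+1}^{l}x_j+Q$ with $Q$ a quadratic form in $x_1,\ldots,x_n$, and the first $q-1$ components already in final shape.

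\textbf{Step 2 (clean up $Q$).} Now I would use the remaining freedom — source diffeomorphisms fixing the coordinates $x_1,\ldots,x_{q-1}$, together with the $g_qe_q$-part of $T\mathcal{C}[G]$ — in three passes: (a) diagonalise the restriction of $Q$ to the free variables $x_q,\ldots,x_n$ by a linear change of those variables, writing that block as $\sum_{j=s}^{n}\pm x_j^2$, where $n-s+1$ is its $\rank$, so that $x_q,\ldots,x_{s-1}$ do not occur squared; (b) complete the square in each $x_j$ with $j\ge s$ to absorb every cross term $x_ix_j$ ($i\le q-1$, $j\ge s$) into lower-index monomials, the substitutions being diffeomorphisms fixing $x_1,\ldots,x_{q-1}$ and introducing no new off-diagonal terms among $x_q,\ldots,x_n$; (c) use the $\mathcal{K}[G]^2$-directions spanned by $x_k g_q e_q\equiv x_k\bigl(\sum_{j\le l}\pm x_j\bigr)e_q\ \bmod\ \mathcal{M}_n^3$ to kill every quadratic monomial of $g_q$ divisible by one of $x_1,\ldots,x_l$. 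After (a)--(c) the surviving quadratic terms pair an index in $\{l+1,\ldots,q-1\}$ with an index in $\{l+1,\ldots,s-1\}$, plus $\sum_{j=s}^{n}\pm x_j^2$, which is exactly the normal form \eqref{eq:normal form 2-jet q=0}. Alternatively, one can package Step~2 by using Theorem~\ref{thm:complete transversal} to produce a complete transversal inside $\mathcal{M}_n^2\mathcal{E}_n^q$ spanned by the monomials occurring in \eqref{eq:normal form 2-jet q=0}, and then diagonalising the square block.

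The main obstacle will be the bookkeeping in Step~2: one must verify at the level of $2$-jets that passes (b) and (c) are compatible with the normalisation $g_i=x_i$ ($i\le q-1$) achieved in Step~1, identify precisely the span of the available $\mathcal{C}[G]$-directions — so that exactly the monomials with a factor $x_j$ with $j\le l$, and no others, are removed — and check that neither (b) nor (c) reintroduces off-diagonal terms among $x_q,\ldots,x_n$. Once these verifications are in place, matching the leftover with the double sum $\sum_{j_1=l+1}^{q-1}\sum_{j_2=l+1}^{s-1}a_{j_1j_2}x_{j_1}x_{j_2}$ is routine.
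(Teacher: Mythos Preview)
Your overall strategy is sound and close to the paper's, but Step~2(c) has a genuine gap. You restrict yourself at the start of Step~2 to ``source diffeomorphisms fixing the coordinates $x_1,\ldots,x_{q-1}$, together with the $g_qe_q$-part of $T\mathcal{C}[G]$'', and then claim in (c) that the directions $x_k g_q e_q \equiv x_k\bigl(\sum_{j\le l}\pm x_j\bigr)e_q\ \bmod\ \mathcal{M}_n^3$ kill every quadratic monomial in $g_q$ divisible by some $x_j$ with $j\le l$. They do not: for each $k$ you get a \emph{single} linear combination $\sum_{j\le l}\pm x_k x_j$, so as $k$ ranges over $\{1,\ldots,n\}$ you produce at most $n$ independent directions, whereas the space of quadratic monomials divisible by some $x_j$ with $j\le l$ has dimension $ln-\binom{l}{2}$. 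For $l\ge 2$ this is strictly larger, so you cannot remove, say, $x_1 x_k$ and $x_2 x_k$ independently for $k>l$.

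What is missing is exactly the move you excluded: coordinate changes that \emph{do} move $x_1,\ldots,x_l$, compensated by the $g_j e_j$ rescalings for $j\le l$. Concretely, taking $\xi_j=-a_j x_j$ in the $\mathcal{R}$-direction together with $a_j\, g_j e_j$ in the $\mathcal{C}[G]$-direction (for $j\le l$, $a_j\in\mathcal{M}_n$) leaves the first $q-1$ components unchanged and contributes $\mp a_j x_j$ to $g_q$ modulo $\mathcal{M}_n^3$; varying $a_j$ over $\mathcal{M}_n$ this gives all of $x_j\mathcal{M}_n$ in the $e_q$-slot, which is precisely what you need. The paper carries this out by writing $\tilde g=\tilde g_0(x_{l+1},\ldots,x_n)+\sum_{j\le l}x_j\tilde g_j(x)$ via Taylor, then applying the diffeomorphism $x_j\mapsto x_j(1\mp\tilde g_j)$ for $j\le l$ together with multiplying the $j$-th component by the unit $1/(1\mp\tilde g_j)$; after that only $\tilde g_0$ survives and your passes (a) and (b) finish the job. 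Your alternative via Theorem~\ref{thm:complete transversal} would also work, but verifying the transversal requires the same tangent-space computation and hence the same correction.
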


\begin{proof}[Proof of Lemma~\ref{lem:normal form corank1 2-jet q=0}]
It is easy to see that a representative of a $2$-jet in $(\pi^2_1)^{-1}(\Lambda_l)$ is $\mathcal{K}[G]$-equivalent to the following germ:
\begin{equation}\label{eq:g and tildeg}
\left( x_1, \ldots, x_{q-1}, \sum_{j=1}^{l} \pm x_j+ \tilde{g} \left(x_1,\ldots, x_n\right) \right),
\end{equation}
where $\tilde{g}\in \mathcal{M}_n^2$.
By using the Taylor theorem (Lemma~3.3 in p.~60 in \cite{Golubitsky_Book_v1}), 
$\tilde{g}$ can be written as follows:
\[
\tilde{g} \left(x_1,\ldots,x_n \right) = \tilde{g}_0 \left( x_{l+1},\ldots, x_n \right) + \sum_{j=1}^{l} x_j \tilde{g}_j \left( x_1,\ldots,x_n \right),
\]
where 
\begin{math}
j^1 \tilde{g}_0 = 0
\end{math}
and
\begin{math}
j^0\tilde{g}_j = 0
\end{math}
for all 
\begin{math}
j \in \left\{ 1, \ldots, l \right\}
\end{math}. 
By plugging this expression, we obtain 
\begin{equation}
\left( x_1, \cdots, x_{q-1}, \sum_{j=1}^{l} \pm x_j \left( 1 \mp \tilde{g}_j \left( x \right) \right) + \tilde{g}_0 \left( x_{l+1},\ldots,x_n \right) \right).
\end{equation}
By changing coordinates 
\begin{equation}
\left( x_1,\ldots, x_n \right) \mapsto \left( x_1 \left( 1 \mp \tilde{g}_1 \left( x \right) \right), \ldots, x_{l} \left( 1 \mp \tilde{g}_{l} \left( x \right) \right), x_{l+1}, \ldots, x_n \right), 
\end{equation}
along with multiplying positive factors 
\begin{math}
1 / \left( 1 \mp \tilde{g}_j \left( x \right) \right)
\end{math}
to the 
\begin{math}
j
\end{math}-th component of the map-germ above, we obtain the following map-germ:
\begin{equation}
\left( x_1, \ldots, x_{q-1}, \sum_{j=1}^{l} \pm x_j + \tilde{g}_0 \left( x_{l+1}, \ldots,x_n \right) \right).\label{eq:tildeg_0}
\end{equation}
The $2$-jet of the germ above is equal to that of the following germ:
\[
\left( x_1, \ldots, x_{q-1}, \sum_{j=1}^{l} \pm x_j +\sum_{j_1=l+1}^{q-1}\sum_{j_2=l+1}^n\alpha_{j_1j_2}x_{j_1}x_{j_2}+ \sum_{j_1,j_2\geq q}\beta_{j_1j_2}x_{j_1}x_{j_2} \right)
\]
where $\alpha_{j_1j_2},\beta_{j_1j_2}\in \R$. 
We can change the last term $\sum_{j_1,j_2\geq q}\beta_{j_1,j_2}x_{j_1}x_{j_2}$ to $\sum_{j=s}^{n}\pm x_j^2$ by changing coordinates $(x_1,\ldots, x_n)$ preserving $(x_1,\ldots, x_{q-1})$.
This coordinate transformation changes the $2$-jet above to 
the following jet:
\[
\left( x_1, \ldots, x_{q-1}, \sum_{j=1}^{l} \pm x_j +\sum_{j_1=l+1}^{q-1}\sum_{j_2=l+1}^n\tilde{\alpha}_{j_1j_2}x_{j_1}x_{j_2} + \sum_{j=s}^n\pm x_{j}^2 \right)
\]
By completing the square, we can further change this jet to that in \eqref{eq:normal form 2-jet q=0} by a coordinate transformation preserving $(x_1,\ldots, x_{q-1})$. 
\end{proof}

Let $\Lambda_{l,s}\subset (\pi^2_1)^{-1}(\Lambda_l)$ be the set of $2$-jets $\mathcal{K}[G]^2$-equivalent to that in \eqref{eq:normal form 2-jet q=0} for some signs and $a_{j_1j_2}\in \R$. 

\begin{lemma}\label{lem:estimate codimension Lambda_l,s}

$\Lambda_{l,s}$ is a submanifold of $J^2(n,q)_0$ and its extended codimension is equal to $q-l+(s-q+1)(s-q)/2$. 

\end{lemma}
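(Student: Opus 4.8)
The plan is to realize $\Lambda_{l,s}$ as the preimage, under a submersion, of the classical corank stratum inside a bundle of symmetric bilinear forms, and then to read off its codimension from that of the stratum.

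First I would observe that $\pi^2_1\colon J^2(n,q)_0\to J^1(n,q)_0$ is a submersion and that $\codim(\Lambda_l,J^1(n,q)_0)=n-l$ by Lemma~\ref{lem:classification corank1 1-jet q=0}, so $(\pi^2_1)^{-1}(\Lambda_l)$ is a submanifold of $J^2(n,q)_0$ of codimension $n-l$. Since every $1$-jet in $\Lambda_l$ has corank exactly one, the assignment $j^1g(0)\mapsto\operatorname{Sym}^2(\Ker dg_0)^\ast\otimes\Coker dg_0$ defines a semi-algebraic vector bundle $E\to\Lambda_l$ of rank $\binom{n-q+2}{2}$, and the intrinsic second derivative of Section~\ref{sec:extended_intrinsic_derivative} gives a map $\Psi\colon(\pi^2_1)^{-1}(\Lambda_l)\to E$, $j^2g(0)\mapsto(j^1g(0),D^2g)$, which is polynomial in jet coordinates and compatible with the projections to $\Lambda_l$. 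I would then check that $\Psi$ is a submersion; since both sides are fibred over $\Lambda_l$ by submersions, it is enough to check this on a fibre of $\pi^2_1$, where $\Psi$ is affine with linear part sending a $q$-tuple of quadratic forms to the class in $\Coker dg_0$ of the restriction of its Hessian to $\Ker dg_0$. Choosing a standard basis vector $e_j\notin\Im dg_0$ (which exists because $\Im dg_0$ is a proper subspace of $\R^q$) and letting only the $j$-th quadratic form vary over all forms pulled back from $\Ker dg_0$ already surjects onto the fibre of $E$, so $\Psi$ is a submersion.

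The key identification is $\Lambda_{l,s}=\Psi^{-1}(E_s)$, where $E_s\subset E$ is the locus of forms of corank exactly $s-q$. Fibrewise $E_s$ is the classical rank stratum in $\operatorname{Sym}^2(\R^{n-q+1})^\ast$, a submanifold of codimension $(s-q+1)(s-q)/2$, hence $E_s$ is a submanifold of $E$ of that codimension. For the inclusion $\subseteq$: a germ in the normal form \eqref{eq:normal form 2-jet q=0} with parameter $s$ has $D^2$ of corank $s-q$, because the terms $a_{j_1j_2}x_{j_1}x_{j_2}$ involve only the variables $x_{l+1},\dots,x_{q-1}$, which do not lie in $\Ker dg_0$, and therefore contribute nothing to $D^2$; moreover $D^2$ is invariant under $\mathcal{K}$ up to linear isomorphisms on source and target by Lemma~\ref{L:invariance intrinsic derivative} (note $G_{gp}\subset\GL(q,\R)$), so the corank of $D^2$ is a $\mathcal{K}[G]^2$-invariant. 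For $\supseteq$: given $\sigma\in\Psi^{-1}(E_s)$, Lemma~\ref{lem:normal form corank1 2-jet q=0} brings $\sigma$ to the form \eqref{eq:normal form 2-jet q=0} with some parameter $s'$, and the computation just made forces $s'-q=\corank D^2=s-q$, so $\sigma\in\Lambda_{l,s}$.

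Since $\Psi$ is a submersion, $\Lambda_{l,s}=\Psi^{-1}(E_s)$ is a submanifold of $(\pi^2_1)^{-1}(\Lambda_l)$ of codimension $(s-q+1)(s-q)/2$; adding the codimension $n-l$ of $(\pi^2_1)^{-1}(\Lambda_l)$ in $J^2(n,q)_0$ and using $d_e(\Lambda_{l,s})=\codim(\Lambda_{l,s},J^2(n,q)_0)+q-n$ (Definition~\ref{H:def:extended codimension} with $r=0$) gives $d_e(\Lambda_{l,s})=q-l+(s-q+1)(s-q)/2$. I expect the main obstacle to be making the submersivity of $\Psi$ fully rigorous — in particular pinning down the precise shape of $D^2g$ (the cokernel representative, and the fact that only the $\Ker dg_0$-block of the Hessian enters) so that the ``vary a single Hessian component'' argument is watertight; the rest is codimension bookkeeping plus direct appeals to Lemmas~\ref{lem:classification corank1 1-jet q=0}, \ref{lem:normal form corank1 2-jet q=0}, \ref{L:invariance intrinsic derivative} and the classical stratification of symmetric matrices by rank.
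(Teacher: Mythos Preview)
Your argument is correct and takes a genuinely different route from the paper. The paper proves the lemma by working with the $\mathcal{K}^2$-orbit $Q_s$ of the jet $f_s=(x_1,\dots,x_{q-1},\sum_{j=s}^n\pm x_j^2)$: it shows that $\Lambda_{l,s}=(\pi^2_1|_{Q_s})^{-1}(\Lambda_l)$, proves via a commutative diagram of group actions that $\pi^2_1|_{Q_s}\colon Q_s\to\Sigma_1\setminus\Sigma_2$ is a submersion, and then computes $\codim(Q_s,J^2(n,q)_0)$ by an explicit tangent-space calculation. You instead fibre over $\Lambda_l$ first and use the intrinsic derivative $D^2g$ to map $(\pi^2_1)^{-1}(\Lambda_l)$ submersively onto the bundle of symmetric forms on $\Ker dg_0$, identifying $\Lambda_{l,s}$ with the preimage of the classical corank-$(s-q)$ stratum. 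Your approach is more conceptual and recycles the machinery of Section~\ref{sec:extended_intrinsic_derivative}, avoiding the orbit-codimension computation for $Q_s$; the paper's approach is more hands-on but stays closer to the explicit normal forms used throughout the classification.

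One small imprecision: you write that the cross terms $a_{j_1j_2}x_{j_1}x_{j_2}$ in \eqref{eq:normal form 2-jet q=0} involve only the variables $x_{l+1},\dots,x_{q-1}$, but in fact $j_2$ ranges up to $s-1\ge q-1$. The conclusion is still correct, since $j_1\le q-1$ always, so each such monomial contains at least one factor from $x_1,\dots,x_{q-1}$ and hence vanishes on $\Ker dg_0$; just rephrase that sentence accordingly.
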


\begin{proof}[Proof of Lemma~\ref{lem:estimate codimension Lambda_l,s}]
Let $Q_s\subset (\pi^2_1)^{-1}(\Sigma_1\setminus \Sigma_2)$ be the set of $2$-jets $\mathcal{K}$-equivalent to the $2$-jet represented by  $f_s = \left(x_1,\ldots,x_{q-1},\sum_{j=s}^{n}\pm x_j^2\right)$ (with some signs). 
It is easy to see that $\Lambda_{l,s}$ is contained in $(\pi^2_1)^{-1}(\Lambda_l)\cap Q_s = (\pi^2_1|_{Q_s})^{-1}(\Lambda_l)$, and the proof of Lemma~\ref{lem:normal form corank1 2-jet q=0} implies the opposite inclusion $(\pi^2_1|_{Q_s})^{-1}(\Lambda_l)\subset \Lambda_{l,s}$. 
For any $\sigma\in Q_s$, $Q_s$ is equal to the $\mathcal{K}^2$-orbit of $\sigma$ (especially a submanifold of $J^2(n,q)_0$), and $\Sigma_1\setminus \Sigma_2$ is the $\mathcal{K}^1$-orbit of $\pi^2_1(\sigma)$. 
Moreover, the following diagram commutes:
\[
\begin{CD}
\mathcal{K}^2 @> a_{\sigma}>> Q_s\\
@V\pi^2_1VV @VV\pi^2_1|_{Q_s}V\\
\mathcal{K}^1 @>a_{\pi^2_1(\sigma)}>> \Sigma_1\setminus \Sigma_2,
\end{CD}
\]
where $a_\sigma(\tau) = \tau\cdot \sigma$ for $\tau \in \mathcal{K}^2$ and $a_{\pi^2_1(\sigma)}$ is defined in the same way. 
Since $(d\pi^2_1)_{1_{\mathcal{K}^2}}$ and $(da_{\pi^2_1(\sigma)})_{1_{\mathcal{K}^1}}$ are both surjective, $(d\pi^2_1|_{Q_s})_\sigma$ is also surjective.
In particular, $\Lambda_{l,s}=(\pi^2_1|_{Q_s})^{-1}(\Lambda_l)$ is a submanifold of $Q_s$.

One can easily show that $\{[x_je_q]~|~q\leq j\leq n\}\cup\{[x_{j_1}x_{j_2}e_q]~|~q\leq j_1\leq j_2\leq s-1\}$ is a basis of the quotient space $\mathcal{M}_n\mathcal{E}_n^q/(T\mathcal{K}f_s+\mathcal{M}_n^3\mathcal{E}_n^q)$. 
In particular, the $\mathcal{K}$-codimension of $f_s$ is equal to $n-q+1+(s-q+1)(s-q)/2$, which is further equal to the codimension of $Q_s$ in $J^2(n,q)_0$.
We thus obtain:
\begin{align*}
d_e(\Lambda_{l,s})=& \codim ((\pi^2_1|_{Q_s})^{-1}(\Lambda_l),J^2(n,q))-n \\
=& \codim(Q_s,J^2(n,q)) +\codim (\Lambda_l,\Sigma_1\setminus \Sigma_2)-n \\
=&n-q+1+(s-q+1)(s-q)/2 +q+ (q-1-l) -n\\
=&q-l+(s-q+1)(s-q)/2. 
\end{align*}
This completes the proof of Lemma~\ref{lem:estimate codimension Lambda_l,s}.
\end{proof}

\noindent
By this lemma, the extended codimensions of $\bigsqcup_{s\geq q+3} \Lambda_{q-1,s}$, $\bigsqcup_{s\geq q+2} \Lambda_{l,s}$ for $l=q-2,q-3$, and $\bigsqcup_{s\geq q+1} \Lambda_{q-4,s}$ are greater than $4$. 
For this reason, in what follows, we will only analyze jets in $(\pi^m_2)^{-1}(\Lambda_{l,s})$ for 
\begin{align*}
(l,s)=&(q-1,q),(q-1,q+1),(q-1,q+2),(q-2,q),\\
&(q-2,q+1),(q-3,q),(q-3,q+1),(q-4,q)
\end{align*}
with suitable orders $m$ one by one.

\subsubsection*{Jets in $\Lambda_{q-1,q}$($=(\pi^2_2)^{-1}(\Lambda_{q-1,q})$)}

A jet in $\Lambda_{q-1,q}$ is $\mathcal{K}[G]^2$-equivalent to 
\begin{equation}\label{eq:2jet Lambda_q-1,q}
\left( x_1, \ldots, x_{q-1}, \sum_{j=1}^{q-1} \pm x_j+\sum_{j=q}^{n}\pm x_j^2\right). 
\end{equation}
As is shown in Appendix~\ref{sec:calculation codim determinacy type 1k}, this $2$-jet is $2$-determined and its $\mathcal{K}[G]^2$-codimension is $n-q+1$. 
In particular, $(\pi^5_2)^{-1}(\Lambda_{q-1,q})$ is equal to $B_{1,2}$, and the extended codimension of $B_{1,2}$ is $1$.

\subsubsection*{Jets in $(\pi^3_2)^{-1}(\Lambda_{q-1,q+1})$}

A jet in $\Lambda_{q-1,q+1}$ is $\mathcal{K}[G]^2$-equivalent to the $2$-jet represented by 
\begin{equation}\label{eq:2jet Lambda_q-1,q+1}
f=\left( x_1, \ldots, x_{q-1}, \sum_{j=1}^{q-1} \delta_jx_j+\sum_{j=q+1}^{n}\delta_jx_j^2\right). 
\end{equation}
The followings then hold (where $\alpha$ is a multi-index with $|\alpha|>1$): 
{\allowdisplaybreaks
\begin{align*}
2\delta_s x_sx_{\alpha}e_q &= tf(x_\alpha e_s)\in T\mathcal{K}[G]_1(f) &(s\geq q+1), \\
\delta_j x_jx_{\alpha}e_q &= tf(x_\alpha e_j)-f^\ast X_je_j\in T\mathcal{K}[G]_1(f) &(j\leq q-1), \\
x_{\alpha}(e_j+\delta_j e_q) & = tf(x_\alpha e_j)\in T\mathcal{K}[G]_1(f) &(j\leq q-1). 
\end{align*}
}%
We can thus deduce the following inclusion for $m\geq 3$:
\[
\mathcal{M}_n^m\mathcal{E}_n^q\subset \left<x_q^me_q\right>_{\R}+T\mathcal{K}[G]_1(f)+\mathcal{M}_n^{m+1}\mathcal{E}_n^q. 
\]
Therefore, using Theorem~\ref{thm:complete transversal}, we can deduce that an $m$-jet $\sigma \in J^m(n,q)_0$ with $\pi^{m}_{m-1}(\sigma)=j^{m-1}f(0)$ is $\mathcal{K}[G]^m$-equivalent to either the $m$-jet of the germ of type $(1,m)$ in Table~\ref{table:generic constraint r=0} or $j^mf(0)$ for $m\geq 3$. 
As shown in Appendix~\ref{sec:calculation codim determinacy type 1k}, the $m$-jet represented by the germ of type $(1,m)$ is $m$-determined and its $\mathcal{K}[G]^m$-codimension is $n-q+m-1$. 
We can thus conclude that any jet in $(\pi^5_2)^{-1}(\Lambda_{q-1,q+1})$ is $\mathcal{K}[G]^5$-equivalent to either the jet represented by the germ of type $(1,m)$ for $m=3,4,5$, or the jet $j^5f(0)$ (with some signs), and the extended codimension of $B_{1,m}$ is equal to $m-1$, whereas that of the complement $(\pi^5_2)^{-1}(\Lambda_{q-1,q+1})\setminus \left(\bigsqcup_mB_{1,m}\right)$ is equal to $5$.

\subsubsection*{Jets in $(\pi^3_2)^{-1}(\Lambda_{q-1,q+2})$}

A jet in $\Gamma_{q-1,q+2}$ is $\mathcal{K}[G]^2$-equivalent to the $2$-jet represented by 
\begin{equation}\label{eq:2jet Lambda_q-1,q+2}
f=\left( x_1, \ldots, x_{q-1}, \sum_{j=1}^{q-1} \pm x_j+\sum_{j=q+2}^{n}\pm x_j^2\right). 
\end{equation}
We can deduce the following inclusion in the same way as that for jets in $(\pi^5_2)^{-1}(\Lambda_{q-1,q+1})$: 
\[
\mathcal{M}_n^3\mathcal{E}_n^q\subset \left<x_q^3e_q,x_q^2x_{q+1}e_q,x_qx_{q+1}^2e_q,x_{q+1}^3e_q\right>_{\R}+T\mathcal{K}[G]_1(f)+\mathcal{M}_n^{4}\mathcal{E}_n^q. 
\]
By Theorem~\ref{thm:complete transversal}, a $3$-jet $\sigma\in J^3(n,q)_0$ with $(\pi^3_2)(\sigma)=j^2f(0)$ is $\mathcal{K}[G]^3$-equivalent to the following $3$-jet for some $\alpha_0,\ldots,\alpha_3\in \R$:
\begin{equation}\label{eq:3jet Lambda_q-1,q+2}
\left( x_1, \ldots, x_{q-1}, \sum_{j=1}^{q-1} \pm x_j+\alpha_0x_q^3+\alpha_1x_q^2x_{q+1}+\alpha_2 x_qx_{q+1}^2 + \alpha_3 x_{q+1}^3+\sum_{j=q+2}^{n}\pm x_j^2\right). 
\end{equation}

In the same way as that in \cite{Gibson_book}, one can show that an appropriate linear transformation in
\begin{math}
\left( x_q, x_{q+1} \right)
\end{math}
brings the $3$-jet to one of those in Table~\ref{table:jet in Gamma_q-1,q+2}.
\begin{table}[h]
 \begin{center}
  \begin{tabular}{|c|c|c|} \hline
   \# & normal form & $\mathcal{K}[G]^3$-cod. \\ \hline
$1$ & $\left( x_1, \ldots, x_{q-1}, \sum_{j=1}^{q-1} \pm x_j+x_q^3 \pm x_qx_{q+1}^2+\sum_{j=q+2}^{n}\pm x_j^2\right)$ & $n-q+4$ \\
   \hline
$2$ & $\left( x_1, \ldots, x_{q-1}, \sum_{j=1}^{q-1} \pm x_j+x_qx_{q+1}^2+\sum_{j=q+2}^{n}\pm x_j^2\right)$ & $n-q+5$ \\
   \hline
$3$ & $\left( x_1, \ldots, x_{q-1}, \sum_{j=1}^{q-1} \pm x_j+x_q^3+\sum_{j=q+2}^{n}\pm x_j^2\right)$ & $n-q+6$ \\
   \hline
$4$ & $\left( x_1, \ldots, x_{q-1}, \sum_{j=1}^{q-1} \pm x_j+\sum_{j=q+2}^{n}\pm x_j^2\right)$ & $n-q+8$ \\
   \hline   
  \end{tabular}
\caption{List of the normal forms and their $\mathcal{K}[G]^3$-codimension of the $3$-jet \eqref{eq:3jet Lambda_q-1,q+2}.}
  \label{table:jet in Gamma_q-1,q+2}
 \end{center}
\end{table}
The 
\begin{math}
\mathcal{K} \left[ G \right]^3
\end{math}-codimension of the $3$-jets in Table~\ref{table:jet in Gamma_q-1,q+2} can be computed as follows. Let 
\begin{math}
g_{\# i}
\end{math}
be the corresponding $3$-jet for 
\begin{math}
i = 1, \ldots, 4
\end{math}.

\noindent
\textbf{\# $1$:} The quotient space 
\begin{math}
J \left( n, q \right)_0 / T \mathcal{K} \left[ G \right]^3 \left( g_{\# 1} \right)
\end{math}
is isomorphic to 
\[
\bigl<\overbrace{x_q e_q, \ldots, x_n e_q}^{n-q+1}, \overbrace{x_q^2 e_q, x_q x_{q+1} e_q, x_{q+1}^2 e_q}^3\bigr>_\R\subset \R[[x]]^q.
\]

\noindent
\textbf{\# $2$:} The quotient space 
\begin{math}
J \left( n, q \right)_0 / T \mathcal{K} \left[ G \right]^3 \left( g_{\# 2} \right)
\end{math}
is isomorphic to 
\[
\bigl<\overbrace{x_q e_q, \ldots, x_n e_q}^{n-q+1}, \overbrace{x_q^2 e_q, x_q x_{q+1} e_q, x_{q+1}^2 e_q, x_q^3 e_q}^4\bigr>_\R\subset \R[[x]]^q.
\]

\noindent
\textbf{\# $3$:} The quotient space 
\begin{math}
J \left( n, q \right)_0 / T \mathcal{K} \left[ G \right]^3 \left( g_{\# 3} \right)
\end{math}
is isomorphic to
\[
\bigl<\overbrace{x_q e_q, \ldots, x_n e_q}^{n-q+1}, \overbrace{x_q^2 e_q, x_q x_{q+1} e_q, x_{q+1}^2 e_q, x_q x_{q+1}^2 e_q, x_{q+1}^3 e_q}^5\bigr>_\R\subset \R[[x]]^q.
\]

\noindent
\textbf{\# $4$:} The quotient space 
\begin{math}
J \left( n, q \right)_0 / T \mathcal{K} \left[ G \right]^3 \left( g_{\# 4} \right)
\end{math}
is isomorphic to
\begin{equation}
\bigl<\overbrace{x_q e_q, \ldots, x_n e_q}^{n-q+1}, \overbrace{x_q^2 e_q, x_q x_{q+1} e_q, x_{q+1}^2 e_q, x_q^3 e_q, x_q^2 x_{q+1} e_q, x_q x_{q+1}^2 e_q, x_{q+1}^3 e_q}^7\bigr>_\R\subset \R[[x]]^q.
\end{equation}

We can eventually conclude that any jet in $(\pi^3_2)^{-1}(\Lambda_{q-1,q+2})$ is $\mathcal{K}[G]^3$-equivalent to the $3$-jets in Table~\ref{table:jet in Gamma_q-1,q+2}. The jet 
\begin{math}
g_{\# 1}
\end{math}
is $3$-determined since
\begin{math}
\mathcal{M}_n^3 \mathcal{E}_n^q \subset T \mathcal{K} \left[ G \right] \left( g \right) +\mathcal{M}_n^{4}\mathcal{E}_n^q
\end{math}
holds for any germ 
\begin{math}
g
\end{math}
representing 
\begin{math}
g_{\# 1}
\end{math} (cf.~Appendix~\ref{sec:calculation codim determinacy type 1k}). The germs of type $(2)$ is one of the germ representing 
\begin{math}
g_{\# 1}
\end{math}.
Therefore, the preimage by $\pi^5_3$ of the union of the $\mathcal{K}[G]^3$-orbits of the germs of type $(2)$ (with all possible signs) is equal to $B_2$. 
Furthermore, the calculations of $\mathcal{K}[G]^3$-codimensions we have done above imply that the extended codimensions of $B_2$ and the complement $(\pi^5_2)^{-1}(\Lambda_{q-1,q+2})\setminus B_2$ are equal to $4$ and $5$, respectively.

\subsubsection*{Jets in $(\pi^4_2)^{-1}(\Lambda_{q-2,q})$}

A jet in $\Lambda_{q-2,q}$ is $\mathcal{K}[G]^2$-equivalent to
\[
\left( x_1, \ldots, x_{q-1}, \sum_{j=1}^{q-2} \pm x_j+ax_{q-1}^2+ \sum_{j=q}^{n}\pm x_j^2\right)
\]
for some $a\in \R$. 
This $2$-jet is further $\mathcal{K}[G]^2$-equivalent to the jet represented by either the germ of type $(3,2)$ in Table~\ref{table:generic constraint r=0} or the following germ: 
\[
f=\left( x_1, \ldots, x_{q-1}, \sum_{j=1}^{q-2} \pm x_j+ \sum_{j=q}^{n}\pm x_j^2\right).
\]
We can deduce the following inclusion for $m\geq 3$ in the same way as that for jets in $(\pi^5_2)^{-1}(\Lambda_{q-1,q+1})$:
\[
\mathcal{M}_n^m\mathcal{E}_n^q\subset \left<x_{q-1}^me_q\right>_{\R}+T\mathcal{K}[G]_1(f)+\mathcal{M}_n^{m+1}\mathcal{E}_n^q. 
\]
Therefore, using Theorem~\ref{thm:complete transversal}, we can deduce that an $m$-jet $\sigma \in J^m(n,q)_0$ with $\pi^{m}_{m-1}(\sigma)=j^{m-1}f(0)$ is $\mathcal{K}[G]^m$-equivalent to either the $m$-jet of the germ of type $(3,m)$ in Table~\ref{table:generic constraint r=0} or $j^mf(0)$ for $m=3,4$. 
The germ of type $(3,m)$, denoted by $g_{3,m}$, is $m$-determined relative to $\mathcal{K}[G]$ since 
\begin{math}
\mathcal{M}_n^m \mathcal{E}_n^q \subset T \mathcal{K} \left[ G \right] \left( g_{3,m} \right)
\end{math}
holds and thus Proposition~\ref{prop:basic properties K[G]-eq/codim} implies the claim. The 
\begin{math}
\mathcal{K} \left[ G \right]
\end{math}-codimension of 
\begin{math}
g_{3,m}
\end{math}
is 
\begin{math}
n - q + m
\end{math}
since
\begin{math}
\mathcal{M}_n \mathcal{E}_n^q / T \mathcal{K} \left[ G \right] \left( g_{3,m} \right)
\end{math}
is isomorphic to
\begin{math}
\bigl<\overbrace{x_{q-1} e_q, \ldots, x_n e_q}^{n-q+2}, \overbrace{x_{q-1}^2 e_q, \ldots, x_{q-1}^{m-1} e_q}^{m-2}\bigr>_\R\subset \R[[x]]^q
\end{math}.

We can thus conclude that any jet in $(\pi^4_2)^{-1}(\Lambda_{q-2,q})$ is $\mathcal{K}[G]^4$-equivalent to either the jet represented by the germ of type $(3,m)$ for $m=3,4$, or the jet $j^4f(0)$ (with some signs), and that the preimage by $\pi^5_4$ of the union of the $\mathcal{K}[G]^4$-orbits of the $4$-jets of the germ of type $(3,m)$ (with all possible signs) is equal to $B_{3,m}$.
The $\mathcal{K}[G]^4$-codimensions of the germ of type $(3,m)$ is equal to $n-q+m$. 
On the other hand, the $\mathcal{K}[G]^4$-codimension of the jet $j^4f(0)$ is equal to $n-q+5$ since 
\begin{math}
J^4 \left( n, q \right)_0 / T\mathcal{K}^4 \left[ G \right] \left( j^4f(0) \right)
\end{math}
is isomorphic to
\begin{math}
\bigl<\overbrace{x_{q-1} e_q, \ldots, x_n e_q}^{n-q+2}, \overbrace{x_{q-1}^2 e_q, x_{q-1}^3 e_q, x_{q-1}^4 e_q}^3\bigr>_\R\subset \R[[x]]^q
\end{math}.
We can thus deduce from the relation~\eqref{eq:relation d_e K[G]-codim} that the extended codimension of $B_{3,m}$ is equal to $m$, whereas that of the complement $(\pi^5_2)^{-1}(\Lambda_{q-2,q})\setminus \left(\bigsqcup_m B_{3,m}\right)$ is equal to $5$.

\subsubsection*{Jets in $(\pi^4_2)^{-1}(\Lambda_{q-2,q+1})$}

A jet in $\Lambda_{q-2,q+1}$ is $\mathcal{K}[G]^2$-equivalent to
\begin{equation}\label{eq:2jet Lambda_q-2,q+1}
\left( x_1, \ldots, x_{q-1}, \sum_{j=1}^{q-2} \pm x_j+a_1x_{q-1}^2+a_2x_{q-1}x_q+ \sum_{j=q+1}^{n}\pm x_j^2\right)
\end{equation}
for some $a_1,a_2\in \R$. 
If $a_2\neq 0$, one can change this jet to that represented by the following germ by a coordinate transformation preserving $(x_1,\ldots, x_{q-1},x_{q+1},\ldots, x_n)$:
\[
f_1=\left( x_1, \ldots, x_{q-1}, \sum_{j=1}^{q-2} \pm x_j+x_{q-1}x_q+ \sum_{j=q+1}^{n}\pm x_j^2\right)
\]
If $a_2=0$ and $a_1\neq 0$, the jet \eqref{eq:2jet Lambda_q-2,q+1} is $\mathcal{K}[G]^2$-equivalent to that represented by
\[
f_2=\left( x_1, \ldots, x_{q-1}, \sum_{j=1}^{q-2} \pm x_j\pm x_{q-1}^2+ \sum_{j=q+1}^{n}\pm x_j^2\right).
\]
If $a_1=a_2=0$, the quotient space \begin{math}
J^2 \left( n, q \right)_0 / T \mathcal{K} \left[ G \right]^2 \left( g \right)
\end{math}
for the jet $g$ in \eqref{eq:2jet Lambda_q-2,q+1} is isomorphic to
\begin{math}
\bigl<\overbrace{x_{q-1} e_q, \ldots, x_n e_q}^{n-q+2}, \overbrace{x_{q-1}^2 e_q, x_{q-1} x_q e_q, x_q^2 e_q}^3\bigr>_\R\subset \R[[x]]^q
\end{math}.
In particular, the $\mathcal{K}[G]^2$-codimension of the jet \eqref{eq:2jet Lambda_q-2,q+1} is equal to $n-q+5$.

Since $x_{q-1}x_{\alpha}e_q = tf_1(x_\alpha e_q)\in T\mathcal{K}[G]_1(f_1)$ for any multi-index $\alpha$ with $|\alpha|>1$, we can deduce the following inclusion for $m\geq 3$ in the same way as that for jets in $(\pi^5_2)^{-1}(\Lambda_{q-1,q+1})$:
\[
\mathcal{M}_n^m\mathcal{E}_n^q\subset \left<x_{q}^me_q\right>_{\R}+T\mathcal{K}[G]_1(f_1)+\mathcal{M}_n^{m+1}\mathcal{E}_n^q. 
\]
Therefore, using Theorem~\ref{thm:complete transversal}, we can deduce that an $m$-jet $\sigma \in J^m(n,q)_0$ with $\pi^{m}_{m-1}(\sigma)=j^{m-1}f_1(0)$ is $\mathcal{K}[G]^m$-equivalent to either the $m$-jet of the germ of type $(4,m)$ in Table~\ref{table:generic constraint r=0} or $j^mf_1(0)$ for $m=3,4$. 
We denote the germ of type $(4,m)$ by $g_{4,m}$.
In the same way as before, one can show that \begin{math}
\mathcal{M}_n^m \mathcal{E}_n^q
\end{math}
is contained in $T \mathcal{K} \left[ G \right] \left( g_{4,m} \right)$ and \begin{math}
\mathcal{M}_n \mathcal{E}_n^q / T \mathcal{K} \left[ G \right] \left( g_{4,m} \right)
\end{math}
is isomorphic to $\left<x_{q-1} e_q, \ldots, x_n e_q, x_q^2 e_q, \ldots, x_q^{m-1} e_q\right>_{\R}\subset \R[[x]]^q$.
In particular $g_{4,m}$ is $m$-determined and has the $\mathcal{K}[G]^m$-codimension $n-q+m$.
Thus, the union of the $\mathcal{K}[G]^4$-orbits of the germs of type $(4,m)$ (with all possible signs) is equal to $B_{4,m}$ and its extended codimension is $m$. 
On the other hand, the $\mathcal{K}[G]^4$-codimension of $j^4f_1(0)$ is equal to $n-q+5$ since
\begin{math}
J ^4 \left( n, q \right)_q / T \mathcal{K} \left[ G \right]^4 \left( j^4f_1 \left( 0 \right) \right)
\end{math}
is isomorphic to \begin{math}
\left<x_{q-1} e_q, \ldots, x_n e_q,x_q^2 e_q, x_q^3 e_q, x_q^4 e_q\right>_\R\subset\R[[x]]^q
\end{math}.

Since $\pm 2 x_{q-1}x_{\alpha}e_q = tf_2(x_\alpha e_{q-1})-f_2^\ast X_{q-1}x_\alpha e_{q-1}\in T\mathcal{K}[G]_1(f_2)$ for any multi-index $\alpha$ with $|\alpha|>1$, we can deduce the following inclusion in the same way as before:
\[
\mathcal{M}_n^3\mathcal{E}_n^q\subset \left<x_{q}^3e_q\right>_{\R}+T\mathcal{K}[G]_1(f_2)+\mathcal{M}_n^{4}\mathcal{E}_n^q. 
\]
Therefore, using Theorem~\ref{thm:complete transversal}, we can deduce that an $3$-jet $\sigma \in J^3(n,q)_0$ with $\pi^{3}_{2}(\sigma)=j^{2}f_2(0)$ is $\mathcal{K}[G]^3$-equivalent to either the $3$-jet of the germ of type $(5)$ in Table~\ref{table:generic constraint r=0} or $j^3f_2(0)$.
We denote the germ of type $(5)$ by $g$. 
In the same way as before, one can show that $\mathcal{M}_n^3 \mathcal{E}_n^q$ is contained in $T \mathcal{K} \left[ G \right] \left( g \right)$ and \begin{math}
\mathcal{M}_n \mathcal{E}_n^q / T \mathcal{K} \left[ G \right] \left( g \right)
\end{math}
is isomorphic to \begin{math}
\left<x_{q-1} e_q, \ldots, x_n e_q,x_q^2 e_q, x_{q-1} x_q e_q\right>_\R\subset \R[[x]]^q
\end{math}.
In particular $g$ is $3$-determined and its $\mathcal{K}[G]$-codimension is $n-q+4$. 
Thus, the union of the $\mathcal{K}[G]^4$-orbits of the germs of type $(5)$ (with all possible signs) is equal to $B_{5}$ and its extended codimension is $4$.
On the other hand, the $\mathcal{K}[G]^3$-codimension of $j^3f_2(0)$ is $n-q+6$ since 
\begin{math}
J^3 \left( n, q \right)_0 / T \mathcal{K} \left[ G \right]^3 \left( j^3 f_2(0) \right)
\end{math}
is isomorphic to 
\[
\left<x_{q-1} e_q, \ldots, x_n e_q,x_{q-1} x_q e_q, x_q^2 e_q, x_{q-1} x_q^2 e_q, x_q^3 e_q\right>_\R \subset \R[[x]]^q.
\]

The complement of $B_{4,3}\sqcup B_{4,4}\sqcup B_5$ in $(\pi^5_2)^{-1}(\Lambda_{q-2,q+1})$ is the following union:
\[
(\pi^5_4)^{-1}\biggl(\bigsqcup_{\begin{minipage}[c]{11mm}
\scriptsize
\centering
all signs 

in $f_1$
\end{minipage}}\mathcal{K}[G]^4\cdot j^4f_1(0)\biggr)
\sqcup (\pi^5_3)^{-1}\biggl(\bigsqcup_{\begin{minipage}[c]{11mm}
\scriptsize
\centering
all signs 

in $f_2$
\end{minipage}}(\mathcal{K}[G]^3\cdot j^3f_2(0))
\biggr)
\]
The extended codimension of the union is equal to $5$ since the $\mathcal{K}[G]^4$- (resp.~$\mathcal{K}[G]^3$-) codimension of $j^4f_1(0)$ (resp.~$j^3f_2(0)$) is equal to $n-q+5$. 

\subsubsection*{A digression on extended intrinsic derivatives for jets in $\Lambda_{l,q}$}

Before proceeding with the proof of Theorem~\ref{thm:classification jets}, we will give invariants of jets in $\Lambda_{l,q}$ under the $\mathcal{K}[G]^2$-action. 
For $\sigma = j^2f(0)\in \Lambda_{l,q}$, the number of zero entries of $\mu_f$ is $q-l-1$. 
We take indices $k_1,\ldots, k_{q-l-1}\in \{1,\ldots, q\}$ so that $k_i<k_{i+1}$ and $(\mu_f)_{k_i}=0$ (for the definition of $\mu_f$, see Subsection~\ref{sec:extended_intrinsic_derivative}). 
We take vectors $v_1(f),\ldots, v_{q-l-1}(f)\in W_f$ satisfying the following conditions:

\begin{itemize}

\item 
$\tilde{D}^2f(v_i(f)\otimes w)=0$ for any $w\in \Ker df_0$, 

\item 
$d(f_{k_i})_0(v_j(f)) = \delta_{ij}$.

\end{itemize}

\noindent
Since $D^2f$ is non-degenerate and $\tilde{D}^2f$ depends only on the $2$-jet $j^2f(0)$, the vector $v_1(f),\ldots, \linebreak v_{q-l-1}(f)$ satisfying the conditions above are uniquely determined from $\sigma = j^2f(0)$. 
For this reason, we denote $v_i(f)$ by $v_i(\sigma)$. 

\begin{lemma}\label{lem:zero set alpha_ij}

The subset $\Omega_0 = \{\sigma=j^2f(0)\in \Lambda_{l,q}~|~\tilde{D}^2f(v_i(\sigma)\otimes v_j(\sigma))=0\mbox{ for any }i\leq j\}$ is a submanifold of $\Lambda_{l,q}$ with codimension $\tilde{l}:=(q-l-1)(q-l)/2$. 

\end{lemma}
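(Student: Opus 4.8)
The plan is to exhibit $\Omega_0$ as the zero locus of a submersion whose target has dimension $\tilde l$. Recall first that $\Lambda_{l,q}$ is a submanifold of $J^2(n,q)_0$ (Lemma~\ref{lem:estimate codimension Lambda_l,s}) and that $\pi^2_1(\Lambda_{l,q})\subset\Sigma_1\setminus\Sigma_2$, so $\rank df_0=q-1$ is constant along $\Lambda_{l,q}$ (cf.\ Lemma~\ref{lem:classification corank1 1-jet q=0}). Working locally on $\Lambda_{l,q}$, choose a smooth trivialization of the line bundle $\sigma=j^2f(0)\mapsto\Coker df_0$, and set
\[
\Phi\colon\Lambda_{l,q}\longrightarrow\mathrm{Sym}^2(\R^{q-l-1})^\ast\cong\R^{\tilde l},\qquad\Phi(\sigma)=\bigl(\tilde{D}^2f(v_i(\sigma)\otimes v_j(\sigma))\bigr)_{1\le i\le j\le q-l-1},
\]
which is well defined because $\tilde{D}^2f$ is symmetric and $v_1(\sigma),\dots,v_{q-l-1}(\sigma)$ are uniquely determined by $\sigma$ (as explained before the statement). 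By definition $\Omega_0=\Phi^{-1}(0)$, and this zero set is independent of the chosen trivialization; since being a submanifold of codimension $\tilde l$ is a local property, it suffices to prove that $\Phi$ is a submersion. Smoothness of $\Phi$ is first checked: on $\Sigma_1\setminus\Sigma_2$ the subbundle $\Im df_0$ varies smoothly, hence so does a vector $\mu_f$ spanning $(\Im df_0)^\perp$; on the open subsets of $\Lambda_{l,q}$ where the zero pattern of $\mu_f$ is fixed (these cover $\Lambda_{l,q}$), $W_f$ and $\tilde{D}^2f$ then depend algebraically on $\sigma$, and $v_i(\sigma)$, being the unique solution of the linear system given by the two bulleted conditions preceding the statement, depends smoothly on $\sigma$ by Cramer's rule.

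Next I would compute $\Phi$ on the normal forms. Fix signs; by Lemma~\ref{lem:normal form corank1 2-jet q=0} the jets
\[
f_a=\left(x_1,\dots,x_{q-1},\ \sum_{j=1}^{l}\pm x_j+\sum_{j_1,j_2=l+1}^{q-1}a_{j_1j_2}x_{j_1}x_{j_2}+\sum_{j=q}^{n}\pm x_j^2\right),\qquad a=(a_{j_1j_2})\ \text{symmetric},
\]
form an affine submanifold of $\Lambda_{l,q}$ of dimension $\tilde l$. For $f_a$ one has $\Im df_0=\langle e_1\pm e_q,\dots,e_l\pm e_q,e_{l+1},\dots,e_{q-1}\rangle$, so $\mu_f$ has zero entries exactly in positions $l+1,\dots,q-1$, $\Coker df_0=\R[e_q]$, $W_f=\{x_1=\cdots=x_l=0\}$, $\Ker df_0=\langle\partial_q,\dots,\partial_n\rangle$, and $\tilde{D}^2f_a(u\otimes w)=\mathrm{Hess}\bigl((f_a)_q\bigr)(u,w)\,[e_q]$ for $u,w\in W_f$. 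A short computation then yields $v_j(f_a)=\partial_{l+j}$ and $\tilde{D}^2f_a(v_i(f_a)\otimes v_j(f_a))=2a_{l+i,l+j}[e_q]$, so $\Phi$ restricted to the slice $\{f_a\}$ is a linear isomorphism onto $\R^{\tilde l}$; in particular $d\Phi$ is onto at every normal form $\sigma_0=j^2f_a(0)$.

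Finally I would propagate surjectivity along $\mathcal{K}[G]^2$-orbits. By Theorem~\ref{T:invariance D^2 under K} (together with the way $\mu_f$, $W_f$ and the ordered zero-positions $k_1<\cdots<k_{q-l-1}$ transform: a diagonal factor of $G$ rescales $\Coker df_0$, a permutation factor permutes the $k_i$, and $\phi$ carries $W_f$ by $(d\phi_0)^{-1}$), for every $\tau\in\mathcal{K}[G]^2$ there is an invertible linear map $A_\tau$ of $\R^{\tilde l}$ with $\Phi(\tau\cdot\sigma)=A_\tau(\Phi(\sigma))$. Hence $\Omega_0$ is $\mathcal{K}[G]^2$-invariant, and given arbitrary $\sigma\in\Lambda_{l,q}$, writing $\sigma=\tau_0\cdot\sigma_0$ with $\sigma_0$ a normal form (Lemma~\ref{lem:normal form corank1 2-jet q=0}) and using $\Phi\circ L_{\tau_0}=A_{\tau_0}\circ\Phi$ for the diffeomorphism $L_{\tau_0}\colon\Lambda_{l,q}\to\Lambda_{l,q}$, $\sigma'\mapsto\tau_0\cdot\sigma'$, we conclude that $d\Phi_\sigma$ is onto. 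Thus $\Phi$ is a submersion, $\Omega_0=\Phi^{-1}(0)$ is a submanifold of $\Lambda_{l,q}$, and $\codim(\Omega_0,\Lambda_{l,q})=\dim\R^{\tilde l}=\tilde l$. I expect the last step—keeping track of the $\mathcal{K}[G]^2$-action on $\mu_f$, $W_f$, the enumeration $k_i$ and hence on $v_i(\sigma)$, together with the fact that $\Phi$ is only defined piecewise-smoothly so that one must argue locally—to be the main obstacle; the normal-form computation in the third step is routine once the identity $\tilde{D}^2f(u\otimes w)=\mathrm{Hess}(f_q)(u,w)[e_q]$ for $u,w\in W_f$ is in hand.
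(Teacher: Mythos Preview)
Your plan is correct and follows the paper's own strategy: realise $\Omega_0=\Phi^{-1}(0)$ with $\Phi(\sigma)=(\tilde D^2f(v_i(\sigma)\otimes v_j(\sigma)))_{i\le j}$, verify that $d\Phi$ is onto along a normal-form slice (your computation $v_i(f_a)=\partial_{l+i}$, $\Phi(f_a)_{ij}=2a_{l+i,l+j}$ is exactly right), and propagate by equivariance. Regarding the obstacle you flag: with an arbitrary local trivialisation of the cokernel line bundle the equivariance law is only $\Phi(\tau\sigma)=\rho(\sigma)\,A_\tau\Phi(\sigma)$ for some smooth nowhere-zero scalar $\rho$, not $\Phi(\tau\sigma)=A_\tau\Phi(\sigma)$ as you write; this is harmless on $\Omega_0$ (where $\Phi=0$, so the extra $d\rho$-term vanishes and the lemma still follows) but does not by itself transport surjectivity of $d\Phi$ to points outside $\Omega_0$. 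The paper sidesteps this by covering $\Lambda_{l,q}$ with explicit charts $U_{L,M,K}$, fixing the basis $[\partial_q]$ of $\Coker df_0$ there, and propagating only via the $\mathcal R^2$-element $\ell_\sigma$ that straightens $f$ to $(x_1,\dots,x_{q-1},h)$; since this element has $\psi=I$, it preserves $[\partial_q]$, so one gets $A_{L,M,K}\circ\ell_\sigma=A_{L,M,K}$ on the nose. The section is then built at the half-normalised form $(x_1,\dots,x_{q-1},h)$ rather than at the full normal form $f_a$, and the resulting submersion statement holds at \emph{every} point of $\Lambda_{l,q}$, which is what is later needed for Proposition~\ref{prop:submersion map A}.
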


\begin{proof}[Proof of Lemma~\ref{lem:zero set alpha_ij}]
For subsets of indices 
\begin{align*}
&L=\{l_1,\ldots, l_{q-1}\}\subset \{1,\ldots, q\}, \\
&M=\{m_1,\ldots, m_{q-1}\}\subset \{1,\ldots,n\},\mbox{ and}\\ &K=\{k_1,\ldots,k_{q-l-1}\}\subset L, 
\end{align*}
we define $U_{L,M,K}\subset \Lambda_{l,q}$ as follows: 
\[
U_{L,M,K}=\left\{ \sigma=j^2f(0)\in \Lambda_{l,q}~\left|~
\begin{minipage}[c]{50mm}
$\det\left(\frac{\Pa f_{l_i}}{\Pa x_{m_j}}(0)\right)_{1\leq i,j\leq q-1}\neq 0$\\
$(\mu_f)_{k_1}=\cdots =(\mu_f)_{k_{q-l-1}}=0$
\end{minipage}\right.\right\}.
\]
The family $\{U_{L,M,K}\}_{L,M,K}$ is an open cover of $\Lambda_{l,q}$. 
Thus, it is enough to show that the intersection $\Omega_0\cap U_{L,M,K}$ is a submanifold of $U_{K,L,M}$ with codimension $\tilde{l}$. 
For simplicity, we assume $L=M=\{1,\ldots, q-1\}$ and $K=\{l+1,\ldots, q-1\}$. (One can deal with the other subsets of indices in the same way.)

For a map-germ $f\in \mathcal{M}_n\mathcal{E}_n^q$ with $j^2f(0)\in U_{L,M,K}$, we denote the diffeomorphism-germ $(f_1,\ldots, f_{q-1},x_q,\ldots, x_n)$ by $\Phi(f)$. 
Since the $i$-th component $(f\circ\Phi(f)^{-1})_i$ is equal to $x_i$ for $1\leq i \leq q-1$, $\Ker df_0$ is generated by 
\[
w_q(f):=(d\Phi(f)_0)^{-1}(\Pa_q),\ldots, w_n(f):=(d\Phi(f)_0)^{-1}(\Pa_n)\in T_0\R^n,
\]
where $\Pa_1,\ldots, \Pa_n$ are the canonical basis of $T_0\R^n$. 
Moreover, $[\Pa_q]\in \Coker df_0$ is a basis of $\Coker df_0$. 
We put $b_{ij}(f)=\frac{\Pa^2(f\circ \Phi(f)^{-1})_q}{\Pa x_i\Pa x_j}(0)$. 
Since $(b_{ij}(f))_{q\leq i,j \leq n}$ is a representation matrix of the intrinsic derivative $D^2f$ with respect to the bases above, this matrix is regular and thus there exists $c_{kj}(f)\in \R$ ($k=1,\ldots, q-l-1$, $j=q,\ldots,n$) such that the following linear equations hold:
\[
\sum_{j=q}^n c_{kj}(f)b_{ij}(f) +\frac{\Pa^2 (f\circ \Phi(f)^{-1})_q}{\Pa x_i\Pa x_{l+k}}(0) =0 \hspace{.3em}(i=q,\ldots,n).
\]
By a direct calculation, one can obtain the following equality: 
\[
v_k(\sigma) = (d\Phi(f)_0)^{-1}\left(\Pa_{l+k}+\sum_{j=q}^{n}c_{kj}(f)\Pa_j\right). 
\]
Since $(d\Phi(f)_0)^{-1}$ and $c_{kj}(f)$ depends smoothly on $j^2f(0)$, the map $A_{L,M,K}:U_{L,M,K}\to \R^{\tilde{l}}$ defined by $A_{L,M,K}(\sigma) = (\ldots, \tilde{D}^2f(v_i(\sigma)\otimes v_j(\sigma)),\ldots)$ (under the identification $\Coker df_0\cong \R$ by the basis $[\Pa_q]\in \Coker df_0$) is smooth. (Here the coordinates of $\R^{\tilde{l}}$ are labeled by $i,j$ with $1\leq i\leq j\leq q-l-1$.)

Let $\sigma = j^2f_0(0)\in U_{L,M,K}$. 
In what follows, we will show that $(dA_{L,M,K})_\sigma$ is surjective (and thus $A_{L,M,K}$ is a submersion). 
Since $\Phi(f_0)^{-1}=(f'_1,\ldots, f'_{q-1},x_q,\ldots, x_n)$ for some $f'_1,\ldots,f'_{q-1}\in \mathcal{M}_n$, one can deduce by direct calculation that the left action by $(j^2\Phi(f_0)(0),I)\in \mathcal{K}[G]^2$ (where $I$ is the unit matrix) preserves the subset $U_{L,M,K}\subset \Lambda_{l,q}$. 
Let $\ell_{\sigma}:U_{L,M,K}\to U_{L,M,K}$ be the diffeomorphism defined by this action. 
It is easy to check that the following diagram commutes: 
\[
\xymatrix{
U_{L,M,K} \ar[r]^{\ell_{\sigma}} \ar[dr]_{A_{L,M,K}} & U_{L,M,K} \ar[d]^{A_{L,M,K}} \\
& \R^{\tilde{l}}.
}
\]
Hence, one can assume $f_0=(x_1,\ldots, x_{q-1},h)$ for some $h\in \mathcal{M}_n$ without loss of generality. 
We define a map $s:\R^{\tilde{l}} \to U_{L,M,K}$ by $s(d) = j^2\tilde{f}_d(0)$, where  
\[
\tilde{f}_d:= (x_1,\ldots, x_{q-1},\tilde{h}(d)) :=  \left(x_1,\ldots, x_{q-1},h+\sum_{i<j}d_{ij}x_{l+i}x_{l+j}+\frac{1}{2}\sum_i d_{ii}x_{l+i}^2\right)
\]
for $d=(d_{ij})\in \R^{\tilde{l}}$. 
By direct calculation, one can easily check that $b_{ij}(f_0)=b_{ij}(\tilde{f}_d)$ for $q\leq i,j \leq n$, $\frac{\Pa^2 h}{\Pa x_i\Pa x_{l+k}}(0)= \frac{\Pa^2 \tilde{h}_d}{\Pa x_i\Pa x_{l+k}}(0)$ for $k=1,\ldots, q-l-1$ and $i=q,\ldots, n$, and thus $v_k(s(d)) = v_k(\sigma) = \Pa_{l+k}+ \sum_{j=q}^{n}c_{kj}(f_0)\Pa_j$. 
The following equalities thus hold:
\begin{align*}
A_{L,M,K}\circ s(d)&= \left(\ldots,\tilde{D}^2\tilde{f}_d(v_i(s(d))\otimes v_j(s(d))),\ldots\right)\\
&= \left(\ldots,\tilde{D}^2f_0(v_i(\sigma)\otimes v_j(\sigma))+d_{ij},\ldots\right) = A_{L,M,K}(\sigma)+d.
\end{align*}
In particular, the differential $d(A_{L,M,K}\circ s)_0 = (dA_{L,M,K})_\sigma \circ ds_0$ is the identity map, and thus $(dA_{L,M,K})_\sigma$ is surjective. 

The intersection $\Omega_0\cap U_{L,M,K}$ is equal to $A_{L,M,K}^{-1}(0)$, which is a submanifold of $U_{L,M,K}$ with codimension $\tilde{l}$ since $A_{L,M,K}$ is a submersion. 
\end{proof}

Let $\mathbb{P}^{\tilde{l}-1}$ be the $(\tilde{l}-1)$-dimensional real projective space, whose homogeneous coordinates are labeled by $i,j$ with $1\leq i\leq j \leq q-l-1$. 
Taking an isomorphism $\Coker (df_0)\cong \R$, we regard $\tilde{D}^2f(v_i(f)\otimes v_j(f))$ as a real value, which we denote by $\alpha_{ij}(f)$ or $\alpha_{ij}(\sigma)$. 
We define $A:\Lambda_{l,q}\setminus \Omega_0\to \mathbb{P}^{\tilde{l}-1}$ by $A(\sigma)=[\cdots :\alpha_{ij}(\sigma):\cdots]$. 
Note that this map does not depend on the choice of an isomorphism $\Coker(df_0)\cong \R$. 

\begin{proposition}\label{prop:submersion map A}

The map $A$ is a submersion. 

\end{proposition}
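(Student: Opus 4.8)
The plan is to piggyback on the submersion $A_{L,M,K}$ already built in the proof of Lemma~\ref{lem:zero set alpha_ij}. Recall that $\{U_{L,M,K}\}_{L,M,K}$ is an open cover of $\Lambda_{l,q}$, and that on each chart the smooth map $A_{L,M,K}\colon U_{L,M,K}\to\R^{\tilde l}$, $\sigma\mapsto\left(\ldots,\tilde D^2f\bigl(v_i(\sigma)\otimes v_j(\sigma)\bigr),\ldots\right)$ (using a fixed identification $\Coker(df_0)\cong\R$) was shown to be a submersion, via the explicit section $s$ with $A_{L,M,K}\circ s(d)=A_{L,M,K}(\sigma)+d$. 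By the very definition of $\Omega_0$ we have $\Omega_0\cap U_{L,M,K}=A_{L,M,K}^{-1}(0)$, so $U_{L,M,K}\setminus\Omega_0=A_{L,M,K}^{-1}\bigl(\R^{\tilde l}\setminus\{0\}\bigr)$, an open set on which $A_{L,M,K}$ restricts to a submersion onto $\R^{\tilde l}\setminus\{0\}$.

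First I would record that $A$ is well defined on $\Lambda_{l,q}\setminus\Omega_0$ independently of the auxiliary choices: rescaling the identification $\Coker(df_0)\cong\R$ multiplies every $\alpha_{ij}(\sigma)$ by one common nonzero scalar and therefore does not change the homogeneous point $[\cdots:\alpha_{ij}(\sigma):\cdots]$, while the coordinates $\alpha_{ij}$ themselves are canonically labelled, because the indices $k_1<\cdots<k_{q-l-1}$ at which $\mu_f$ vanishes and the vectors $v_i(\sigma)$ normalized by $d(f_{k_i})_0(v_j(\sigma))=\delta_{ij}$ are intrinsically attached to $\sigma$. Consequently it suffices to verify submersivity chartwise, and on $U_{L,M,K}\setminus\Omega_0$ the map $A$ agrees with $p\circ A_{L,M,K}$, where $p\colon\R^{\tilde l}\setminus\{0\}\to\mathbb{P}^{\tilde l-1}$ is the tautological projection.

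Then I would conclude as follows. The projection $p$ is a submersion (its differential at any point has the radial line as kernel and nothing more), and $A_{L,M,K}$ is a submersion onto $\R^{\tilde l}\setminus\{0\}$ on $U_{L,M,K}\setminus\Omega_0$ by the previous paragraph; hence $A|_{U_{L,M,K}\setminus\Omega_0}=p\circ A_{L,M,K}$ is a composite of submersions, thus a submersion. Equivalently, fixing $\sigma\in U_{L,M,K}\setminus\Omega_0$ and writing $a=A_{L,M,K}(\sigma)\neq0$, the section $s$ gives $A\circ s(d)=p(a+d)$, a composite of the translation diffeomorphism $d\mapsto a+d$ with the submersion $p$ near $a$, so $A\circ s$ is a submersion at $0$ and $(dA)_\sigma$ is onto. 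Since the $U_{L,M,K}\setminus\Omega_0$ cover $\Lambda_{l,q}\setminus\Omega_0$, the proposition follows.

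The argument is essentially formal once Lemma~\ref{lem:zero set alpha_ij} is available; the only point demanding attention — and the one I expect to be the sole (mild) obstacle — is the bookkeeping showing that the identity $A_{L,M,K}\circ s(d)=A_{L,M,K}(\sigma)+d$ passes unambiguously to the projective target, i.e.~that no chart-dependent positive rescaling of the $\alpha_{ij}$ creeps in when the distinguished index set $(L,M,K)$ is changed; this is exactly what the homogeneity observation in the first paragraph takes care of.
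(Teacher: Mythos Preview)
Your proof is correct and follows essentially the same approach as the paper: the paper's proof is a one-liner noting that $A|_{U_{L,M,K}\setminus\Omega_0}=\pi\circ A_{L,M,K}$ with $\pi\colon\R^{\tilde l}\setminus\{0\}\to\mathbb{P}^{\tilde l-1}$ the canonical projection, hence a composition of submersions. Your argument is the same, just with the bookkeeping about well-definedness across charts spelled out more explicitly than the paper bothers to.
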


\begin{proof}[Proof of Proposition~\ref{prop:submersion map A}]
The statement follows from the fact that $A|_{U_{L,M,K}\setminus \Omega_0}$ is equal to $\pi \circ A_{L,M,K}$, where $\pi:\R^{\tilde{l}}\setminus \{0\}\to \mathbb{P}^{\tilde{l}-1}$ is the projection. 
\end{proof}

\subsubsection*{Jets in $(\pi^3_2)^{-1}(\Lambda_{q-3,q})$}

A jet in $\Lambda_{q-3,q}$ is $\mathcal{K}[G]^2$-equivalent to
\begin{equation}
\left( x_1, \ldots, x_{q-1}, \sum_{j=1}^{q-3} \pm x_j + \alpha_{11} x_{q-2}^2 + \alpha_{12} x_{q-2} x_{q-1} + \alpha_{22} x_{q-1}^2 + \sum_{j=q}^n \pm x_j^2 \right) \label{eq:2jet Lambda_q-3,q}
\end{equation}
for some $\alpha_{ij}\in \R$. 
The 
\begin{math}
\mathcal{K} \left[ G \right]^2
\end{math}-codimension of $j^2g_\alpha(0)$ is as shown in Table~\ref{table:kgtangent_Lambda_q-3,q_j2} (see Appendix~\ref{sec:appendix_kg2codimension_Lambda_q-3,q}).
\begin{table}[h]
 \begin{center}
  \begin{tabular}{|l|l|l|} \hline
   class \# & $\alpha_{ij}$'s & $\mathcal{K}[G]^2$-cod.  \\ \hline
   1 & $\alpha_{11} \alpha_{22} \neq 0$ or $\alpha_{11} \alpha_{12} \neq 0$ or $\alpha_{22} \alpha_{12} \neq 0$ & $n-q+4$ \\ \hline
   2 & ($\alpha_{11} = \alpha_{12} = 0$ and $\alpha_{22} \neq 0$) & $n-q+5$ \\
   & or ($\alpha_{22} = \alpha_{12} = 0$ and $\alpha_{11} \neq 0$) & \\
   & or  ($\alpha_{11} = \alpha_{22} = 0$ and $\alpha_{12} \neq 0$) & \\ \hline
   3 & $\alpha_{11} = \alpha_{12} = \alpha_{22} = 0$ & $n-q+6$ \\
\hline
  \end{tabular}
  \caption{The $\mathcal{K}[G]^2$-codimension of the $2$-jet \eqref{eq:2jet Lambda_q-3,q}.}
  \label{table:kgtangent_Lambda_q-3,q_j2}
 \end{center}
\end{table}
It is easy to see that the set of $2$-jets $\mathcal{K}[G]^2$-equivalent to that of the class $2$ or $3$, denoted by $\Omega_{23}\subset \Lambda_{q-3,q}$, is equal to $\Omega_0\cup A^{-1}(\{[1:0:0],[0:1:0],[0:0:1]\})$. 
By Lemma~\ref{lem:zero set alpha_ij} and Proposition~\ref{prop:submersion map A}, the codimension of $\Omega_0$ in $\Lambda_{q-3,q}$ is equal to $3$, while that of $A^{-1}(\{[1:0:0],[0:1:0],[0:0:1]\})$ is equal to $2$. 
Thus, the extended codimension of $\Omega_{23}$ is equal to $2+d_e(\Lambda_{q-3,q}) = 5$. 
In what follows, we consider the jet of class 1 in Table~\ref{table:kgtangent_Lambda_q-3,q_j2}.

\noindent
{\bf Case $\boldsymbol{\alpha_{11} \alpha_{22} \neq 0}$}~:~In this case, the $2$-jet \eqref{eq:2jet Lambda_q-3,q} can be normalized to 
\begin{equation}\label{eq:normal form 2jet generic Lambda_q-3,q}
\left( x_1, \ldots, x_{q-1}, \sum_{j=1}^{q-3} \pm x_j + \delta_{1} x_{q-1}^2 + \alpha x_{q-1} x_{q-2} + \delta_{2} x_{q-2}^2 + \sum_{j=q}^n \pm x_j^2 \right), 
\end{equation}
by an appropriate scaling transformation, where $\alpha\in \R$ and $\delta_i=\pm1$.

Let $V\subset \Lambda_{q-3,q}$ be the following subset:
\begin{multline}
\mathcal{K}[G]^2\cdot \left\{ \left( x_1, \ldots, x_{q-1}, \sum_{j=1}^{q-3} \pm x_j + \delta_{1} x_{q-1}^2 + \alpha x_{q-1} x_{q-2} + \delta_{2} x_{q-2}^2 + \sum_{j=q}^n \pm x_j^2 \right) \right. \\
\left. \alpha \in \mathbb{R}, 4 \delta_{1} \delta_{2} - \alpha^2 \neq 0 \right\}.
\end{multline}
It is easy to see that $V$ is equal to $A^{-1}(W)$, where 
\[
W = \{[\alpha_{11}:\alpha_{12}:\alpha_{22}]\in \mathbb{P}^2~|~\alpha_{11},\alpha_{22}\neq 0, 4\alpha_{11}\alpha_{22}-\alpha_{12}^2\neq 0\}. 
\]
Since $W$ is an open subset of $\mathbb{P}^2$, $V$ is also an open subset of $\Lambda_{q-3,q}$ by Proposition~\ref{prop:submersion map A}. 
In particular the extended codimension of $V$ is equal to $d_e(\Lambda_{q-3,q})=3$.

Let $g\in \mathcal{M}_n\mathcal{E}_n^q$ be a map-germ representing the $2$-jet \eqref{eq:normal form 2jet generic Lambda_q-3,q}.
If 
\begin{math}
4 \delta_{1} \delta_{2} - \alpha^2 \neq 0
\end{math}
holds, 
\begin{math}
\mathcal{M}_n^3 \mathcal{E}_n^q \subset T \mathcal{K} \left[ G \right]_1 \left( g \right) + \mathcal{M}_n^4 \mathcal{E}_n^q
\end{math}
holds by the similar argument.
We can thus deduce from Proposition~\ref{prop:basic properties K[G]-eq/codim} that $g$ is $3$-determined, and further deduce from Theorem~\ref{thm:complete transversal} that $g$ is $2$-determined. 
Note that $B_6 = (\pi^5_2)^{-1}(V)$ is an open subset (and thus a submanifold) of $(\pi^5_2)^{-1}(\Lambda_{q-3,q})$.

\begin{remark}

Using the extended intrinsic derivative $\tilde{D}^2f$ (in particular considering the value $\tilde{D}^2f(v_1(f)\otimes v_2(f))$), one can also show that two $2$-jets of the form \eqref{eq:normal form 2jet generic Lambda_q-3,q} with distinct $\alpha$ are not $\mathcal{K}[G]^2$-equivalent.

\end{remark}

 If 
\begin{math}
4 \delta_{1} \delta_{2} - \alpha^2 = 0
\end{math}, 
the $2$-jet \eqref{eq:normal form 2jet generic Lambda_q-3,q} is $\mathcal{K}[G]^2$-equivalent to the jet represented by
\begin{equation}
f_3=\left( x_1, \ldots, x_{q-1}, \sum_{j=1}^{q-3} \pm x_j +\delta' \left(  x_{q-2} +\delta''  x_{q-1} \right)^2 + \sum_{j=q}^n \pm x_j^2 \right).
\end{equation}
Since $2\delta'(x_{q-2}+\delta'' x_{q-1})x_{q-2}x_\alpha e_q=tf_3(x_{q-2}x_\alpha e_{q-2})-(f_3^\ast X_{q-2})x_\alpha e_{q-2}\in T\mathcal{K}[G]_1(f_3)$ and $2\delta'\delta''(x_{q-2}+\delta'' x_{q-1})x_{q-1}x_\alpha e_q=tf_3(x_{q-1}x_\alpha e_{q-1})-(f_3^\ast X_{q-1})x_\alpha e_{q-1}\in T\mathcal{K}[G]_1(f_3)$ for a multi-index $\alpha$ with $|\alpha|>0$, we can deduce the following inclusion in the same way as that for jets in $(\pi^5_2)^{-1}(\Lambda_{q-1,q+1})$:
\[
\mathcal{M}_n^3\mathcal{E}_n^q\subset \left<x_{q-1}^3\right>_{\R}+T\mathcal{K}[G]_1(f_3)+\mathcal{M}_n^4\mathcal{E}_n^q. 
\]
By Theorem~\ref{thm:complete transversal}, there exists 
\begin{math}
\beta \in \mathbb{R}
\end{math}
such that $j^3 f_3(0)$ is 
\begin{math}
\mathcal{K} \left[ G \right]^3
\end{math}-equivalent to 
\begin{equation}
\left( x_1, \ldots, x_{q-1}, \sum_{j=1}^{q-3} \pm x_j +\delta' \left(  x_{q-2} +\delta''  x_{q-1} \right)^2 + \beta x_{q-1}^3 + \sum_{j=q}^n \pm x_j^2 \right).
\end{equation}
If 
\begin{math}
\beta \neq 0
\end{math}
holds, the
\begin{math}
\mathcal{K} \left[ G \right]
\end{math}-codimension of the jet above is $n-q+4$ since 
\begin{math}
\mathcal{M}_n \mathcal{E}_n^q/T \mathcal{K} \left[ G \right] \left( g \right)
\end{math}
is isomorphic to 
\begin{math}
\left<x_{q-2} e_q, \ldots, x_n e_q,x_{q-1}^2 e_q\right>_{\R}\subset \R[[x]]^q \end{math} for a germ $g$ representing the jet above. 
Furthermore, 
$\mathcal{M}_n^3 \mathcal{E}_n^q$ is contained in $T \mathcal{K} \left[ G \right] \left( g \right)$ and thus 
\begin{math}
g
\end{math}
is 
\begin{math}
3
\end{math}-determined relative to 
\begin{math}
\mathcal{K} \left[ G \right]
\end{math}
by Proposition~\ref{prop:basic properties K[G]-eq/codim}.
An appropriate scaling of the coordinate brings the jet above to the normal form of type $(7)$ in Table~\ref{table:generic constraint r=0}. 
If 
\begin{math}
\beta = 0
\end{math}, the $3$-jet above is equal to $j^3f_3(0)$ and it has 
\begin{math}
\mathcal{K} \left[ G \right]^3
\end{math}-codimension 
\begin{math}
n - q + 5
\end{math}
since 
\begin{math}
J^3 \left( n, q \right)_0 / T \mathcal{K} \left[ G \right]^3 \left( j^3f_3(0) \right)
\end{math}
is isomorphic to 
\begin{math}
\left<x_{q-2} e_q, \ldots, x_n e_q,x_{q-1}^2 e_q, x_{q-1}^3 e_q \right>_\R\subset \R[[x]]^q
\end{math}. 

\noindent
{\bf Case $\boldsymbol{(\alpha_{11} \alpha_{12} \neq 0\land \alpha_{22} = 0)}$ or $\boldsymbol{(\alpha_{22} \alpha_{12} \neq0 \land\alpha_{11} = 0)}$}~:~In this case, the $2$-jet \eqref{eq:2jet Lambda_q-3,q} is $\mathcal{K}[G]^2$-equivalent to that represented by 
\begin{equation}
f_4=\left( x_1, \ldots, x_{q-1}, \sum_{j=1}^{q-3} \pm x_j +\delta_{q-2}x_{q-2}^2+ \delta_{q-2,q-1} x_{q-2} x_{q-1} + \sum_{j=q}^n \pm x_j^2 \right).
\end{equation}
Since $(2\delta_{q-2}x_{q-2}^2+\delta_{q-2,q-1} x_{q-2}x_{q-1})x_\alpha e_q$ is equal to $tf_4(x_{q-2}x_\alpha e_{q-2})-(f_4^\ast X_{q-2})x_\alpha e_{q-2}\in T\mathcal{K}[G]_1(f_4)$ for a multi-index $\alpha$ with $|\alpha|>0$, we can deduce the following inclusion in the same way as before:
\[
\mathcal{M}_n^3\mathcal{E}_n^q\subset \left<x_{q-2}^3\right>_{\R}+T\mathcal{K}[G]_1(f_4)+\mathcal{M}_n^4\mathcal{E}_n^q. 
\]
By Theorem~\ref{thm:complete transversal}, there exists 
\begin{math}
\beta \in \mathbb{R}
\end{math}
such that $j^3 f_4(0)$ is 
\begin{math}
\mathcal{K} \left[ G \right]^3
\end{math}-equivalent to 
\begin{equation}
\left( x_1, \ldots, x_{q-1}, \sum_{j=1}^{q-3} \pm x_j + \beta x_{q-2}^3+\delta_{q-2}x_{q-2}^2+\delta_{q-2,q-1}x_{q-2}x_{q-1}  + \sum_{j=q}^n \pm x_j^2 \right).
\end{equation}
If 
\begin{math}
\beta \neq 0
\end{math}
holds, the 
\begin{math}
\mathcal{K} \left[ G \right]
\end{math}-codimension of the jet above is $n-q+4$ by the similar argument.
Furthermore, $\mathcal{M}_n^3 \mathcal{E}_n^q$ is contained in $T \mathcal{K} \left[ G \right] \left( g \right) + \mathcal{M}_n^4 \mathcal{E}_n^q$, 
and thus the jet above is $3$-determined relative to $\mathcal{K}[G]$ by Proposition~\ref{prop:basic properties K[G]-eq/codim}.
An appropriate scaling of the coordinate brings the map-germ to the normal form of type $(8)$ in Table~\ref{table:generic constraint r=0}. 
If 
\begin{math}
\beta = 0
\end{math}, the $3$-jet above is equal to $j^3f_4(0)$ and 
it has $\mathcal{K}[G]^3$-codimension $n-q+5$ by the similar argument.

We can eventually conclude that the extended codimensions of $B_6,B_7$ and $B_8$ are equal to $3,4$ and $4$, respectively, the complement of the union of them in $(\pi^5_2)^{-1}(\Lambda_{q-3,q})$ is equal to 
\[
(\pi^5_2)^{-1}(\Omega_{23}) \sqcup (\pi^5_3)^{-1}\biggl(\biggl(
\bigsqcup_{\begin{minipage}[c]{11mm}
\scriptsize
\centering
all signs 

in $f_3$
\end{minipage}}\left(\mathcal{K}[G]^3\cdot j^3f_3(0)\right)\biggr)
\sqcup \biggl(\bigsqcup_{\begin{minipage}[c]{11mm}
\scriptsize
\centering
all signs 

in $f_4$
\end{minipage}}(\mathcal{K}[G]^3\cdot j^3f_4(0))
\biggr)\biggr),
\]
and its extended codimension is $5$. 

\subsubsection*{Jets in $(\pi^3_2)^{-1}(\Lambda_{q-3,q+1})$}

In this case, by using Lemma~\ref{lem:normal form corank1 2-jet q=0}, a jet in $\Lambda_{q-3,q+1}$ is $\mathcal{K}[G]^2$-equivalent to the $2$-jet represented by 
\begin{equation}
g_a = \left( x_1, \ldots, x_{q-1}, \sum_{j=1}^{q-3} \pm x_j+\sum_{j_1=q-2}^{q-1}\sum_{j_2=j_1}^{q}a_{j_1j_2}x_{j_1}x_{j_2}+ \sum_{j=q+1}^{n}\pm x_j^2\right)\label{eq:2jet Lambda_q-3,q+1}
\end{equation}
for $a = \left( a_{q-2,q-2}, a_{q-2,q-1}, a_{q-2,q}, a_{q-1,q-1}, a_{q-1,q} \right) \in \mathbb{R}^5$.
In the same manner as in Appendix~\ref{sec:appendix_kg2codimension_Lambda_q-3,q}, one can take a basis of the quotient space $\mathcal{M}_n\mathcal{E}_n^q/(T\mathcal{K}[G](g_a)+\mathcal{M}_n^3\mathcal{E}_n^q)$ for each $a\in \R^5$ as shown in Appendix~\ref{sec:table7_computation}. 
We investigate each case in what follows.
\begin{table}[h]
 \begin{center}
  \begin{tabular}{|l|l|l|} \hline
   \# & $a_{ij}$'s & $\mathcal{K}[G]^2$-cod.  \\ \hline
   1 & $P(a) \neq 0$ & $n-q+4$ \\ \hline
   2 & $P(a) = 0$ and ($a_{q-2,q} a_{q-1,q} \neq 0$& $n-q+5$ \\
   &  or $a_{q-2,q-2} a_{q-1,q} \neq 0$ or  $a_{q-2,q-2} a_{q-1,q-1} \neq 0$) & \\ \hline
   3 & ($a_{q-2,q-2} = a_{q-2,q} = 0$ and $a_{q-1,q} \neq 0$) & $n-q+6$ \\
   & ($a_{q-1,q-1} = a_{q-1,q} = 0$ and $a_{q-2,q} \neq 0$) & \\ \hline
   4 & $a_{q-2,q} = a_{q-1,q} = 0$ and ($a_{q-2,q-1}a_{q-1,q-1} \neq 0$ or  & $n-q+7$ \\
   & $a_{q-2,q-2}a_{q-1,q-1} \neq 0$ or $a_{q-2,q-2}a_{q-2,q-1} \neq 0$) & \\ \hline
   5 & $a_{q-2,q} = a_{q-1,q} = a_{q-2,q-2} = a_{q-2,q-1} = 0, a_{q-1,q-1} \neq 0$ & $n-q+8$ \\
   & or $a_{q-2,q} = a_{q-1,q} = a_{q-2,q-2} = a_{q-1,q-1} = 0, a_{q-2,q-1} \neq 0$ & \\
   & or $a_{q-2,q} = a_{q-1,q} = a_{q-2,q-1} = a_{q-1,q-1} = 0, a_{q-2,q-2} \neq 0$ & \\ \hline
   6 & $a_{q-2,q-2} = a_{q-2,q-1} = a_{q-2,q} = a_{q-1,q-1} = a_{q-1,q} = 0$ & $n-q+9$ \\ \hline
  \end{tabular}
  \captionsetup{singlelinecheck=off}
  \caption{The $\mathcal{K}[G]^2$-codimension of the $2$-jet \eqref{eq:2jet Lambda_q-3,q+1}, where $P(a)$ is given in \eqref{eq:definition P}. 
}
  \label{table:kgtangent_Lambda_q-3,q+1_j2}
 \end{center}
\end{table}

Let
\begin{math}
\iota \colon \mathbb{R}^5 \rightarrow \Lambda_{q-3,q+1}
\end{math}
be a map defined as 
\begin{math}
\iota \left( a \right) = j^2 g_a
\end{math}
for $a\in \R^5$.
The mapping 
\begin{math}
\iota
\end{math}
is a smooth embedding and thus 
\begin{math}
\iota \left( \mathbb{R}^5 \right)
\end{math}
is a smooth submanifold in 
\begin{math}
\Lambda_{q-3,q+1}
\end{math}.

\noindent 
\textbf{Case \#1:}
Let $A_1 = \{a\in \R^5~|~P(a)\neq 0\}$ for
\begin{equation}\label{eq:definition P}
P(a)=a_{q-2,q} a_{q-1,q} \left( a_{q-2,q}^2 a_{q-1,q-1} - a_{q-2,q-1} a_{q-2,q} a_{q-1,q} + a_{q-2,q-2} a_{q-1,q}^2 \right).
\end{equation}
The set is an open subset of the parameter space
\begin{math}
\mathbb{R}^5
\end{math}
and thus
\begin{math}
\iota(A_1)
\end{math}
is a smooth manifold in 
\begin{math}
\Lambda_{q-3,q+1}
\end{math}. In addition, 
\begin{math}
\cfrac{\partial j^2 g_a \left( 0 \right)}{\partial a_{j_1j_2}} \in T\mathcal{K} \left[ G \right]^2 \left( j^2 g_a \left( 0 \right) \right)
\end{math}
holds for all 
\begin{math}
j_1 \in \left\{ q-2, q-1 \right\}
\end{math}
and 
\begin{math}
j_2 \in \left\{ q-2, \ldots, q \right\}, j_2 \ge j_1
\end{math}, which can be checked in the same manner as in Appendix~\ref{sec:appendix_kg2codimension_Lambda_q-3,q}. 
Therefore, each connected component of 
\begin{math}
\iota(A_1)
\end{math}
is contained in a single 
\begin{math}
\mathcal{K} \left[ G \right]^2
\end{math}-orbit by Mather's lemma (\cite{Mather1969}). 
We can choose a representative from each connected component of $A_1$ as
\begin{math}
\left( a_{q-2,q-1}, a_{q-2,q}, a_{q-1,q} \right) = \left( \pm 1, \pm 1, \pm 1 \right)
\end{math}
and 
\begin{math}
a_{q-2,q-2} = a_{q-1,q-1} = 0
\end{math}. The corresponding germ representing it is 
\begin{equation}
f = \left( x_1, \ldots, x_{q-1}, \sum_{j=1}^{q-3} \pm x_j \pm x_{q-2} x_{q-1} \pm x_{q-2} x_q \pm x_{q-1} x_q + \sum_{j=q+1}^{n}\pm x_j^2\right).
\end{equation}
In the same manner as before, we can deduce the following inclusion:
\begin{equation}
\mathcal{M}_n^3 \mathcal{E}_n^q \subset \langle x_q^3 \rangle_{\mathbb{R}} + T \mathcal{K} \left[ G \right]_1 \left( g \right) + \mathcal{M}_n^4 \mathcal{E}_n^q.
\end{equation}
Therefore, using Theorem~\ref{thm:complete transversal}, there exists 
\begin{math}
\beta \in \mathbb{R}
\end{math}
such that 
\begin{math}
j^3 f \left( 0 \right)
\end{math}
is 
\begin{math}
\mathcal{K} \left[ G \right]^3
\end{math}-equivalent to 
\begin{equation}
\left( x_1, \ldots, x_{q-1}, \sum_{j=1}^{q-3} \pm x_j \pm x_{q-2} x_{q-1} \pm x_{q-2} x_q \pm x_{q-1} x_q + \beta x_q^3 + \sum_{j=q+1}^{n}\pm x_j^2\right).
\end{equation}
If 
\begin{math}
\beta \neq 0
\end{math}, it is 
\begin{math}
3
\end{math}-determined relative to 
\begin{math}
\mathcal{K} \left[ G \right]
\end{math}
and its
\begin{math}
\mathcal{K} \left[ G \right]^3
\end{math}-codimension is
\begin{math}
n-q+4
\end{math}
by the similar argument before. An appropriate scaling brings the $3$-jet to that represented by type~(9) in Table~\ref{table:generic constraint r=0}. If 
\begin{math}
\beta = 0
\end{math}, it has 
\begin{math}
\mathcal{K} \left[ G \right]^3
\end{math}-codimension 
\begin{math}
n-q+5
\end{math}
by the similar argument before.

\noindent
\textbf{Case \#2:} For the polynomial $P(a)$ given in \eqref{eq:definition P}, the singular locus of the algebraic set in 
\begin{math}
\mathbb{R}^5
\end{math}
defined by
\begin{math}
P = 0
\end{math}
is contained in 
\begin{equation}
V_{\mathbb{R}} \left( \left< P, \frac{\partial P}{\partial a_{q-2,q-2}}, \frac{\partial P}{\partial a_{q-2,q-1}}, \frac{\partial P}{\partial a_{q-2,q}}, \frac{\partial P}{\partial a_{q-1,q-1}}, \frac{\partial P}{\partial a_{q-1,q}} \right>_{\R[a]} \right)
\end{equation}
where 
\begin{math}
V_{\mathbb{R}} \left( I \right) = \left\{ a \in \mathbb{R}^5 \middle| \forall f \in I, f \left( a \right) = 0 \right\}
\end{math}. It is easy to check that the radical ideal of 
\begin{math}
I
\end{math}
is
\begin{math}
\sqrt{I} = \langle a_{q-2,q} a_{q-1,q}, a_{q-2,q-2} a_{q-1,q}, a_{q-2,q-2} a_{q-1,q-1} \rangle_{\R[a]}
\end{math} (see e.g.~\cite{BeckerBook}), and thus the singular locus is contained in the set defined by
\begin{math}
a_{q-2,q} a_{q-1,q} = a_{q-2,q-2} a_{q-1,q} = a_{q-2,q-2} a_{q-1,q-1} = 0
\end{math}. This proves that the set $A_2$ in 
\begin{math}
\mathbb{R}^5
\end{math}
defined by the condition of Case \#2 is a smooth manifold. The tangent space of the $A_2$ at 
\begin{math}
a
\end{math}
is 
\begin{math}
\langle v_1, v_2, v_3, v_4 \rangle_{\mathbb{R}}
\end{math}
where
\begin{align}
v_1 &= a_{q-2,q} \frac{\partial}{\partial a_{q-2,q-2}} + a_{q-1,q} \frac{\partial}{\partial a_{q-2,q-1}},  \\
v_2 &= a_{q-2,q} \frac{\partial}{\partial a_{q-2,q-1}} + a_{q-1,q} \frac{\partial}{\partial a_{q-1,q-1}}, \\
v_3 &= 4 a_{q-2,q-1} \frac{\partial}{\partial a_{q-2,q-1}} - 3 a_{q-2,q} \frac{\partial}{\partial a_{q-2,q}} + 8 a_{q-1,q-1} \frac{\partial}{\partial a_{q-1,q-1}} + a_{q-1,q} \frac{\partial}{\partial a_{q-1,q-1}}, \\
v_4 &= 3 a_{q-2,q-2} \frac{\partial}{\partial a_{q-2,q-2}} + 2a_{q-2,q-1} \frac{\partial}{\partial a_{q-2,q-1}}+a_{q-1,q-1} \frac{\partial}{\partial a_{q-1,q-1}}-a_{q-1,q} \frac{\partial}{\partial a_{q-1,q}}.
\end{align}
The subset 
\begin{math}
\iota(A_2)\subset \Lambda_{q-3,q+1}
\end{math}
is also a smooth manifold and whose tangent space at 
\begin{math}
j^2 g_a \left( 0 \right)
\end{math}
is spanned by 
\begin{math}
d\iota_a \left( v_i \right)
\end{math}
for 
\begin{math}
i \in \left\{ 1, \ldots, 4 \right\}
\end{math}.
\begin{math}
d \iota_a \left( v_i \right) \in T \mathcal{K} \left[ G \right]^2 \left( j^2 g_a \left( 0 \right) \right)
\end{math}
holds for all 
\begin{math}
a \in A_2
\end{math}
and
\begin{math}
i \in \left\{ 1, \ldots, 4 \right\}
\end{math}. (This can be checked by computing standard basis of 
\begin{math}
T \mathcal{K} \left[ G \right] \left( g_a \left( 0 \right) \right) + \mathcal{M}_n^3 \mathcal{E}_n^q
\end{math}
for each parameter value of $a$ and dividing a polynomial representing 
\begin{math}
d \iota_a \left( v_i \right)
\end{math}
by the standard basis.) Finally, by applying Mather's lemma, we can conclude that each connected component of 
\begin{math}
\iota(A_2)
\end{math}
is contained in the single orbit. This specifically means that jets satisfying the condition consists of finite number of 
\begin{math}
\mathcal{K} \left[ G \right]^2
\end{math}-orbits with 
\begin{math}
\mathcal{K} \left[ G \right]^2
\end{math}-codimension 
\begin{math}
n-q+5
\end{math}.

\noindent
\textbf{Case \#3:} In this case, an appropriate coordinate transformation brings the $2$-jet to
\begin{equation}
\left( x_1, \ldots, x_{q-1}, \sum_{j=1}^{q-3} \pm x_j + x_{q-1} x_q + \sum_{j=q+1}^{n}\pm x_j^2\right).
\end{equation}
In this case, the corresponding set in the $2$-jet space consists of a finite number of orbits and thus their 
\begin{math}
\mathcal{K} \left[ G \right]^2
\end{math}-codimensions are 
\begin{math}
n-q+6
\end{math}.

\noindent
\textbf{Case \#4:} In this case, an appropriate coordinate transformation brings the $2$-jet to either 
\begin{equation}
\left( x_1, \ldots, x_{q-1}, \sum_{j=1}^{q-3} \pm x_j \pm x_{q-2}^2 \pm x_{q-1}^2 + \sum_{j=q+1}^{n}\pm x_j^2\right)
\end{equation}
or
\begin{equation}
\left( x_1, \ldots, x_{q-1}, \sum_{j=1}^{q-3} \pm x_j \pm x_{q-2}^2 \pm x_{q-2} x_{q-1} + \sum_{j=q+1}^{n}\pm x_j^2\right).
\end{equation}
In this case, the corresponding set in the $2$-jet space consists of a finite number of orbits and thus their 
\begin{math}
\mathcal{K} \left[ G \right]^2
\end{math}-codimensions are 
\begin{math}
n-q+7
\end{math}.

\noindent
\textbf{Case \#5:} In this case, an appropriate coordinate transformation brings the $2$-jet to either 
\begin{equation}
\left( x_1, \ldots, x_{q-1}, \sum_{j=1}^{q-3} \pm x_j \pm x_{q-2}^2 + \sum_{j=q+1}^{n}\pm x_j^2\right)
\end{equation}
or
\begin{equation}
\left( x_1, \ldots, x_{q-1}, \sum_{j=1}^{q-3} \pm x_j \pm x_{q-2} x_{q-1} + \sum_{j=q+1}^{n}\pm x_j^2\right).
\end{equation}
In this case, the corresponding set in the $2$-jet space consists of a finite number of orbits and thus their 
\begin{math}
\mathcal{K} \left[ G \right]^2
\end{math}-codimensions are 
\begin{math}
n-q+8
\end{math}.

\noindent
\textbf{Case \#6:} In this case, the corresponding set in the $2$-jet space consists of a finite number of orbits of 
\begin{math}
j^2 g_0 \left( 0 \right)
\end{math}
and its 
\begin{math}
\mathcal{K}^2 \left[ G \right]
\end{math}
codimension is 
\begin{math}
n-q+9
\end{math}.

Let
\begin{math}
\Omega_{1c}
\end{math}
be the union of 
\begin{math}
\mathcal{K} \left[ G \right]^2
\end{math}-orbits of the classes except for \#1 in Table~\ref{table:kgtangent_Lambda_q-3,q+1_j2}. 
As we have shown, the set \begin{math}
\Omega_{1c}
\end{math}
is a finite union of the $\mathcal{K}[G]^2$-orbits in 
\begin{math}
\Lambda_{q-3,q+1}
\end{math}
whose extended codimension is $5$. 
The complement of $B_9$ in \begin{math}
\left( \pi_2^5 \right)^{-1} \left( \Lambda_{q-3,q+1} \right)
\end{math}
is the union
\[
(\pi^5_2)^{-1}(\Omega_{1c}) \sqcup (\pi^5_3)^{-1}\biggl(\biggl(
\bigsqcup_{\begin{minipage}[c]{11mm}
\scriptsize
\centering
all signs 

in $f$
\end{minipage}}\left(\mathcal{K}[G]^3\cdot j^3f(0)\right)\biggr)
\]
whose extended codimension is $5$.

\subsubsection*{Jets in $(\pi^3_2)^{-1}(\Lambda_{q-4,q})$}

A jet in $\Lambda_{q-4,q}$ is $\mathcal{K}[G]^2$-equivalent to that represented by
\begin{equation}\label{eq:2-jet Lambda_{q-4,q}}
f_{\alpha}=\left( x_1, \ldots, x_{q-1}, \sum_{j=1}^{q-4}\pm x_j + \sum_{1\leq i\leq j\leq 3} \alpha_{ij} x_{q-i} x_{q-j} + \sum_{j=q}^n \pm x_j^2 \right)
\end{equation}
for some $\alpha = (\alpha_{ij})_{i,j}\in \R^6$. 
Take subsets $W_1,W_2,W_3\subset \mathbb{P}^{5}$ as follows:
{\allowdisplaybreaks
\begin{align*}
W_1=&\{[\cdots:\alpha_{ij}:\cdots]\in \mathbb{P}^5~|~\alpha_{11}\alpha_{22}\alpha_{33}=0\}, \\
W_2 =&\{[\cdots:\alpha_{ij}:\cdots]\in \mathbb{P}^5~|~(\alpha_{11}\alpha_{22}-\alpha_{12}^2)(\alpha_{22}\alpha_{33}-\alpha_{23}^2)(\alpha_{33}\alpha_{11}-\alpha_{13}^2)=0\},\\
W_3 =&\{[\cdots:\alpha_{ij}:\cdots]\in \mathbb{P}^5~|~\alpha_{11} \alpha_{22} \alpha_{33} + 2 \alpha_{12} \alpha_{13} \alpha_{23} - \alpha_{33} \alpha_{12}^2 - \alpha_{22} \alpha_{13}^2 - \alpha_{11} \alpha_{23}^2=0\}.
\end{align*}
}%
These are proper algebraic subsets of $\mathbb{P}^5$. 
In particular, one can decompose them into submanifolds of $\mathbb{P}^5$ with codimension at least one. 
By Proposition~\ref{prop:submersion map A}, the preimage $A^{-1}(W_1\cup W_2\cup W_3)$ is a union of submanifolds of $\Lambda_{q-4,q}$ with codimension at least one. 
We can deduce from the observation above and Lemma~\ref{lem:zero set alpha_ij} that the extended codimension of $A^{-1}(W_1\cup W_2\cup W_3)\cup \Omega_0$ is (larger than or) equal to $1+d_e(\Lambda_{q-4,q})=5$. 

In what follows, we will consider a $2$-jet in the complement $\Lambda_{q-4,q}\setminus (A^{-1}(W_1\cup W_2\cup W_3)\cup \Omega_0)$, which is $\mathcal{K}[G]^2$-equivalent to $j^2f_\alpha(0)$ (where $f_\alpha$ is given in \eqref{eq:2-jet Lambda_{q-4,q}}) with 
\begin{align}
&\alpha_{11} \neq 0, \alpha_{22} \neq 0, \alpha_{33} \neq 0, \\
&4\alpha_{11} \alpha_{22} - \alpha_{12}^2 \neq 0, 4\alpha_{11} \alpha_{33} - \alpha_{13}^2 \neq 0, 4\alpha_{22} \alpha_{33} - \alpha_{23}^2 \neq 0, \label{eq:condision alpha_ij}\\
&4\alpha_{11} \alpha_{22} \alpha_{33} + \alpha_{12} \alpha_{13} \alpha_{23} - \alpha_{33} \alpha_{12}^2 - \alpha_{22} \alpha_{13}^2 - \alpha_{11} \alpha_{23}^2 \neq 0.
\end{align}
For such a $2$-jet, one can check that the inclusion
\begin{math}
\mathcal{M}_n^3 \mathcal{E}_n^q \subset T + T \mathcal{K} \left[ G \right]_1 \left( f_\alpha \right) + \mathcal{M}_n^4 \mathcal{E}_n^q
\end{math}
holds for 
\begin{math}
T = \langle x_{q-3} x_{q-2} x_{q-1} e_q \rangle_{\mathbb{R}}
\end{math}. 
By Theorem~\ref{thm:complete transversal}, a $3$-jet $\sigma\in J^3(n,q)_0$ with $\pi^3_2(\sigma)=j^2f_\alpha(0)$ is $\mathcal{K}[G]^3$-equivalent to 
\begin{equation}
\left( x_1, \ldots, x_{q-1}, \sum_{j=1}^{q-3} \delta_j x_j + \sum_{i,j = 1}^3 \alpha_{ij} x_{q-4+i} x_{q-4+j} + \beta x_{q-3} x_{q-2} x_{q-1} + \sum_{j=q}^n \delta_j x_j^2 \right),
\end{equation}
for some 
\begin{math}
\beta\in \R
\end{math}.
If 
\begin{math}
\beta \neq 0
\end{math}, the $\mathcal{K}[G]^3$-codimension of the jet above is $n-q+4$ by the similar argument. 
Furthermore, $\mathcal{M}_n^3\mathcal{E}_n^q$ is contained in $T\mathcal{K}[G](g)+\mathcal{M}_n^4\mathcal{E}_n^q$, and thus the jet above is $3$-determined relative to $\mathcal{K}[G]$ by Proposition~\ref{prop:basic properties K[G]-eq/codim}. 
An appropriate scaling of the coordinate brings the map-germ to the normal form of type $(10)$ in Table~\ref{table:generic constraint r=0}.
If $\beta =0$, the $3$-jet above is equal to $j^3f_\alpha(0)$ and it has $\mathcal{K}[G]^3$-codimension $n-q+5$ be the similar argument. 

We have shown that the extended codimension of $B_{10}$ is equal to $4$, the complement $(\pi^5_2)^{-1}(\Lambda_{q-4,q})\setminus B_{10}$ is equal to 
\[
(A\circ \pi^5_2)^{-1}(W_1\cup W_2\cup W_3)\sqcup (\pi^5_3)^{-1}\biggl(\biggl(
\bigsqcup_{\begin{minipage}[c]{11mm}
\scriptsize
\centering
all signs 

in $f_\alpha$
\end{minipage}}\left(\mathcal{K}[G]^3\cdot j^3f_\alpha(0)\right)\biggr), 
\]
and its extended codimension is $5$. 

We can eventually conclude that the complement 
\[
J^5(n,q)_0\setminus \left(B_0\sqcup \left(\bigsqcup_{i,k}B_{i,k}\right)\sqcup \left(\bigsqcup_{j}B_{j}\right)\right)
\]
has extended codimension $5$, completing the proof of 3 of Theorem~\ref{thm:classification jets}.

\subsection*{Classification of jets in $W_{n,q,1}$ with $1\leq q\leq 3$}

In what follows, we will show 4 of Theorem~\ref{thm:classification jets}. 
Since we are supposing $n\gg q$, in particular $\codim(\Sigma_2,J^1(n,q+1)_0)$ is large enough, it suffices to consider the case that $\corank (d(g,h)_0)$ is equal to $1$, which is also equal to $\corank(dg_0)+1$. 
By applying an appropriate action of 
\begin{math}
\mathcal{K} \left[ G \right]^1
\end{math}, 
one can assume 
$j^1 \left( g, h \right) \left( 0 \right) = \left( x_1, \ldots, x_q, 0 \right)$
without loss of generality.
By using the similar argument as Lemma~\ref{lem:normal form corank1 2-jet q=0}, an appropriate action of 
\begin{math}
\mathcal{K} \left[ G \right]
\end{math}
brings 
\begin{math}
j^2 \left( g, h \right) \left( 0 \right)
\end{math}
to the following form:
\begin{equation}
j^2 \left( g, h \right) \left( 0 \right) = \left( x_1, \ldots, x_q, \sum_{j_1=1}^q\sum_{j_2=1}^{s-1}a_{j_1j_2}x_{j_1}x_{j_2}+ \sum_{j=s}^{n}\pm x_j^2\right) \label{eq:normal form 2-jet r=1}
\end{equation}
for some $s\in \{q+1,\ldots, n\}$ and $a_{j_1j_2}\in \R$. 
Let $\Theta_{q,s}$ be the set of $2$-jets $\mathcal{K}[G]^2$-equivalent to that in \eqref{eq:normal form 2-jet r=1} for some signs and $a_{j_1j_2}\in \R$. 
It is easy to check that $\Theta_{q,s}$ is equal to $(\pi^2_1|_{Q_s})^{-1}(\Theta')$, where $\Theta' = \{j^1(g,0)(0)\in \Sigma_1\setminus \Sigma_2~|~dg_0\mbox{~:~regular}\}$ and $Q_s$ is given in the proof of Lemma~\ref{lem:estimate codimension Lambda_l,s}.
Since $\Theta'$ is a submanifold in $\Sigma_1\setminus \Sigma_2$ with codimension $q$, $\Theta_{q,s}$ is also a submanifold of $Q_s$ and its extended codimension is equal to $q+1+(s-q)(s-q-1)/2$. The extended codimensions of $\bigsqcup_{s\geq 4} \Theta_{1,s}$, $\bigsqcup_{s\geq 5} \Theta_{2,s}$, and $\bigsqcup_{s\geq 5} \Theta_{3,s}$ are greater than $4$. 
For this reason, in what follows, we will only analyze jets in $(\pi^m_2)^{-1}(\Theta)$ for 
\begin{math}
\Theta=\Theta_{1,2}, \Theta_{1,3}, \Theta_{2,3}, \Theta_{2,4}, \Theta_{3,4},
\end{math}
with suitable orders $m$ one by one. 
As all the calculations needed to obtain determinacies, codimensions, and complete transversals for various jets/germs are quite similar to those we have done in the proof of 3 of Theorem~\ref{thm:classification jets}, we will omit them for simplicity of the manuscript.

\subsubsection*{Jets in $(\pi^4_2)^{-1}(\Theta_{1,2})$}
A jet in 
\begin{math}
\Theta_{1,2}
\end{math}
is 
\begin{math}
\mathcal{K} \left[ G \right]^2
\end{math}-equivalent to the
\begin{math}
2
\end{math}-jet represented by
\begin{equation}
\left( g, h \right) = \left( x_1, a_{11} x_1^2+ \sum_{j=2}^{n}\pm x_j^2\right).
\end{equation}
If 
\begin{math}
a_{11} \neq 0
\end{math}
holds, an appropriate scaling brings the $2$-jet to the normal form of type 
\begin{math}
\left( 1, 2 \right)
\end{math}
in Table~\ref{table:generic constraint q>0 r=1} which is $2$-determined and its extended codimension is $2$. If 
\begin{math}
a_{11} = 0
\end{math}
holds, the following inclusion holds for 
\begin{math}
m \ge 3
\end{math}:
\begin{equation}
\mathcal{M}_n^m \mathcal{E}_n^2 \subset \langle x_1^m e_2 \rangle_{\mathbb{R}} + T \mathcal{K} \left[ G \right]_1 \left( \left( g, h \right) \right) + \mathcal{M}_n^{m+1} \mathcal{E}_n^2.
\end{equation}
Therefore, using Theorem~2.1, we can deduce that an 
\begin{math}
m
\end{math}-jet 
\begin{math}
\sigma \in J^m \left( n, 2 \right)_0
\end{math}
with 
\begin{math}
\pi_{m-1}^m \left( \sigma \right) = j^{m-1} \left( g, h \right) \left( 0 \right)
\end{math}
is 
\begin{math}
\mathcal{K} \left[ G \right]^m
\end{math}-equivalent to either the 
\begin{math}
m
\end{math}-jet of the germ of type 
\begin{math}
\left( 1, m \right)
\end{math}
in Table.~\ref{table:generic constraint q>0 r=1} or 
\begin{math}
j^m \left( g, h \right) \left( 0 \right)
\end{math}. The 
\begin{math}
m
\end{math}-jet represented by the germ of type 
\begin{math}
\left( 1, m \right)
\end{math}
is $m$-determined and its extended codimension is 
\begin{math}
m
\end{math}. We can thus conclude that any jet in 
\begin{math}
\left( \pi_2^4 \right)^{-1} \left( \Theta_{1,2} \right)
\end{math}
is 
\begin{math}
\mathcal{K} \left[ G \right]^4
\end{math}-equivalent to either the jet represented by the germ of type 
\begin{math}
\left( 1, m \right)
\end{math}
for 
\begin{math}
m = 2, 3, 4
\end{math}, or the jet 
\begin{math}
j^4 \left( g, h \right) \left( 0 \right)
\end{math} (with some signs), and the extended codimension of 
\begin{math}
C_{1,m}
\end{math}
is equal to 
\begin{math}
m
\end{math}, whereas that of the complement 
\begin{math}
\left( \pi_2^4 \right)^{-1} \left( \Theta_{1,2} \right) \setminus \left( \bigsqcup_m C_{1,m} \right)
\end{math}
is equal to $5$.

\subsubsection*{Jets in $(\pi^3_2)^{-1}(\Theta_{1,3})$}
A jet in 
\begin{math}
\Theta_{1,3}
\end{math}
is 
\begin{math}
\mathcal{K} \left[ G \right]^2
\end{math}-equivalent to the
\begin{math}
2
\end{math}-jet represented by
\begin{equation}
\left( g, h \right) = \left( x_1, a_{11} x_1^2 + a_{12} x_1 x_2 + \sum_{j=3}^{n}\pm x_j^2\right).
\end{equation}
If 
\begin{math}
a_{12} \neq 0
\end{math}
holds, an appropriate action of 
\begin{math}
\mathcal{K} \left[ G \right]^2
\end{math}
brings the $2$-jet to the $2$-jet represented by
\begin{equation}
f_1 = \left( x_1, x_1 x_2 + \sum_{j=3}^{n}\pm x_j^2\right).
\end{equation}
If 
\begin{math}
a_{12} = 0
\end{math}
and 
\begin{math}
a_{11} \neq 0
\end{math}
holds, an appropriate action of 
\begin{math}
\mathcal{K} \left[ G \right]^2
\end{math}
brings the $2$-jet to the $2$-jet represented by
\begin{equation}
f_2 = \left( x_1, x_1^2 + \sum_{j=3}^{n}\pm x_j^2\right).
\end{equation}
If 
\begin{math}
a_{11} = a_{12} = 0
\end{math}
holds, the quotient space 
\begin{math}
J^2 \left( n, 2 \right)_0 / T \mathcal{K} \left[ G \right]^2 \left( j^2 \left( g, h \right) \left( 0 \right) \right)
\end{math}
is isomorphic to
\begin{math}
\bigl<x_1 e_2, \ldots, x_n e_2, x_1^2 e_q, x_1 x_2 e_2, x_2^2 e_2\bigr>_\R\subset \R[[x]]^2
\end{math}. In particular, the 
\begin{math}
\mathcal{K} \left[ G \right]^2
\end{math}-codimension of 
\begin{math}
j^2 \left( g, h \right) \left( 0 \right)
\end{math}
is equal to 
\begin{math}
n + 3
\end{math}
and its extended codimension is $5$.

An 
\begin{math}
m
\end{math}-jet 
\begin{math}
\sigma \in J^m \left( n, q \right)_0
\end{math}
with 
\begin{math}
\pi_{m-1}^m \left( \sigma \right) = j^{m-1} f_1 \left( 0 \right)
\end{math}
is 
\begin{math}
\mathcal{K} \left[ G \right]^m
\end{math}-equivalent to either the 
\begin{math}
m
\end{math}-jet of the germ of type 
\begin{math}
\left( 3, m \right)
\end{math}
in Table~\ref{table:generic constraint q>0 r=1} or 
\begin{math}
j^m f_1 \left( 0 \right)
\end{math}
for
\begin{math}
m = 3, 4
\end{math}. The germ of type 
\begin{math}
\left( 3, m \right)
\end{math}
in Table~\ref{table:generic constraint q>0 r=1} is 
\begin{math}
m
\end{math}-determined and has the 
\begin{math}
\mathcal{K} \left[ G \right]^m
\end{math}-codimension 
\begin{math}
n - 2 + m
\end{math}. Thus, the union of the 
\begin{math}
\mathcal{K} \left[ G \right]^4
\end{math}-orbits of the germs of type 
\begin{math}
\left( 4, m \right)
\end{math}
(with all possible signs) is equal to 
\begin{math}
C_{3,m}
\end{math}
and its extended codimension is 
\begin{math}
m
\end{math}. On the other hand, the 
\begin{math}
\mathcal{K} \left[ G \right]^4
\end{math}-codimension of 
\begin{math}
j^4 f_1 \left( 0 \right)
\end{math}
is equal to 
\begin{math}
n + 3
\end{math}
since 
\begin{math}
J^4 \left( n, 2 \right)_0 / T \mathcal{K} \left[ G \right]^4 \left( j^4 f_1 \left( 0 \right) \right)
\end{math}
is isomorphic to 
\begin{math}
\langle x_1 e_2, \ldots, x_n e_2, x_2^2 e_2, x_2^3 e_2, x_2^4 e_2 \rangle_{\mathbb{R}} \subset \mathbb{R} \left[ \left[ x \right] \right]^2
\end{math}. 

An 
\begin{math}
3
\end{math}-jet 
\begin{math}
\sigma \in J^3 \left( n, q \right)_0
\end{math}
with 
\begin{math}
\pi_2^3 \left( \sigma \right) = j^2 f_2 \left( 0 \right)
\end{math}
is 
\begin{math}
\mathcal{K} \left[ G \right]^3
\end{math}-equivalent to either the 
\begin{math}
3
\end{math}-jet of the germ of type 
\begin{math}
\left( 2 \right)
\end{math}
in Table~\ref{table:generic constraint q>0 r=1} or 
\begin{math}
j^3 f_2 \left( 0 \right)
\end{math}. The germ of type 
\begin{math}
\left( 2 \right)
\end{math}
in Table~\ref{table:generic constraint q>0 r=1} is 
\begin{math}
3
\end{math}-determined and its 
\begin{math}
\mathcal{K} \left[ G \right]
\end{math}-codimension is 
\begin{math}
n + 2
\end{math}. Thus, the union of the 
\begin{math}
\mathcal{K} \left[ G \right]^4
\end{math}-orbits of the germs of type 
\begin{math}
\left( 2 \right)
\end{math}
(with all possible signs) is equal to 
\begin{math}
C_2
\end{math}
and its extended codimension is 
\begin{math}
4
\end{math}. On the other hand, the 
\begin{math}
\mathcal{K} \left[ G \right]^3
\end{math}-codimension of 
\begin{math}
j^3 f_2 \left( 0 \right)
\end{math}
is 
\begin{math}
n + 4
\end{math}.

The compliment of 
\begin{math}
C_{3,3} \bigsqcup C_{3,4} \bigsqcup C_2
\end{math}
in 
\begin{math}
\left( \pi_2^4 \right)^{-1} \left( \Theta_{1,3} \right)
\end{math}
is the following union: 
\[
\biggl(\bigsqcup_{\begin{minipage}[c]{11mm}
\scriptsize
\centering
all signs 

in $f_1$
\end{minipage}}\mathcal{K}[G]^4\cdot j^4f_1(0)\biggr)
\sqcup (\pi^4_3)^{-1}\biggl(\bigsqcup_{\begin{minipage}[c]{11mm}
\scriptsize
\centering
all signs 

in $f_2$
\end{minipage}}(\mathcal{K}[G]^3\cdot j^3f_2(0))
\biggr)
\]
The extended codimension of the union is equal to 
\begin{math}
5
\end{math}
since the 
\begin{math}
\mathcal{K} \left[ G \right]^4
\end{math}- (reps. 
\begin{math}
\mathcal{K} \left[ G \right]^3
\end{math}- ) codimension of 
\begin{math}
j^4 f_1 \left( 0 \right)
\end{math}
(resp. 
\begin{math}
j^3 f_2 \left( 0 \right)
\end{math}) is equal to 
\begin{math}
n + 3
\end{math}.

\subsubsection*{A digression on intrinsic derivatives for jets in $\Theta_{q,q+1}$}

Before proceeding with the proof of Theorem~\ref{thm:classification jets}, we will give invariants of jets in $\Theta_{q,q+1}$ under the $\mathcal{K}[G]^2$-action. 
For $\sigma = j^2(g,h)(0)\in \Theta_{q,q+1}$, we take vectors $v_1'(\sigma),\ldots, v_q'(\sigma)\in T_0\R^n$ satisfying the following conditions: 
\begin{itemize}

\item 
$D^2h(v_i'(\sigma)\otimes w) = 0$ for any $w\in \Ker dg_0$, 

\item 
$d(g_j)_0(v_i'(\sigma)) = \delta_{ij}$. 

\end{itemize}

\noindent
Since $D^2h$ is invariant under $\mathcal{K}[G]^2$-action (cf.~Lemma~\ref{L:invariance intrinsic derivative}) and $D^2(g,h) = D^2h|_{\otimes^2 \Ker dg_0}$ is non-degenerate, the vectors $v_1'(\sigma),\ldots, v_q'(\sigma)$ satisfying the conditions  above are uniquely determined from $\sigma$. 
One can further show the following lemma in the same way as that in the proof of Lemma~\ref{lem:zero set alpha_ij}. 

\begin{lemma}\label{lem:zero set alpha'_ij}

The subset 
\[
\Omega'_0 = \{\sigma=j^2(g,h)(0)\in \Theta_{q,q+1}~|~ D^2h(v_i'(\sigma),v_j'(\sigma))=0\mbox{ for any }i\leq j\}
\]
is a submanifold of $\Theta_{q,q+1}$ with codimension $\tilde{q}=q(q+1)/2$. 

\end{lemma}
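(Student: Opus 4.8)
The plan is to run the proof of Lemma~\ref{lem:zero set alpha_ij} essentially verbatim, with the extended intrinsic derivative $\tilde{D}^2 f$ of a corank-$1$ inequality constraint map-germ replaced throughout by the Hessian $D^2 h$ of the single equality constraint. Recall that on $W^2_{n,q,1}$ one has $dh_0 = 0$, so $D^2 h$ is just the Hessian quadratic form of $h$ at $0$, and $D^2(g,h) = D^2 h|_{\otimes^2 \Ker dg_0}$. First I would cover $\Theta_{q,q+1}$ by the Zariski-open sets $U_M$, indexed by subsets $M\subset\{1,\ldots,n\}$ with $|M| = q$, on which the $q\times q$ minor of $dg_0$ on the columns $M$ is invertible; here no analogue of the auxiliary index set $K$ of Lemma~\ref{lem:zero set alpha_ij} is needed, because $dg_0$ already has full rank $q$. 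Since the assertion is local, it then suffices to show that $\Omega'_0\cap U_M$ is a submanifold of $U_M$ of codimension $\tilde{q}$, and after permuting coordinates we may take $M = \{1,\ldots,q\}$.

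On $U_M$ I would make the vectors $v_k'(\sigma)$ explicit exactly as in Lemma~\ref{lem:zero set alpha_ij}: for a representative $(g,h)$ of $\sigma = j^2(g,h)(0)$, the diffeomorphism-germ $\Phi(g,h) = (g_1,\ldots,g_q,x_{q+1},\ldots,x_n)$ satisfies $(g,h)\circ\Phi(g,h)^{-1} = (x_1,\ldots,x_q, h\circ\Phi(g,h)^{-1})$, so $\Ker dg_0$ is spanned by $w_j(g,h) = (d\Phi(g,h)_0)^{-1}(\Pa_j)$ ($j = q+1,\ldots,n$) and $[\Pa_1]$ spans $\Coker d(g,h)_0$. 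The block $\bigl(\tfrac{\Pa^2(h\circ\Phi(g,h)^{-1})}{\Pa x_i\Pa x_j}(0)\bigr)_{q+1\le i,j\le n}$ represents the non-degenerate form $D^2(g,h)$, so the linear system that forces $\Pa_k + \sum_{j>q}c_{kj}\Pa_j$ to be $D^2 h$-orthogonal to $\Ker dg_0$ has a unique solution $c_{kj}(g,h)$ for each $k = 1,\ldots,q$, depending smoothly on $\sigma$, and a direct computation gives $v_k'(\sigma) = (d\Phi(g,h)_0)^{-1}\bigl(\Pa_k + \sum_{j>q}c_{kj}(g,h)\Pa_j\bigr)$. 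Consequently the map $A_M:U_M\to\R^{\tilde{q}}$, $A_M(\sigma) = \bigl(D^2 h(v_i'(\sigma)\otimes v_j'(\sigma))\bigr)_{1\le i\le j\le q}$ (cokernel identified with $\R$ via $[\Pa_1]$), is smooth, and $\Omega'_0\cap U_M = A_M^{-1}(0)$.

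The main obstacle — indeed the only genuine computation — is to show that $A_M$ is a submersion. As in Lemma~\ref{lem:zero set alpha_ij}, I would first use the left $\mathcal{K}[G]^2$-action of $(j^2\Phi(g_0,h_0)(0), I)$, which preserves $U_M$ and leaves $A_M$ invariant, to reduce (at a given $\sigma = j^2(g_0,h_0)(0)\in U_M$) to the case where $\sigma$ itself has the form $j^2(x_1,\ldots,x_q,h_0)(0)$ with $h_0\in\mathcal{M}_n^2$; then I would exhibit the local section $s:\R^{\tilde{q}}\to U_M$, $s(d) = j^2\bigl(x_1,\ldots,x_q,\ h_0 + \sum_{1\le i<j\le q}d_{ij}x_i x_j + \tfrac{1}{2}\sum_{i=1}^q d_{ii}x_i^2\bigr)(0)$. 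The perturbing terms are quadratic forms in $x_1,\ldots,x_q$ only, so $s(d)$ stays in $\Theta_{q,q+1}$ (the Morse part in $x_{q+1},\ldots,x_n$ is untouched) and they change none of the data determining $v_k'$ — namely $dg_0$, the block $D^2 h|_{\otimes^2\Ker dg_0}$, and the mixed second derivatives $\Pa^2 h_0/\Pa x_k\Pa x_j$ with $k\le q < j$ — so $v_k'(s(d)) = v_k'(\sigma)$; evaluating the perturbed Hessian on $v_i'(\sigma)\otimes v_j'(\sigma)$, whose $\Ker dg_0$-components are annihilated, then yields $A_M(s(d)) = A_M(\sigma) + d$, whence $d(A_M\circ s)_0 = \id$ and $A_M$ is a submersion. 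Therefore $\Omega'_0\cap U_M = A_M^{-1}(0)$ is a submanifold of $U_M$ of codimension $\tilde{q} = q(q+1)/2$, and patching over the cover $\{U_M\}$ proves the lemma.
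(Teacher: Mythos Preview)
Your proposal is correct and follows exactly the approach the paper indicates: the paper does not give a separate proof of this lemma but simply remarks that it is shown ``in the same way as that in the proof of Lemma~\ref{lem:zero set alpha_ij}'', and your adaptation carries out precisely that translation, correctly noting that only the column-index set $M$ is needed (the analogues of $L$ and $K$ being forced since $dg_0$ has full rank and $dh_0=0$). One small notational point: when you write ``$[\Pa_1]$ spans $\Coker d(g,h)_0$'', the first standard basis vector of $T_0\R^{q+1}$ actually lies in the image of $d(g,h)_0$; you mean the last basis vector $[\Pa_{q+1}]$ (equivalently the generator of $\Coker dh_0\cong T_0\R$), but this does not affect the argument.
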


\noindent
Under the canonical identification $\Coker dh_0\cong T_0\R\cong \R$, we put $\alpha'_{ij}(\sigma) = D^2h(v'_i(\sigma)\otimes v'_j(\sigma))$ and define $A':\Theta_{q,q+1}\to \mathbb{P}^{\tilde{q}-1}$ by $A'(\sigma) = [\cdots:\alpha'_{ij}(\sigma):\cdots]$.

\begin{proposition}\label{prop:submersion map A'}

The map $A'$ is a submersion. 

\end{proposition}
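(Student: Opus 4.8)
The plan is to run exactly the argument used for Proposition~\ref{prop:submersion map A}, transported from the corank-$1$ inequality setting to the present one. Recall that the proof of Lemma~\ref{lem:zero set alpha'_ij} (obtained by imitating that of Lemma~\ref{lem:zero set alpha_ij}) produces an open cover of $\Theta_{q,q+1}$ by charts analogous to the sets $U_{L,M,K}$, together with, on each such chart $U$, a smooth map $A'_U\colon U\to\R^{\tilde{q}}$ whose components are the numbers $D^2h(v'_i(\sigma)\otimes v'_j(\sigma))$ read off in a fixed trivialization of the line $\Coker dh_0$ over $U$; by construction $\Omega'_0\cap U=(A'_U)^{-1}(0)$. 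The first step is to record that each $A'_U$ is a submersion. This is the content of (the proof of) Lemma~\ref{lem:zero set alpha'_ij}: one exhibits a local section $s\colon\R^{\tilde{q}}\to U$, obtained by adding to $h$ a quadratic form in the coordinates spanning the $v'$-directions with coefficient vector $d=(d_{ij})_{i\le j}\in\R^{\tilde{q}}$, and one checks that this perturbation changes neither the Hessian of $h$ restricted to $\Ker dg_0$ nor the mixed second derivatives that pin down the vectors $v'_i(\sigma)$, so that $v'_i(s(d))=v'_i(\sigma)$ and hence $A'_U\circ s(d)=A'_U(\sigma)+d$; thus $d(A'_U\circ s)_0$ is the identity and $A'_U$ is a submersion.

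The second step is purely formal. Over $U\setminus\Omega'_0$ one has, by the very definition of $A'$ and of $A'_U$, the factorization
\[
A'|_{U\setminus\Omega'_0}=\pi\circ A'_U|_{U\setminus\Omega'_0},
\]
where $\pi\colon\R^{\tilde{q}}\setminus\{0\}\to\mathbb{P}^{\tilde{q}-1}$ is the canonical projection; note that, although the trivialization of $\Coker dh_0$ used to define $A'_U$ is chart-dependent, changing it rescales $A'_U$ by a nowhere-vanishing function and so leaves the composite $\pi\circ A'_U$ unchanged, consistently with the fact that $A'$ itself does not depend on the choice of an isomorphism $\Coker dh_0\cong\R$. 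Since $\pi$ is a submersion and $A'_U$ is a submersion by the first step, the composite $A'|_{U\setminus\Omega'_0}$ is a submersion; as being a submersion is a local property and the sets $U\setminus\Omega'_0$ cover the domain of $A'$, we conclude that $A'$ is a submersion.

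I expect the only real work to be the transcription in the first step: verifying that a quadratic perturbation of $h$ supported on the appropriate coordinates leaves the vectors $v'_i(\sigma)$ untouched while shifting the $\alpha'_{ij}(\sigma)$ affinely. This is entirely analogous to the corresponding verification in the proof of Lemma~\ref{lem:zero set alpha_ij} — the roles of $D^2f$ and $\tilde{D}^2f$ there being played here by $D^2(g,h)=D^2h|_{\otimes^2\Ker dg_0}$ and $D^2h$ — so I would either cite Lemma~\ref{lem:zero set alpha'_ij} for it directly or reproduce the one-line computation $A'_U\circ s=\mathrm{id}+\mathrm{const}$ and then invoke it.
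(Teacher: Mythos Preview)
Your proposal is correct and follows exactly the route the paper intends: the paper's own proof of Proposition~\ref{prop:submersion map A} is the one-line observation that $A|_{U_{L,M,K}\setminus\Omega_0}=\pi\circ A_{L,M,K}$ with $A_{L,M,K}$ a submersion (established in the proof of Lemma~\ref{lem:zero set alpha_ij}) and $\pi$ the canonical projection, and the paper explicitly leaves Proposition~\ref{prop:submersion map A'} to the reader as the parallel statement. Your two steps reproduce this argument in the $\Theta_{q,q+1}$ setting, with the submersivity of $A'_U$ supplied by the section construction in (the proof of) Lemma~\ref{lem:zero set alpha'_ij}.
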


\noindent
The proof of this proposition is quite similar to that of Proposition~\ref{prop:submersion map A} and left for the reader.

\subsubsection*{Jets in $(\pi^3_2)^{-1}(\Theta_{2,3})$}
A jet in 
\begin{math}
\Theta_{2,3}
\end{math}
is 
\begin{math}
\mathcal{K} \left[ G \right]^2
\end{math}-equivalent to the
\begin{math}
2
\end{math}-jet represented by
\begin{equation}
\left( g, h \right) = \left( x_1, x_2, \alpha_{11} x_1^2 + \alpha_{12} x_1 x_2 + \alpha_{22} x_2^2 + \sum_{j=3}^{n}\pm x_j^2\right).
\end{equation}
Let
\begin{math}
\Omega'_{23} = \Omega'_0\cup (A')^{-1}(\{[1:0:0],[0:1:0],[0:0:1]\}) \subset \Theta_{2,3}
\end{math}. 
By Lemma~\ref{lem:zero set alpha'_ij} and Proposition~\ref{prop:submersion map A'}, the extended codimension of $\Omega'_{23}$ is 
\begin{math}
5
\end{math}.
One can further show that \begin{math}
\Theta_{2,3}
\end{math}
is the union of 
\begin{math}
\mathcal{K} \left[ G \right]^3
\end{math}-orbits of 
\begin{math}
C_4, C_5
\end{math}, and 
\begin{math}
C_6
\end{math}
whose extended codimensions are 
\begin{math}
3, 4
\end{math}, and 
\begin{math}
4
\end{math}, respectively, and 
\[
(\pi^3_2)^{-1}(\Omega'_{23}) \sqcup\biggl(
\bigsqcup_{\begin{minipage}[c]{11mm}
\scriptsize
\centering
all signs 

in $f_3$
\end{minipage}}\left(\mathcal{K}[G]^3\cdot j^3f_3(0)\right)\biggr)
\sqcup \biggl(\bigsqcup_{\begin{minipage}[c]{11mm}
\scriptsize
\centering
all signs 

in $f_4$
\end{minipage}}(\mathcal{K}[G]^3\cdot j^3f_4(0))
\biggr),
\]
whose extended codimension is $5$ where
\begin{math}
f_3 = \left( x_1, x_2, \pm \left( x_1 \pm x_2 \right)^2 + \sum_{j=3}^{n}\pm x_j^2\right)
\end{math}
and 
\begin{math}
f_4 =  \left( x_1, x_2, \pm x_1^2 \pm x_1 x_2 + \sum_{j=3}^{n}\pm x_j^2\right)
\end{math}.

\subsubsection*{Jets in $(\pi^3_2)^{-1}(\Theta_{2,4})$}
A jet in 
\begin{math}
\Theta_{2,4}
\end{math}
is 
\begin{math}
\mathcal{K} \left[ G \right]^2
\end{math}-equivalent to the
\begin{math}
2
\end{math}-jet represented by
\begin{equation}
\left( g, h \right) \left( 0 \right) = \left( x_1, x_2, a_{11} x_1^2 + a_{12} x_1 x_2 + a_{13} x_1 x_3 + a_{22} x_2^2 + a_{23} x_2 x_3 + \sum_{j=4}^{n}\pm x_j^2\right).
\end{equation}
Let 
\begin{math}
\Omega_{1c}
\end{math}
be 
\begin{math}
\mathcal{K} \left[ G \right]^2
\end{math}-orbit of the set of the $2$-jets whose coefficients satisfying 
\begin{equation}
a_{13} a_{23} \left( a_{13}^2 a_{22} - a_{12} a_{13} a_{23} + a_{11} a_{23}^2 \right) = 0
\end{equation}
and let 
\begin{equation}
f = \left( x_1, x_2, \pm x_1 x_2 \pm x_1 x_3 \pm x_2 x_3 + \sum_{j=q+1}^{n}\pm x_j^2\right).
\end{equation}
be a map-germ. In the same manner as before, we can deduce that 
\begin{math}
(\pi^3_2)^{-1}(\Theta_{2,4})
\end{math}
is the union of 
\begin{math}
C_7
\end{math}
whose extended codimension is $4$ and 
\[
(\pi^3_2)^{-1}(\Omega_{1c}) \sqcup\biggl(
\bigsqcup_{\begin{minipage}[c]{11mm}
\scriptsize
\centering
all signs 

in $f$
\end{minipage}}\left(\mathcal{K}[G]^3\cdot j^3f(0)\right)\biggr)
\]
whose extended codimension is $5$.

\subsubsection*{Jets in $(\pi^3_2)^{-1}(\Theta_{3,4})$}

A jet in $\Theta_{3,4}$ is $\mathcal{K}[G]^2$-equivalent to that represented by 
\begin{equation}\label{eq:2-jet Theta_3,4}
\left( g, h_\alpha \right) = \left( x_1, x_2, x_3, \sum_{1\leq i\leq j\leq 3} \alpha_{ij}x_ix_j + \sum_{j=4}^{n}\pm x_j^2\right)
\end{equation}
for some $\alpha = (\alpha_{ij})\in \R^6$. 
Let $W_1,W_2,W_3\subset \mathbb{P}^5$ be the subsets we took when dealing with jets in $\Lambda_{q-4,q}$. 
By Proposition~\ref{prop:submersion map A'}, the preimage $(A')^{-1}(W_1\cup W_2\cup W_3)$ is a union of submanifolds of $\Theta_{3,4}$ with codimension at least one. 
Thus, the extended codimension of $(A')^{-1}(W_1\cup W_2\cup W_3)\cup \Omega_0'$ is (larger than or) equal to $1+d_e(\Theta_{3,4})=5$. 

In what follows, we will consider a $2$-jet in the complement $\Theta_{3,4}\setminus ((A')^{-1}(W_1\cup W_2\cup W_3)\cup \Omega_0')$, which is $\mathcal{K}[G]^2$-equivalent to $j^2(g,h_\alpha)(0)$ with $\alpha=(\alpha_{ij})$ satisfying the conditions in \eqref{eq:condision alpha_ij}.  
For such a $2$-jet, one can check that the inclusion
\begin{math}
\mathcal{M}_n^3 \mathcal{E}_n^{4} \subset T + T \mathcal{K} \left[ G \right]_1 \left( g,h_\alpha \right) + \mathcal{M}_n^4 \mathcal{E}_n^{4}
\end{math}
holds for 
\begin{math}
T = \langle x_1x_2x_3e_4 \rangle_{\mathbb{R}}
\end{math}. 
By Theorem~\ref{thm:complete transversal}, a $3$-jet $\sigma\in J^3(n,4)_0$ with $\pi^3_2(\sigma)=j^2(g,h_\alpha)(0)$ is $\mathcal{K}[G]^3$-equivalent to 
\begin{equation}
\left( x_1,x_2,x_3, \sum_{i,j = 1}^3 \alpha_{ij} x_{i} x_{j} + \beta x_1x_2x_3 + \sum_{j=4}^n \delta_j x_j^2 \right),
\end{equation}
for some 
\begin{math}
\beta\in \R
\end{math}.
If 
\begin{math}
\beta \neq 0
\end{math}, the $\mathcal{K}[G]^3$-codimension of the jet above is $n$ by the similar argument. 
Furthermore, $\mathcal{M}_n^3\mathcal{E}_n^4$ is contained in $T\mathcal{K}[G](g,h)+\mathcal{M}_n^4\mathcal{E}_n^4$, and thus the jet above is $3$-determined relative to $\mathcal{K}[G]$ by Proposition~\ref{prop:basic properties K[G]-eq/codim}. 
An appropriate scaling of the coordinate brings the map-germ to the normal form of type $(8)$ in Table~\ref{table:generic constraint q>0 r=1}.
If $\beta =0$, the $3$-jet above is equal to $j^3(g,h_\alpha)(0)$ and it has $\mathcal{K}[G]^3$-codimension $n+1$ be the similar argument.

We have shown that the extended codimension of $C_{8}$ is equal to $4$, the complement $(\pi^3_2)^{-1}(\Theta_{3,4})\setminus C_8$ is equal to 
\[
(A'\circ \pi^3_2)^{-1}(W_1\cup W_2\cup W_3)\sqcup \biggl(
\bigsqcup_{\begin{minipage}[c]{11mm}
\scriptsize
\centering
all signs 

in $h_\alpha$
\end{minipage}}\left(\mathcal{K}[G]^3\cdot j^3(g,h_\alpha)(0)\right)\biggr), 
\]
and its extended codimension is $5$. 
This completes the proof of 4 of Theorem~\ref{thm:classification jets}. 

Lastly, we can obtain a basis of the quotient space $\mathcal{E}_n^{q+r}/T\mathcal{K}[G]_e(g,h)$ for each germ $(g,h)$ in Tables \ref{table:generic constraint q=0}--\ref{table:generic constraint q>0 r=1} either by calculating standard basis of $T\mathcal{K}[G]_e(g,h)$ in the same way as those in Appendix~\ref{sec:appendix}, or by using Proposition~\ref{prop:relation K[G]/K[G]_e-cod}. 
The details are left to the reader. 
We eventually complete the proof of Theorem~\ref{thm:classification jets}. 
\end{proof}

\subsection*{The main theorem in full detail}

Combining Theorem~\ref{thm:classification jets} with the results in Section~\ref{sec:transversality parameter family}, we eventually obtain the following theorem. 

\begin{theorem}\label{thm:main theorem detail}

Let $N$ be a manifold without boundary, $b\leq 4$, and $U\subset \R^b$ be an open subset. 
The set consisting of constraint mappings $(g,h)\in C^\infty(N\times U,\R^{q+r})$ with the following conditions is residual in $C^\infty(N\times U,\R^{q+r})$. 

\begin{enumerate}

\item 
For any $u\in U$ and ${\overline{x}}\in M(g_u,h_u)$, the corank of $(dh_u)_{\overline{x}}$ is at most $1$. 

\item 
For any $u\in U$ and ${\overline{x}}\in M(g_u,h_u)$ at which there is no active inequality constraint (i.e.~there is no $k\in \{1,\ldots, q\}$ with $g_k({\overline{x}},u)=0$), a full reduction of the germ $(g,h):(N\times U,({\overline{x}},u))\to \R^{q+r}$ is $\mathcal{K}[G]$-equivalent to either the trivial family of the constant map-germ, or a versal unfolding of one of the germs in Table~\ref{table:generic constraint q=0} with $\mathcal{K}[G]_e$-codimension at most $b$. 

\end{enumerate}

\noindent
In what follows, we will assume that $(g_u,h_u)$ has an active inequality constraint at $x\in M(g_u,h_u)$.

\begin{enumerate}

\addtocounter{enumi}{2}
\item 
For any $u\in U$ and ${\overline{x}}\in M(g_u,h_u)$ with $\corank((dh_u)_{\overline{x}})=0$, a full reduction of the germ $(g_u,h_u):(N,{\overline{x}})\to \R^{q+r}$ is $\mathcal{K}[G]$-equivalent to either a submersion-germ, or one of the germs in Table~\ref{table:generic constraint r=0} with stratum $\mathcal{K}[G]_e$-codimension at most $b$.
Furthermore, if a full reduction of $(g_u,h_u)$ is $\mathcal{K}[G]$-equivalent to the germ of neither type (6) nor type (10), a full reduction of $(g,h):(N\times U,({\overline{x}},u))\to \R^{q+r}$ is a versal unfolding of $(g_u,h_u)$. 

\item 
For any $({\overline{x}},u)\in N\times U$ with $\corank((dh_u)_{\overline{x}})=1$, a full reduction of the germ $(g_u,h_u):(N,{\overline{x}})\to \R^{q+r}$ is $\mathcal{K}[G]$-equivalent to one of the germs in Table~\ref{table:generic constraint q>0 r=1} with stratum $\mathcal{K}[G]_e$-codimension at most $b$ (in particular the number of active inequality constraints is at most $3$).
Furthermore, if a full reduction of $(g_u,h_u)$ is $\mathcal{K}[G]$-equivalent to the germ of neither type (4) nor type (8), a full reduction of $(g,h):(N\times U,({\overline{x}},u))\to \R^{q+r}$ is a versal unfolding of $(g_u,h_u)$. 

\end{enumerate}

\end{theorem}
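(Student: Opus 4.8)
The plan is to exhibit the set of families satisfying (1)--(4) as a finite intersection of residual subsets of $C^\infty(N\times U,\R^{q+r})$ --- each produced by a transversality statement from Section~\ref{sec:transversality parameter family} --- and then to translate the resulting transversality of the jet extension $j_1^m(g,h)$ into the stated properties of full reductions by means of Proposition~\ref{H:prop:equivalence transversality reduction} and the geometricity of $\mathcal{K}[G]$.

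First I would list the finitely many strata to be controlled. Since $n=\dim N$ and $q+r$ are fixed, there are only finitely many choices of index systems $(i)\subset\{1,\ldots,r\}$, $(k)\subset\{1,\ldots,q\}$, of the corank $l$ and the number $s$ of active inequality constraints, and for each of these finitely many types in Tables~\ref{table:generic constraint q=0}--\ref{table:generic constraint q>0 r=1} (the relevant jet orders $m$ vary with the type but stay bounded). For fixed such data I would take: the $\mathcal{K}[G]^m$-orbits (or finite unions of orbits, decomposed into submanifolds) of the normal forms in the tables; the moduli semi-algebraic sets $B_6,B_{10}$ of Table~\ref{table:generic constraint r=0} and $C_4,C_8$ of Table~\ref{table:generic constraint q>0 r=1}, each decomposed into finitely many submanifolds as in the proof of Theorem~\ref{thm:classification jets}; the submanifold of submersive $1$-jets and the constant-germ stratum; the ``garbage'' complements, which Theorem~\ref{thm:classification jets} asserts have extended codimension $5$; and the strata $\{j^1(g,h)(0):\corank dh_0\ge2\}$ and $\{g_{k_1}=\cdots=g_{k_4}=0,\ \corank dh_0\ge1\}$, which force a full reduction to lie in $W^m_{n-r+l,s,l}$ with $l\ge2$ respectively $l=1,\ s\ge4$; by part~1 of Theorem~\ref{thm:classification jets}, together with the identity $d_e(\tilde\Sigma)=d_e(\Sigma)$ coming from Proposition~\ref{H:prop:semi-alg tilde{Sigma}}, the last three families all have extended codimension $>4\ge b$. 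For every stratum $\Sigma$ in this list with $d_e(\Sigma)>b$, the Corollary to Theorem~\ref{H:thm:transversality constraints} shows $\{(g,h):j_1^m(g,h)(N\times U)\cap\tilde\Sigma_N=\emptyset\}$ is residual; for every stratum that is a submanifold with $d_e(\Sigma)\le b$, Theorem~\ref{H:thm:transversality constraints} shows $\{(g,h):j_1^m(g,h)\pitchfork\tilde\Sigma_{(i),(k),N}\}$ is residual. Let $\mathcal{S}$ be the intersection of this finite family; $\mathcal{S}$ is residual since $C^\infty(N\times U,\R^{q+r})$ is a Baire space.

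Next I would verify that every $(g,h)\in\mathcal{S}$ satisfies (1)--(4). Condition~(1), and the bound on the number of active inequality constraints when $\corank dh=1$, follow at once from the avoidance of $\{\corank dh_0\ge2\}$ and $\{g_K=0,\ \corank dh_0\ge1\}$. For a point $(\overline x,u)$ in the feasible set one forms the full reduction: let $(k)$ be the inactive indices and $(i)$ a maximal set of components of $h_u$ with independent differentials at $\overline x$, so that $l=\corank (dh_u)_{\overline x}$ by Lemma~\ref{H:lem:diff of h_{(i)} is 0} and the $m$-jet at the origin of $(g,h)_{\iota_{(i)},\lambda_{(i)},(k)}$ lies in $W^m_{n-r+l,s,l}$; by the parametric analogue of Lemma~\ref{H:lem:basic properties germ (g,h)_{(i)}} used in the proof of Proposition~\ref{H:prop:equivalence transversality reduction}, the $\mathcal{K}[G]$-class of this reduction does not depend on the choice of $\iota_{(i)},\lambda_{(i)}$. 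Avoiding the extended-codimension-$5$ complements and every type-stratum with $d_e>b$ forces the full reduction at $(\overline x,u)$ to be $\mathcal{K}[G]$-equivalent to a constant germ, a submersion, or one of the listed normal forms with stratum $\mathcal{K}[G]_e$-codimension $\le b$; here one uses that $d_e$ of a $\mathcal{K}[G]^m$-orbit equals the $\mathcal{K}[G]_e$-codimension of an $m$-determined representative (by \eqref{eq:relation d_e K[G]-codim} and Proposition~\ref{prop:relation K[G]/K[G]_e-cod}), and that for the moduli strata $d_e$ is by definition the stratum $\mathcal{K}[G]_e$-codimension. Finally, when the type is a single orbit --- equivalently, not type~(6) or~(10) of Table~\ref{table:generic constraint r=0} and not type~(4) or~(8) of Table~\ref{table:generic constraint q>0 r=1} --- transversality of $j_1^m(g,h)$ to $\tilde\Sigma_{(i),(k),N}$ at $(\overline x,u)$ gives, via Proposition~\ref{H:prop:equivalence transversality reduction}, transversality of $j_1^m(g,h)_{\iota_{(i)},\lambda_{(i)},(k)}$ to $\overline\Sigma$ at $(0,0)$; since $\mathcal{K}[G]$ is geometric in the sense of Damon, this is exactly versality of the reduction as a $b$-parameter unfolding of the reduced germ of $(g_u,h_u)$ at $\overline x$, which is the content of (2), (3), (4); the constant-germ case is versal trivially, its $\mathcal{K}[G]_e$-codimension being $0$.

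I expect the main difficulty to lie precisely in this last translation and its bookkeeping: one must track that the reduction entering conditions~(2)--(4) is a \emph{full} reduction (so that its jet lands in a $W$-stratum, not merely a $\Gamma$-stratum), that the $\mathcal{K}[G]^m$-orbit of that reduction --- and not some coarser stratum --- is exactly what $\tilde\Sigma_{(i),(k),N}$ is built from, and that Proposition~\ref{H:prop:equivalence transversality reduction} is invoked for the submanifold $\overline\Sigma$ whose fiber is that orbit, with the parameter count matching $b$. The geometricity of $\mathcal{K}[G]$ used to interchange transversality and versality for parameter families is applied exactly as in the discussion preceding Section~\ref{sec:classification_constraints}. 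The remaining points --- Baire-category closure of a finite intersection, and the rank counts for the corank and $W^m_{n,q,r}$ strata --- are routine.
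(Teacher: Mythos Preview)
Your proposal is correct and follows exactly the approach the paper itself indicates: the paper presents Theorem~\ref{thm:main theorem detail} as an immediate consequence of Theorem~\ref{thm:classification jets} combined with the transversality machinery of Section~\ref{sec:transversality parameter family} (Theorem~\ref{H:thm:transversality constraints}, its Corollary, Proposition~\ref{H:prop:equivalence transversality reduction}, and the geometricity of $\mathcal{K}[G]$), without spelling out the finite-intersection argument or the bookkeeping that you have supplied. Your detailed account of which strata to avoid, which to hit transversally, and how Proposition~\ref{H:prop:equivalence transversality reduction} converts transversality of $j_1^m(g,h)$ into versality of the full reduction is precisely the content the paper leaves implicit.
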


\noindent
Note that one can obtain a model of a versal unfolding of each germ in the tables from Table~\ref{table:generator quotient}. (See the observation at the end of Section~\ref{sec:transversality parameter family}.)

\appendix

\section{Appendix: Standard Basis and Its Applications} \label{sec:appendix}
In this section, we provide a brief summary of standard basis and its application to module membership problems and codimension computation. 
Let 
\begin{math}
M \subset \mathcal{E}_n^q
\end{math}
be an
\begin{math}
\mathcal{E}_n
\end{math}-module. In what follows, we assume 
\begin{math}
M
\end{math}
has finite codimension, i.e.~%
\begin{math}
\dim_{\mathbb{R}} \cfrac{\mathcal{E}_n^q}{M} < \infty
\end{math}. This condition is equivalent to the existence of
\begin{math}
k \in \mathbb{N}
\end{math}
such that 
\begin{math}
\mathcal{M}_n^k \mathcal{E}_n^q \subset M
\end{math}
holds. Let 
\begin{math}
\mathbb{R} \left[ \left[ x_1, \ldots, x_n \right] \right]
\end{math}
be a formal power series ring with variables 
\begin{math}
x_1, \ldots, x_n
\end{math}. Then, 
\begin{math}
\cfrac{\mathcal{E}_n}{\mathcal{M}_n^\infty} \cong \mathbb{R} \left[ \left[ x_1, \ldots, x_n \right] \right]
\end{math}
holds, where we put $\mathcal{M}_n^\infty = \bigcap_{k\geq 0} \mathcal{M}_n^k$. Since 
\begin{math}
M
\end{math}
has finite codimension, 
\begin{math}
\mathcal{M}_n^\infty \mathcal{E}_n^q \subset M
\end{math}
and thus 
\begin{equation}
\cfrac{\mathcal{E}_n^q}{M} \cong \cfrac{\mathcal{E}_n^q/\mathcal{M}_n^\infty \mathcal{E}_n^q}{M/\mathcal{M}_n^\infty \mathcal{E}_n^q} \cong \cfrac{\mathbb{R} \left[ \left[ x_1, \ldots, x_n \right] \right]^q}{\widehat{M}}
\end{equation}
holds where 
\begin{math}
\widehat{M} = M / \mathcal{M}_n^\infty \mathcal{E}_n^q
\end{math}. 
\begin{math}
\widehat{M}
\end{math}
can be regarded as an 
\begin{math}
\mathbb{R} \left[ \left[ x_1, \ldots, x_n \right] \right]
\end{math}-module. Through this isomorphism, 
\begin{math}
\dim_{\mathbb{R}} \cfrac{\mathcal{E}_n^q}{M} = \dim_{\mathbb{R}} \cfrac{\mathbb{R} \left[ \left[ x_1, \ldots, x_n \right] \right]^q}{\widehat{M}}
\end{math}
holds and the computation of the codimension of 
\begin{math}
M
\end{math}
in 
\begin{math}
\mathcal{E}_n^q
\end{math}
can be reduced to that of the codimension of 
\begin{math}
\widehat{M}
\end{math}
in 
\begin{math}
\mathbb{R} \left[ \left[ x_1, \ldots, x_n \right] \right]^q
\end{math}. The latter computation is reduced to the computation of standard basis of 
\begin{math}
\widehat{M}
\end{math}
since 
\begin{math}
\mathbb{R} \left[ \left[ x_1, \ldots, x_n \right] \right]
\end{math}
is Noetherian. For the terminology related to the standard basis, see \cite{Greuel2008}.

Let 
\begin{math}
\prec
\end{math}
be a local term order in the set of the monomials in 
\begin{math}
\mathbb{R} \left[ \left[ x_1, \ldots, x_n \right] \right]
\end{math}
and 
\begin{math}
\prec_m
\end{math}
be the module order compatible with the term order. Take any
\begin{math}
f \in \mathbb{R} \left[ \left[ x_1, \ldots, x_n \right] \right]^q \setminus \left\{ 0 \right\}
\end{math}
and suppose that it can be expanded as 
\begin{equation}
f = c_\alpha x^\alpha e_j + \left( \textnormal{sum of terms smaller than} \; x^\alpha e_j \; \textnormal{with respect to} \; \prec_m\right), 
\end{equation}
where 
\begin{math}
\alpha \in \mathbb{Z}^n_{\ge 0}
\end{math}, 
\begin{math}
c_\alpha \in \mathbb{R} \setminus \left\{ 0 \right\}
\end{math},
\begin{math}
x^\alpha = x_1^{\alpha_1} x_2^{\alpha_2} \cdots x_n^{\alpha_n}
\end{math}. 
For such an 
\begin{math}
f
\end{math}, we define its leading monomial, leading term and leading coefficient as 
\begin{math}
\textnormal{LM} \left( f \right) = x^\alpha e_j
\end{math}, 
\begin{math}
\textnormal{LT} \left( f \right) = c_\alpha x^\alpha e_j
\end{math}, and 
\begin{math}
\textnormal{LC} \left( f \right) = c_\alpha
\end{math}, respectively. For a pair of elements 
\begin{math}
f, g \in \mathbb{R} \left[ \left[ x_1, \ldots, x_n \right] \right]^q \setminus \left\{ 0 \right\}
\end{math}
where 
\begin{math}
\textnormal{LT} \left( f \right) = c_\alpha x^\alpha e_j
\end{math}
and 
\begin{math}
\textnormal{LT} \left( g \right) = d_{\alpha'} x^{\alpha'} e_{j'}
\end{math}, we define their symmetric polynomial as 
\begin{equation}
\textnormal{spoly} \left( f, g \right) = 
\begin{cases}
\left( \frac{f}{c_\alpha x^\alpha} - \frac{g}{d_{\alpha'} x^{\alpha'}} \right) \prod_{j=1}^n x_j^{\max \left\{ \alpha_j, \alpha'_j \right\}} & \left( j = j' \right), \\
0 & \left( j \neq j' \right).
\end{cases}
\end{equation}
We say 
\begin{math}
f
\end{math}
is \textit{divisible} by 
\begin{math}
S = \left\{ f_1, \ldots, f_l \right\}
\end{math}
if there exist
\begin{math}
a_1, \ldots, a_l \in \mathbb{R} \left[ \left[ x_1, \ldots, x_n \right] \right]
\end{math}
satisfying the following conditions:

\begin{itemize}

\item 
\begin{math}
f = \sum_{j=1}^l a_j f_j,
\end{math}

\item 
\begin{math}
\textnormal{LM} \left( f \right) \ge \textnormal{LM} \left( a_j f_j \right)
\end{math}
for all 
\begin{math}
j \in \left\{ 1, \ldots, l \right\}
\end{math}
 with 
\begin{math}
a_j f_j \neq 0.
\end{math}
\end{itemize}

\noindent
We say 
\begin{math}
S = \left\{ f_1, \ldots, f_l \right\}
\end{math}
is a \textit{standard basis} of 
\begin{math}
\widehat{M}
\end{math}
if 
\begin{math}
S
\end{math}
generates
\begin{math}
\widehat{M}
\end{math}
and
\begin{math}
\textnormal{spoly} \left( f_i, f_j \right)
\end{math}
for all 
\begin{math}
i < j
\end{math}
are divisible by 
\begin{math}
S
\end{math}. 
Note that a generating set consisting of monomials is always a standard basis since $\textnormal{spoly}(f,g)=0$ for any monomials $f,g\in \R[[x_1,\ldots, x_n]]^q$.

\begin{theorem} \label{thm:standard_basis}
Let 
\begin{math}
S = \left\{ f_1, \ldots, f_l \right\}
\end{math}
be a standard basis of 
\begin{math}
\widehat{M}
\end{math}. Then, the following hold:
\begin{enumerate}[(1)]
\item 
\begin{math}
\cfrac{\mathbb{R} \left[ \left[ x_1, \ldots, x_n \right] \right]^q}{\widehat{M}}
\end{math}
is isomorphic to the 
\begin{math}
\mathbb{R}
\end{math}-vector subspace in 
\begin{math}
\mathbb{R} \left[ \left[ x_1, \ldots, x_n \right] \right]^q
\end{math} spanned by the monomials that cannot be divisible by any leading monomial of an element of 
\begin{math}
S
\end{math}.
\item For any 
\begin{math}
f \in \mathbb{R} \left[ \left[ x_1, \ldots, x_n \right] \right]^q
\end{math}, 
\begin{math}
f \in \widehat{M}
\end{math}
if and only if
\begin{math}
f
\end{math}
is divisible by 
\begin{math}
S
\end{math}.
\end{enumerate}
\end{theorem}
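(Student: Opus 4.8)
The plan is to reduce everything to the classical theory of standard (Gröbner) bases over the power series ring $\R[[x_1,\ldots,x_n]]$, as developed in \cite{Greuel2008}, since $\widehat{M}$ is genuinely an $\R[[x_1,\ldots,x_n]]$-submodule of $\R[[x_1,\ldots,x_n]]^q$ of finite codimension (the latter because $\mathcal{M}_n^k\mathcal{E}_n^q\subset M$ for some $k$, hence $\mathcal{M}_n^k\R[[x]]^q\subset\widehat{M}$). The two statements are the module-theoretic analogues of Macaulay's theorem and of the division-algorithm characterization of membership, and the argument is standard; I would present it in the self-contained form below rather than merely citing, for completeness.

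First I would treat (2). The ``if'' direction is immediate: if $f$ is divisible by $S$, then $f=\sum_j a_jf_j\in\widehat{M}$ since $S$ generates $\widehat{M}$. For ``only if'', I would run the division algorithm (normal form algorithm with respect to $\prec_m$) on an arbitrary $f\in\widehat{M}$: repeatedly subtract a suitable monomial multiple $a_jf_j$ so as to cancel the current leading term, always choosing $f_j$ whose leading monomial divides $\mathrm{LM}$ of the current remainder. Because $\prec_m$ is a local module order refining a local term order, this process either terminates with remainder $0$ or produces a remainder whose leading monomial is divisible by no $\mathrm{LM}(f_j)$; in the power series setting one must argue (as in \cite{Greuel2008}) that the partial sums converge $\mathcal{M}_n$-adically, which is where finite codimension is again used to guarantee the remainder lies in a finite-dimensional complement. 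The key point is then: the final remainder $\tilde f$ lies in $\widehat{M}$ and is \emph{reduced} with respect to $S$; I claim $\tilde f=0$. This is exactly where the standard-basis hypothesis enters --- via Buchberger's criterion for modules, the divisibility of all $\mathrm{spoly}(f_i,f_j)$ by $S$ forces every reduced element of $\widehat{M}$ to be zero. I would either invoke the module version of Buchberger's criterion directly from \cite{Greuel2008} or sketch the standard syzygy/telescoping argument: write $\tilde f=\sum a_jf_j$ with $\max_j\mathrm{LM}(a_jf_j)$ minimal; if the maximum is attained more than once, the leading terms among the maximizers cancel, giving an $S$-polynomial relation, and the hypothesis that $\mathrm{spoly}$'s are divisible by $S$ lets one lower the maximum, contradicting minimality; hence the maximum is $\mathrm{LM}(\tilde f)$ itself, so some $\mathrm{LM}(f_j)$ divides $\mathrm{LM}(\tilde f)$, contradicting reducedness unless $\tilde f=0$.

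Given (2), statement (1) follows quickly. Let $B\subset\R[[x_1,\ldots,x_n]]^q$ be the $\R$-span of the monomials $x^\alpha e_i$ not divisible by any $\mathrm{LM}(f_j)$; these are finite in number by finite codimension. I would show the composite $B\hookrightarrow\R[[x]]^q\twoheadrightarrow\R[[x]]^q/\widehat{M}$ is an isomorphism. Surjectivity: given any $f$, the normal form algorithm of the previous paragraph rewrites $f$ modulo $\widehat{M}$ as an $\R$-linear combination of monomials none of which is divisible by an $\mathrm{LM}(f_j)$, i.e.\ as an element of $B$. Injectivity: if $b\in B\cap\widehat{M}$, then by (2) $b$ is divisible by $S$, so $\mathrm{LM}(b)$ is divisible by some $\mathrm{LM}(f_j)$ (since the division expression $b=\sum a_jf_j$ satisfies $\mathrm{LM}(b)\ge\mathrm{LM}(a_jf_j)$, forcing equality for some $j$, whence $\mathrm{LM}(f_j)\mid\mathrm{LM}(b)$); but $b\in B$ means no monomial of $b$ --- in particular $\mathrm{LM}(b)$ --- is so divisible, a contradiction unless $b=0$.

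\textbf{Main obstacle.} The genuine content, and the only place where the ``standard basis'' hypothesis is actually consumed, is the claim inside (2) that a reduced element of $\widehat{M}$ vanishes --- i.e.\ the module version of Buchberger's criterion. Everything else is bookkeeping with the division algorithm. A secondary technical nuisance specific to the power series (rather than polynomial) setting is the termination/convergence of the division algorithm: one must phrase ``normal form'' via $\mathcal{M}_n$-adic limits and invoke that, modulo $\mathcal{M}_n^k\R[[x]]^q\subset\widehat{M}$, only finitely many reduction steps affect the answer; the cleanest route is simply to cite the module normal form and Buchberger criterion from \cite{Greuel2008} and note that the finite-codimension hypothesis places us safely within its scope.
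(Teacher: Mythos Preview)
The paper does not provide a proof of Theorem~\ref{thm:standard_basis}; it is stated as a standard fact from the theory of standard bases over local rings, with the preceding text pointing the reader to \cite{Greuel2008} for terminology, and the theorem is then immediately \emph{applied} in Appendix~\ref{sec:calculation codim determinacy type 1k} and \ref{sec:appendix_kg2codimension_Lambda_q-3,q} without further justification. Your proof outline is correct and is essentially the classical argument one finds in that reference (normal form algorithm plus the module version of Buchberger's criterion), so there is nothing to compare against beyond noting that you have supplied what the paper leaves to citation.
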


In what follows, we will explain two examples of applications of Theorem~\ref{thm:standard_basis}.

\subsection{The $\mathcal{K}[G]_e$-codimension and $\mathcal{K}[G]$-determinacy order of the map-germ of type $(1,k)$ in Table~\ref{table:generic constraint r=0}}\label{sec:calculation codim determinacy type 1k}

For $k\geq 2$, let $g$ be the map-germ of type $(1,k)$ in Table~\ref{table:generic constraint r=0}, i.e.~we put
\begin{equation}
g \left( x_1, \ldots, x_n \right) = \left( x_1, \ldots, x_{q-1}, \sum_{j=1}^{q-1} \delta_j x_j + \delta_q x_q^k + \sum_{j=q+1}^{n}\delta_{j} x_{j}^2\right)
\end{equation}
for $\delta_j=\pm 1$.
We first calculate the $\mathcal{K}[G]_e$-codimension of $g$.
The extended tangent space at 
\begin{math}
g
\end{math}
is 
\begin{equation}
T \mathcal{K} \left[ G \right]_e \left( g \right) = \langle \frac{\partial g}{\partial x_1}, \ldots, \frac{\partial g}{\partial x_n} \rangle_{\mathcal{E}_n} + \langle g_1 e_1, \ldots, g_q e_q \rangle_{\mathcal{E}_n}.
\end{equation}
In this case, $\frac{\partial g}{\partial x_j}$ is calculated as follows:
\[
\frac{\partial g}{\partial x_j} = \begin{cases}
e_j + \delta_j e_q & (j=1,\ldots, q-1) \\
k \delta_q x_q^{k-1} e_q & (j=q)\\
2 \delta_j x_j e_q &( j =q+1, \ldots, n ).
\end{cases}
\]
We set the monomial ordering in 
\begin{math}
\mathbb{R} \left[ \left[ x_1, \ldots, x_n \right] \right]
\end{math}
as the negative degree reverse lexicographical ordering
\begin{math}
\prec
\end{math}
satisfying 
\begin{math}
x_n \prec x_{n-1} \prec \cdots \prec x_2 \prec x_1
\end{math}
and the term over position module ordering 
\begin{math}
\prec_m
\end{math}
satisfying 
\begin{math}
e_q \prec_m e_{q-1} \prec_m \cdots \prec_m e_2 \prec_m e_1
\end{math}
compatible with the monomial ordering 
\begin{math}
\prec
\end{math}.

First note that 
\begin{math}
x_j e_q = \left( x_j \left( e_j + \delta_j e_q \right) - x_j e_j \right) / \delta_j \in \widehat{\mathcal{K} \left[ G \right]_e \left( g \right)}
\end{math}
holds for 
\begin{math}
j \in \left\{ 1, \ldots, q-1 \right\}
\end{math}. We claim that 
\begin{equation}
S = \left\{  e_1 + \delta_1 e_q, \ldots, e_{q-1} + \delta_{q-1} e_q, x_1 e_q, \ldots, x_{q-1} e_q, x_q^{k-1} e_q, x_{q+1} e_q, \ldots, x_n e_q \right\}
\end{equation}
is a standard basis of 
\begin{math}
\widehat{T \mathcal{K} \left[ G \right]_e \left( g \right)}
\end{math}. First of all, it is obvious that 
\begin{math}
\cfrac{\partial g}{\partial x_i}
\end{math}
can be written as an 
\begin{math}
\mathbb{R} \left[ \left[ x \right] \right]
\end{math}-linear combination of the elements in 
\begin{math}
S
\end{math}
for all 
\begin{math}
i \in \left\{ 1, \ldots, n \right\}
\end{math}. Second, 
\begin{math}
g_i e_i = x_i \left( e_i + \delta_i e_q \right) - \delta_i \left( x_i e_q \right)
\end{math}
holds for all 
\begin{math}
i \in \left\{ 1, \ldots, q-1 \right\}
\end{math}
and 
\begin{equation}
g_q e_q = \sum_{j=1}^{q-1} \delta_j \times \left( x_j e_q \right) + \delta_q x_q \times \left( x_q^{k-1} e_q \right) +\sum_{j=q+1}^{n} \delta_{j} x_{j} \times \left( x_{j} e_q \right).
\end{equation}
Third, it is obvious that all the elements in 
\begin{math}
S
\end{math}
are contained in 
\begin{math}
\widehat{T\mathcal{K} \left[ G \right]_e \left( g \right)}
\end{math}. Therefore, the set 
\begin{math}
S
\end{math}
generates 
\begin{math}
\widehat{T\mathcal{K} \left[ G \right]_e \left( g \right)}
\end{math}. Next, we show that
\begin{math}
\textnormal{spoly} \left( s_1, s_2 \right)
\end{math}
is divisible by 
\begin{math}
S
\end{math}
for all 
\begin{math}
s_1, s_2 \in S
\end{math}. By the definition of 
\begin{math}
\textnormal{spoly}
\end{math}, 
\begin{math}
\textnormal{spoly} \left( s, s \right) = 0
\end{math}
for all 
\begin{math}
s \in S
\end{math}, 
\begin{math}
\textnormal{spoly} \left( s_1, s_2 \right) = 0
\end{math}
for all the monomials
\begin{math}
s_1, s_2
\end{math}
in 
\begin{math}
S
\end{math}, and 
\begin{math}
\textnormal{spoly} \left( s_1, s_2 \right) = 0
\end{math}
if the components of the leading monomials of 
\begin{math}
s_1
\end{math}
and 
\begin{math}
s_2
\end{math}
are different. Regarding this fact, all the symmetric polynomials between the elements in 
\begin{math}
S
\end{math}
are zero and thus they are divisible by 
\begin{math}
S
\end{math}. This proves that 
\begin{math}
S
\end{math}
is a standard basis of 
\begin{math}
\widehat{T \mathcal{K} \left[ G \right]_e \left( g \right)}
\end{math}. For general algorithm to compute standard basis, see \cite{Greuel2008}.

Therefore, 
\begin{math}
\mathbb{R} \left[ \left[ x_1, \ldots, x_n \right] \right]^q / \widehat{T \mathcal{K} \left[ G \right]_e \left( g \right)}
\end{math}
is spanned by the monomials in 
\begin{math}
\mathbb{R} \left[ \left[ x_1, \ldots, x_n \right] \right]
\end{math}
not divisible by the leading monomials of the elements of $G$ by Theorem~\ref{thm:standard_basis} (1). In this case, that is 
\begin{math}
e_q, x_q e_q, \ldots, x_q^{k-2} e_q
\end{math}. This can be shown as follows. 
First, it is obvious that $e_q, x_q e_q, \ldots, x_q^{k-2} e_q$ are not divisible by any leading monomial of an element in $S$.
The monomial \begin{math}
x^\alpha e_i
\end{math}
is divisible by 
\begin{math}
e_i \; \left( = \textnormal{LM} \left( e_i + \delta_i e_q \right) \right) 
\end{math}
for any \begin{math}
\alpha \in \mathbb{Z}_{\ge 0}^n
\end{math}
and
\begin{math}
i \in \left\{ 1, \ldots, q-1 \right\}
\end{math}. 
If one of the components of 
\begin{math}
\alpha \in \Z_{\geq0}^n
\end{math}
except for 
\begin{math}
q
\end{math}-th one, say $\alpha_i$, is non-zero, the monomial
\begin{math}
x^\alpha e_q
\end{math}
is divisible by 
\begin{math}
x_i e_q
\end{math}.
The monomial \begin{math}
x_q^l e_q
\end{math}
is divisible by 
\begin{math}
x_q^{k-1} e_q \; \left( = \textnormal{LM} \left( x_q^{k-1} e_q \right) \right) 
\end{math} for \begin{math}
l \ge k-1
\end{math}.
By using Theorem~\ref{thm:standard_basis} (1), we obtain the claim.

This specifically implies 
\begin{math}
\widehat{T \mathcal{K} \left[ G \right]_e \left( g \right)}
\end{math}
has a finite codimension in 
\begin{math}
\mathbb{R} \left[ \left[ x_1, \ldots, x_n \right] \right]^q
\end{math}
and thus there exists 
\begin{math}
l \in \mathbb{N}
\end{math}
such that 
\begin{math}
\widehat{\mathcal{M}}_n^l \mathbb{R} \left[ \left[ x_1, \ldots, x_n \right] \right]^q \subset \widehat{T \mathcal{K} \left[ G \right]_e \left( g \right)}
\end{math}
holds. This implies that
\begin{math}
\mathcal{M}_n^l \mathcal{E}_n^q \subset T \mathcal{K} \left[ G \right]_e \left( g \right) + \mathcal{M}_n^\infty \mathcal{E}_n^q
\end{math}
and thus 
\begin{math}
\mathcal{M}_n^l \mathcal{E}_n^q \subset T \mathcal{K} \left[ G \right]_e \left( g \right) + \mathcal{M}_n^{l+1} \mathcal{E}_n^q
\end{math}. By using Nakayama's lemma, 
\begin{math}
\mathcal{M}_n^l \mathcal{E}_n^q \subset T \mathcal{K} \left[ G \right]_e \left( g \right)
\end{math}
holds. Therefore, 
\begin{math}
\cfrac{\mathcal{E}_n^q}{T \mathcal{K} \left[ G \right]_e \left( g \right)} \cong \cfrac{\mathbb{R} \left[ \left[ x_1, \ldots, x_n \right] \right]^q}{\widehat{T \mathcal{K} \left[ G \right]_e \left( g \right)}}
\end{math}
holds and thus
\begin{math}
\cfrac{\mathcal{E}_n^q}{T \mathcal{K} \left[ G \right]_e \left( g \right)} \cong \langle e_q, x_q e_q, \ldots, x_q^{k-2} e_q \rangle_{\mathbb{R}}
\end{math}
and
\begin{math}
\mathcal{K} \left[ G \right]_e
\end{math}-codimension of 
\begin{math}
g
\end{math}
is 
\begin{math}
k-1
\end{math}.

We next confirm that the map-germ 
\begin{math}
g
\end{math}
is $k$-determined relative to 
\begin{math}
\mathcal{K} \left[ G \right]
\end{math}. By using Proposition~\ref{prop:basic properties K[G]-eq/codim}, it is enough to confirm that 
\begin{math}
\mathcal{M}_n^k \mathcal{E}_n^q \subset T \mathcal{K} \left[ G \right] \left( g \right)
\end{math}
holds. This condition is equivalent to the condition that 
\begin{math}
\widehat{\mathcal{M}_n^k} \mathbb{R} \left[ \left[ x \right] \right]^q \subset \widehat{T \mathcal{K} \left[ G \right] \left( g \right)}
\end{math}
holds and thus we confirm the latter condition in what follows.

First of all, the equality 
\begin{multline}
\widehat{T \mathcal{K} \left[ G \right] \left( g \right)} = \widehat{\mathcal{M}_n} \langle \frac{\partial g}{\partial x_1}, \ldots, \frac{\partial g}{\partial x_n} \rangle_{\mathbb{R} \left[ \left[ x \right] \right]} + \langle g_1 e_1, \ldots, g_q e_q \rangle_{\mathbb{R} \left[ \left[ x \right] \right]} \\
= \widehat{\mathcal{M}_n} \langle e_1 + \delta_1 e_q, \ldots, e_{q-1} + \delta_{q-1} e_q, x_q^{k-1} e_q, \delta_{q+1} x_{q+1} e_q, \ldots, \delta_n x_n e_q \rangle_{\mathbb{R} \left[ \left[ x \right] \right]} \\
+ \left< x_1 e_1, x_2 e_2, \ldots, x_{q-1} e_{q-1}, \left( \sum_{j=1}^{q-1} \delta_j x_j + \delta_q x_q^k + \sum_{j=q+1}^n \delta_j x_j^2 \right) e_q \right>_{\mathbb{R} \left[ \left[ x \right] \right]}
\end{multline}
holds. Then, 
\begin{math}
x_j e_q = \left( x_j \left( e_j + \delta_j e_q \right) - x_j e_j \right) / \delta_j \in \widehat{T \mathcal{K} \left[ G \right] \left( g \right)}
\end{math}
holds for all 
\begin{math}
j \in \left\{ 1, \ldots, q-1 \right\}
\end{math}
and thus, one can show that the set
\begin{multline}
S = \left\{ x_i \left( e_j + \delta_j e_q \right) \middle| i \in \left\{ 1, \ldots, n \right\}, j \in \left\{ 1, \ldots, q-1 \right\} \right\} \\
\cup \left\{ x_1 e_q, \ldots, x_{q-1} e_q \right\} \cup \left\{ x_i x_j e_q \middle| i \in \left\{ q, \ldots, n \right\}, j \in \left\{ q+1, \ldots, n \right\} \right\} \cup \left\{ x_q^k e_q \right\}
\end{multline}
is a standard basis of $\widehat{T \mathcal{K} \left[ G \right] \left( g \right)}$ in the same way as that in the demonstration of Theorem~\ref{thm:standard_basis} (1). 

Since the module
\begin{math}
\widehat{\mathcal{M}_n^k} \mathbb{R} \left[ \left[ x \right] \right]^q
\end{math}
is generated by 
\begin{equation}
\left\{ x^\alpha e_j \middle| \alpha \in \mathbb{Z}_{\ge 0}^n, \left| \alpha \right| = k, j \in \left\{ 1, \ldots, q \right\} \right\},
\end{equation}
it is enough to show that all the generators are in 
\begin{math}
\widehat{T \mathcal{K} \left[ G \right] \left( g \right)}
\end{math}. 
By Theorem~\ref{thm:standard_basis} (2), this condition is equivalent to the condition that 
\begin{math}
x^\alpha e_j
\end{math}
is divisible by 
\begin{math}
S
\end{math}
for all 
\begin{math}
\alpha \in \mathbb{Z}_{\ge 0}^n
\end{math}
with $\left| \alpha \right| = k$ and 
\begin{math}
j \in \left\{ 1, \ldots, q \right\}
\end{math}. The latter condition can be shown as follows. 
For  
\begin{math}
\alpha \in \mathbb{Z}_{\ge 0}^n
\end{math}
with $\left| \alpha \right| = k$ and
\begin{math}
j \in \left\{ 1, \ldots, q-1 \right\}
\end{math}, the monomial 
\begin{math}
x^\alpha e_j
\end{math}
is equal to $x^\alpha \left( e_j + \delta_j e_q \right) - \delta_j x^\alpha e_q$. 
Therefore, it is enough to show that 
\begin{math}
x^\alpha e_q
\end{math}
is divisible by 
\begin{math}
S
\end{math}. If one of the elements 
\begin{math}
\alpha_1, \ldots, \alpha_{q-1}
\end{math}
is non-zero, 
\begin{math}
x^\alpha e_q
\end{math}
is divisible by one of the monomials
\begin{math}
x_1 e_q, \ldots, x_{q-1} e_q
\end{math} in $S$. If 
\begin{math}
\alpha_1 = \cdots = \alpha_{q-1} = 0
\end{math}
and one of the elements 
\begin{math}
\alpha_{q+1}, \ldots, \alpha_n
\end{math}
is non-zero, 
\begin{math}
x^\alpha e_q
\end{math}
is divisible by one of the monomials in 
\begin{math}
\left\{ x_i x_j e_q \middle| i \in \left\{ q, \ldots, n \right\}, j \in \left\{ q+1, \ldots, n \right\} \right\}\subset S
\end{math}. 
In the other case $\alpha_1 = \cdots = \alpha_{q-1} = \alpha_{q+1} = \cdots =\alpha_n = 0, \alpha_q = k$, 
\begin{math}
x_q^k e_q
\end{math}
is divisible by itself, which is contained in 
\begin{math}
S
\end{math}. Therefore, all the generators are divisible by 
\begin{math}
S
\end{math}. This proves the claim.
\subsection{The $\mathcal{K} \left[ G \right]^2$-codimension of the $2$-jet shown in Table~\ref{table:kgtangent_Lambda_q-3,q_j2}} \label{sec:appendix_kg2codimension_Lambda_q-3,q}
Let
\begin{math}
g_\alpha
\end{math}
be a map-germ representing \eqref{eq:2jet Lambda_q-3,q}. Then, 
\begin{multline}
\mathcal{K} \left[ G \right] \left( g_\alpha \right) = \mathcal{M}_n \langle e_1 + \delta_1 e_q, \ldots, e_{q-3} + \delta_{q-3} e_q \rangle_{\mathcal{E}_n} \\
+ \mathcal{M}_n \langle e_{q-2} + \left( 2 \alpha_{11} x_{q-2} + \alpha_{12} x_{q-1} \right) e_q, e_{q-1} + \left( 2 \alpha_{22} x_{q-1} + \alpha_{12} x_{q-2} \right) e_q \rangle_{\mathcal{E}_n} \\
+ \mathcal{M}_n \langle x_q e_q, \ldots, x_n e_q \rangle_{\mathcal{E}_n} 
+ \langle x_1 e_1, \ldots, x_{q-1} e_{q-1}, \\
\left( \sum_{j=1}^{q-3} \delta_j x_j + \alpha_{11} x_{q-2}^2 + \alpha_{12} x_{q-2} x_{q-1} + \alpha_{22} x_{q-1}^2 + \sum_{j=q}^n \delta_j x_j^2 \right) e_q \rangle_{\mathcal{E}_n}
\end{multline}
holds. 
In what follows, the elements given after ":" form a basis of 
\[
\widehat{\mathcal{M}_n} \mathbb{R} \left[ \left[ x \right] \right]^q / (\widehat{\mathcal{K} \left[ G \right] \left( g_\alpha \right)} + \widehat{\mathcal{M}_n^3} \mathbb{R} \left[ \left[ x \right] \right])
\]
for each parameter value 
\begin{math}
\left( \alpha_{11}, \alpha_{12}, \alpha_{22} \right)
\end{math}
satisfying the equations before ":". 
We can obtain these results by computing standard basis in the same way as that in the previous subsection (details are left to the readers). 

\begin{enumerate}
\item $\alpha_{12} \alpha_{22} \neq 0$~:
\begin{math}
\overbrace{x_{q-2} e_q, \ldots, x_n e_q}^{n-q+3}, \overbrace{x_{q-2}^2 e_q}^1
\end{math}.
\item $\alpha_{12} = 0, \alpha_{11} \alpha_{22} \neq0$~:
\begin{math}
\overbrace{x_{q-2} e_q, \ldots, x_n e_q}^{n-q+3}, \overbrace{x_{q-2} x_{q-1} e_q}^1
\end{math}.

\item $\alpha_{22} = 0, \alpha_{11} \alpha_{12} \neq 0$~:
\begin{math}
\overbrace{x_{q-2} e_q, \ldots, x_n e_q}^{n-q+3}, \overbrace{x_{q-1}^2 e_q}^1
\end{math}.

\item $\alpha_{11} = \alpha_{22} = 0, \alpha_{12} \neq 0$~: 
\begin{math}
\overbrace{x_{q-2} e_q, \ldots, x_n e_q}^{n-q+3}, \overbrace{x_{q-2}^2 e_q, x_{q-1}^2 e_q}^2
\end{math}.

\item $\alpha_{11} = \alpha_{12} = 0, \alpha_{22} \neq 0$~:
\begin{math}
\overbrace{x_{q-2} e_q, \ldots, x_n e_q}^{n-q+3}, \overbrace{x_{q-2}^2 e_q, x_{q-2} x_{q-1} e_q}^2
\end{math}.

\item $\alpha_{12} = \alpha_{22} = 0, \alpha_{11} \neq 0$~:
\begin{math}
\overbrace{x_{q-2} e_q, \ldots, x_n e_q}^{n-q+3}, \overbrace{x_{q-2} x_{q-1} e_q, x_{q-1}^2 e_q}^2
\end{math}.

\item $\alpha_{11} = \alpha_{12} = \alpha_{22} = 0$~:
\begin{math}
\overbrace{x_{q-2} e_q, \ldots, x_n e_q}^{n-q+3}, \overbrace{x_{q-2}^2 e_q, x_{q-2} x_{q-1} e_q, x_{q-1}^2 e_q}^3
\end{math}.
\end{enumerate}

\noindent
The
\begin{math}
\mathcal{K} \left[ G \right]^2
\end{math}-codimension of 
\begin{math}
j^2g_\alpha \left( 0 \right)
\end{math}
is 
\begin{math}
n-q+4
\end{math}
in the cases 1, 2, and 3, 
\begin{math}
n-q+5
\end{math}
in the cases 4, 5 and 6, and 
\begin{math}
n-q+6
\end{math}
in the case 7. By combining the corresponding semi-algebraic sets in the list on which 
\begin{math}
j^2 g_\alpha \left( 0 \right)
\end{math}
has the same 
\begin{math}
\mathcal{K} \left[ G \right]^2
\end{math}-codimensions, we obtain Table~\ref{table:kgtangent_Lambda_q-3,q_j2}.

\subsection{A list of bases for Table~\ref{table:kgtangent_Lambda_q-3,q+1_j2}} \label{sec:table7_computation}

The following are the list of bases of the quotient space $\mathcal{M}_n\mathcal{E}_n^q/\left(T \mathcal{K} \left[ G \right] \left(g_a\right) + \mathcal{M}_n^3 \mathcal{E}_n^q\right)$.
In what follows, the elements given after ":" form a basis of the quotient space for each $a$ in the algebraic subset of $\R^5$ given before ":" 

\begin{enumerate}
\item 
\begin{math}
V_{\mathbb{R}} \left( \langle a_{q-1,q}, a_{q-1,q-1}, a_{q-2,q}, a_{q-2,q-1}, a_{q-2,q-2} \rangle \right)
\end{math}:
\newline
\begin{math}
x_{q-2} e_q, \ldots, x_n e_q, x_q^2 e_q, x_{q-1} x_q e_q, x_{q-2} x_q e_q, x_{q-1}^2 e_q, x_{q-2}  x_{q-1} e_q, x_{q-2}^2 e_q
\end{math}.

\item 
$V_{\mathbb{R}} \left( \langle a_{q-2,q}, a_{q-2,q-2} \rangle \right) \setminus V_{\mathbb{R}} \left( \langle a_{q-2,q} a_{q-1,q}^3, a_{q-2,q-1} a_{q-1,q}^3, a_{q-2,q-2} a_{q-1,q}^3, \right.$
\newline
$\left. a_{q-2,q} a_{q-1,q-1} a_{q-1,q}^2, a_{q-2,q-1} a_{q-2,q} a_{q-1,q}^2 \rangle \right)$:
\newline
\begin{math}
x_{q-2} e_q, \ldots, x_n e_q, x_q^2 e_q, x_{q-2} x_q e_q, x_{q-2}^2 e_q
\end{math}.

\item 
\begin{math}
V_{\mathbb{R}} \left( \langle a_{q-1,q}, a_{q-1,q-1}, a_{q-2,q-1} \rangle \right) \setminus V_{\mathbb{R}} \left( \langle a_{q-2,q} \rangle \right)
\end{math}: 
\newline
\begin{math}
x_{q-2} e_q, \ldots, x_n e_q, x_{q-1}^2 e_q, x_q^2 e_q, x_{q-1} x_q e_q
\end{math}.

\item 
\begin{math}
V_{\mathbb{R}} \left( \langle a_{q-1,q}, a_{q-2,q} \rangle \right) \setminus V_{\mathbb{R}} \left( \langle a_{q-2,q-1} a_{q-1,q-1} a_{q-1,q}, a_{q-2,q-1} a_{q-2,q} a_{q-1,q-1}, \right.
\end{math}
\newline
\begin{math}
\left.a_{q-2,q-1}^2 a_{q-1,q-1}, a_{q-2,q-2} a_{q-2,q-1} a_{q-1,q-1}^2 \rangle \right)
\end{math}: 
\newline
\begin{math}
x_{q-2} e_q, \ldots, x_n e_q, x_q^2 e_q, x_{q-1} x_q e_q, x_{q-2} x_q e_q, x_{q-2}^2 e_q
\end{math}.

\item
\begin{math}
V_{\mathbb{R}} \left( \langle a_{q-1,q}, a_{q-2,q}, a_{q-1,q-1}, a_{q-2,q-2} \rangle \right) \setminus V_{\mathbb{R}} \left( \langle a_{q-1,q}, a_{q-2,q}, a_{q-2,q-1}, a_{q-2,q-2} a_{q-1,q-1} \rangle \right)
\end{math}: 
\begin{math}
x_{q-2} e_q, \ldots, x_n e_q, x_q^2 e_q, x_{q-1} x_q e_q, x_{q-2} x_q e_q, x_{q-1}^2 e_q, x_{q-2}^2 e_q
\end{math}.
\item
\begin{math}
V_{\mathbb{R}} \left( \langle a_{q-1,q}, a_{q-1,q-1}, a_{q-2,q}, a_{q-2,q-1} \rangle \right) \setminus V_{\mathbb{R}} \left( \langle a_{q-2,q-2} \rangle \right)
\end{math}: 
\newline
\begin{math}
x_{q-2} e_q, \ldots, x_n e_q, x_q^2 e_q, x_{q-1} x_q e_q, x_{q-1}^2 e_q, x_{q-2} x_q e_q, x_{q-2}  x_{q-1} e_q
\end{math}.

\item
\begin{math}
V_{\mathbb{R}} \left( \langle a_{q-1,q}, a_{q-2,q}, a_{q-1,q-1} \rangle \right) 
\end{math}
\newline
\begin{math}
\setminus V_{\mathbb{R}} \left( \langle a_{q-2,q-2} a_{q-1,q}, a_{q-2,q-2} a_{q-2,q}, a_{q-2,q-2} a_{q-2,q-1}, a_{q-2,q-2}^2 a_{q-1,q-1} \rangle \right)
\end{math}: 
\newline
\begin{math}
x_{q-2} e_q, \ldots, x_n e_q, x_q^2 e_q, x_{q-1} x_q e_q, x_{q-2} x_q e_q, x_{q-1}^2 e_q
\end{math}.

\item
\begin{math}
V_{\mathbb{R}} \left( \langle a_{q-2,q} \rangle \right) 
\end{math}
\newline
\begin{math}
\setminus V_{\mathbb{R}} \left( \langle a_{q-2,q-2} a_{q-2,q} a_{q-1,q}^3, a_{q-2,q-2} a_{q-2,q-1} a_{q-1,q}^3, a_{q-2,q-2}^2 a_{q-1,q}^3, \right.
\end{math}
\newline
\begin{math}
\left. a_{q-2,q-2} a_{q-2,q} a_{q-1,q-1} a_{q-1,q}^2, a_{q-2,q-2} a_{q-2,q-1} a_{q-2,q} a_{q-1,q}^2 \rangle \right)
\end{math}: 
\newline
\begin{math}
x_{q-2} e_q, \ldots, x_n e_q, x_q^2 e_q, x_{q-2} x_q e_q
\end{math}.

\item
\begin{math}
V_{\mathbb{R}} \left( \langle a_{q-1,q}, a_{q-2,q}, a_{q-2,q-1} \rangle \right) 
\end{math}
\newline
\begin{math}
\setminus V_{\mathbb{R}} \left( \langle a_{q-1,q-1} a_{q-1,q}, a_{q-2,q} a_{q-1,q-1}, a_{q-2,q-1} a_{q-1,q-1}, a_{q-2,q-2} a_{q-1,q-1}^2 \rangle \right)
\end{math}: 
\newline
\begin{math}
x_{q-2} e_q, \ldots, x_n e_q, x_q^2 e_q, x_{q-1} x_q e_q, x_{q-2} x_q e_q, x_{q-2}  x_{q-1} e_q
\end{math}.

\item
\begin{math}
V_{\mathbb{R}} \left(\langle  a_{q-1,q}, a_{q-2,q}, a_{q-2,q-1}, a_{q-2,q-2} \rangle \right) \setminus V_{\mathbb{R}} \left( \langle a_{q-1,q-1} \rangle \right)
\end{math}: 
\newline
\begin{math}
x_{q-2} e_q, \ldots, x_n e_q, x_q^2 e_q, x_{q-2} x_q e_q, x_{q-2}^2 e_q, x_{q-1} x_q e_q, x_{q-2} x_{q-1} e_q
\end{math}.

\item
\begin{math}
V_{\mathbb{R}} \left(\langle  a_{q-1,q} \rangle \right) 
\end{math}
\newline
\begin{math}
\setminus V_{\mathbb{R}} \left( \langle a_{q-2,q} a_{q-1,q-1} a_{q-1,q}, a_{q-2,q-1} a_{q-1,q-1} a_{q-1,q}, \right.
\end{math}
\newline
\begin{math}
\left. a_{q-2,q-2} a_{q-1,q-1} a_{q-1,q}, a_{q-2,q} a_{q-1,q-1}^2, a_{q-2,q-1} a_{q-2,q} a_{q-1,q-1} \rangle \right)
\end{math}: 
\newline
\begin{math}
x_{q-2} e_q, \ldots, x_n e_q, x_q^2 e_q, x_{q-1} x_q e_q
\end{math}.

\item
\begin{math}
V_{\mathbb{R}} \left(\langle a_{q-1,q}, a_{q-1,q-1} \rangle \right) 
\end{math}
\newline
\begin{math}
\setminus V_{\mathbb{R}} \left( \langle a_{q-2,q} a_{q-1,q}, a_{q-2,q-1} a_{q-1,q}, a_{q-2,q-2} a_{q-1,q}, a_{q-2,q} a_{q-1,q-1}, a_{q-2,q-1} a_{q-2,q} \rangle \right)
\end{math}: 
\newline
\begin{math}
x_{q-2} e_q, \ldots, x_n e_q, x_q^2 e_q, x_{q-1} x_q e_q, x_{q-1}^2 e_q
\end{math}.

\item
\begin{math}
\R^5 \setminus V_{\mathbb{R}} \left( \langle a_{q-2,q-2} a_{q-2,q}^2 a_{q-1,q}^4-a_{q-2,q-1} a_{q-2,q}^3 a_{q-1,q}^3+a_{q-2,q}^4 a_{q-1,q-1} a_{q-1,q}^2, \right. 
\end{math}
\newline
\begin{math}
\left. a_{q-2,q-2} a_{q-2,q-1} a_{q-2,q} a_{q-1,q}^4-a_{q-2,q-1}^2 a_{q-2,q}^2 a_{q-1,q}^3+a_{q-2,q-1} a_{q-2,q}^3 a_{q-1,q-1} a_{q-1,q}^2, \right.
\end{math}
\newline
\begin{math}
\left. a_{q-2,q-2}^2 a_{q-2,q} a_{q-1,q}^4-a_{q-2,q-2} a_{q-2,q-1} a_{q-2,q}^2 a_{q-1,q}^3+a_{q-2,q-2} a_{q-2,q}^3 a_{q-1,q-1} a_{q-1,q}^2, \right.
\end{math}
\newline
\begin{math}
\left. a_{q-2,q-2} a_{q-2,q}^2 a_{q-1,q-1} a_{q-1,q}^3-a_{q-2,q-1} a_{q-2,q}^3 a_{q-1,q-1} a_{q-1,q}^2+a_{q-2,q}^4 a_{q-1,q-1}^2 a_{q-1,q}, \right.
\end{math}
\newline
\begin{math}
\left. a_{q-2,q-2} a_{q-2,q-1} a_{q-2,q}^2 a_{q-1,q}^3-a_{q-2,q-1}^2 a_{q-2,q}^3 a_{q-1,q}^2+a_{q-2,q-1} a_{q-2,q}^4 a_{q-1,q-1} a_{q-1,q} \rangle \right)
\end{math}: 
\newline
\begin{math}
x_{q-2} e_q, \ldots, x_n e_q, x_q^2 e_q
\end{math}.

\item
\begin{math}
V_{\mathbb{R}} \left(\langle a_{q-2,q-2} a_{q-1,q}^2-a_{q-2,q-1} a_{q-2,q} a_{q-1,q}+a_{q-2,q}^2 a_{q-1,q-1} \rangle \right) 
\end{math}
\newline
\begin{math}
\setminus V_{\mathbb{R}} \left( \langle a_{q-2,q}^2 a_{q-1,q}^2, a_{q-2,q-1} a_{q-2,q} a_{q-1,q}^2, a_{q-2,q-2} a_{q-2,q} a_{q-1,q}^2, \right. 
\end{math}
\newline
\begin{math}
\left. a_{q-2,q}^2 a_{q-1,q-1} a_{q-1,q}, a_{q-2,q-1} a_{q-2,q}^2 a_{q-1,q} \rangle \right)
\end{math}: 
\newline
\begin{math}
x_{q-2} e_q, \ldots, x_n e_q, x_q^2 e_q, x_{q-2}^2 e_q
\end{math}.

\item
\begin{math}
V_{\mathbb{R}} \left( \langle a_{q-2,q}, a_{q-2,q-1}, a_{q-2,q-2} \rangle \right) \setminus V_{\mathbb{R}} \left( \langle a_{q-1,q} \rangle \right)
\end{math}: 
\newline
\begin{math}
x_{q-2} e_q, \ldots, x_n e_q, x_q^2 e_q, x_{q-2} x_q e_q, x_{q-2}^2 e_q
\end{math}.
\end{enumerate}
In summary, we obtain Table~\ref{table:the computation detail of Table 7}. By combining the strata of $\R^5$ on which $j^2 g_a \left( 0 \right)$ has the same $\mathcal{K} \left[ G \right]^2$-codimension, we obtain Table~\ref{table:kgtangent_Lambda_q-3,q+1_j2}.
\begin{table}[htbp]
 \begin{center}
  \begin{tabular}{|c|l|c|c|c|} \hline
   $\mathcal{K} \left[ G \right]^2$-cod. & strata numbers \\ \hline
   $n-q+4$ & $13$ \\
   $n-q+5$ & $8, 11, 14$ \\
   $n-q+6$ & $2, 3, 12, 15$ \\
   $n-q+7$ & $4, 7, 9$ \\
   $n-q+8$ & $5, 6, 10$ \\
   $n-q+9$ & $1$ \\
 \hline
  \end{tabular}
  \caption{$\mathcal{K} \left[ G \right]^2$-codimension of $j^2 g_a \left( 0 \right)$ for the parameter $a$ in each stratum.}
  \label{table:the computation detail of Table 7}
 \end{center}
\end{table}

\end{document}